\definecolor{blue}{cmyk}{1.,1.,0.,0.53}
\definecolor{red}{cmyk}{0.,1.,1.,0.53}
\definecolor{green}{cmyk}{1.,0.,1.,0.53}
\newcommand{\blue}{\textcolor{blue}}
\newcommand{\green}{\textcolor{green}}
\newcommand{\red}{\textcolor{red}}
\newtheorem{theorem}{Theorem}[section]
\newtheorem{proposition}{Proposition}[section]
\newtheorem{corollary}{Corollary}[section]
\newtheorem{lemma}{Lemma}[section]
\theoremstyle{definition}
\newtheorem{definition}{Definition}[section]
\theoremstyle{remark}
\newcommand{\explain}[1]{\text{\scriptsize\sf [#1]}}
\renewcommand{\@fnsymbol}[1]
{\ensuremath{\ifcase#1\or $*$\or $**$\or $***$\or $****$\or $*****$
\else\@ctrerr\fi}}
\begin{document}

\title{Effective algebraic degeneracy\textsuperscript{$\ast$}}

\author{Simone Diverio \and Jo\"el Merker \and Erwan Rousseau}

\address{Simone Diverio --- Istituto \lq\lq Guido Castelnuovo\rq\rq{},
Sapienza Universit\`a di Roma}
\curraddr{Institut de Math\'ematiques de Jussieu, 
Universit\'e Pierre et Marie Curie, Paris}
\email{diverio@math.jussieu.fr}

\address{Jo\"el Merker --- D\'epartement de math\'ematiques 
et applications, \'Ecole Normale Sup\'erieure, Paris}
\email{merker@dma.ens.fr} 

\address{Erwan Rousseau --- D\'epartement de math\'ematiques, 
Universit\'e Louis Pasteur, Strasbourg}
\email{rousseau@math.unistra.fr}

\begin{abstract}
We show that for every smooth projective hypersurface $X \subset
\mathbb{ P}^{ n+1}$ of degree $d$ and of arbitrary dimension $n
\geqslant 2$, if $X$ is generic, then there exists a proper algebraic
subvariety $Y \subsetneqq X$ such that every nonconstant entire
holomorphic curve $f \colon \mathbb{ C} \to X$ has image $f (\mathbb{
C})$ which lies in $Y$, as soon as its degree satisfies the effective
lower bound $d \geqslant 2^{ n^5}$.
\end{abstract}

\maketitle

\thispagestyle{empty}


\begin{center}
\begin{minipage}[t]{11cm}
\baselineskip =0.35cm
{\scriptsize

\centerline{\bf Table of contents}

\smallskip

{\bf 1.~Introduction\dotfill 
\pageref{Section-1}.}

{\bf 2.~Preliminaries\dotfill 
\pageref{Section-2}.}

{\bf 3.~Algebraic degeneracy of entire curves 
\dotfill \pageref{Section-3}.}

{\bf 4.~Effectiveness of the degree lower bound
\dotfill \pageref{Section-4}.}

{\bf 5.~Estimations of the quantities ${\sf D}_k (n)$ and ${\sf D}_k'
(n)$
\dotfill \pageref{Section-5}.}

{\bf 6.~Effective bounds in dimensions 2, 3 and 4 through the invariant
theory approach
\dotfill \pageref{Section-6}.}

{\bf 7.~Effective algebraic degeneracy in dimensions 5 and 6
\dotfill \pageref{Section-7}.}

}\end{minipage}
\end{center}

\renewcommand{\thefootnote}{\fnsymbol{footnote}}

\footnotetext[1]{\,
The final publication is available at
{\scriptsize\sf www.springerlink.com}, with 
DOI: 10.1007/s00222-010-0232-4 at the
{\sl Inventiones Mathematic{\ae}}.
}

\renewcommand{\thefootnote}{\arabic{footnote}}

\section{Introduction}
\label{Section-1}

In 1979, Green and Griffiths~\cite{gg1980} conjectured that every
projective algebraic variety $X$ of general type contains a certain
{\em proper} algebraic {\em sub}variety $Y \subsetneqq X$ inside which
all nonconstant entire holomorphic curves $f \colon \mathbb{ C} \to X$
must necessarily lie.

A positive answer to this conjecture has been given for surfaces by
McQuillan~\cite{ mq1998} under the assumption that the second Segre
number $c_1^2-c_2$ is positive. In the survey article~\cite{ siu2004}
({\em cf.}  also~\cite{ siu2002}), Siu provided a beautiful strategy
to establish algebraic degeneracy of entire holomorphic curves in generic
hypersurfaces $X \subset \mathbb{ P}^{ n+1}$ of high degree larger
than a certain $d_n \gg 1$, and also {\em Kobayashi-hyperbolicity} of
such $X$'s if $d_n$ is even much higher.

Siu's strategy is based on two key steps: 1) the explicit
construction, in projective coordinates, of global holomorphic jet
differentials; 2) the deformation of such jet differentials by means
of slanted vector fields having low pole order. The explicit
construction of jet differentials can be seen as a replacement of the
argument using Riemann-Roch which is known to be difficult to realize
since it involves a control of the cohomology. The reason to perform
explicit constructions is also a better access to the base-point set,
in order to provide hyperbolicity instead of just algebraic
degeneracy. Complete up-to-date survey considerations may further be
found in~\cite{siuyeu1996, dem1997, mq1999, deg2000, kob1988,
voi2003}.

In this paper, we overcome the difficulty of the Riemann-Roch argument
thanks to an alternative approach for Siu's first key step based on
Demailly's bundle of invariant jets~\cite{ dem1997}. The advantage of
this method is also that it usually yields better bounds on the
degree. Indeed, after performing in Sections~\ref{Section-4} 
and~\ref{Section-5} below some
explicit, delicate elimination computations, we finally obtain a lower
bound on the degree $d_n = d ( n)$ as an explicit function of $n$, for
generic projective hypersurfaces of arbitrary dimension $n \geqslant
2$.

\begin{theorem}\label{main}
Let $X\subset \mathbb{ P}^{ n+1}$ be a smooth projective hypersurface
of degree $d$ and of arbitrary dimension $n \geqslant
2$. If $X$ is generic and if its degree satisfies the
\emph{effective} lower bound:
\[
d
\geqslant
2^{n^5},
\]
then there exists a \emph{proper} algebraic subvariety $Y \subsetneqq
X$ such that every nonconstant entire holomorphic curve $f \colon
\mathbb{C} \to X$ has image $f (\mathbb{ C })$ contained in $Y$.
\end{theorem}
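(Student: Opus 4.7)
The plan is to combine the two pillars of Siu's strategy as refined by Demailly: produce many global invariant jet differentials on $X$ that vanish along an ample divisor, and then propagate their vanishing along entire curves by differentiating them with slanted meromorphic vector fields of controlled pole order on the universal family. The novelty driving the effective bound is to replace Siu's hand-made coordinate construction of jet differentials by Demailly's bundle $E_{k,m}\, T_X^*$ of $\mathbb{G}_k$-invariant jets, whose Euler characteristic admits a sharp asymptotic expression and whose higher cohomologies are more tractable than those of the full Green--Griffiths bundle. The source of the final bound $d \geqslant 2^{n^5}$ is then the need to turn each cohomological and geometric inequality into an explicit lower bound on $d$, and to majorize their maximum.

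Concretely, I would first fix $k = n$ (the smallest order of jets for which Demailly's vanishing arguments can succeed on $n$-folds) and seek global sections of $E_{k,m}\, T_X^* \otimes \mathcal{O}_X(-A)$ for $A$ ample and $m$ large. A Riemann--Roch computation of the Euler characteristic of $E_{k,m}\, T_X^*$, combined with a Bogomolov-type control of the intermediate cohomologies and a separate estimation of $H^1$, yields non-trivial global sections as soon as $d$ exceeds a first effective threshold ${\sf D}_k(n)$. By Demailly's fundamental vanishing theorem for entire curves, the pull-back of any such section via the $k$-jet of a nonconstant entire curve $f \colon \mathbb{C} \to X$ vanishes identically; thus the image of the lifted curve is trapped in the common zero locus $\Sigma \subset J^{\mathrm{inv}}_k X$ of these sections.

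To cut $\Sigma$ down to a proper algebraic subvariety of $X$, I would next extend the sections to a Zariski-open neighbourhood of $X$ in the universal hypersurface $\mathcal{X} \subset \mathbb{P}^{n+1} \times \mathbb{P}^{N_d}$ and apply Siu's second key step: differentiate them with meromorphic vector fields on the Demailly tower of $\mathcal{X}$ that are tangent to the vertical directions of the family, with pole order bounded by a second effective quantity ${\sf D}_k'(n)$. Each such differentiation produces a new jet differential, still valued in an appropriate negative twist of $\mathcal{O}_X$, which continues to vanish on the lifted curve as long as the total twist remains negative. Iterating the differentiation enough times to span the full vertical tangent space cuts $\Sigma$ down to a subvariety that does not project surjectively onto $X$, so that $f(\mathbb{C})$ is trapped in its projection $Y \subsetneqq X$.

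The main obstacle, and the real content of Sections~\ref{Section-4} and~\ref{Section-5}, is the simultaneous explicit majorization $d \geqslant \max({\sf D}_k(n), {\sf D}_k'(n))$ by $2^{n^5}$. A priori the combinatorial growth of the Schur-type expressions indexing $\mathbb{G}_k$-invariant jets, together with the $k!$-style factors appearing in the pole orders of the slanted fields, could force a doubly exponential lower bound; avoiding this requires choosing $m$ as an explicit polynomial in $n$, expressing the relevant top intersection numbers on $X$ in terms of the Chern classes of $T_X$, and performing a careful elimination between the two thresholds. This delicate elimination, rather than any individual step of the strategy, is where the numerical subtlety lies and where the exponent $n^5$ will ultimately come from.
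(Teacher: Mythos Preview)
Your high-level outline is correct in spirit --- two pillars, invariant jets plus slanted vector fields --- but the method you propose for the first pillar has a genuine gap in arbitrary dimension.

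You write that you would obtain sections of $E_{n,m}T_X^*\otimes A^{-1}$ via ``a Riemann--Roch computation of the Euler characteristic of $E_{k,m}\,T_X^*$, combined with a Bogomolov-type control of the intermediate cohomologies,'' and you invoke ``Schur-type expressions indexing $\mathbb{G}_k$-invariant jets.'' This is exactly the invariant-theory approach of Section~\ref{Section-6}, and it \emph{only works when one knows the full Schur decomposition of} $\operatorname{Gr}^\bullet E_{n,m}T_X^*$, which in turn requires a complete description of the algebra of $\mathbb{G}_n'$-invariants. That algebra is known only for $n\leqslant 4$ (Demailly, Rousseau, Merker); already for $n=4$ it has $16$ generators and $41$ relations, and for $n\geqslant 5$ no workable presentation exists. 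So your proposed Step~1 cannot be carried out for general $n$, and the paper says so explicitly: the Riemann--Roch route ``is known to be difficult to realize since it involves a control of the cohomology,'' and ``the extreme intricacy of the algebras of invariants \ldots\ is the main obstacle to run the process in the higher dimensions $n\geqslant 5$.''

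What the paper actually does for Step~1 is different and is the real source of the $2^{n^5}$: it works not on $X$ but on the top $X_n$ of Demailly's tower, applies the \emph{algebraic holomorphic Morse inequalities} to the difference of nef line bundles $\mathcal{O}_{X_n}(\mathbf{a})\otimes\pi^*\mathcal{O}_X(2|\mathbf{a}|)$ and $\pi^*\mathcal{O}_X(2|\mathbf{a}|)\otimes\pi^*K_X^{\delta|\mathbf{a}|}$, and reduces the resulting intersection product~\thetag{\ref{morse-intersection-delta}} to a polynomial in $d$ by descending the tower level by level. The combinatorics you must tame are not Schur tableaux but the recursive Chern-class relations~\thetag{\ref{c-l-k}},~\thetag{\ref{u-n}} and the Jacobi--Trudy determinants $\mathcal{C}_J^\ell$ that appear when eliminating $u_\ell$ (Lemmas~\ref{K-i-l}--\ref{C-A} and Proposition~\ref{J-T}). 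The quantities ${\sf D}_k(n)$, ${\sf D}_k'(n)$ in the paper are not degree thresholds but uniform upper bounds on the $d^k$-coefficients of the reduced monomials $h^lu_1^{i_1}\cdots u_n^{i_n}$; the bound $n^{4n^3}2^{n^4}$ of Theorem~\ref{D-k-n}, fed into Lemma~\ref{d-1-n} together with the pole order $n^2+2n$, is what produces $2^{n^5}$. Your sketch misses this entire mechanism.
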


As in~\cite{ siu2002, siu2004}, we thereby confirm, for generic projective
hypersurfaces of high degree, the Green-Griffiths-Lang conjecture. Even if
our lower bound is far from the one $\deg X \geqslant n+3$ insuring general
type, to our knowledge, Theorem~\ref{main} is, in this direction, the first
$n$-dimensional result with, moreover, an explicit degree lower bound. In
addition, as a byproduct of our constructions, the subvarieties absorbing
the images of nonconstant entire curves vary as a holomorphic family
with the generic projective hypersurface.

Two main ingredients enter our proof: 1) the existence of invariant
jet differentials vanishing on an ample divisor in projective
hypersurfaces of high degree, following~\cite{ dem1997, div2008a}; and
Siu's second key step: 2) the global generation of a sufficiently high
twisting of the tangent bundle to the so-called {\sl manifold of
vertical $n$-jets}, which is canonically associated to the universal
family of projective hypersurfaces, following~\cite{ siu2004,
mer2008a}.

The first ingredient dates back to the seminal work of Bloch
\cite{blo1926}, revisited by Green-Griffiths in~\cite{gg1980}, by
Siu in~\cite{ siu1995, siuyeu1996,
siu2004} and by Demailly in~\cite{ dem1997}. Bloch's main
philosophical idea is that global jet differentials vanishing on an
ample divisor provide some algebraic differential equations that every
entire holomorphic curve $f \colon \mathbb{ C} \to X$ must
satisfy. Five decades later, Green and Griffiths~\cite{gg1980}
modernized Bloch's concepts and established several
results\,\,---\,\,still fundamental nowadays\,\,---\,\,about the
geometry of entire curves.

Later on, Demailly~\cite{dem1997} refined and enlarged the whole
theory by defining jet differentials that are invariant under
reparametrization of the source $\mathbb{ C}$. Through this
geometrically adequate, new point of view, one looks only at the
conformal class of all entire curves. In~\cite{ div2008a, div2008b},
the first-named author combined Demailly's approach with Trapani's
\cite{tra1995} algebraic version of the holomorphic Morse
inequalities, so as to construct global invariant jet differentials in
{\em any} dimension $n \geqslant 2$. The first effective aspect of our
proof is to make somewhat explicit such a construction.

Indeed, by following~\cite{ div2008a, div2008b}, we consider a certain
intersection product ({\em see}~\thetag{\ref{morseintersection}}
and~\thetag{\ref{pi-delta}} below), the positivity of which
yields\,\,---\,\,thanks to a suitable application of the holomorphic
Morse inequalities\,\,---\,\,a lower bound for the (asymptotic)
dimension of the space of global sections of a certain {\em weighted
subbundle} of Demailly's full bundle $E_{ n, m} T_X^*$ of invariant
$n$-jet differentials. This intersection product lives in the
cohomology algebra of the $n$-th projectivized jet bundle over $X$, a
polynomial algebra in $n$ indeterminates $u_1, u_2, \dots, u_n$
equipped with canonical, geometrically significant relations (\cite{
dem1997, div2008a}). The $u_i$ here are the first Chern classes of the
successive (anti)tautological line bundles which arise during the
projectivization process. The task of reducing the mentioned
intersection product in terms of the Chern classes of
$T_X$\,\,---\,\,after eliminating {\em all} the Chern classes living
at each level of Demailly's tower\,\,---\,\,happens to be of high
algebraic complexity, because four combinatorics are intertwined
there: 1) presence of several relations shared by all the Chern
classes of the lifted horizontal distributions; 2) Newton expansion of
large $n^2$-powers; 3) differences of various binomial coefficients;
4) emergence of many Jacobi-Trudy determinants.

The second ingredient, {\em viz.} the {\sl vertical jets}, comes from
ideas developed for 1-jets by Voisin [24] in order to generalize works 
of Clemens~\cite{ cle1986} and Ein on the positivity of the canonical 
bundles of subvarieties of generic projective hypersurfaces of high 
degree. In~\cite{ siu2004}, Siu showed how the corresponding
{\sl global generation property} for $1$-jets devised by Voisin
generalizes to the bundle of tangents to the space of vertical
$n$-jets. Siu then established that one may use the available tangential
generators, which are meromorphic vector fields with a certain {\sl
pole order} $c_n \geqslant 1$, so as to produce, by plain
differentiation, many new algebraically independent invariant jet
differentials when starting from just a single {\em nonzero} jet
differential. At the end, one obtains in this way sufficiently many
independent jet differentials, and this then forces entire curves to
lie in a positive-codimensional subvariety $Y \subsetneqq X$.

This strategy was realized in details for 2-jets in dimension 2 by P\u
aun~\cite{pau2008} with pole order $c_2 = 7$, and similarly, for
3-jets in dimension 3 by the third-named author in~\cite{ rou2007}
with $c_3 = 12$. In both works, global generation holds outside a
certain exceptional set. The general case of $n$-jets in dimension $n$
was performed recently by the second-named author in~\cite{ mer2008a}
with $c_n = \frac{ n^2 + 5 n}{ 2}$ and with a quite similar
exceptional set. It then became clear, when ~\cite{ mer2008a}
appeared, that Demailly's invariant jets combined with Siu's second
key step could yield {\em weak} algebraic degeneracy (nonexistence of
Zariski-dense entire curves) in {\em any} dimension $n \geqslant
2$. But to reach effectivity, it yet remained to perform what the
present article is aimed at: taming somehow the complicated
combinatorics of Demailly's tower. Furthermore, at the cost of
increasing the pole order up to $c_n' = n^2 + 2n$, the exceptional set
is shrunk to be just the set of singular jets (\cite{ mer2008a}), and
then {\sl strong effective} algebraic degeneracy is gained. This is
Theorem~\ref{main}.

These brief words summarize how we combine {\em several ideas}, both
of {\em conceptual} and of {\em technical} nature which stem from
Algebra, from Analysis and from Geometry; deep conjectures always
confirm the unity of mathematics.

As the effective lower bound $\deg X \geqslant 2^{ n^5}$ of the main
theorem above is not optimal, Sections~\ref{Section-6}
and~\ref{Section-7} of the paper are intended to provide numerically
better estimates in small dimensions. For surfaces, the best known
effective lower bound for the degree is $d \geqslant 18$
(\cite{pau2008}), after $d \geqslant 21$ (\cite{ deg2000}) and $d
\geqslant 36$ (\cite{ mq1999}). In \cite{ rou2007}, the third-named
author obtained the first effective result for weak algebraic
degeneracy of entire curves inside threefolds $X$ of $\mathbb{ P}^4$,
whenever $\deg X \geqslant 593$.

\begin{theorem}
\label{lowdim}
Let $X \subset \mathbb{ P }^{ n+1 }$ be a smooth projective
hypersurface of degree $d$.  If $X$ is generic, then there exists a
proper closed subvariety $Y \subsetneqq X$ such that every nonconstant
entire holomorphic curve $f \colon \mathbb{ C } \to X$ has image $f
(\mathbb{ C})$ contained in $Y$

\begin{itemize}
\item for $\dim X=3$, whenever $\deg X \geqslant 593${\rm ;} 
\item for $\dim X=4$, whenever $\deg X \geqslant 3203${\rm ;}
\item for $\dim X=5$, whenever $\deg X \geqslant 35355${\rm ;}
\item for $\dim X=6$, whenever $\deg X \geqslant 172925$.
\end{itemize}
\end{theorem}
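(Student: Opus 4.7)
The plan is to treat the four dimensions separately, applying in each case the same two-ingredient strategy already used for Theorem~\ref{main}, but performing the Morse-theoretic existence step in a sharper way adapted to the specific value of $n$. In every case the output we seek is the same: a nonzero global section $\sigma\in H^0\bigl(X,E_{n,m}T_X^*\otimes\mathcal O_X(-A)\bigr)$ for some ample divisor $A$ on $X$ and some $m\gg 1$, together with enough meromorphic slanted vector fields of controlled pole order on the manifold of vertical $n$-jets to differentiate $\sigma$ into sufficiently many algebraically independent invariant jet differentials, forcing every entire curve into the common base locus.

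For $\dim X=3$ and $\dim X=4$ I would follow the \emph{invariant theory approach} developed in Section~\ref{Section-6}. For $n=3$ this recovers the construction of Rousseau~\cite{rou2007}, where $E_{3,m}T_X^*$ is decomposed into its Schur components using the classical invariants of the action of the group of reparametrizations of $3$-jets; the Euler characteristic of each summand is computed as an explicit polynomial in $d=\deg X$, and one checks that for $d\geq 593$ the holomorphic Morse inequalities give a nonzero section vanishing on an ample divisor. Combining with P\u aun-type slanted vector fields whose pole order is $c_3=12$ (\cite{pau2008,rou2007,mer2008a}), one reaches the claim. For $n=4$ the same invariant-theoretic decomposition is available in principle, but now one must carry out the Schur/plethysm expansion of $E_{4,m}T_X^*$ and evaluate the resulting Euler characteristics; comparing the dominant term with the contribution from the ample twist and balancing against the pole order $c_4=\tfrac{4^2+5\cdot 4}{2}=18$ of the slanted vector fields produces the threshold $d\geq 3203$.

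For $\dim X=5$ and $\dim X=6$ the invariant ring becomes too intricate to be handled explicitly, and I would use instead the approach of Section~\ref{Section-7}, namely the intersection-product estimate~\thetag{\ref{morseintersection}}--\thetag{\ref{pi-delta}} on Demailly's tower of projectivized jet bundles. Concretely, one expresses the relevant Morse integral as a polynomial in the Chern classes $c_i(T_X)$ and the tautological classes $u_1,\dots,u_n$, uses the Grothendieck relations at each level of the tower to reduce it to a polynomial in $c_1(\mathcal O_X(1))$ and $d$, and then looks for the smallest $d$ making the leading coefficient positive. This is the same computation as in the proof of the main theorem but specialized to $n=5$ and $n=6$, which allows many of the combinatorial simplifications (Newton expansions, Jacobi--Trudy determinants, binomial cancellations) to be carried out by hand rather than estimated crudely. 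Combining the resulting existence of jet differentials vanishing on an ample divisor with the slanted vector fields of~\cite{mer2008a} of pole order $c_n'=n^2+2n$ (yielding $c_5'=35$ and $c_6'=48$), the thresholds $d\geq 35355$ and $d\geq 172925$ come out.

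The main obstacle in each case is purely computational: it is the final reduction, on Demailly's tower, of the Morse intersection product to a single polynomial inequality in $d$, because four combinatorics are intertwined (horizontal-distribution relations, large Newton powers, alternating binomial sums, Jacobi--Trudy determinants). For $n=3,4$ the invariant-theoretic presentation shortcuts part of this work, which is why the bounds obtained there are numerically much better than what one would get by specializing the general estimates of Section~\ref{Section-5}; for $n=5,6$ one must instead organize the expansion carefully enough to keep the subleading terms under control while still getting a closed-form threshold. Once the polynomial inequality is in hand, verifying that the announced values $593,3203,35355,172925$ are sufficient is a direct numerical check, and the degeneracy conclusion then follows from the fundamental vanishing theorem for jet differentials exactly as in the proof of Theorem~\ref{main}.
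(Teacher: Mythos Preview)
Your overall two-case split (invariant theory for $n=3,4$; Demailly-tower intersection product for $n=5,6$) matches the paper exactly, and you have correctly identified the slanted-vector-field pole orders $c_5'=35$, $c_6'=48$ for the higher dimensions. However, there are two genuine gaps in your treatment of the low-dimensional cases.

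First, for $n=4$ you invoke the pole order $c_4=\tfrac{4^2+5\cdot4}{2}=18$. This is the pole order of the frame that generates only outside a larger exceptional set, and it yields only \emph{weak} algebraic degeneracy (the subvariety $Y$ may depend on $f$); as the paper notes in the Introduction, with $c_4=18$ one gets the bound $\deg X\geqslant 2432$, not $3203$. To obtain \emph{strong} degeneracy with the announced bound $3203$ the paper uses $c_4'=n^2+2n=24$, the pole order of the frame generating over all of $J_n^{\text{reg}}\mathcal V$ (Theorem~\ref{globgen}). In dimension $3$ the bound $593$ does come from Rousseau's work with $c_3=12$, but upgrading this to strong degeneracy requires an additional argument you omit: one checks that the $f$-dependent part of the degeneracy locus has codimension $\geqslant 2$ and then applies Clemens' theorem excluding rational and elliptic curves on generic hypersurfaces.

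Second, for $n=3,4$ you write that ``the holomorphic Morse inequalities give a nonzero section''. This is not what the paper does in Section~\ref{Section-6}. The actual mechanism is Riemann--Roch plus explicit cohomology control: one computes $\chi(X,E_{n,m}T_X^*)$ via the Schur decomposition, shows $h^n=0$ by the Bogomolov--Demailly vanishing theorem, and then bounds $h^2$ from above by a separate estimate (Theorem~\ref{h2-4} for $n=4$), finally concluding $h^0\geqslant\chi-h^2$. The $h^2$ bound is a substantive step involving flag bundles and a restriction-to-divisors argument, not a byproduct of Morse inequalities, and without it you cannot extract the stated numerical thresholds.

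For $n=5,6$ your description is essentially right, but note that the paper does not carry out the reduction ``by hand'': it fixes the explicit minimal weight $\mathbf a=(2\cdot3^{n-2},\dots,6,2,1)$, sets $\delta=(n^2+2n)/(d-n-2)$, and evaluates the resulting intersection polynomial by computer (\textsc{gp/pari}); the polynomials~\thetag{\ref{P-54-18-6-2-1}}--\thetag{\ref{P-54-18-6-2-1-prime}} are displayed precisely because a closed-form derivation is not feasible.
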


The last three effective lower bounds in dimensions 4, 5 and 6 are
entirely new. In dimension $3$, our bound 593 is the same as in~\cite{
rou2007}. Indeed, an inspection of the exceptional set in~\cite{
rou2007} shows that the part of the degeneracy locus which may depend
on $f$ is in fact of codimension~2 ({\em cf.}~\cite{ mer2008a}), and
therefore is empty, thanks to Clemens' result~\cite{ cle1986} which
excludes elliptic and rational curves. Using $c_4 = 18$ and $c_5 =
25$ instead of $c_4 ' = 24$ and $c_5 ' = 35$, we would have obtained
the two lower bounds $\deg X \geqslant 2432$ and $\deg X \geqslant
25586$ which were announced in our first {\tt arxiv.org} preprint and
which insured only {\em weak} algebraic degeneracy ({\em cf.}~\cite{
mer2008a}; using $c_6 = 33$ instead of $c_6 ' = 48$, the bound would
be $\deg X \geqslant 120176$).

For dimensions 5 and 6, our strategy of proof is the same as for
Theorem~\ref{main}, except that we choose a numerically better weighted
subbundle of Demailly's bundle of invariant jet differentials, exactly
as in~\cite{ div2008a}.

Quite differently, for dimensions $3$ and $4$, the construction of
nonzero jet differentials is based on a {\em complete} algebraic
description of the full Demailly bundles $E_{n,m} T^*_X$, $n =3, 4$,
due respectively to the third-named author (\cite{ rou2006a}) and to
the second-named author (\cite{ mer2008b}), after Demailly~\cite{
dem1997} and Demailly-El Goul~\cite{ deg2000} for $n = 2$. The
invariant theory approach requires finding the composition series of
the $E_{ n, m} T_X^*$, but this is understood only in dimensions 2, 3
and 4, because of the proliferation of secondary
invariants\,\,---\,\,a well known phenomenon, {\em cf.}~\cite{
mer2008b} and the references therein. Then by appropriately summing
the Euler characteristics of the composing Schur bundles~\cite{
rou2006a}, taking account of the numerous syzygies shared by a
collection of fundamental bi-invariants~\cite{ mer2008b}, one
establishes the positivity of the Euler characteristics $\chi \big(
E_{n,m} T^*_X \big)$ for $n = 3, 4$, at least asymptotically as $m$
goes to infinity. Furthermore, realizing also in dimension 4 the
strategy finalized in dimension 3 by the third-named author~\cite{
rou2006b}, we estimate from above the contribution of the even
cohomology dimensions $h^{ 2i} \big( X, E_{ n,m} T_X^* \big)$, thereby
gaining a suitable lower bound for the dimension of the space $h^0
\big( X, E_{ n,m} T_X^* \big)$ of global sections. Such estimates are
done by means of Demailly's~\cite{dem1997} generalization of
a vanishing theorem due to Bogomolov for the top cohomology,
and also by means of the algebraic version of the weak holomorphic
Morse inequalities for the intermediate cohomologies~\cite{ rou2006b}.

Even if the numerical bounds obtained in this way in dimensions 3 and
4 are better than the ones we obtained in all dimensions, the extreme
intricacy of the algebras of invariants by reparametrization ({\em
cf.}~\cite{ mer2008b}) is the main obstacle to run the process in the
higher dimensions $n \geqslant 5$. This was our central motivation to
follow the strategy of~\cite{ div2008a, div2008b}.

\subsection*{ Acknowledgments} The first-named author warmly 
thanks Stefano Trapani for patiently listening all the details of the
proof of the main theorem. 

\section{Preliminaries}
\label{Section-2}

\subsection{Jet differentials}
We briefly present here useful geometric concepts selected from the
theory of Green-Griffiths' and Demailly's jets~\cite{gg1980, dem1997}
({\em cf.} also~\cite{ rou2006a, div2008a}). Let $(X,V)$ be a {\sl
directed manifold}, {\em i.e.} a pair consisting of a complex manifold
$X$ together with a (not necessarily integrable) holomorphic subbundle
$V \subset T_X$ of the tangent bundle to $X$. This category will be
very useful later on, when we will consider the situation where $X$ is
the universal family of projective hypersurfaces of fixed degree and
$V$ the relative tangent bundle to the family. The bundle $J_kV$ is
the bundle of $k$-jets of germs of holomorphic curves $f \colon(
\mathbb{ C},0) \to X$ which are tangent to $V$, {\em i.e.}, such that
$f'(t)\in V_{ f(t) }$ for all $t$ near $0$, together
with the projection map $f \mapsto f(0)$ onto $X$.

Let $\mathbb{ G}_k$ be the group of germs of $k$-jets of biholomorphisms
of $(\mathbb{ C},0)$, that is, the group of germs of biholomorphic maps
\[
t\mapsto\varphi(t)=
a_1\,t+a_2\,t^2
+\cdots+
a_k\,t^k,\quad 
a_1\in\mathbb{C}^*,\,\,
a_j\in\mathbb{C},\,\,j\geqslant 2
\]
of $(\mathbb{ C}, 0)$, the composition law being taken modulo terms
$t^j$ of degree $j>k$. Then $\mathbb{ G}_k$ admits a natural fiberwise
right action on $J_kV$ which consists in reparametrizing $k$-jets of
curves by such changes $\varphi$ of parameters. In~\cite{ mer2008a},
one finds the multivariate Faà di Bruno formulae yielding explicit
reparametrization for the so-called
{\sl absolute case} $V = T_X$. Moreover the
subgroup $\mathbb{ H} \simeq \mathbb{ C}^*$ of homotheties $\varphi(t)
= \lambda\, t$ is a (non-normal) subgroup of $\mathbb{ G}_k$ and we have
a semidirect decomposition $\mathbb{ G}_k = \mathbb{ G}_k' \ltimes
\mathbb{ H}$, where $\mathbb{ G}_k'$ is the group of $k$-jets of
biholomorphisms tangent to the identity, {\em i.e.} with $a_1 =
1$. The corresponding action on $k$-jets is described in coordinates
by
\begin{equation}\label{omothety}
\lambda\cdot
\big(f',f'',\dots,f^{(k)}\big)
=
\big(
\lambda f',\lambda^2 f'',\dots,\lambda^k f^{(k)}
\big).
\end{equation}
As in~\cite{gg1980}, we introduce the {\sl Green-Griffiths vector
bundle $E_{k, m}^{ GG }V^*\to X$}, the fibers of which are
complex-valued polynomials $Q(f', f'', \dots, f^{ (k)})$ in the fibers
of $J_kV$ having weighted degree $m$ with respect to the $\mathbb{
C}^*$ action, namely such that:
\[
Q
\big(
\lambda f',\lambda^2 f'',\dots,\lambda^k f^{(k)}
\big)
=
\lambda^mQ
\big(f',f'',\dots,f^{(k)}\big),
\]
for all $\lambda \in \mathbb{ C}^*$ and all $\big(
f',f'',\dots,f^{(k)} \big) \in J_k V$. Demailly
refined this concept.

\begin{definition}[\cite{dem1997}]
\label{invariant-jets}
The {\sl bundle of invariant jet differentials of order $k$ and
weighted degree $m$} is the subbundle $E_{k,m}V^*\subset
E_{k,m}^{GG}V^*$ of polynomial differential operators
$Q(f',f'',\dots,f^{(k)})$ which are invariant under {\em arbitrary}
changes of parametrization, {\em i.e.} which, for every $\varphi \in
\mathbb{ G}_k$, satisfy:
\[
Q\big(
(f\circ\varphi)',(f\circ\varphi)'',\dots,(f\circ\varphi)^{(k)}
\big)
=
\varphi'(0)^m\,
Q\big(
f',f'',\dots,f^{(k)}
\big).
\]
Alternatively, $E_{k,m}V^* = \big( E_{k,m}^{ GG}V^* \big)^{ \mathbb{
G}_k'}$ is the set of invariants of $E_{k,m}^{ GG}V^*$ under the
action of $\mathbb{ G}'_k$.
\end{definition}

We now define a filtration on $E_{k,m}^{GG}V^*$. A coordinate change
$f\mapsto \Psi \circ f$ transforms every monomial $( f^{ (\bullet) }
)^\ell = (f')^{\ell_1} (f'')^{\ell_2} \cdots ( f^{(k)} )^{ \ell_k}$
having, for any $s$ with $1 \leqslant s\leqslant k$, the partial
weighted degrees $\vert\ell\vert_s := 
\vert \ell_1 \vert + 2 \vert \ell_2 \vert + \cdots +
s \vert \ell_s \vert$, 
into a new polynomial $\big( ( \Psi \circ f)^{( \bullet)}
\big)^\ell$ in $(f', f'', \dots, f^{ ( k) })$, which has the same
partial weighted degree of order $s$ when $\ell_{ s+1 } = \cdots =
\ell_k=0$, and a larger or equal partial degree of order $s$ otherwise
(use the chain rule). Hence, for each $s=1, \dots, k$, we get a well
defined decreasing filtration $F_s^\bullet$ on $E_{ k,m }^{ GG}V^*$ as
follows:
\[
F_s^p\big(E_{k,m}^{GG}V^*\big)
=
\left\{
\begin{matrix}
\text{$Q(f',f'',\dots,f^{(k)})\in E_{k,m}^{GG}V^*$ involving} 
\\
\text{only monomials $(f^{(\bullet)})^\ell$ 
with $\vert\ell\vert_s\geqslant p$}
\end{matrix}
\right\},
\quad\forall\,p\in\mathbb{N}.
\]
The graded terms $\operatorname{ Gr}^p_{ k-1} \big( E_{ k,m}^{ GG}V^*
\big)$ associated with the $(k-1)$-filtration $F_{ k-1 }^p( E_{k,m }^{
GG }V^*)$ are the homogeneous polynomials $Q(f',f'',\dots,f^{(k)})$
all the monomials $( f^{( \bullet )})^\ell$ of which have partial
weighted degree $\vert\ell\vert_{k-1} = p$; hence, their degree
$\ell_k$ in $f^{(k)}$ is such that $m-p = k\ell_k$ and $\operatorname{
Gr}^p_{ k-1}( E_{k,m }^{ GG}V^*)=0$ unless $k\vert m - p$. Looking at
the transition automorphisms of the graded bundle induced by the
coordinate change $f\mapsto \Psi \circ f$, it turns out that $f^{(k)}$
transforms as an element of $V\subset T_X$ and, by means of a simple
computation, one finds
\[
\operatorname{Gr}^{m-k\ell_k}_{k-1}
\big(E_{k,m}^{GG}V^*\big)
=
E_{k-1,m-k\ell_k}^{GG}V^*\otimes S^{\ell_k}V^*.
\]
Combining all filtrations $F^\bullet_s$ together, we find inductively
a filtration $F^\bullet$ on $E_{ k,m }^{ GG}V^*$ the graded terms of
which are
\[
\operatorname{Gr}^{\ell}
\big(
E_{k,m}^{GG}V^*
\big)
=
S^{\ell_1}V^*\otimes S^{\ell_2}V^*
\otimes\cdots\otimes 
S^{\ell_k}V^*,\quad\ell
\in\mathbb{N}^k,\ \
\vert\ell\vert_k=m.
\]
Moreover (\cite{ dem1997}), invariant jet differentials enjoy the
natural induced filtration:
\[
F^p_s(E_{k,m}V^*)
=
E_{k,m}V^*\cap F_{s}^p
\big(
E_{k,m}^{GG}V^*
\big),
\]
the associated graded bundle being, if we employ $( \bullet)^{
\mathbb{ G}_k'}$ to denote $\mathbb{ G}_k'$-invariance:
\[
\operatorname{Gr}^\bullet(E_{k,m}V^*)
=
\bigg(
\bigoplus_{\vert\ell\vert_k=m}
S^{\ell_1}V^*\otimes S^{\ell_2}V^*
\otimes\cdots\otimes S^{\ell_k}V^*
\bigg)^{\mathbb{G}_k'}.
\]

\subsection{Projectivized $k$-jet bundles}
Next, we recall briefly Demailly's construction~\cite{ dem1997} of the
tower of projectivized bundles providing a (relative) smooth
compactification of $J^{\text{ reg}}_kV / \mathbb{ G}_k$, where $J^{
\text{ reg}}_kV$ is the bundle of {\sl regular $k$-jets tangent to
$V$}, that is, $k$-jets such that $f'(0)\neq 0$.

Let $(X,V)$ be a directed manifold, with $\dim X = n$ and $\text{\rm
rank} \, V = r$. With $(X, V)$, we associate another directed manifold
$( \widetilde X, \widetilde V)$ where $\widetilde X = P(V)$ is the
projectivized bundle of lines of $V$, $\pi \colon \widetilde X \to X$
is the natural projection and $\widetilde V$ is the subbundle of
$T_{\widetilde X}$ defined fiberwise as
\[
\widetilde V_{(x_0,[v_0])}\overset{\text{def}}
=
\big\{
\xi\in T_{\widetilde X,(x_0,[v_0])}\mid
\pi_*\xi\in\mathbb{C}\cdot v_0
\big\},
\]
for any $x_0 \in X$ and $v_0\in T_{ X, x_0} \setminus\{ 0 \}$. We also
have a \lq\lq lifting\rq\rq{} operator which assigns to a germ of
holomorphic curve $f\colon( \mathbb{ C}, 0)\to X$ tangent to $V$ a
germ of holomorphic curve $\widetilde f \colon( \mathbb{ C}, 0) \to
\widetilde X$ tangent to $\widetilde V$ in such a way that
$\widetilde{ f}(t) = (f(t), [f'(t)])$.

To construct the projectivized $k$-jet bundle we simply set
inductively $(X_0,V_0) = (X,V)$ and $(X_k, V_k) = (\widetilde X_{k-1},
\widetilde V_{k-1})$. Clearly $\text{\rm rank} \, V_k = r$ and $\dim
X_k = n+k(r-1)$. Of course, we have for each $k > 0$ a tautological
line bundle $\mathcal{ O}_{X_k}(-1)\to X_k$ and a natural projection
$\pi_k \colon X_k\to X_{ k-1}$. We call $\pi_{ j,k}$ the composition
of the projections $\pi_{ j+1} \circ \cdots \circ \pi_{k}$, so that
the total projection is given by $\pi_{ 0, k} \colon X_k \to X$. We
have, for each $k>0$, two short exact sequences
\begin{equation}\label{ses1}
0\to T_{X_k/X_{k-1}}\to V_k\to\mathcal{O}_{X_k}(-1)\to 0,
\end{equation}
\begin{equation}\label{ses2}
0\to\mathcal{O}_{X_k}\to\pi_k^*V_{k-1}
\otimes\mathcal{O}_{X_k}(1)\to T_{X_k/X_{k-1}}\to 0.
\end{equation}
Here, we also have an inductively defined $k$-lifting for germs of
holomorphic curves such that $f_{[k]} \colon( \mathbb{ C}, 0) \to X_k$
is obtained as $f_{[k]} = \widetilde f_{ [k-1] }$.

\begin{theorem}[\cite{dem1997}]
Suppose that $\text{\rm rank} \, V\geqslant 2$. The quotient $J_k^{
\text{\rm reg}} V \big/ \mathbb{ G}_k$ has the structure of a locally
trivial bundle over $X$, and there is a holomorphic embedding $J_k^{
\text{\rm reg}} V \big/ \mathbb{ G}_k \hookrightarrow X_k$ over $X$,
which identifies $J_k^{ \text{\rm reg}} V \big/ \mathbb{ G}_k$ with
$X_k^{\text{\rm reg}}$, that is the set of points in $X_k$ of the form
$f_{ [k]}(0)$ for some non singular $k$-jet $f$. In other words $X_k$
is a relative compactification of $J_k^{ \text{\rm reg}} V/ \mathbb
G_k$ over $X$. Moreover, one has the direct image formula:
\[
(\pi_{0,k})_*\mathcal{ O}_{X_k}(m)
=
\mathcal{ O}\big(
E_{k,m}V^*
\big).
\]
\end{theorem}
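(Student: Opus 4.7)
The plan is to proceed by induction on $k \geqslant 1$, paralleling the inductive construction $(X_k,V_k) = (\widetilde X_{k-1},\widetilde V_{k-1})$ of the Demailly tower. The base case $k=1$ is essentially tautological: a regular $1$-jet at $x_0 \in X$ is just a nonzero vector $v \in V_{x_0}$, the group $\mathbb{G}_1 \simeq \mathbb{C}^*$ acts by $v \mapsto a_1\,v$, so $J_1^{\text{reg}}V/\mathbb{G}_1$ is manifestly the projective bundle $P(V) = X_1 = X_1^{\text{reg}}$, and the direct image identity $(\pi_{0,1})_*\mathcal{O}_{X_1}(m) = S^m V^* = E_{1,m}V^*$ is the standard push-forward formula for projective bundles.

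For the inductive step, the key geometric input is the lifting $f \mapsto \widetilde f$, $\widetilde f(t) = (f(t),[f'(t)])$, which by construction is insensitive to reparametrizations of the source: for any $\varphi \in \mathbb{G}_k$ one has $\widetilde{f\circ\varphi}(0) = \bigl(f(0),[\varphi'(0)\,f'(0)]\bigr) = \widetilde f(0)$. Iterating this remark, the assignment $[f] \mapsto f_{[k]}(0)$ descends to a well-defined holomorphic map
\[
\Phi_k \colon J_k^{\text{reg}}V/\mathbb{G}_k \longrightarrow X_k.
\]
To show that $\Phi_k$ is an embedding with image $X_k^{\text{reg}}$, I would argue as follows. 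By the induction hypothesis applied to $\pi_k \circ f_{[k]} = f_{[k-1]}$, whenever $f_{[k]}(0) = g_{[k]}(0)$ one may assume, after replacing $g$ by $g \circ \psi$ for a suitable $\psi \in \mathbb{G}_{k-1}$, that $f$ and $g$ share the same $(k{-}1)$-jet at $0$. The additional datum furnished by the fiber coordinate of $\pi_k$ is exactly the line spanned by $(f_{[k-1]})'(0) \in V_{k-1}$, which encodes $f^{(k)}(0)$ up to the freedom generated by the homothety subgroup $\mathbb{H} \subset \mathbb{G}_k$. A dimension count, using $\dim J_k^{\text{reg}}V - \dim \mathbb{G}_k = rk - k = k(r-1) = \dim X_k - \dim X$, then upgrades injectivity to an open immersion, whose image must coincide with the Zariski-open locus $X_k^{\text{reg}}$.

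The direct image formula $(\pi_{0,k})_*\mathcal{O}_{X_k}(m) = \mathcal{O}(E_{k,m}V^*)$ is then obtained from the behaviour of sections of $\mathcal{O}_{X_k}(m)$ along $k$-jets. Given a local section $\sigma$ of $\mathcal{O}_{X_k}(m)$ near $f_{[k]}(0)$, its pairing with the $m$-th tensor power of the tautological vector $(f_{[k]})'(0) \in \mathcal{O}_{X_k}(-1)_{f_{[k]}(0)}$ produces a function $Q_\sigma$ of $(f',\dots,f^{(k)})$. Computed inductively through the two short exact sequences displayed above, $Q_\sigma$ is a weighted homogeneous polynomial of weight $m$ which is $\mathbb{G}_k$-invariant because $\Phi_k$ is reparametrization-invariant, so $Q_\sigma \in E_{k,m}V^*$. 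Conversely, every $\mathbb{G}_k$-invariant polynomial of weight $m$ in the jet variables defines a holomorphic function on $X_k^{\text{reg}}$ transforming as a section of $\mathcal{O}_{X_k}(m)$ and extends across the divisor $X_k \setminus X_k^{\text{reg}}$ by Hartogs, the induced filtration matching Demailly's graded structure on $E_{k,m}V^*$.

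The main difficulty lies in the bookkeeping of the inductive step: one has to reconcile the $\mathbb{G}_k$-action on successive derivatives, given explicitly by the multivariate Fa\`a di Bruno formula mentioned in the beginning of this section, with the fiber coordinates of the iterated projectivizations, and to verify that the action remains free on \emph{regular} jets after each pass through a $\pi_k$. Once this coordinate-level dictionary is in place, both the embedding statement and the direct image formula follow by combining the elementary projective-bundle identity $(\pi_k)_*\mathcal{O}_{X_k}(m) = S^m V_{k-1}^*$ with the iterated filtration and with invariance under the normal subgroup $\mathbb{G}_k'$ of reparametrizations tangent to the identity.
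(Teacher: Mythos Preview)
The paper does not contain a proof of this theorem: it is quoted from Demailly's original article~\cite{dem1997} in the preliminaries section and stated without argument, so there is no ``paper's own proof'' to compare against. Your sketch follows the standard inductive route that Demailly himself uses, and the overall architecture (base case $k=1$, reparametrization-invariance of the lift, dimension count, identification of sections of $\mathcal{O}_{X_k}(m)$ with invariant jet polynomials) is correct.

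There is, however, a genuine gap in your last paragraph. You invoke Hartogs to extend a section from $X_k^{\text{reg}}$ to all of $X_k$, but the complement $X_k \setminus X_k^{\text{reg}}$ is \emph{not} of codimension $\geqslant 2$: it contains the divisor $D_k = P(T_{X_{k-1}/X_{k-2}}) \subset P(V_{k-1}) = X_k$, which has codimension one. Hartogs therefore does not apply. In Demailly's treatment this step is handled differently: one works inductively with the identification $(\pi_k)_*\mathcal{O}_{X_k}(m) = S^m V_{k-1}^*$ on the whole of $X_{k-1}$, and shows by a direct coordinate computation (using the affine charts of $P(V_{k-1})$ and the explicit transition functions of $\mathcal{O}_{X_k}(-1)$) that the resulting polynomial expressions in the jet variables are exactly the $\mathbb{G}_k'$-invariants of weight $m$. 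The extension across $D_k$ is thus built into the inductive push-forward rather than obtained \emph{a posteriori} by an analytic continuation argument. You should replace the Hartogs appeal by this more structural reasoning.
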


Next, we are in position to recall the fundamental application of jet
differentials to Kobayashi-hyperbolicity and to Green-Griffiths
algebraic degeneracy.

\begin{theorem}[\cite{gg1980, siuyeu1996, dem1997}]\label{A}
Assume that there exist integers $k,m>0$ and an ample line bundle
$A\to X$ such that
\[
H^0\big(
X_k,\mathcal{ O}_{X_k}(m)
\otimes
\pi_{0,k}^*A^{-1}
\big)
\simeq 
H^0\big(
X,E_{k,m}V^*\otimes A^{-1}
\big)
\]
has non zero sections $\sigma_1, \dots, \sigma_N$ and let $Z \subset
X_k$ be the base locus of these sections. Then every entire
holomorphic curve $f \colon \mathbb{ C}\to X$ tangent to $V$
necessarily satisfies $f_{ [k] }( \mathbb{ C }) \subset Z$. In other
words, for every global $\mathbb{ G}_k$-invariant differential
equation $P$ vanishing on an ample divisor, every entire holomorphic
curve $f$ must satisfy the algebraic differential equation 
$P\big( j^k \! f (t) \big) \equiv 0$.
Furthermore, the same result also holds true for the bundle $E_{ k,m
}^{ GG } T^*_X$.
\end{theorem}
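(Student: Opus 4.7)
The plan is to argue by contradiction through a Bloch-type mechanism based on the Ahlfors-Schwarz lemma.

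First I would unpack the datum: by the direct image formula $(\pi_{0,k})_*\mathcal{O}_{X_k}(m) = \mathcal{O}(E_{k,m}V^*)$, each section $\sigma_i$ is canonically an $A^{-1}$-valued $\mathbb{G}_k$-invariant jet differential $P_i$ of order $k$ and weighted degree $m$ on $X$. For any germ $f\colon(\mathbb{C},0)\to X$ tangent to $V$, the local evaluation $t\mapsto P_i(j^k f(t))$ is a well-defined holomorphic section of $f^*A^{-1}$; equivalently, it is the pullback $f_{[k]}^*\sigma_i$ through the canonical $k$-th lift $f_{[k]}\colon\mathbb{C}\to X_k$. Thus $f_{[k]}(\mathbb{C})\subset Z$ if and only if every $f_{[k]}^*\sigma_i$ vanishes identically on $\mathbb{C}$.

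Next I would argue by contradiction: suppose $f_{[k]}(\mathbb{C})\not\subset Z$, so that some $f_{[k]}^*\sigma_i\not\equiv 0$. Fix a smooth Hermitian metric on $T_X$, which induces metrics on all subbundles $V_j\subset T_{X_j}$ and on the tautological line bundles $\mathcal{O}_{X_j}(-1)$; using the ampleness of $A$, fix also a smooth metric $h_A$ on $A$ with strictly positive curvature form. With respect to the induced Hermitian metric on $\mathcal{O}_{X_k}(m)\otimes\pi_{0,k}^*A^{-1}$, the curvature is pushed into the strictly negative direction, along the lifted curve, by the term $-f^*c_1(A,h_A)$.

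The contradiction then comes from an Ahlfors-Schwarz / logarithmic-derivative estimate. The function $\log\|f_{[k]}^*\sigma_i\|^2$ is, away from its isolated zeros, a subharmonic function on $\mathbb{C}$ whose Levi form is bounded below by the pullback under $f$ of a strictly positive $(1,1)$-form on $X$. But $\mathbb{C}$ is parabolic and carries no non-trivial singular Hermitian pseudo-metric of curvature bounded above by a negative constant; this classical Schwarz-type vanishing forces $f_{[k]}^*\sigma_i\equiv 0$, contradicting our assumption. The same argument goes through verbatim for sections of $E_{k,m}^{GG}T_X^*\otimes A^{-1}$, which gives the last assertion of the theorem.

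The main technical obstacle I would expect to spend most of the work on is the weight bookkeeping needed to turn the abstract curvature inequality into an honest pointwise majorization. One has to verify that, thanks to the weighted degree $m$ of $P_i$ and to the identification of $\mathcal{O}_{X_k}(-1)$ as a quotient of $\pi_k^*V_{k-1}$, the Hermitian norm $\|f_{[k]}^*\sigma_i\|^2$ is controlled locally by the appropriate power of the derivatives $f',\dots,f^{(k)}$, with exactly the exponents needed so that the negative twist by $A^{-1}$ dominates at the level of currents. Once this comparison is in place, integration over disks $\{|t|\le r\}$ together with Jensen's formula\,\,---\,\,in the classical Bloch-Ochiai-Green-Griffiths-Demailly style\,\,---\,\,yields the required vanishing.
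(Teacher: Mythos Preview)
The paper does not supply its own proof of this theorem: it is quoted as a preliminary result with references to Green--Griffiths, Siu--Yeung and Demailly, and is used as a black box thereafter. So there is no ``paper's proof'' to compare against; your sketch is to be measured against the classical argument in those references.

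Your outline is the standard Demailly proof and is essentially correct. The key steps\,---\,identifying $f_{[k]}^*\sigma_i$ with $P_i(j^kf)$ via the direct image formula, putting a positively curved metric on $A$, and forcing a contradiction with the parabolicity of $\mathbb{C}$\,---\,are all in place. One remark on the curvature bookkeeping: the inequality you want is $i\partial\bar\partial\log\|f_{[k]}^*\sigma_i\|^2 \geqslant -f_{[k]}^*c_1\big(\mathcal{O}_{X_k}(m)\otimes\pi_{0,k}^*A^{-1}\big)$, and the decisive point is that the pullback $f_{[k]}^*c_1(\mathcal{O}_{X_k}(m))$ contributes only logarithmic-derivative terms in $f',\dots,f^{(k)}$ (because $f_{[k-1]}'$ trivializes $f_{[k]}^*\mathcal{O}_{X_k}(-1)$), whereas $f^*c_1(A,h_A)$ is a genuinely positive $(1,1)$-form on $\mathbb{C}$. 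It is this asymmetry\,---\,positive ample part versus small log-derivative part\,---\,that produces the singular metric of strictly negative curvature on $\mathbb{C}$ and hence the Ahlfors--Schwarz contradiction. Your last paragraph gestures at this but conflates the two mechanisms (Ahlfors--Schwarz vs.\ Nevanlinna log-derivative); either one alone suffices, and making the choice explicit would tighten the argument.
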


\subsection{Existence of invariant jet differentials}
Now, we recall some results obtained by the first-named author in
\cite{div2008b}, concerning the existence of invariant jet differentials
on projective hypersurfaces which generalized to all dimensions $n$
previous works by Demailly~\cite{ dem1997} and of the third-named
author~\cite{rou2006b}.

Denote by $c_\bullet(E)$ the total Chern class of a vector bundle
$E$. The two short exact sequences (\ref{ses1}) and (\ref{ses2}) give,
for each $k>0$, the following two formulae:
\[
\aligned
c_\bullet(V_k)
&
=
c_\bullet\big(T_{X_k/X_{k-1}}\big)\,
c_\bullet\big(\mathcal{ O}_{X_k}(-1)\big)
\\
c_\bullet
\big(\pi_k^*V_{k-1}
\otimes
\mathcal{ O}_{X_k}(1)\big)
&
=
c_\bullet\big(
T_{X_k/X_{k-1}}\big),
\endaligned
\]
so that by a plain substitution:
\begin{equation}\label{chern1}
c_\bullet(V_k)
=
c_\bullet\big(\mathcal{ O}_{X_k}(-1)\big)\,
c_\bullet\big(\pi_k^*V_{k-1}
\otimes\mathcal{ O}_{X_k}(1)
\big).
\end{equation}
Let us call $u_j = c_1 \big( \mathcal{ O}_{ X_j}(1) \big)$ and
$c_l^{[j]} = c_l(V_j)$. With these notations, \thetag{\ref{chern1}}
becomes:
\begin{equation}\label{c-l-k}
c_l^{[k]}
=
\sum_{s=0}^l
\big[
{\textstyle{\binom{n-s}{l-s}}}
-
{\textstyle{\binom{n-s}{l-s-1}}}
\big]
u_k^{l-s}\cdot\pi_k^*c_s^{[k-1]},\quad 1\leqslant l\leqslant r.
\end{equation}
Since $X_j$ is the projectivized bundle of line of $V_{j-1}$, we also
have the polynomial relations
\begin{equation}\label{u-n}
u_j^r+\pi_j^*c_1^{[j-1]}
\cdot u_j^{r-1}+\cdots+
\pi_j^*c_{r-1}^{[j-1]}\cdot u_j+\pi_j^*c_{r}^{[j-1]}
=
0,
\quad 1\leqslant j\leqslant k.
\end{equation}
After all, the cohomology ring of $X_k$ is defined in terms of
generators and relations as the polynomial algebra $H^\bullet( X)[
u_1, \dots, u_k]$ with the relations (\ref{u-n}) in which, using
inductively~\thetag{\ref{c-l-k}}, one may express in advance all the
$c_l^{ [j]}$ as certain polynomials with integral coefficients in the
variables $u_1, \dots, u_j$ and $c_1(V), \dots, c_l(V)$. In
particular, for the first Chern class of $V_k$, a simple explicit
formula is available:
\begin{equation}\label{c1}
c_1^{[k]}
=
\pi_{0,k}^*c_1(V)+(r-1)
\sum_{s=1}^k\,\pi_{s,k}^*\,u_s.
\end{equation}
Also, it is classically known that the Chern classes $c_j ( X)$ of a
smooth projective hypersurface $X \subset \mathbb{ P}^{ n+1}$ are
polynomials in $d := \deg X$ and the hyperplane class $h := c_1 \big(
\mathcal{ O}_{ \mathbb{ P }^{ n+1}} ( 1) \big)$, {\em viz.} for $1
\leqslant j \leqslant n$:
\begin{equation}
\label{c-d}
c_j(X)
=
c_j(T_X)
=
(-1)^j\,h^j\,
\sum_{i=0}^j\,(-1)^i\,
{\textstyle{\binom{n+2}{i}}}\,d^{j-i}.
\end{equation} 

Now, let $X \subset \mathbb{ P}^{ n+1}$ be a smooth projective
hypersurface of degree $\deg X = d$ and consider, for all what follows
in the sequel, the absolute case $V = T_X$ with jet order $k = n$
equal to the dimension. Given any $\bold a = (a_1, \dots,a_n) \in
\mathbb{ Z}^n$, we define ({\em cf.}~\cite{ dem1997, div2008a}) the
following line bundle $\mathcal O_{X_n} (\bold a)$ on $X_n$:
\[
\mathcal{ O}_{X_n}(\bold a)
=
\pi_{1,n}^*\mathcal{ O}_{X_1}(a_1)
\otimes\pi_{2,n}^*\mathcal{ O}_{X_2}(a_2)
\otimes\cdots\otimes
\mathcal{ O}_{X_n}(a_n).
\] 
Using the algebraic version\,\,---\,\,first appeared in Trapani's
article~\cite{tra1995}\,\,---\,\,of Demailly's holomorphic Morse
inequalities, the first-named author showed in~\cite{div2008b} that,
in order to check the {\em bigness} of $\mathcal{ O}_{ X_n}(1)$, it
suffices to show the {\em positivity}, for some $\mathbf{ a} = (a_1,
\dots, a_n) \in \mathbb{ N}^n$ lying arbitrarily in the cone defined
by:
\begin{equation}\label{rel-nef-cone}
a_1\geqslant 3a_2,\dots,a_{n-2}
\geqslant 
3a_{n-1}\quad\text{\rm and}
\quad a_{n-1}
\geqslant 
2a_n\geqslant 1,
\end{equation}
of the following intersection product:
\[
F^N-N\,F^{N-1}
\cdot G,
\]
where $N = \dim X_n = n^2$, and where the two bundles $F := \mathcal{
O}_{ X_n} ( {\bf a}) \otimes \pi_{ 0, n}^* \mathcal{ O}_X ( 2 \vert
{\bf a} \vert)$ and $G := \pi_{ 0, n}^* \mathcal{ O}_X ( 2 \vert {\bf
a} \vert)$ are both globally nef on $X_n$ 
(\cite{div2008b}, Proposition~2); here,
$\mathcal O_{ X}(1)$ is the hyperplane bundle over $X$ and we
abbreviate $\vert \mathbf{ a} \vert := a_1 +\cdots+ a_n$.  In other
words, we express $\mathcal{ O}_{ X_n}( \mathbf{ a})$ as a
``difference'' $F \otimes G^{ -1}$
between two nef line bundles over $X_n$:
\[
\mathcal{ O}_{X_n}(\mathbf{a})
=
\big(\mathcal{ O}_{X_n}(\mathbf{a})\otimes\pi^*_{0,n}
\mathcal{ O}_X(2\vert\mathbf{a}\vert)\big)
\otimes
\big(\pi^*_{0,n}\mathcal{ O}_X(2\vert\mathbf{a}\vert)\big)^{-1}.
\] 
Thus in sum, we have to find some ${\bf a} \in \mathbb{ Z}^n$ lying in
the cone~\thetag{ \ref{rel-nef-cone}} for which the concerned
intersection product written in length:
\begin{multline}
\label{morseintersection}
\aligned
\big(\mathcal{ O}_{X_n}
(\mathbf{a})
&
\otimes\pi^*_{0,n}
\mathcal{ O}_X(2\vert\mathbf{a}\vert)\big)^{n^2}
-
\\
&
-n^2\big(\mathcal{O}_{X_n}(\mathbf{a})
\otimes\pi^*_{0,n}\mathcal{O}_X(2\vert\mathbf{a}\vert)\big)^{n^2-1}
\cdot\pi^*_{0,n}\mathcal{O}_X
(2\vert\mathbf{a}\vert)
\endaligned
\end{multline}
is positive.  This was done by the first-named author, and an
application of the mentioned Morse inequalities yielded
the following.

\begin{theorem}[\cite{div2008b}]\label{existence}
Let $X \subset \mathbb{ P}^{n + 1}$ by a smooth complex hypersurface
of degree $\deg X = d$ and fix any ample line bundle $A\to X$. Then,
for jet order $k = n$ equal to the dimension, there exists a positive
integer $d_n$ such that the two isomorphic spaces of sections:
\[
H^0\big(
X_n,\mathcal{ O}_{X_n}(m)
\otimes\pi^*_{0,n}A^{-1}\big)
\simeq
H^0\big(
X,E_{n,m}T^*_X\otimes A^{-1}
\big)\neq 0,
\]
are {\em nonzero}, whenever $d \geqslant d_n$ provided that $m$ is
large enough.
\end{theorem}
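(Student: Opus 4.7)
The plan is to apply the algebraic version of Demailly's holomorphic Morse inequalities to the line bundle $\mathcal{O}_{X_n}(\mathbf{a})$, expressed as the difference $F\otimes G^{-1}$ of the two nef line bundles $F=\mathcal{O}_{X_n}(\mathbf{a})\otimes\pi_{0,n}^*\mathcal{O}_X(2|\mathbf{a}|)$ and $G=\pi_{0,n}^*\mathcal{O}_X(2|\mathbf{a}|)$, for a suitable $\mathbf{a}\in\mathbb{N}^n$ in the cone \thetag{\ref{rel-nef-cone}}. Morse inequalities then reduce the bigness of $\mathcal{O}_{X_n}(\mathbf{a})$, and hence of $\mathcal{O}_{X_n}(1)$, to the positivity of the intersection product $F^{n^2}-n^2\,F^{n^2-1}\cdot G$ displayed in~\thetag{\ref{morseintersection}}. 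Once $\mathcal{O}_{X_n}(1)$ is big, a standard argument gives, for all sufficiently large $m$, non-trivial sections of $\mathcal{O}_{X_n}(m)\otimes\pi_{0,n}^*A^{-1}$, which by Demailly's direct image formula are exactly the invariant jet differentials of weighted degree $m$ vanishing on the ample divisor associated with $A$.

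The core task is therefore to compute~\thetag{\ref{morseintersection}} on the tower $X_n$ and to exhibit a choice of admissible weights $\mathbf{a}$ and a threshold $d_n$ making it positive. First, I would expand $F^{n^2}$ and $F^{n^2-1}\cdot G$ by the multinomial formula in the classes $u_j=c_1(\mathcal{O}_{X_j}(1))$ and in the hyperplane pull-back $h$, so that the intersection lives in the polynomial ring $H^\bullet(X)[u_1,\dots,u_n]$ modulo the relations~\thetag{\ref{u-n}} that express the $u_j^r$ in terms of the lower powers of $u_j$ and the Chern classes $c_l^{[j-1]}$. Second, I would use the recursion~\thetag{\ref{c-l-k}} to rewrite each $c_l^{[j]}$ as an explicit polynomial in $u_1,\dots,u_j$ and $c_1(T_X),\dots,c_l(T_X)$, and finally substitute~\thetag{\ref{c-d}} to convert every remaining Chern class of $T_X$ into a polynomial in $d$ and $h$, knowing $h^n=d$ on $X$.

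After these reductions, \thetag{\ref{morseintersection}} becomes a polynomial expression in $d$ whose leading-in-$d$ behaviour must be determined. The key qualitative point is that the term $F^{n^2}$ produces a contribution in which, for a generic choice of $\mathbf{a}$ in the cone~\thetag{\ref{rel-nef-cone}}, the top-degree coefficient in $d$ is strictly positive, while the subtracted term $n^2\,F^{n^2-1}\cdot G$, carrying one fewer factor of $F$ and a factor $\pi_{0,n}^*\mathcal{O}_X(2|\mathbf{a}|)$ growing only polynomially in $d$, cannot dominate asymptotically. I would therefore track, in $F^{n^2}$, the monomial whose coefficient is a positive polynomial in $d$ of maximal degree, and bound from above the absolute values of the coefficients appearing in $n^2\,F^{n^2-1}\cdot G$.

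The genuine obstacle is the combinatorial complexity of this last step: the recursions~\thetag{\ref{c-l-k}}--\thetag{\ref{u-n}} entangle many terms, and one has to keep track of large multinomials, signed binomial differences coming from~\thetag{\ref{c-l-k}}, and the Jacobi--Trudy type determinants arising when eliminating the Chern classes of the lifted horizontal distributions at each level of Demailly's tower. This is precisely the elimination and bookkeeping carried out, after~\cite{div2008a, div2008b}, in Sections~\ref{Section-4} and~\ref{Section-5}; the effective threshold $d_n$ appearing in Theorem~\ref{main} is extracted from a careful quantitative version of this asymptotic comparison, whereas here it is enough to know that \emph{some} such finite $d_n$ exists, so that for $d\geqslant d_n$ the Morse inequalities yield the announced non-vanishing.
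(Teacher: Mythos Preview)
Your approach is the same as the paper's (and~\cite{div2008b}'s): write $\mathcal{O}_{X_n}(\mathbf{a})=F\otimes G^{-1}$ with both $F,G$ nef, apply the algebraic Morse inequalities, and reduce everything to the positivity of the intersection number~\thetag{\ref{morseintersection}} viewed as a polynomial in $d$. The outline is correct.

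There is, however, a gap in your justification of why the subtracted term $n^2\,F^{n^2-1}\cdot G$ does not kill the leading behaviour. You write that $G=\pi_{0,n}^*\mathcal{O}_X(2|\mathbf{a}|)$ ``grows only polynomially in $d$''; in fact $G=2|\mathbf{a}|\,h$ does not depend on $d$ at all. The correct reason is the observation recorded as Lemma~\ref{h-1}: any monomial $h^l u_1^{i_1}\cdots u_n^{i_n}$ with $l\geqslant 1$ contributes nothing to the coefficient of $d^{n+1}$ after reduction. Since every term in $n^2\,F^{n^2-1}\cdot G$ carries at least one factor of $h$, and the same holds for all $h$-containing terms of $F^{n^2}$, the coefficients of $d^{n+1}$ in $F^{n^2}-n^2 F^{n^2-1}G$, in $F^{n^2}$, and in $(a_1u_1+\cdots+a_nu_n)^{n^2}$ all coincide (this is exactly what the paper records in Subsection~\ref{highlighting} and in the proof of Theorem~\ref{existenceKX}). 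Nefness of $F$ then forces this common leading coefficient, as a polynomial in $\mathbf{a}$, to be $\geqslant 0$ throughout the cone~\thetag{\ref{rel-nef-cone}}; to conclude one must still check it is not identically zero there. The paper does this by exhibiting the monomial $a_1^n\cdots a_n^n$ with coefficient $(n^2)!/(n!)^n$ coming from the central term $u_1^n\cdots u_n^n$ (Corollary~\ref{before-minorating}). With that in hand, some $\mathbf{a}$ in the cone makes the leading $d^{n+1}$-coefficient strictly positive, and the existence of a finite $d_n$ follows.

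So your plan is right, but the sentence about $G$ growing in $d$ should be replaced by the $h$-vanishing argument plus the explicit nonvanishing of the $a_1^n\cdots a_n^n$ coefficient; without these two ingredients the ``asymptotic comparison'' you describe is not actually justified.
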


It is also proved in~\cite{div2008a} that for any jet order $k < n$
{\em smaller} than the dimension, no nonzero sections, though, are
available: $H^0 \big( X_k, \, \mathcal{ O}_{ X_k} ( m) \otimes \pi_{
0, k}^* A^{ -1} \big) = 0$; in fact, this vanishing property is
used as a technical tool in the proof of 
Theorem~\ref{existence}.

\smallskip

In our applications, it will be crucial to be able to control in a
more precise way the order of vanishing of these differential
operators along the ample divisor. Thus, we shall need here a slightly
different theorem, inspired from~\cite{ siu2004, pau2008, rou2007}.
Recall at first that for $X$ a smooth projective hypersurface of
degree $d$ in $\mathbb{ P }^{ n+1 }$, the canonical bundle has the
following expression in terms of the hyperplane bundle:
\[
K_X\simeq \mathcal{ O}_{X}(d-n-2),
\]
whence it is ample as soon as $d \geqslant n+3$. 

\begin{theorem}\label{existenceKX}
Let $X \subset \mathbb{ P}^{ n+1}$ by a smooth complex hypersurface of
degree $\deg X = d$. Then, for all positive rational numbers $\delta$
small enough, there exists a positive integer $d_n$ such that the
space of twisted jet differentials:
\[
H^0\big(X_n,\mathcal{ O}_{X_n}(m)
\otimes
\pi^*_{0,n}K_X^{-\delta m}\big)
\simeq
H^0\big(X,E_{n,m}T^*_X\otimes 
K_X^{-\delta m}\big)\neq 0,
\]
is nonzero, whenever $d \geqslant d_{ n, \delta}$ provided again that
$m$ is large enough and that $\delta m$ is
an integer.
\end{theorem}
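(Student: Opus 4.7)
The plan is to follow the Morse-theoretic strategy used in the proof of Theorem~\ref{existence} (\cite{div2008b}), replacing the fixed ample twist $A^{-1}$ there by the $m$-dependent twist $K_X^{-\delta m}$, and then to absorb the additional $\delta$-correction into the positivity estimates. Fix a weight $\mathbf{a} = (a_1, \dots, a_n) \in \mathbb{N}^n$ in the relative nef cone~(\ref{rel-nef-cone}) and write $|\mathbf{a}| := a_1 + \cdots + a_n$. Since $K_X \simeq \mathcal{O}_X(d-n-2)$, I would introduce
\[
L_{\mathbf{a},\delta} := \mathcal{O}_{X_n}(\mathbf{a}) \otimes \pi_{0,n}^* K_X^{-\delta |\mathbf{a}|} \;=\; F \otimes G_\delta^{-1},
\]
with
\[
F := \mathcal{O}_{X_n}(\mathbf{a}) \otimes \pi_{0,n}^* \mathcal{O}_X(2|\mathbf{a}|),
\qquad
G_\delta := \pi_{0,n}^* \mathcal{O}_X\bigl((2 + \delta(d-n-2))\,|\mathbf{a}|\bigr).
\]
The bundle $F$ is nef by Proposition~2 of~\cite{div2008b}, while $G_\delta$ is manifestly nef as soon as $\delta \geqslant 0$ and $d \geqslant n+2$. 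Trapani's algebraic form~\cite{tra1995} of Demailly's holomorphic Morse inequalities then guarantees bigness of $L_{\mathbf{a},\delta}$ as soon as
\[
F^{n^2} \;-\; n^2\, F^{n^2-1} \cdot G_\delta \;>\; 0.
\]

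Setting $G_0 := \pi_{0,n}^* \mathcal{O}_X(2|\mathbf{a}|)$ and using that $c_1(G_\delta) = c_1(G_0) + \delta(d-n-2)|\mathbf{a}| \cdot \pi_{0,n}^* h$, this intersection number decomposes as
\[
\underbrace{\bigl(F^{n^2} - n^2\, F^{n^2-1} \cdot G_0\bigr)}_{\mathcal{M}(d,\,\mathbf{a})}
\;-\;
n^2\,\delta\,(d-n-2)\,|\mathbf{a}| \cdot F^{n^2-1} \cdot \pi_{0,n}^* h.
\]
The first summand $\mathcal{M}(d,\mathbf{a})$ is exactly the Morse quantity~(\ref{morseintersection}) whose strict positivity for $d \geqslant d_n$ underlies Theorem~\ref{existence}; an explicit polynomial lower bound for it in the variable $d$ will be produced in Sections~\ref{Section-4} and~\ref{Section-5}. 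Reducing the correction term through the same Chern-class identities~(\ref{c-l-k}), (\ref{u-n}) and~(\ref{c-d}) expresses it as another explicit polynomial in $d$, proportional to $\delta$. Comparing the leading contributions of the two, one sees that, for $\delta > 0$ sufficiently small in terms of the coefficients arising in Section~\ref{Section-5}, the positive term $\mathcal{M}(d,\mathbf{a})$ strictly dominates the correction for every $d$ beyond a suitable threshold $d_{n,\delta} \geqslant d_n$; hence $L_{\mathbf{a},\delta}$ is big.

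Finally, pick any $q \gg 1$ such that $\delta\, q\,|\mathbf{a}|\,(d-n-2) \in \mathbb{Z}$ and set $m := q|\mathbf{a}|$; bigness of $L_{\mathbf{a},\delta}$ then furnishes a nonzero global section of
\[
L_{\mathbf{a},\delta}^{\otimes q} \;=\; \mathcal{O}_{X_n}(q\mathbf{a}) \otimes \pi_{0,n}^* K_X^{-\delta m}.
\]
Pushing forward along $\pi_{0,n}$, the projection formula combined with the fact that $(\pi_{0,n})_* \mathcal{O}_{X_n}(q\mathbf{a})$ sits as a subsheaf of Demailly's bundle $E_{n,m} T^*_X$ (\cite{div2008a}) produces a nonzero section of $E_{n,m} T^*_X \otimes K_X^{-\delta m}$; via the direct image identification invoked in Theorem~\ref{A}, this is exactly a nonzero element of $H^0\bigl(X_n,\, \mathcal{O}_{X_n}(m) \otimes \pi_{0,n}^* K_X^{-\delta m}\bigr)$, as wanted. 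The main obstacle is the quantitative comparison in the second paragraph: keeping track of the $d$-dependence of both $\mathcal{M}(d,\mathbf{a})$ and the $\delta$-correction requires carrying the Chern-class reduction through Demailly's tower far enough to make the resulting bound on $\delta$ effective, which is precisely the intricate combinatorics that the remainder of the paper undertakes.
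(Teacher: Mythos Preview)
Your proposal is correct and follows essentially the same approach as the paper: you write $\mathcal{O}_{X_n}(\mathbf{a})\otimes\pi_{0,n}^*K_X^{-\delta|\mathbf{a}|}$ as the same difference $F\otimes G_\delta^{-1}$ of nef line bundles, apply Trapani's algebraic Morse inequalities, and argue that the resulting intersection number is the $\delta=0$ quantity~(\ref{morseintersection}) plus a $\delta$-linear correction whose leading $d$-coefficient can be absorbed for $\delta$ small. The only differences are cosmetic: the paper phrases the last step as a continuity argument on the leading coefficient rather than an explicit splitting, and it does not spell out (as you do) the passage from bigness of $L_{\mathbf{a},\delta}$ to a section of $E_{n,m}T_X^*\otimes K_X^{-\delta m}$ via the inclusion $(\pi_{0,n})_*\mathcal{O}_{X_n}(q\mathbf{a})\hookrightarrow E_{n,q|\mathbf{a}|}T_X^*$. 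Note also that the effective combinatorics you allude to in your final sentence is not needed for Theorem~\ref{existenceKX} itself, which is non-effective; that machinery enters only later, for Theorem~\ref{main}.
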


Observe that all nonzero sections $\sigma \in H^0 \big(X, E_{ n,m }
T^*_X \otimes K_X^{-\delta m })$ then have vanishing order at least
equal to $\delta m (d - n - 2)$, when viewed as sections
of $E_{ n,m} T_X^*$. 

\begin{proof}[Proof of Theorem \ref{existenceKX}]
For each weight $\mathbf{a} \in \mathbb{N}^n$ 
satisfying~\thetag{ \ref{rel-nef-cone}}, we first of
all express $\mathcal{ O}_{X_n }( \mathbf{ a}) \otimes \pi^{*}_{ 0, n}
K_X^{ -\delta\vert \mathbf{ a} \vert}$ as the following difference of two
nef line bundles:
\[
\big(
\mathcal{O}_{X_n}(\mathbf{a})\otimes\pi^*_{0,n}
\mathcal{ O}_X(2\vert\mathbf{a}\vert)\big)
\otimes
\big(\pi^*_{0,n}
\mathcal{ O}_X(2\vert\mathbf{a}\vert)
\otimes
\pi^{*}_{0,n}
K_X^{\delta\vert\mathbf{a}\vert}\big)^{-1}.
\]
In order to apply the algebraic holomorphic Morse inequalities to obtain
the existence of sections for high powers, we are thus led to compute the
following intersection product:
\begin{equation}
\label{morse-intersection-delta}
\aligned
&
\big(\mathcal{ O}_{X_n}(\mathbf{a})
\otimes
\pi^*_{0,n}\mathcal{ O}_X
(2\vert\mathbf{a}\vert)\big)^{n^2}
-
\\
&
\ \ \ \ \
-
n^2\big(\mathcal{ O}_{X_n}
(\mathbf{a})
\otimes\pi^*_{0,n}
\mathcal{ O}_X
(2\vert\mathbf{a}\vert)\big)^{n^2-1}
\cdot\big(\pi^*_{0,n}
\mathcal{O}_X(2\vert\mathbf{a}\vert)
\otimes\pi^{*}_{0,n}
K_X^{\delta\vert\mathbf{a}\vert}\big),
\endaligned
\end{equation}
and to decide whether it is positive. After reducing it in terms of the
Chern classes of $X$, and then in terms of $d = \deg X$ using~\thetag{
\ref{c-d}}, this intersection product becomes a
polynomial\,\,---\,\,difficult to compute explicitly, but effective aspects
will start in Section~4\,\,---\,\,in $d$ of degree less than or equal to $n
+ 1$, having coefficients which are polynomials in $(\mathbf{ a}, \delta)$
of bidegree $(n^2, 1)$, homogeneous in $\mathbf{ a}$ or identically
zero. Notice that for $\delta = 0$, the intersection product identifies
with~\thetag{ \ref{morseintersection}}; we claim that there exists a weight
$\mathbf a'$ such that~\thetag{ \ref{morseintersection}} is positive. Thus
by continuity, with the same choice of weight, for all $\delta > 0$ small
enough, the leading coefficient still remains positive. So the polynomial
in question again takes only positive values when $d \geqslant d_n$, for
some (noneffective) $d_n$. Holomorphic Morse inequalities then insure the
existence of nonzero sections.

Coming back to our claim, the argument is as follow. First of all, the three
intersection products:~\thetag{ \ref{morseintersection}}, $\mathcal
O_{X_n}(\mathbf a)^{n^2}$ and
$\big(\mathcal{O}_{X_n}(\mathbf{a})\otimes\pi^*_{0,n}\mathcal{
O}_X(2\vert\mathbf{a}\vert)\big)^{n^2}$, once evaluated with respect to the
degree $d$ of the hypersurface, are all polynomials in the variable $d$
with coefficients in $\mathbb Z[a_1,\dots,a_n]$ of degree at most $n+1$ and
the coefficients of $d^{n+1}$ of the three expressions are the same ({\em
cf.}  Proposition~3 in~\cite{ div2008b}). Next, by Proposition 2 in~\cite{
div2008b}, $\mathcal{O}_{X_n}(\mathbf{a})\otimes\pi^*_{0,n}\mathcal{
O}_X(2\vert\mathbf{a}\vert)$ is nef if $\mathbf a$ satisfies~\thetag{
\ref{rel-nef-cone}}; therefore the coefficient of $d^{n+1}$ of its top
self-intersection must be non-negative. Thus, by Lemma 1 in~\cite{
div2008b}, in order to find a weight $\mathbf a'$ in the cone defined
by~\thetag{ \ref{rel-nef-cone}} as in the claim, it suffices to show that
this coefficient is not an identically zero polynomial in $\mathbb
Z[a_1,\dots,a_n]$. So, we have to prove that it contains at least one
non-zero monomial: but by Lemma 3 in~\cite{ div2008b}, the coefficient of
its monomial $a_1^n\cdot a_2^n\cdots a_n^n$ is $(n^2)!/(n!)^n$ and we are
done ({\em cf.} also Subsection~4.4).
\end{proof}

\subsection{Global generation of the tangent bundle
to the variety of vertical jets} We now briefly present the second
ingredient, as said in the Introduction. Let $\mathcal{ X} \subset
\mathbb{ P}^{n+1} \times \mathbb{ P}^{ N_d^n}$ be the universal family
of projective $n$-dimensional hypersurfaces of degree $d$ in $\mathbb{
P}^{ n+1}$; its parameter space is the projectivization $\mathbb{ P}
\big( H^0( \mathbb{ P}^{ n+1}, \mathcal{ O}(d))\big) = \mathbb{ P}^{
N_d^n}$, where $N_d^n = \binom{ n+d+1}{ d}-1$. We have two canonical
projections:
\[
\xymatrix{
& 
\mathcal{ X} \ar[dl]_{\text{pr}_1} \ar[dr]^{\text{pr}_2} 
& 
\\
\mathbb{P}^{n+1} 
& & 
\mathbb{P}^{N_d^n}.}
\]
Consider the relative tangent bundle $\mathcal{ V} \subset T_{\mathcal
X}$ with respect to the second projection $\mathcal{ V} := \ker(
\text{ pr}_2)_*$, and form the corresponding directed manifold
$(\mathcal{ X}, \mathcal{ V})$. It is clear that $\mathcal{ V}$ is
integrable and that any entire holomorphic curve from $\mathbb{ C}$ to
$\mathcal{ X}$ tangent to $\mathcal{ V}$ has its image entirely
contained in some fiber $\text{ pr}_2^{-1}(s) = X_s$, $s\in \mathbb{
P}^{ N_d^n}$.

Now, let $p \colon J_n\mathcal{ V} \to \mathcal{ X}$ be the bundle of
$n$-jets of germs of holomorphic curves in $\mathcal{ X}$ tangent to
$\mathcal{ V}$, the so-called {\sl vertical jets}, and consider the
subbundle $J_n^{ \text{\rm reg }} \mathcal{ V}$ of {\sl regular
$n$-jets} of maps $f \colon( \mathbb{ C},0)\to \mathcal{ X}$ tangent
to $\mathcal{ V}$ such that $f'(0)\neq 0$.

\begin{theorem}[\cite{mer2008a}]\label{globgen}
The twisted tangent bundle to vertical $n$-jets:
\[
T_{J_n\mathcal{V}}
\otimes p^*\text{\rm pr}_1^*\,
\mathcal{O}_{\mathbb{P}^{n+1}}
(n^2+2n)\otimes p^*
\text{\rm pr}_2^*\,
\mathcal{O}_{\mathbb{P}^{N_d^n}}(1)
\]
is generated over $J_n^{ \text{\rm reg }} \mathcal{ V}$ by its global
holomorphic sections. Moreover, one may choose such global generating
vector fields to be invariant with respect to the reparametrization
action of $\mathbb{ G}_n$ on $J_n \mathcal{ V}$.
\end{theorem}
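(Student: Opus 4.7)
The plan is to follow the strategy outlined in \cite{mer2008a}, which itself extends to arbitrary dimension the slanted vector field technique introduced by Voisin for $1$-jets and adapted by Siu, P\u{a}un and the third-named author for $2$- and $3$-jets. First, I would work in a standard affine chart of $\mathbb{P}^{n+1}$, with coordinates $(z_1,\dots,z_{n+1})$, so that the universal hypersurface is cut out by $F(z,A) = \sum_{|\alpha|\leqslant d} A_\alpha z^\alpha$, where the $A_\alpha$ serve as affine coordinates on $\mathbb{P}^{N_d^n}$. The vertical $n$-jet bundle is then cut out inside $\mathbb{C}^{n+1}\times\mathbb{C}^{n(n+1)}\times\mathbb{C}^{N_d^n+1}$ by the $n+1$ equations $F^{[0]}=F^{[1]}=\cdots=F^{[n]}=0$ obtained from $F(f(t),A)\equiv 0 \bmod t^{n+1}$, where $f(t)=z+\xi_1 t+\xi_2 t^2+\cdots+\xi_n t^n$; the expressions $F^{[k]}$ are given explicitly by multivariate Fa\`a di Bruno. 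Each $F^{[k]}$ is linear in $A$ and polynomial in $(z,\xi_1,\dots,\xi_k)$, which is what allows coefficient perturbations to compensate for perturbations of the jet variables.

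Next, I would construct three families of meromorphic vector fields on $\mathbb{P}^{n+1}\times\mathbb{P}^{N_d^n}$ whose restrictions to $\mathcal{X}$ lift to vector fields on $J_n\mathcal{V}$ tangent to the defining relations: (a) pure ``coefficient'' fields $\sum_\alpha b_\alpha\,\partial/\partial A_\alpha$ built directly from $\mathbb{G}_n$-invariant monomials; (b) ``vertical'' fields along each $\partial/\partial \xi_k^i$, equipped with carefully chosen compensations $\sum_\alpha b_\alpha^{(k,i)}\,\partial/\partial A_\alpha$ chosen by a Cramer-type rule so that each $F^{[\ell]}$ is preserved; (c) ``horizontal'' fields along each $\partial/\partial z_j$, again corrected by a compensation in both $\xi$ and $A$. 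The pole order at the hyperplane at infinity of $\mathbb{P}^{n+1}$ accumulates through the denominators introduced by the Cramer compensations applied to the $n+1$ constraints $F^{[0]},\dots,F^{[n]}$, while the pole order along $\mathbb{P}^{N_d^n}$ comes solely from the $A$-direction and is linear.

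Third, I would prove global generation at an arbitrary regular jet $(z^0,\xi^0,A^0)\in J_n^{\mathrm{reg}}\mathcal{V}$. Here the hypothesis $\xi_1^0\neq 0$ makes a certain triangular Jacobian, built out of partial derivatives of $F^{[0]},\dots,F^{[n]}$ with respect to the $\xi_k^i$, invertible; this nondegeneracy is what confines the exceptional locus to the singular-jet stratum and distinguishes the sharper bound $c_n'=n^2+2n$ from the earlier $c_n=(n^2+5n)/2$. Combining (a), (b), (c) at such a point then spans the whole tangent space to $J_n\mathcal{V}$. Fourth, I would check $\mathbb{G}_n$-invariance either directly, by verifying that the Fa\`a di Bruno substitution commutes with the chosen compensations, or, more robustly, by Reynolds-averaging over the algebraic group $\mathbb{G}_n$; either way, averaging or selection preserves spanning at regular jets because $J_n^{\mathrm{reg}}\mathcal{V}$ is $\mathbb{G}_n$-invariant and $\mathbb{G}_n$ is reductive modulo its unipotent radical acting trivially on the relevant quotients.

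The main technical obstacle is the sharp control of the pole order at $n^2+2n$: naive coefficient compensations produce poles growing like $n^2+O(n)$ with a larger sub-leading term, and trimming this down to exactly $n^2+2n$ requires an optimized choice of the monomial shifts used in the Cramer compensations, exploiting the fact that only $n+1$ linear conditions must be satisfied rather than $\binom{n+d+1}{d}$. A secondary difficulty is organizing the combinatorics of Fa\`a di Bruno so that the constructed fields are manifestly rational with controlled polar divisor, and showing simultaneously that the spanning Jacobian at regular jets has a nonvanishing leading term proportional to a power of $\xi_1$; both points are carried out in full in \cite{mer2008a}, which I would invoke for the precise bookkeeping.
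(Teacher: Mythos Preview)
The paper does not contain its own proof of this statement: Theorem~\ref{globgen} is quoted verbatim from \cite{mer2008a} and used as a black box throughout Section~\ref{Section-3}. Your proposal is therefore not to be compared against anything in the present paper, but against \cite{mer2008a} itself, and in that respect your sketch is faithful to the strategy carried out there: affine coordinates on the universal hypersurface, the Fa\`a di Bruno equations $F^{[0]}=\cdots=F^{[n]}=0$ cutting out $J_n\mathcal{V}$, the three families of compensated meromorphic vector fields, and the triangular Jacobian argument at regular jets to obtain spanning with the sharp pole order $n^2+2n$.

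One remark on your invariance step: your alternative of ``Reynolds-averaging over the algebraic group $\mathbb{G}_n$'' is not quite available as stated, since $\mathbb{G}_n=\mathbb{G}_n'\ltimes\mathbb{H}$ with $\mathbb{G}_n'$ unipotent, so there is no Reynolds operator on $\mathbb{G}_n$ in the usual reductive sense. In \cite{mer2008a} the invariance is obtained directly, by checking that the explicitly constructed generators are already $\mathbb{G}_n$-invariant (the compensations are built from jet-invariant expressions), rather than by any averaging. Apart from this, your outline matches the source.
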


This means that we have enough independent, global, invariant vector
fields having {\em meromorphic} coefficients over $J_n \mathcal{ V}$
in order to linearly generate the tangent space $T_{ J_n \mathcal{ V},
j^n}$ at every arbitrary fixed regular jet $j^n \in J_n^{ \text{\rm
reg }} \mathcal{ V}$. The poles of these vector fields occur only in
the base variables of $\mathcal{ X}$, but not in the vertical jet
variables of positive differentiation order. {\em Most importantly},
the maximal pole order here is $\leqslant n^2 + 2n$, hence it is
compensated by the first twisting $( \bullet) \otimes p^* \text{\rm
pr}_1^* \, \mathcal{ O}_{ \mathbb{ P }^{n+1 }} ( n^2+2n)$.

\section{Algebraic degeneracy of entire curves}
\label{Section-3}

Now, we are fully in position to establish the {\em noneffective}
version of Theorem~\ref{main}. The proof ({\em cf.} the Introduction)
incorporates two main ingredients: 1) the existence, already
established by Theorem~\ref{existenceKX}, of at least {\em
one} nonzero global invariant jet differential vanishing on an ample
divisor; 2) Theorem~\ref{globgen} just above to produce sufficiently
many {\em new algebraically independent} jet differentials.

\begin{theorem}
\label{noneffective-main}
Let $X \subset \mathbb{ P}^{ n+1 }$ be a smooth projective
hypersurface of arbitrary dimension $n \geqslant 2$. Then there
exists a positive integer $d_n$ such that whenever $\deg X \geqslant
d_n$ and $X$ is generic, there exists a \emph{proper} algebraic
subvariety $Y \subsetneqq X$ such that every nonconstant entire
holomorphic curve $f \colon \mathbb{ C} \to X$ has image $f (\mathbb{
C})$ contained in $Y$.
\end{theorem}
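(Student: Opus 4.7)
I would realize Siu's two-step strategy with Demailly's invariant jets providing the seed jet differential, and Theorem~\ref{globgen} producing many derived ones.

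\textbf{Step 1 (seed differential).} Fix a rational $\delta > 0$ small and assume $d \geq d_{n,\delta}$ with $m$ large and $\delta m$ integral. Applying Theorem~\ref{existenceKX} to a generic fiber $X_s$ of the universal family $\mathrm{pr}_2 \colon \mathcal{X} \to \mathbb{P}^{N_d^n}$ yields a nonzero invariant jet differential $\sigma_s \in H^0\bigl((X_s)_n, \mathcal{O}_{(X_s)_n}(m) \otimes \pi_{0,n}^* K_{X_s}^{-\delta m}\bigr)$. Because $K_{X_s}$ is the restriction of $\mathcal{O}_{\mathbb{P}^{n+1}}(d-n-2)$, semi-continuity of cohomology assembles these fiberwise sections into a relative section
\[
\Sigma \in H^0\bigl(\mathcal{X}_n|_U,\; \mathcal{O}_{\mathcal{X}_n}(m) \otimes \pi_{0,n}^* \mathrm{pr}_1^* \mathcal{O}_{\mathbb{P}^{n+1}}(-r)\bigr), \qquad r := \delta m (d-n-2),
\]
over some Zariski-open $U \subset \mathbb{P}^{N_d^n}$. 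By Theorem~\ref{A} applied to the directed manifold $(\mathcal{X}, \mathcal{V})$, every entire $f \colon \mathbb{C} \to X_s$ with $s \in U$ has its $n$-jet lift $f_{[n]}(\mathbb{C})$ contained in $\{\Sigma = 0\}$.

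\textbf{Step 2 (derived differentials via vector fields).} By Theorem~\ref{globgen}, there exist global $\mathbb{G}_n$-invariant sections $W_1, \ldots, W_L$ of $T_{J_n \mathcal{V}} \otimes p^* \mathrm{pr}_1^* \mathcal{O}(n^2 + 2n) \otimes p^* \mathrm{pr}_2^* \mathcal{O}(1)$ that generate this bundle at every point of $J_n^{\text{reg}} \mathcal{V}$. In local trivializations I define iterated Lie derivatives
\[
\Sigma^{(k)}_{\mathbf i} := W_{i_k}\bigl(\cdots W_{i_1}(\Sigma) \cdots\bigr), \qquad \mathbf{i}=(i_1,\dots,i_k).
\]
A routine pole-order computation shows each $\Sigma^{(k)}_{\mathbf i}$ is a bona fide global section of
\[
\mathcal{O}_{\mathcal{X}_n}(m) \otimes \pi_{0,n}^* \mathrm{pr}_1^*\mathcal{O}_{\mathbb{P}^{n+1}}\bigl(k(n^2+2n) - r\bigr) \otimes \pi_{0,n}^* \mathrm{pr}_2^* \mathcal{O}(k),
\]
whenever $k \leq K := \lfloor r/(n^2+2n) \rfloor$: the $\mathbb{P}^{n+1}$-direction poles coming from each $W_i$ are absorbed by the strong vanishing of $\Sigma$. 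For such $k$, the restriction to any fiber $(X_s)_n$ remains $\mathcal{O}_{(X_s)_n}(m)$ tensored with a negative power of an ample line bundle on $X_s$, so Theorem~\ref{A} again forces $\Sigma^{(k)}_{\mathbf i}|_{(X_s)_n}$ to vanish on $f_{[n]}(\mathbb{C})$ for every entire $f \colon \mathbb{C} \to X_s$.

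\textbf{Step 3 (the degeneracy locus is proper).} Let $\Delta \subset J_n \mathcal{V}|_U$ be the common zero locus of $\Sigma$ and of all $\Sigma^{(k)}_{\mathbf i}$ with $1 \leq k \leq K$. Its image $Y_s := (p \circ \pi_{0,n})(\Delta) \cap X_s$ contains $f(\mathbb{C})$ for every entire $f \colon \mathbb{C} \to X_s$, so it suffices to show $Y_s \subsetneq X_s$ for generic $s \in U$. Suppose the contrary. Then over a generic $x_0 \in X_s$ there exists a regular $n$-jet $j_0 \in \Delta_s \cap J_n^{\text{reg}} V_s$. By construction, all $\Sigma^{(k)}_{\mathbf i}$ with $k \leq K$ vanish at $j_0$; since by Theorem~\ref{globgen} the vectors $(W_i)_{j_0}$ span the whole tangent space $T_{J_n \mathcal{V}, j_0}$, the restriction of $\Sigma$ to the fiber $(J_n V_s)_{x_0}$ vanishes to order $\geq K+1$ at $j_0$. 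But this restriction is a weighted-homogeneous polynomial in the jet coordinates whose unweighted degree is at most $m$ (since the weights of $f', f'', \dots, f^{(n)}$ are $1, 2, \dots, n$), hence has multiplicity $\leq m$ at every point. Choosing $d \geq n + 2 + (n^2 + 2n)/\delta$ forces $K \geq m$ and yields a contradiction, so $Y_s \subsetneq X_s$. Setting $d_n := \max\bigl(d_{n,\delta},\; n + 2 + \lceil (n^2+2n)/\delta \rceil + 1\bigr)$ concludes the noneffective proof.

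The essential obstacle lies not in the logical structure above but in the \emph{quantification} of $d_{n,\delta}$ in Theorem~\ref{existenceKX}. Making this bound explicit requires taming the combinatorial complexity of Demailly's tower---the intertwined relations among the Chern classes of the $V_k$, the Newton expansion of $n^2$-th powers, and the Jacobi-Trudy determinants that arise along the way---which is precisely the task of Sections~\ref{Section-4} and~\ref{Section-5}, ultimately yielding the effective bound $d \geq 2^{n^5}$ of Theorem~\ref{main}.
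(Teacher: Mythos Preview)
Your global strategy matches the paper's, but Step~3 has a genuine gap. As defined, $\Delta\subset J_n\mathcal V$ always contains the zero section: each $\Sigma^{(k)}_{\mathbf i}$, being the Lie derivative of a $\mathbb G_n$-invariant jet polynomial along a $\mathbb G_n$-invariant vector field, is again weighted-homogeneous of degree $m>0$ in the jet variables and hence vanishes at the zero jet over every base point. Consequently $Y_s=p(\Delta)\cap X_s=X_s$ identically, and your contradiction hypothesis ``$Y_s=X_s$'' carries no content. The subsequent claim that over a generic $x_0$ one can find a \emph{regular} jet $j_0\in\Delta$ is therefore unjustified; without regularity, Theorem~\ref{globgen} gives no spanning at $j_0$ and the multiplicity argument cannot start.

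The paper avoids this by defining $Y_{s_0}$ differently: as the zero locus of $P\vert_{s_0}$ regarded as a section of the \emph{vector bundle} $E_{n,m}T_{X_{s_0}}^*\otimes K_{X_{s_0}}^{-\delta m}$, i.e.\ the set of base points $x$ where all the polynomial coefficients $q_{i_1,\dots,i_n}(s_0,x)$ vanish simultaneously. This is automatically a proper subvariety since $P\vert_{s_0}\not\equiv 0$. The vector-field differentiation is then deployed in the opposite logical direction: assuming $f(t_0)\notin Y_{s_0}$, some coefficient is nonzero at $(s_0,f(t_0))$, and after translating jet coordinates to $j^n f(t_0)$ a specific iterated derivative of length $\leqslant m$ extracts that nonzero coefficient, contradicting Theorem~\ref{A}. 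Your degree-versus-multiplicity bound (unweighted degree $\leqslant m$ against vanishing order $\geqslant K+1>m$) is essentially the contrapositive of this coefficient extraction, so the computational core is the same; only the definition of $Y_s$ needs repair. Replacing $\Delta$ by $\Delta\cap J_n^{\text{\rm reg}}\mathcal V$ and taking the closure of its projection would also salvage your version.
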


\proof
As above, consider the universal projective hypersurface $\mathbb{
P}^{ n+1} \overset{ \text{ \rm pr}_1}{ \longleftarrow } \mathcal{ X}
\overset{ \text{\rm pr}_2}{ \longrightarrow } \mathbb{ P}^{ N_d^n}$ of
degree $d$ in $\mathbb{ P }^{ n+1}$. Observe that
$X_s = \text{\rm
pr}_2^{-1} (s)$ is a smooth projective hypersurface of $\mathbb{ P}^{
n+1}$ for generic $s \in \mathbb{ P}^{ N_d^n}$ and that $\mathcal{ V} =
\ker( \text{\rm pr}_2)_*$ restricted to $X_s$ coincides with the
tangent bundle to $X_s$. We infer therefore that:
\[
\small
\aligned
H^0
\big(X_s,\,
E_{n,m}\mathcal{V}^*
\otimes
\text{pr}_1^*
\mathcal{O}_{\mathbb{P}^{n+1}}
\big(-\delta m(d-n-2)\big)
\big\vert_{X_s}
\big) 
\simeq 
H^0
\big(
X_s,\,
E_{n,m}T^*_{X_s}
\otimes
K_{X_{s}}^{-\delta m}
\big).
\endaligned
\] 
Thanks to Theorem~\ref{existenceKX}, the latter space of sections is
nonzero, for small rational $\delta >0$, for $d \geqslant d_{ n,
\delta}$ and for $m$ large enough,
independently of $s$. Fix any $s_0 \in \mathbb{ P}^{ N_d^n}$ and pick
a nonzero jet differential $P_0 \in H^0 \big( X_{ s_0},\,
E_{n,m}T^*_{X_{ s_0}} \otimes K_{X_{s_0}}^{-\delta m} \big)$. In
order to employ the vector fields of Theorem~\ref{globgen}, we must at
first extend $P_0$ as a {\em holomorphic family} of nonzero jet
differentials. Thus, we invoke the following classical extension
result.

\begin{theorem}[\cite{har1977}, p.~288]
Let $\tau \colon \mathcal{ Y} \to S$ be a flat holomorphic family
of compact complex spaces and let $\mathcal{ L} \to \mathcal{ Y}$ be a
holomorphic vector bundle. Then there exists a proper subvariety $Z
\subset S$ such that for each $s_0 \in S \setminus Z$, the restriction
map $H^0 \big( \tau^{-1} (U_{ s_0}), \mathcal{ L} \big) \to H^0 \big(
\tau^{ -1} (s_0), \mathcal{ L} \vert_{ \tau^{-1} (s_0)} \big)$ is onto,
for some Zariski-dense open set $U_{ s_0} \subset S$
containing $s_0$.
\end{theorem}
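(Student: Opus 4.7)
The plan is to study the direct image sheaf $\tau_*\mathcal{L}$ on the base $S$ and deduce the claimed extension property from the semicontinuity and cohomology-and-base-change theorems. Both results are classical: they are stated in the algebraic category in Hartshorne~III.12, and in the analytic category used here they are due to Grauert.

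First I would introduce the fiberwise dimension function
\[
\varphi(s) \ :=\ \dim H^0\!\big(\tau^{-1}(s),\, \mathcal{L}\vert_{\tau^{-1}(s)}\big).
\]
Since $\tau$ is flat and proper and $\mathcal{L}$ is locally free (hence flat over $S$), Grauert's direct image theorem ensures that every $R^i\tau_*\mathcal{L}$ is a coherent analytic sheaf on $S$, and the semicontinuity theorem says that $\varphi$ is upper semicontinuous. Setting $h := \min_{s \in S}\varphi(s)$, the jump locus
\[
Z \ :=\ \{\, s \in S : \varphi(s) > h\,\}
\]
is then a proper closed analytic subvariety of $S$, and $\varphi \equiv h$ on the Zariski-dense open complement $S \setminus Z$.

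Next I would invoke the cohomology and base change theorem over $S\setminus Z$. Because $\varphi$ is constantly equal to $h$ there, $\tau_*\mathcal{L}$ is locally free of rank $h$ on $S\setminus Z$, and for each $s_0 \in S \setminus Z$ the natural fiber-evaluation map
\[
(\tau_*\mathcal{L})_{s_0} \otimes \kappa(s_0) \ \longrightarrow\ H^0\!\big(\tau^{-1}(s_0),\, \mathcal{L}\vert_{\tau^{-1}(s_0)}\big)
\]
is an isomorphism. To finish, given $s_0 \in S\setminus Z$, I would choose a Zariski-open neighborhood $U_{s_0} \subset S \setminus Z$ of $s_0$ over which the locally free sheaf $\tau_*\mathcal{L}$ trivializes as $\mathcal{O}_{U_{s_0}}^{\oplus h}$. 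Then
\[
H^0\!\big(\tau^{-1}(U_{s_0}),\,\mathcal{L}\big) \ =\ H^0\!\big(U_{s_0},\, \tau_*\mathcal{L}\big) \ \cong\ \mathcal{O}_S(U_{s_0})^{\oplus h},
\]
and the restriction map to $H^0(\tau^{-1}(s_0),\mathcal{L}\vert_{\tau^{-1}(s_0)}) \cong \kappa(s_0)^{\oplus h}$ becomes the evaluation-at-$s_0$ of a tuple of holomorphic functions on $U_{s_0}$, which is manifestly surjective.

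The main conceptual ingredient is the cohomology and base change theorem: once the fiberwise $h^0$ has been rendered locally constant after removing the proper subvariety $Z$, surjectivity of the restriction map is automatic. The only subtlety worth flagging is the passage from the algebraic formulation of Hartshorne~III.12 to the analytic category needed here (since $\tau$ is a holomorphic family and $\mathcal{L}$ is a holomorphic vector bundle), but Grauert's analytic semicontinuity and base change theorems cover this without modification, with $Z$ arising as a genuine closed analytic subset of $S$.
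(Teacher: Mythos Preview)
The paper does not supply its own proof of this statement: it is quoted verbatim as a classical result with the citation \cite{har1977}, p.~288, and is then immediately applied to the universal family $\mathcal{X}\to\mathbb{P}^{N_d^n}$. Your argument via upper semicontinuity of $s\mapsto h^0(\tau^{-1}(s),\mathcal{L}|_{\tau^{-1}(s)})$, followed by Grauert's base change on the locus where this function is constant, is precisely the standard mechanism behind the cited reference, so there is nothing to compare and your sketch is correct in spirit.

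One small point worth tightening: you need $U_{s_0}$ to be \emph{Zariski-dense} in $S$, not merely an open set on which $\tau_*\mathcal{L}$ trivializes. In the algebraic (or irreducible analytic) setting this is harmless, since any nonempty Zariski-open subset of $S\setminus Z$ is automatically Zariski-dense in $S$; but you should make explicit that the trivializing neighborhood can be taken Zariski-open (e.g.\ as the locus where $h$ chosen sections of $\tau_*\mathcal{L}$ remain linearly independent), rather than just a small analytic ball.
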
 

We remark that this theorem implies that the weighted degree of the jet
differential constructed above may be chosen to be independent of the
hypersurface $X_s$ of degree $d$. Now, we apply this statement $\tau =
\text{\rm pr}_2$, to $\mathcal{ Y} = \mathcal{ X}$, to $S = \mathbb{ P}^{
N_d^n}$, to $\mathcal{ L} = E_{ n, m} \mathcal{ V}^* \otimes \text{\rm
pr}_1^* \mathcal{ O}_{ \mathbb{ P}^{ n+1}} \big( - \delta m ( d - n - 2)
\big)$ and we similarly denote by $Z \subset \mathbb{ P}^{ N_d^n}$ the
embarrassing proper algebraic subvariety.  The genericity of $X$ assumed in
the two theorems~\ref{main} and~\ref{noneffective-main} will just consist
in requiring that $s_0 \not \in Z$ (notice {\em passim}\, that we do not
have a constructive access to $Z$) and of course also, that $s$ does not
belong to the set for which $X_s$ is singular.

We therefore obtain a holomorphic family of jet differentials:
\[
P
=
\big\{
P\vert_s\in H^0\big(
X_s,\,
E_{n,m}T^*_{X_s}
\otimes 
K_{X_s}^{-\delta m}
\big)
\big\}
\]
parametrized by $s$ with $P \vert_{ s_0} = P_0 \not\equiv 0$ and
vanishing on $K_{ X_s}^{ \delta m}$; for our purposes, it will suffice
that $s$ varies in some neighborhood of $s_0$.

Now, take a {\em nonconstant} entire holomorphic curve $f \colon
\mathbb{ C} \to \mathcal{ X}$ tangent to $\mathcal{ V}$. Since the
distribution $\mathcal{ V}$ has integral manifolds $\text{\rm pr}_2^{
-1} ( s) = X_s$, $f$ maps $\mathbb{ C}$ into some $X_{s_0}$, for some
$s_0 \in \mathbb{ P}^{ N_d^n}$. Of course, we assume 
that $s_0 \not \in
Z$ and that $X_{ s_0}$ is non-singular. 
Consider now the zero-set locus
\[
Y_{s_0} 
:= 
\big\{x\in X_{s_0}
\colon
P\vert_{s_0}(x)
=
0\big\}, 
\]
where $P\vert_{ s_0} \not\equiv 0$ vanishes as a section of the vector
bundle $E_{ n, m} T^*_{ X_{ s_0 }} \otimes K_{ X_{ s_0 }}^{ - \delta
m}$. Then $Y_{ s_0}$ is a {\em proper algebraic subvariety} of $X_{
s_0}$. We then claim that
\[
f(\mathbb{C})\subset Y_{s_0},
\]
which will complete the proof of the theorem. (It will even come out
that we obtain strong algebraic degeneracy of entire curves $f :
\mathbb{ C} \to X_s$ inside a $Y_s \subsetneqq X_s$ defined by $Y_s =
\big\{ x \in X : \, \, P\vert_s ( x) = 0 \big\}$ and parametrized by
$s$ near $s_0$.)

Reasoning by contradiction, suppose that there exists $t_0 \in
\mathbb{ C}$ with $f ( t_0) \not \in Y_{ s_0}$. Consider the $n$-jet
map $j^n \! f \colon \mathbb{ C}\to J_n \mathcal{ V}$ induced by
$f$. If $j^n \! f ( \mathbb{ C })$ would be entirely contained in
$J_n^{ \text{ \rm sing }} \mathcal{ V} \overset{ \text{\rm def}} = J_n
\mathcal{ V} \setminus J_n^{\text{\rm reg }} \mathcal{ V}$, then $f$
would be {\em constant}, since singular $n$-jets satisfy $f' (t) = 0$.
So necessarily $j^n \! f (\mathbb{C}) \not \subset J_n \mathcal{ V}^{
\text{\rm sing}}$, namely $f' \not \equiv 0$. Then by shifting a bit
$t_0$ if necessary, we can assume that we in addition have $f' ( t_0)
\neq 0$, {\em viz.} $j^n \! f ( t_0) \in J_n^{\text{\rm reg}}
\mathcal{ V}$.

Theorem~\ref{A} ensures that $P \vert_{ s_0} \big( j^n \, f ( t) \big)
\equiv 0$. Denote $U := \mathbb{ P}^{ N_d^n} \setminus Z$.

We may now view the family $P = \{ P \vert_s \}$ as being a
holomorphic map
\[
P\colon J_n\mathcal{ V}\big\vert_{\text{pr}_2^{-1}(U)}
\longrightarrow
p^*\text{\rm pr}_1^*
\mathcal{O}_{\mathbb{P}^{n+1}}
\big(-\delta m(d-n-2)\big)
\big\vert_{\text{\rm pr}_2^{-1}(U)}
\]
which is polynomial of weighted degree $m$ in the jet variables. Let
$V$ be any of the global invariant holomorphic vector fields on $J_n
\mathcal{ V}$ with values in $p^*\text{\rm pr}_1^*\mathcal{
O}_{\mathbb{ P}^{ n+1}} (n^2+2n)$ that were provided by Theorem
\ref{globgen}. Then we observe that the Lie derivative $L_V P$
together with the natural duality pairing
\[
\mathcal{O}_{\mathbb{P}^{n+1}}(p)
\times
\mathcal{O}_{\mathbb{P}^{n+1}}(-q)
\to
\mathcal{O}_{\mathbb{P}^{n+1}}(p-q)
\ \ \ \ \ \ \ \ \ \ \ \ \
{\scriptstyle{(p,\,q\,\geqslant\,1)}}
\] 
provides a new holomorphic map (notice the shift by $n^2 + 2n$):
\[
L_VP
\colon 
J_n\mathcal{V}
\big\vert_{\text{pr}_2^{-1}(U)}
\longrightarrow
p^*\text{\rm pr}_1^*
\mathcal{ O}_{\mathbb{P}^{n+1}}
\big(-\delta m(d-n-2)+n^2+2n\big)
\big\vert_{\text{\rm pr}_2^{-1}(U)},
\]
again polynomial of weighted degree $m$ in the jet variables, thus a
new parameterized family of invariant jet differentials. In
particular, the restriction $L_VP \vert_{ s_0}$ of $L_VP$ to $\{ s =
s_0\}$ yields a {\em nonzero} global holomorphic section in
\begin{multline*}
H^0\big(
X_{s_0},\,
E_{n,m}T^*_{X_{s_0}}
\otimes 
K_{X_{s_0}}^{-\delta m}
\otimes
\mathcal{O}_{X_{s_0}}(n^2+2n)
\big)
=
\\
=
H^0\big(X_{s_0},\,
E_{n,m}T^*_{X_{s_0}}
\otimes
\mathcal{ O}_{X_{s_0}}
(-\delta m(d-n-2)+n^2+2n)\big),
\end{multline*}
which is a global invariant jet differential on $X_{ s_0}$ vanishing
on an ample divisor provided that $-\delta m (d-n-2) + n^2+2n$ {\em
still remains negative}; therefore, if we ensure such a negativity
({\em see} below), Theorem~\ref{A} shows that $[L_VP \vert_{s_0}]
\big( j^n\! f (t) \big) \equiv 0$. As a result, the $n$-jet of $f$ now
satisfies {\em two} global algebraic differential equations:
\[
P_{s_0}
\big(j^n\! f(t)\big)
\equiv
\big[L_VP\vert_{s_0}\big]
\big(j^n\! f(t)\big)
\equiv
0.
\]

\begin{center}
\begin{picture}(0,0)%
\includegraphics{further-jet-differential.pstex}%
\end{picture}%
\setlength{\unitlength}{4144sp}%
\begingroup\makeatletter\ifx\SetFigFont\undefined%
\gdef\SetFigFont#1#2#3#4#5{%
  \reset@font\fontsize{#1}{#2pt}%
  \fontfamily{#3}\fontseries{#4}\fontshape{#5}%
  \selectfont}%
\fi\endgroup%
\begin{picture}(5501,2235)(397,-1580)
\put(3698,-175){\makebox(0,0)[lb]{\smash{{\SetFigFont{7}{8.4}{\familydefault}{\mddefault}{\updefault}{\color[rgb]{.69,0,0}\red{$V$}}%
}}}}
\put(1379,-494){\makebox(0,0)[lb]{\smash{{\SetFigFont{8}{9.6}{\familydefault}{\mddefault}{\updefault}{\color[rgb]{0,.69,0}\green{$f(\mathbb{C})$}}%
}}}}
\put(950,-853){\makebox(0,0)[lb]{\smash{{\SetFigFont{6}{7.2}{\familydefault}{\mddefault}{\updefault}{\color[rgb]{0,.69,0}\green{$f(t_0)$}}%
}}}}
\put(2471,-546){\makebox(0,0)[lb]{\smash{{\SetFigFont{9}{10.8}{\familydefault}{\mddefault}{\updefault}{\color[rgb]{0,0,.69}\blue{fiber}}%
}}}}
\put(3269,154){\makebox(0,0)[lb]{\smash{{\SetFigFont{7}{8.4}{\familydefault}{\mddefault}{\updefault}{\color[rgb]{0,0,.69}\blue{$J_n\!\mathcal{V}_{\!\!f(t_0)}$}}%
}}}}
\put(2439,-746){\makebox(0,0)[lb]{\smash{{\SetFigFont{8}{9.6}{\familydefault}{\mddefault}{\updefault}{\color[rgb]{0,0,.69}\blue{$J_n\!\mathcal{V}_{\!\!f(t_0)}$}}%
}}}}
\put(907,-160){\makebox(0,0)[lb]{\smash{{\SetFigFont{9}{10.8}{\familydefault}{\mddefault}{\updefault}{\color[rgb]{0,0,.69}\blue{$X$}}%
}}}}
\put(5388,-303){\makebox(0,0)[lb]{\smash{{\SetFigFont{8}{9.6}{\familydefault}{\mddefault}{\updefault}{\color[rgb]{.69,0,0}{\bf ?}}%
}}}}
\put(1846,-1456){\makebox(0,0)[lb]{\smash{{\SetFigFont{12}{14.4}{\familydefault}{\mddefault}{\updefault}{\color[rgb]{0,0,0} }%
}}}}
\put(4249,-383){\makebox(0,0)[lb]{\smash{{\SetFigFont{9}{10.8}{\familydefault}{\mddefault}{\updefault}{\color[rgb]{0,0,.69}\blue{differential}}%
}}}}
\put(4248,-550){\makebox(0,0)[lb]{\smash{{\SetFigFont{9}{10.8}{\familydefault}{\mddefault}{\updefault}{\color[rgb]{0,0,.69}\blue{$L_VP$}}%
}}}}
\put(680,-1516){\makebox(0,0)[lb]{\smash{{\SetFigFont{9}{10.8}{\familydefault}{\mddefault}{\updefault}{\color[rgb]{0,0,0}{\bf Fig.~1: Producing from $P$ a new jet differential $L_VP$ having distinct zero locus in $J_n \mathcal{ V}$}}%
}}}}
\put(4249,-223){\makebox(0,0)[lb]{\smash{{\SetFigFont{9}{10.8}{\familydefault}{\mddefault}{\updefault}{\color[rgb]{0,0,.69}\blue{another jet}}%
}}}}
\put(3282,-864){\makebox(0,0)[lb]{\smash{{\SetFigFont{6}{7.2}{\familydefault}{\mddefault}{\updefault}{\color[rgb]{0,0,0}$\{P\!=\!0\}$}%
}}}}
\put(4794,155){\makebox(0,0)[lb]{\smash{{\SetFigFont{6}{7.2}{\familydefault}{\mddefault}{\updefault}{\color[rgb]{0,0,0}$\{L_VP\!=\!0\}$}%
}}}}
\put(4250,-76){\makebox(0,0)[lb]{\smash{{\SetFigFont{9}{10.8}{\familydefault}{\mddefault}{\updefault}{\color[rgb]{0,0,.69}\blue{constructing}}%
}}}}
\put(1450,214){\makebox(0,0)[lb]{\smash{{\SetFigFont{8}{9.6}{\familydefault}{\mddefault}{\updefault}{\color[rgb]{0,0,0}$Y_{s_0}$}%
}}}}
\put(3505,-411){\makebox(0,0)[lb]{\smash{{\SetFigFont{7}{8.4}{\familydefault}{\mddefault}{\updefault}{\color[rgb]{0,.69,0}\green{$j^n\!f(t_0)$}}%
}}}}
\put(5396,-56){\makebox(0,0)[lb]{\smash{{\SetFigFont{5}{6.0}{\familydefault}{\mddefault}{\updefault}{\color[rgb]{0,.69,0}\green{$j^n\!f(t_0)$}}%
}}}}
\put(5300,-434){\makebox(0,0)[lb]{\smash{{\SetFigFont{5}{6.0}{\familydefault}{\mddefault}{\updefault}{\color[rgb]{0,.69,0}\green{$j^n\!f(t_0)$}}%
}}}}
\end{picture}%

\end{center}

Heuristically ({\em cf.} the figure), if the fiber $J_n \mathcal{
V}_{\! f ( t_0)}$ would be, say, 2-dimensional, and if the
intersection of $\{ P_{s_0} = 0 \}$ with $\{ L_V P \vert_{ s_0} =
0\}$, viewed in the fiber $J_n \mathcal{ V}_{\! f ( t_0)}$, would be a
point {\em distinct from the original $j^n \! f ( t_0)$}, we would get
the sought contradiction. Now we realize this idea ({\em cf.}~\cite{
siu2004, pau2008, rou2007}) by producing enough new jet differential
divisors whose intersection becomes {\em empty}.

Indeed, with $t_0$ such that $f ( t_0) \not \in Y_{ s_0}$ and $j^n \!
f (t_0) \in J_n^{\text{\rm reg}} \mathcal{ V}$, and with $W_i$, $V_j$
denoting some global meromorphic vector fields in
\[
H^0
\big(
J_n\mathcal{ V},\,
T_{J_n\mathcal{V}}
\otimes 
p^*\text{\rm pr}_1^*
\mathcal{O}_{\mathbb{P}^{n+1}}(n^2+2n)
\otimes 
p^*\text{\rm pr}_2^*\mathcal{O}_{\mathbb{P}^{N_d^n}}(1)
\big),
\]
that are supplied by Theorem~\ref{globgen}, we claim that the
following two {\em evidently contradictory} conditions can be
satisfied, and this will achieve the proof.

\begin{itemize}

\smallskip\item[{\bf (i)}]
For every $p \leqslant m$ and for arbitrary such fields $W_1, \dots,
W_p$, the restriction $L_{ W_p} \cdots L_{ W_1} P \big\vert_{
s_0}$ yields a nonzero global holomorphic section in
\[
H^0
\big(
X_{s_0},\,
E_{n,m}T_{X_{s_0}}^*\otimes
\mathcal{O}_{X_{s_0}}
(-\delta m(d-n-2)+p(n^2+2n))
\big)
\]
with the property that $\big[ L_{ W_p} \cdots L_{ W_1} P \big] \big(
s_0, \, j^n \! f (t) \big) \equiv 0$.

\smallskip\item[{\bf (ii)}]
there exist some $p \leqslant m$ and some invariant fields $V_1,
\dots, V_p$ such that $\big[ L_{V_p} \cdots L_{V_1} P
\big] \big( s_0, \, j^n\! f(t_0) \big) \neq 0$.

\end{itemize}\smallskip

\noindent

The first condition {\bf (i)} will automatically be ensured by
Theorem~\ref{A} provided the resulting jet differential still vanishes
on an ample divisor, \emph{i.e.} provided that
\[
-\delta m(d-n-2)+p(n^2+2n)
<
0
\]
is still negative. But since $p$ will be $\leqslant m$, it suffices
that $- \delta m ( d - n - 2) + m ( n^2 + 2n) < 0$, and then after
erasing $m$, that:
\begin{equation}
\label{C2}
d
>
{\textstyle{\frac{n^2+2n}{\delta}}}
+
n+2.
\end{equation}
To get {\bf (i)}, we first fix a rational $\delta >0$ so that
Theorem~\ref{existenceKX} gives a {\em nonzero} jet differential for
any $d \geqslant d_{n, \delta}$, we increase (if necessary) this
lower bound by taking account of~\thetag{ \ref{C2}}, we construct
the holomorphic family $P\vert_s$, and {\bf (i)} holds.

To establish {\bf (ii)}, we choose local coordinates:
\[
\big(s,z,z',\dots,z^{(n)}
\big)
\in
\mathbb{C}^{N_d^n}
\times\mathbb{C}^n\times\mathbb{C}^n
\times\cdots\times\mathbb{C}^n
\]
on $J_n \mathcal{ V}$ near $\big( s_0, j^n \! f ( t_0) \big)$, where
$z \in \mathbb{ C}^n$ provides some local coordinates on $X_s$ for any
fixed $s$ near $s_0$, and where $\big( z', \dots, z^{ (n)} \big)$ are
the jet coordinates associated with $z$. We also choose a local
trivialization of the line bundle $K_{ X_s}^{ -
\delta m}$. Then our holomorphic family of jet differentials
$P\vert_s \in H^0 \big( X_s, \, E_{ n, m} T_{ X_s}^* \otimes K_{
X_s}^{ - \delta m} \big)$ writes locally as a weighted $m$-homogeneous
jet-polynomial:
\[
P
=
\sum_{\vert i_1\vert+\cdots+n\vert i_n\vert=m}\,
q_{i_1,\dots,i_n}(s,z)\,
(z')^{i_1}\cdots (z^{(n)})^{i_n},
\]
where $i_1, \dots, i_n \in \mathbb{ N}^n$ and where the $q_{ i_1,
\dots, i_n} ( s, z)$ are holomorphic near $( s_0, f( t_0) )$. Locally,
the proper subvariety $Y_{ s_0} \subset
X$ is represented as the common zero-locus:
\[
Y_{s_0}
=
\big\{
z\in X_{s_0}\colon
q_{i_1,\dots,i_n}(s_0,z)
=
0,\ \
\forall\,\,i_1,\dots,i_n
\big\}.
\]
By our assumption that $f ( t_0) \not \in Y_{ s_0}$, there exist
$i_1^0, \dots, i_n^0 \in \mathbb{ N}^n$ such that $q_{ i_1^0, \dots,
i_n^0} \big( s_0, f( t_0) \big) \neq 0$. If we make the translational
change of jet coordinates $\overline{ z}' := z' - f'(t_0)$, \dots,
$\overline{ z}^{ ( n)} := z^{ ( n)} - f^{ ( n)} ( t_0)$, our
jet-polynomial transfers to:
\[
\overline{P}
=
\sum_{\vert i_1\vert+\cdots+n\vert i_n\vert\leqslant m}\,
\overline{q}_{i_1,\dots,i_n}(s,z)\,
(\overline{z}')^{i_1}\cdots(\overline{z}^{(n)})^{i_n},
\]
(notice ``$\leqslant \! m$'') with new coefficients $\overline{ q}_{
i_1, \dots, i_n} (s, z)$ that depend linearly upon the old ones and
polynomially upon $\big( f'(t_0), \dots, f^{ (n)} ( t_0) \big)$.
Again, there exist $\overline{ i}_1^0, \dots, \overline{ i}_n^0 \in
\mathbb{ N}^n$ such that $\overline{ q}_{ \overline{ i}_1^0, \dots,
\overline{ i}_n^0} \big( s_0, f ( t_0) \big)\neq 0$, because otherwise
the two jet-polynomials $P\big\vert_{ s_0, f (t _0)}$ and $\overline{
P}\big\vert_{ s_0, f ( t_0)}$ would be both identically zero.

Since $j^n \! f ( t_0) \in J_n^{\text{\rm reg}} \mathcal{ V}$, by the
property~\ref{globgen} of generation by global sections, we get that
for every $k$ with $1 \leqslant k \leqslant n$ and for every $i$ with
$1 \leqslant i \leqslant n$, there exists an invariant vector field
$V_i^k$ with
\[
V_i^k\big\vert_{(s_0,\overline{j}^n\!f(t_0))} 
= 
{\textstyle{\frac{\partial}{
\partial\overline{z}_i^{(k)}}}}
\Big\vert_{ ( s_0, \overline{
j}^n\! f ( t_0 ))}, 
\]
where we have denoted the translated central jet by $\overline{ j}^n
\! f ( t_0) := \big( f (t_0), 0, \dots, 0 \big)$.

To achieve the proof of {\bf (ii)}, we may suppose that for every
integer $p$ with $p < \vert \overline{ i}_1^0 \vert + \vert \overline{
i}_2^0 \vert + \cdots + \vert \overline{ i}_n^0 \vert$, whence $p <
\vert \overline{ i}_1^0 \vert + 2 \, \vert \overline{ i}_2^0 \vert +
\cdots + n\, \vert \overline{ i}_n^0 \vert = m$, and for every $p$
invariant vector fields $W_1, \dots, W_p$, one has $\big[ W_1 \cdots
W_p \, \overline{ P} \, \big] \big(s_0, \overline{j}^n\! f ( t_0)\big)
= 0$, since if any such an expression is already $\neq 0$, {\bf (ii)}
would be got gratuitously. Thanks to the global generation
Theorem~2.5, this vanishing property then holds for any vector fields
$W_i$ involving all the possible differentiations $\frac{ \partial}{
\partial s}$, $\frac{ \partial }{ \partial z}$, $\frac{ \partial }{
\partial \overline{ z}'}$, \dots, $\frac{ \partial }{ \partial
\overline{ z}^{ (n)}}$. Then under this assumption, the contribution
of the remainder differentiations present in $V_i^k$ after $\partial
\big/ \partial\overline{z}_i^{(k)} \big\vert_{ ( s_0, \overline{
j}^n\! f ( t_0 ))}$ will vanish at the point $\big( s_0, \overline{
j}^n \! f ( t_0) \big)$ when performing any multi-derivation of length
equal to $\vert \overline{ i}_1^0 \vert + \cdots + \vert \overline{
i}_n^0 \vert$, hence if we write in 
length $\overline{ i}_k^0 = \big( \overline{
i}_{ k, 1}^0, \dots, \overline{ i}_{ k, n}^0 \big) \in \mathbb{ N}^n$
all the multiindices present in the specific coefficient $\overline{
q}_{ \overline{i }_1^0, \dots, \overline{ i}_n^0}$, it follows that:
\[
\aligned
\big[
V_{\overline{i}_{n,n}^0}^n\!\!\cdots\,
&
V_{\overline{i}_{n,1}^0}^n\cdots\cdots\,
V_{\overline{i}_{1,n}^0}^1\!\!\cdots\,
V_{\overline{i}_{1,n}^0}^1\!\!
\overline{P}\,
\big]
\big(s_0,\,\overline{j}^n\!f(t_0)\big)
=
\\
&
=
\Big[
{\textstyle{\frac{\partial}{
\partial\overline{z}_{\overline{i}_{n,n}^0}^{(n)}}}}
\cdots
{\textstyle{\frac{\partial}{
\partial\overline{z}_{\overline{i}_{n,1}^0}^{(n)}}}}
\cdots\cdots
{\textstyle{\frac{\partial}{
\partial\overline{z}_{\overline{i}_{1,n}^0}^{(1)}}}}
\cdots
{\textstyle{\frac{\partial}{
\partial\overline{z}_{\overline{i}_{1,1}^0}^{(1)}}}}
\overline{P}\,
\Big]
\big(s_0,\,f(t_0),0,\dots,0\big)
\\
&
=
\overline{i}_{n,n}^0!\cdots
\overline{i}_{n,1}^0!\cdots\cdots
\overline{i}_{1,n}^0!\cdots
\overline{i}_{1,1}^0!\,\,
\overline{q}_{\overline{i}_1^0,\dots,\overline{i}_n^0}
\big(
s_0,\,f(t_0)
\big)
\neq 0,
\endaligned
\]
which is nonzero. Thus {\bf (ii)} holds and the proof of
Theorem~\ref{noneffective-main} is complete.
Theorem~\ref{noneffective-main} being {\em not} effective regarding
the condition $d \geqslant d_n$, the next two 
Sections~\ref{Section-4} and~\ref{Section-5} are
devoted to the proof of the effective main Theorem~\ref{main}.
\endproof

\section{Effectiveness of the degree lower bound}
\label{Section-4}

It is known ({\em cf.}~\cite{ siu1995, dem1997, voi2003, siu2004,
rou2006a, div2008a, mer2008b}) that reaching an explicit lower bound
degree $\deg X \geqslant d_n$ both for Green-Griffiths algebraic
degeneracy and for Kobayashi hyperbolicity (in nonoptimal degree)
still remained an open question in arbitrary dimension $n$, due to the
existence of {\em substantial algebraic obstacles}. In order to render
somewhat explicit the lower bound $d_n$ of
Theorem~\ref{noneffective-main}, one has to expand the $n^2$-powered
intersection product~\thetag{ \ref{morse-intersection-delta}} and then
to reduce it as an explicit polynomial ${\sf P}_{ \mathbf{ a}, \delta}
(d)$, as was foreseen in the proof of Theorem~\ref{existenceKX}. To
this aim, one should descend Demailly's tower {\em step by step}, each
time using the two relations~\thetag{ \ref{c-l-k}} and~\thetag{
\ref{u-n}}. As a matter of fact, one must perform some numerous,
explicit eliminations and substitutions and thereby tame the
exponential growth of computations. At several places, we shall leave
aside optimality of majorations in order to reach the neat announced
lower bound $2^{ n^5}$.

\subsection{Reduction of the basic intersection product} 
We remind from Theorem~\ref{existenceKX} that, in order to produce a
global invariant jet differential with controlled vanishing order on
hypersurfaces $X$ whose degree $d \geqslant d_n$ would be bounded from
below by an effectively known function $d_n = d ( n)$ of $n$, we
should ensure {\em in an effective way} the positivity of the
intersection product:
\begin{multline*}
\big(\mathcal{ O}_{X_n}(\mathbf{a})
\otimes
\pi^*_{0,n}\mathcal{ O}_X(2\vert\mathbf{a}\vert)\big)^{n^2}
-
\\
-n^2\big(\mathcal{ O}_{X_n}(\mathbf{a})
\otimes
\pi^*_{0,n}\mathcal{ O}_X(2\vert\mathbf{a}\vert)\big)^{n^2-1}
\cdot
\big(\pi^*_{0,n}\mathcal{ O}_X(2\vert\mathbf{a}\vert)
\otimes
\pi^{*}_{0,n}K_X^{\delta\vert\mathbf{a}\vert}\big),
\end{multline*}
for a certain $n$-tuple of integers $\mathbf{ a} = \mathbf{ a} ( n)
\in \mathbb{ N}^n$ belonging to the cone~\thetag{ \ref{rel-nef-cone}}
(with $k = n$)
which would depend {\em effectively} upon $n$, and for a certain
rational number $\delta = \delta ( n) > 0$ which would also depend
{\em effectively} upon $n$.

As in~\cite{div2008b}, denote $u_\ell = c_1 \big( \mathcal{ O}_{
X_\ell} (1) \big)$ for $\ell = 1, \dots, n$, denote $c_k = c_k(T_X)$
for $k = 1, \dots, n$, and $h = c_1 \big( \mathcal{ O}_X(1) \big)$.
With these standard notations, the intersection product we have to
evaluate becomes:
\begin{multline}
\label{pi-delta}
\Pi_\delta
:=
\big(
a_1u_1+\dots+a_nu_n+2\vert\mathbf{a}\vert h
\big)^{n^2}
-
\\
-n^2\big(
a_1u_1+\dots+a_nu_n+2\vert
\mathbf{a}\vert h
\big)^{n^2-1}
\cdot
\big(
2\vert\mathbf{ a}\vert h-\delta\vert\mathbf{a}\vert c_1
\big);
\end{multline}
here and from now on, admitting a slight abuse of notation which will
greatly facilitate the reading of formal computations, {\em we
systematically omit every pull-back symbol $\pi_{ j,k}^* ( \bullet
)$}. After elimination and reduction using the
relations~\thetag{\ref{c-l-k}} and~\thetag{\ref{u-n}} ({\em see}
below), our intersection product gives in principle a polynomial
(difficult to compute, 
{\em see} the end of the paper) 
of degree $\leqslant n+1$ with respect to $d =
\deg X$, which is affine in $\delta$, and all of which coefficients
are homogeneous polynomials in $\mathbf{ a}$ of degree $n^2$. Thus,
let us call it:
\[
{\sf P}_{\mathbf{ a},\delta}(d)
=
{\sf P}_{\mathbf{ a}}(d)+\delta\,{\sf P}'_{\mathbf{ a}}(d)
=
\sum_{k=0}^{n+1}\,{\sf p}_{k,\mathbf{a}}\,d^k
+
\delta\,
\sum_{k=0}^{n+1}\,{\sf p}_{k,\mathbf{a}}'\,d^k.
\] 
Now, suppose in advance that we have an effective control, through
explicit inequalities, of all the coefficients ${\sf p}_{ k, \mathbf{
a}} \in \mathbb{ Z}$ and ${\sf p}_{ k, \mathbf{ a}}' \in \mathbb{ Z}$
of both ${\sf P}_{ \mathbf{ a}}$ and ${\sf P}_{ \mathbf{ a}}'$, and
more precisely, that we already know inequalities of the type: 
\[
\vert{\sf p}_{k,\mathbf{a}}\vert 
\leqslant{\sf E}_k\ \ \
{\scriptstyle{(k\,=\,0,\,\dots,\,n)}},
\ \ \ \ \ \ \
{\sf p}_{n+1, \mathbf{ a}} 
\geqslant
{\sf G}_{n+1},
\ \ \ \ \ \ \
\vert{\sf p}_{k,\mathbf{a}}'\vert 
\leqslant 
{\sf E}_k'\ \ \
{\scriptstyle{(k\,=\,0,\,\dots,\,n,\,n\,+\,1)}}, 
\]
with the ${\sf E}_k \in \mathbb{ N}$, with ${\sf G}_{ n+1} \in
\mathbb{ N} \setminus \{ 0\}$ and with the ${\sf E}_k' \in \mathbb{
N}$ all depending upon $n$ only. According to the proof of
Theorem~\ref{existenceKX}, a good choice of weight $\mathbf{ a}$
indeed makes ${\sf p}_{ n+1, \mathbf{ a}}$ positive; we will see below
that ${\sf p}_{ n+1, \mathbf{ a}}'$ is then necessarily negative.

If we now set $\delta := \frac{ 1}{ 2}\, \frac{ {\sf G}_{n+1} }{ {\sf
E}_{ n+1}'}$ so that $\delta$ also depends {\em a posteriori}
explicitly upon $n$, the leading $d^{ n+1}$-coefficient of ${\sf P}_{
\mathbf{ a}, \delta}$ becomes positive and bounded from below:
\[
{\sf p}_{n+1,\mathbf{a}}
+
\delta\,{\sf p}_{n+1,\mathbf{a}}'
=
{\sf p}_{n+1,\mathbf{a}}
-
\delta\,
\big\vert
{\sf p}_{n+1,\mathbf{a}}'
\big\vert
\geqslant
{\sf G}_{n+1}
-
{\textstyle{\frac{1}{2}}}\,
{\textstyle{\frac{{\sf G}_{n+1}}{{\sf E}_{n+1}'}}}\,
{\sf E}_{n+1}'
=
{\textstyle{\frac{1}{2}}}\,
{\sf G}_{n+1}.
\]
The largest real root of a polynomial ${\sf a}_{ n+1} \, d^{ n+1} +
{\sf a}_n \, d^n + \cdots + {\sf a}_0$ having integer coefficients and
positive leading coefficient ${\sf a}_{ n+1} \geqslant 1$ may be
checked to be less than $1 + ({\sf a}_n + \cdots + {\sf a}_0) \big/
{\sf a}_{ n+1}$; instead of the finer bound $2 \max_{0\leqslant j
\leqslant n} \big( \frac{ \vert {\sf a}_j \vert}{ \vert {\sf a}_{ n+1}
\vert}\big)^{ 1 / n+1-j}$, we use this easier-to-write-down majoration
because at the end of Section~4, this will make no difference in
reaching the bound $\deg X \geqslant 2^{ n^5}$ of Theorem~\ref{main}.
Applied to our situation:

\begin{lemma}
\label{d-1-n}
If one chooses $\delta := \frac{ 1}{ 2}\, \frac{ {\sf G}_{n+1} }{ {\sf
E}_{ n+1}'}$, then the intersection product $\sum_{ k=0 }^{ n+1}\,
\big( {\sf p}_{ k, \mathbf{ a}} + \delta \, {\sf p}_{ k , \mathbf{
a}}' \big) \, d^k$ has positive leading coefficient ${\sf p}_{ n+1,
\mathbf{ a} } + \delta \, {\sf p}_{ n+1, \mathbf{ a}}' \geqslant
\frac{ 1}{ 2}\, {\sf G}_{ n+1}$ and has other coefficients enjoying
the majorations:
\[
\Big\vert{\sf p}_{k,\mathbf{a}} 
+ 
\delta\, 
{\sf p}_{k ,\mathbf{a}}'
\Big\vert 
\leqslant
{\sf E}_k
+
{\textstyle{\frac{1}{2}\, 
\frac{{\sf G}_{ n+1}}{{\sf E}_{n+1}'}}}\,
{\sf E}_k '
\ \ \ \ \ \ \ \ \ \ \ \ \
{\scriptstyle{(k\,=\,0,\,\dots,\,n)}},
\] 
and therefore it takes only positive values for all degrees
\[
\small
\aligned
d
\geqslant
1+
{\textstyle{
\Big(
{\sf E}_n+\cdots+{\sf E}_0
+
\frac{1}{2}\,\frac{{\sf G}_{n+1}}{{\sf E}_{n+1}'}\,
\big\{{\sf E}_n'
+\cdots+
{\sf E}_0'\big\}
\Big)
\Big/
\frac{1}{2}\,
{\sf G}_{n+1}
}}
=:
d_n^1.
\qed
\endaligned
\]
\end{lemma}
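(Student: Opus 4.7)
The plan is to verify the lemma in three elementary steps: first establish a lower bound on the leading coefficient of ${\sf P}_{\mathbf{a},\delta}(d)$, then an upper bound on each lower-degree coefficient, and finally invoke the classical Cauchy-type root bound for real polynomials.

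For the leading coefficient, I would simply combine the hypotheses $|{\sf p}_{n+1,\mathbf{a}}'| \leq {\sf E}_{n+1}'$ and ${\sf p}_{n+1,\mathbf{a}} \geq {\sf G}_{n+1}$ with the choice $\delta := \tfrac{1}{2}\,{\sf G}_{n+1}/{\sf E}_{n+1}'$ to obtain
\[
{\sf p}_{n+1,\mathbf{a}} + \delta\,{\sf p}_{n+1,\mathbf{a}}' \geq {\sf p}_{n+1,\mathbf{a}} - \delta\,\big|{\sf p}_{n+1,\mathbf{a}}'\big| \geq {\sf G}_{n+1} - \delta\,{\sf E}_{n+1}' = \tfrac{1}{2}\,{\sf G}_{n+1};
\]
this estimate is insensitive to the sign of ${\sf p}_{n+1,\mathbf{a}}'$, so the parenthetical remark made in the text that ${\sf p}_{n+1,\mathbf{a}}'$ is actually negative is not strictly needed for the bound (it is rather an \emph{a posteriori} check that the calibration $\delta \sim {\sf G}_{n+1}/{\sf E}_{n+1}'$ is essentially sharp). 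For each lower-degree coefficient $0 \leq k \leq n$, the triangle inequality alone yields
\[
\big|{\sf p}_{k,\mathbf{a}} + \delta\,{\sf p}_{k,\mathbf{a}}'\big| \leq \big|{\sf p}_{k,\mathbf{a}}\big| + \delta\,\big|{\sf p}_{k,\mathbf{a}}'\big| \leq {\sf E}_k + \tfrac{1}{2}\,\tfrac{{\sf G}_{n+1}}{{\sf E}_{n+1}'}\,{\sf E}_k',
\]
which is exactly the advertised upper bound.

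To conclude, I would apply the standard Cauchy-type root bound: for any polynomial $Q(d) = A_{n+1}\,d^{n+1} + A_n\,d^n + \cdots + A_0$ with real coefficients and leading coefficient $A_{n+1} \geq 1$, the inequality $A_{n+1}(d-1) \geq |A_n| + \cdots + |A_0|$ together with $d^k \leq d^n$ for $0 \leq k \leq n$ immediately forces $Q(d) > 0$. Applied to $A_k := {\sf p}_{k,\mathbf{a}} + \delta\,{\sf p}_{k,\mathbf{a}}'$ with the estimates from the two previous paragraphs, this produces precisely the announced effective lower bound $d_n^1$. There is essentially no obstacle here: the lemma is a careful packaging of the triangle inequality and the Cauchy root bound, with the balance factor $\tfrac{1}{2}$ in the definition of $\delta$ being just tight enough to preserve half of the initial margin ${\sf G}_{n+1}$ on the top coefficient.
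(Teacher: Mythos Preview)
Your proposal is correct and follows essentially the same approach as the paper, which in fact gives no separate proof (the lemma is marked \qed): the preceding paragraph already computes the leading-coefficient lower bound exactly as you do, and the sentence just before the lemma states the Cauchy-type root bound $1 + (\sum_{k\leqslant n}|{\sf a}_k|)/{\sf a}_{n+1}$ that you invoke. One cosmetic point: your hypothesis $A_{n+1}\geqslant 1$ in the root bound is stronger than what your argument actually uses (only $A_{n+1}>0$ is needed, and here $A_{n+1}\geqslant\tfrac{1}{2}\,{\sf G}_{n+1}$), so you may wish to relax that statement.
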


Thus, this $d_n^1$ will be effectively known in terms of $n$ when ${\sf
E}_k$, ${\sf G}_{ n+1}$, ${\sf E}_k'$ will be so. In order to have not
only the existence of global invariant jet differentials with
controlled vanishing order, but also algebraic degeneracy, we have
also to take account of condition~\thetag{\ref{C2}}, and 
this condition now reads:
\[
d
\geqslant
1
+
n+2
+
2\,(n^2+2n)\,
{\textstyle{\frac{{\sf E}_{n+1}'}{{\sf G}_{n+1}}}}
=:
d_n^2. 
\]
In conclusion, we would obtain the \emph{effective} estimate of
Theorem~\ref{main} provided we compute the bounds ${\sf E}_k$, ${\sf
G}_{ n+1}$, ${\sf E}_k'$ in terms of $n$ and provided we establish
that:
\begin{equation}
\label{finalestimate}
2^{n^5}
\geqslant
\max
\big\{
d_n^1,d_n^2
\big\} 
=:
d_n.
\end{equation}

\subsection{ Expanding the intersection product}
By expanding the $n^2$- and the $(n^2-1)$-powers, the intersection
product $\Pi_\delta$ in~\thetag{\ref{pi-delta}} writes as a certain
sum, with coefficients being polynomials in $\mathbb{ Z} \big[ a_1,
\dots, a_n, \delta \big]$, of monomials in the present Chern classes
that are of the general form:
\[
h^lu_1^{i_1}\cdots u_n^{i_n}
\ \ \ \ \ 
\text{\rm or}
\ \ \ \ \
h^lc_1u_1^{j_1}\cdots u_n^{j_n},
\]
where $l + i_1 + \cdots + i_n = n^2$ or $l + 1 + j_1 + \cdots + j_n = 
n^2$.

\begin{lemma}[\cite{dem1997,div2008a}]
\label{ineffective-reduction}
After several elimination computations which take account of the
relations~\thetag{\ref{c-l-k}} and~\thetag{\ref{u-n}}, any such
monomial reduces to a certain polynomial in $\mathbb{ Z} \big[ h,
c_1, \dots, c_n \big]$ which is homogeneous of degree $n = \dim X$, if
$h$ is assigned the weight $1$ and each $c_k$ receives the weight $k$.
Furthermore, after a last substitution by means of \thetag{\ref{c-d}}
which uses $h^n \equiv \int_X h^n = d = \deg X$, the polynomial in
question becomes a plain polynomial in $\mathbb{ Z} [ d]$ of degree
$\leqslant n+1$.
\qed
\end{lemma}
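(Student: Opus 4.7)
My plan is to run the reduction inductively from the top of Demailly's tower down to the base $X$, using exclusively the two structural relations~\thetag{\ref{c-l-k}} and~\thetag{\ref{u-n}}. Recall that the cohomology ring $H^\bullet(X_n)$ is the polynomial algebra $H^\bullet(X)[u_1,\dots,u_n]$ subject to the $n$ relations~\thetag{\ref{u-n}} at each level (with $r=n$), and that each $c_l^{[j]}$ has already been expressed recursively via~\thetag{\ref{c-l-k}} as an integer polynomial in $u_1,\dots,u_j$ and $c_1(V),\dots,c_l(V)=c_1,\dots,c_l$. Fix therefore a monomial of the shape $h^l\,u_1^{i_1}\cdots u_n^{i_n}$ or $h^l c_1\,u_1^{j_1}\cdots u_n^{j_n}$ of total codimension $n^2=\dim X_n$, coming from the expansion of $\Pi_\delta$.

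First I would treat the highest level. The relation~\thetag{\ref{u-n}} at level $n$ reads
\[
u_n^n=-\,c_1^{[n-1]}u_n^{n-1}-\cdots-c_{n-1}^{[n-1]}u_n-c_n^{[n-1]},
\]
which lets one replace every occurrence of $u_n^{i_n}$ with $i_n\geqslant n$ by an expression of strictly lower $u_n$-degree whose coefficients live in $H^\bullet(X_{n-1})$. After finitely many such replacements, the $u_n$-degree drops below $n$, and by fiber integration along the $\mathbb{P}^{n-1}$-fibration $\pi_n\colon X_n\to X_{n-1}$ only the component carrying $u_n^{n-1}$ survives upon pushing down. Then I would substitute each $c_l^{[n-1]}$ appearing in the surviving expression by its right-hand side in~\thetag{\ref{c-l-k}}, which is a polynomial in $u_{n-1}$ and the $c_s^{[n-2]}$. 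Repeating this procedure step by step for levels $n-1, n-2,\dots,1$, one arrives at a polynomial with integer coefficients in $u_1,\dots,u_{k-1}$ and in $c_1^{[0]},\dots,c_n^{[0]}=c_1,\dots,c_n$ at each intermediate stage $k$, and eventually, at level $0$, at a polynomial in $\mathbb{Z}[h,c_1,\dots,c_n]$.

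The key bookkeeping step is the homogeneity check. Assign the weights $\mathrm{wt}(u_j)=\mathrm{wt}(h)=1$ and $\mathrm{wt}(c_l)=l$; then relations~\thetag{\ref{c-l-k}} and~\thetag{\ref{u-n}} are both homogeneous (of weights $l$ and $n$ respectively), so every substitution preserves total weight. Since the starting monomial has weight $n^2$, and since each successive fiber integration $(\pi_k)_*$ extracts the coefficient of $u_k^{n-1}$ and lowers the cohomological degree by $n-1$, the final class on $X$ has weight $n^2-n(n-1)=n$, which is exactly the first assertion of the lemma.

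For the second assertion, I would substitute~\thetag{\ref{c-d}}: each $c_j=h^j P_j(d)$ where $P_j\in\mathbb{Z}[d]$ has degree exactly $j$. A weight-$n$ monomial $h^{\alpha_0}c_1^{\alpha_1}\cdots c_n^{\alpha_n}$ with $\alpha_0+\alpha_1+2\alpha_2+\cdots+n\alpha_n=n$ therefore becomes $h^n\cdot P_1(d)^{\alpha_1}\cdots P_n(d)^{\alpha_n}$; using $\int_X h^n=d$ produces a polynomial in $d$ whose total degree is $1+\alpha_1+2\alpha_2+\cdots+n\alpha_n\leqslant 1+n$, as asserted. The main obstacle is not conceptual but rather the sheer combinatorial complexity of carrying the reduction out explicitly enough to control the coefficients~${\sf p}_{k,\mathbf{a}}$ and ${\sf p}_{k,\mathbf{a}}'$ effectively; this is exactly the task postponed to the subsequent subsections of Section~\ref{Section-4} and to Section~\ref{Section-5}.
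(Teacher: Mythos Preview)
Your proposal is correct and follows essentially the same approach the paper itself sketches immediately after the lemma: the three reduction processes (vanishing for degree-form reasons, fiber-integration of $u_\ell^{n-1}$, and substitution via~\thetag{\ref{c-l-k}}--\thetag{\ref{u-n}}) together with the weight bookkeeping and the final substitution~\thetag{\ref{c-d}}. The paper does not give a self-contained proof of this lemma---it is stated with a \qed and attributed to~\cite{dem1997,div2008a}---so your argument is in fact more detailed than what appears here, but it is exactly the intended mechanism.
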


We illustrate with $h^l u_1^{i_1} \cdots u_{ n-1}^{ i_{ n-1}} u_n^{
i_n}$ three fundamental processes of reduction that will be
intensively used. Recall that any {\em sub}monomial $h^l u_1^{ i_1}
\cdots u_\ell^{ i_\ell} = \pi_{ 0, \ell}^* (h^l) \pi_{ 1, \ell}^*
(u_1^{ i_1}) \cdots u_\ell^{ i_\ell}$ denotes a differential form
living $X_\ell$ and that $\dim X_\ell = n + \ell ( n-1)$. Such a form
is of bidegree $(p, p)$ where $p = l + i_1 + \cdots + i_\ell$. We
shall allow the (slight) abuse of language to say
that $p$ itself is the {\sl
degree} of a $(p, p)$-form.

At first, if $i_n \leqslant n-2$, then $l + i_1 + \cdots + i_{ n-1}
\geqslant n^2 - n + 2 = 1 + \dim_\mathbb{ C} X_{ n-1}$, whence the
(sub)form $h^l u_1^{ i_1} \cdots u_{ n-1}^{ i_{ n-1}}$ which lives on
$X_{ n-1}$ annihilates, as then does $h^l u_1^{i_1} \cdots u_{ n-1}^{
i_{ n-1}} u_n^{ i_n}$ too. We call this (straightforward) first kind
of reduction process:
\[
{\text{\sl ``vanishing for degree-form reasons''}},
\]
and we symbolically point out the annihilating subform by underlining
it with a small circle appended, {\em viz.}:
\[
\underline{h^lu_1^{i_1}\cdots u_{n-1}^{i_{n-1}}}_\circ
u_n^{i_n}
=
0
\ \ \ \ \ \ \
\text{\rm when}\ \
i_n\leqslant n-2.
\]
This will greatly improve readability of elimination computations
below.

Secondly, in the case where $i_n = n-1$, using an appropriate version
of the Fubini theorem and taking account of the fact that $\int_{\rm
fiber} \, u_n^{ n-1} = \int_{ \mathbb{P}^{ n-1}}\, u_n^{n-1} = 1$,
where all the fibers of $\pi_{ n-1, n} : X_n \to X_{ n-1}$ are $\simeq
\mathbb{P}^{ n-1} ( \mathbb{ C})$ (\cite{ dem1997, rou2007, div2008a,
div2008b}), we may simplify as follows our monomial:
\[
h^lu_1^{i_1}\cdots u_{n-1}^{i_{n-1}}
\underline{u_n^{n-1}}_{\int}
=
h^l
u_1^{i_1}\cdots u_{n-1}^{i_{n-1}}\cdot 1
=
h^l
u_1^{i_1}\cdots u_{n-1}^{i_{n-1}}.
\]
We shall call this second kind of reduction process:
\[
{\text{\sl ``{\sl fiber-integration}''}}.
\]

The third process of course consists in substituting the two
relations~\thetag{\ref{c-l-k}} and~\thetag{\ref{u-n}} as many times as
necessary. With $r = n$ and without any $\pi_{ j, k}^* ( \bullet)$,
they now read:
\begin{equation}
\label{c-j-ell}
c_j^{[\ell]}
=
\sum_{k=0}^j\,\lambda_{j,j-k}\cdot
c_k^{[\ell-1]}
\big(u_\ell\big)^{j-k},
\end{equation}
where $1 \leqslant j, \, \ell \leqslant n$, with the conventions
$c_0^{ [\ell]} = 1$ and $c_j^{ [0]} = c_j$, where we set
\[
\lambda_{j,j-k}
:=
{\textstyle{\binom{n-k}{j-k}}}
-
{\textstyle{\binom{n-k}{j-k-1}}}
=
{\textstyle{\frac{(n-k)!}{(j-k)!\,(n-j)!}}}
-
{\textstyle{\frac{(n-k)!}{(j-k-1)!(n-j+1)!}}},
\]
and also, with upper indices of $u_\ell$ denoting exponents:
\begin{equation}
\label{u-ell-n}
u_\ell^n
=
-
c_1^{[\ell-1]}\,u_\ell^{n-1}
-
c_2^{[\ell-1]}\,u_\ell^{n-2}
-\cdots-
c_{n-1}^{[\ell-1]}\,u_\ell
-
c_n^{[\ell-1]}. 
\end{equation}

\subsection*{ Estimating the coefficient of $d^{ n+1}$}
Our first main task is to reach a lower bound ${\sf G}_{ n+1} - \delta
\, {\sf E}_{ n+1}'$ for the coefficient of $d^{ n+1}$ in $\Pi_\delta$,
and this cannot be straightforward, because there are {\em very
numerous} monomials in the expansion of $\Pi_\delta$. In a first
reading, one might jump directly to Subsection~\ref{highlighting} just
after Corollary~\ref{before-minorating}.
Here is an initial observation.

\begin{lemma}[\cite{ div2008b}]
\label{h-1}
Assume $l + i_1 + \cdots + i_n = n^2$ or $l + 1 + j_1 + \cdots + j_n =
n^2$. Then as soon as $l \geqslant 1$, one has: 
\[
0
=
{\sf coeff}_{d^{n+1}}
\big[
h^lu_1^{i_1}\cdots u_n^{i_n}
\big]
\ \ \ \ \ 
\text{\rm and}
\ \ \ \ \
0
=
{\sf coeff}_{d^{n+1}}
\big[
h^lc_1u_1^{j_1}\cdots u_n^{j_n}
\big].
\]
\end{lemma}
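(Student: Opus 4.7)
The plan is to exploit a single simple observation: the reduction relations~\thetag{\ref{c-l-k}} and~\thetag{\ref{u-n}} do not involve the hyperplane class $h$ at all, since they only link the tautological classes $u_\ell$ and the iterated Chern classes $c_j^{[\ell]}$. Consequently, throughout the full reduction process described in Lemma~\ref{ineffective-reduction}, the factor $h^l$ in the original monomial is carried inertly along as a pure multiplier, never merging with any $u_\ell$ nor with any $c_j^{[\ell]}$.

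First I would verify that once the $u_\ell$'s have been fully eliminated and the successive fiber integrations $X_n \to X_{n-1} \to \cdots \to X$ performed, the monomial $h^l u_1^{i_1}\cdots u_n^{i_n}$, which has total cohomological degree $n^2$ on $X_n$, descends to a top form on $X$, i.e.\ a class of cohomological degree $n$. Because $h$ is inert, this class necessarily takes the shape
\[
h^l \cdot R\bigl(c_1(T_X),\dots,c_n(T_X)\bigr),
\]
with $R$ weighted-homogeneous of degree $n-l$ when each $c_k$ is assigned weight $k$; the identity $l + (n-l) = n$ matches precisely the top-degree condition.

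Next I would plug in the explicit formula~\thetag{\ref{c-d}}, which shows that $c_k(T_X) = h^k \cdot P_k(d)$ with $P_k \in \mathbb{Z}[d]$ of degree exactly $k$. Any monomial $c_{k_1}\cdots c_{k_s}$ of weighted degree $n-l$ thus contributes $h^{n-l}$ times a polynomial in $d$ of degree at most $n-l$. Summing over such monomials, the whole expression becomes $h^n \cdot Q(d)$ with $\deg Q \leqslant n-l$, and using $h^n \equiv d$ then yields a polynomial in $d$ of degree at most $(n-l)+1 = n+1-l$. Under the hypothesis $l \geqslant 1$, this degree is at most $n$, so the coefficient of $d^{n+1}$ vanishes identically, as claimed.

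For the second class of monomials $h^l c_1 u_1^{j_1}\cdots u_n^{j_n}$, the same recipe applies verbatim: after reducing the $u$'s one obtains $h^l \cdot c_1(T_X)\cdot R'(c_1,\dots,c_n)$ with $R'$ weighted-homogeneous of degree $n-l-1$; substituting~\thetag{\ref{c-d}} and using $h^n = d$ produces a polynomial in $d$ of total degree at most $1 + (n-l-1) + 1 = n+1-l \leqslant n$. The only point requiring attention is the inertness of $h$ throughout the reduction, a feature immediate from inspection of~\thetag{\ref{c-l-k}} and~\thetag{\ref{u-n}}, so there is really no conceptual obstacle: the statement amounts to a clean bookkeeping fact expressing that every spare factor of $h$ strictly \emph{decreases} the maximal $d$-degree below the absolute ceiling $n+1$.
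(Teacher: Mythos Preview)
Your proof is correct and follows essentially the same route as the paper's own argument: reduce the $u$-monomial to a combination of terms $h^l c_1^{\lambda_1}\cdots c_n^{\lambda_n}$ with $l + \lambda_1 + 2\lambda_2 + \cdots + n\lambda_n = n$, substitute~\thetag{\ref{c-d}}, use $h^n = d$, and observe that the resulting $d$-degree is strictly below $n+1$ once $l\geqslant 1$. Your framing via the \emph{inertness} of $h$ in the relations~\thetag{\ref{c-l-k}} and~\thetag{\ref{u-n}} is a clean way to isolate the key mechanism, and your degree count $n+1-l$ is in fact slightly sharper than the bound the paper records, but the substance is identical.
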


\proof
Indeed, after reduction of either $u$-monomial in terms of the Chern
classes $c_k$ of the base, one obtains a sum with integer coefficients
of terms of the form:
\[
h^lc_1^{\lambda_1}c_2^{\lambda_2}\cdots
c_n^{\lambda_n}
\]
with $l + \lambda_1 + 2 \lambda_2 + \cdots + n \lambda_n = n$. But
then if we replace the Chern classes by their expressions~\thetag{
\ref{c-d}} in terms of $h$ and of the degree, we get:
\[
\aligned
{\sf coeff}_{d^{n+1}}
\big[
h^lc_1^{\lambda_1}c_2^{\lambda_2}
\cdots
c_n^{\lambda_n}
\big]
&
=
{\sf coeff}_{d^{n+1}}
\big[
(-1)^{\lambda_1+\cdots+\lambda_n}\,h^n\cdot
d^{\lambda_1+\lambda_2+\cdots+\lambda_n}
+
{\sf l.o.t}
\big]
\\
&
=
{\sf coeff}_{d^{n+1}}
\big[
(-1)^{\lambda_1+\cdots+\lambda_n}\,d\cdot
d^{\lambda_1+\lambda_2+\cdots+\lambda_n}
+
{\sf l.o.t}
\big]
\\
&
=
0,
\endaligned
\]
since $1 + \lambda_1 + \lambda_2 + \cdots + \lambda_n \leqslant l +
\lambda_1 + 2 \lambda_2 + \cdots + n \lambda_n = n$.
\endproof

As a result, a glance at~\thetag{\ref{pi-delta}} immediately shows that: 
\[
\small
\aligned
\text{\rm coeff}_{d^{n+1}}\!
\big[\Pi_\delta\big]
=
\text{\rm coeff}_{d^{n+1}}
\Big[
\big(a_1u_1+\cdots+a_nu_n\big)^{n^2}
+
\delta\vert\mathbf{a}\vert c_1
\big(a_1u_1+\cdots+a_nu_n\big)^{n^2-1}
\Big].
\endaligned
\]

\subsection{ Reverse lexicographic ordering for the $u$-monomials}
We order the collection of all homogeneous monomials $u_1^{ i_1}
\cdots u_n^{ i_n}$ with $i_1 + \cdots + i_n = n^2$ appearing in the
expansion of $\big( a_1 u_1 + \cdots + a_n u_n \big)^{ n^2}$ above by
declaring that the monomial $u_1^{ i_1} \cdots u_n^{ i_n}$ is {\sl
smaller}, for the {\sl reverse lexicographic ordering}, than another
monomial $u_1^{ j_1} \cdots u_n^{ j_n}$, again of course with $j_1 +
\cdots + j_n = n^2$, if:
\[
\left\{
\aligned
&
i_n>j_n
\\
\text{\rm or if}\ \
&
i_n=j_n\ \
\text{\rm but}\ \
i_{n-1}>j_{n-1}
\\
\cdots\cdots
&
\cdots\cdots\cdots\cdots\cdots\cdots\cdots\cdots\cdot\cdot
\\
\text{\rm or if}\ \
&
i_n=j_n,\,\dots,\,i_3=j_3\ \
\text{\rm but}\ \
i_2>j_2.
\endaligned
\right.
\]
Observe that $i_n = j_n, \dots, i_2 = j_2$ implies $i_1 = j_1$. An
equivalent language says that the multiindices themselves are ordered in 
this way:
\[
(i_1,\dots,i_n)
<_{\sf revlex}
(j_1,\dots,j_n).
\]

\begin{proposition}
\label{bigger-central}
The coefficient of $d^{ n+1}$ in any monomial $u_1^{ i_1} \cdots u_n^{
i_n}$ which is {\em larger} than $u_1^n \cdots u_n^n$ is zero:
\[
{\sf coeff}_{d^{n+1}}
\big[u_1^{i_1}\cdots u_n^{i_n}\big]
=
0
\ \ \ \ \ \
\text{\em for any}
\ \ \ \ \ \
(i_1,\dots,i_n)
>_{\sf revlex}
(n,\dots,n).
\]
\end{proposition}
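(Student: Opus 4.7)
\smallskip
\noindent\textbf{Proof plan.}

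The strategy is to reduce $u_1^{i_1}\cdots u_n^{i_n}$ via iterated fiber-integration along the Demailly tower $X_n\to X_{n-1}\to\cdots\to X$, combining the fiber-integration fact $\pi_{k-1,k,*}\,u_k^{n-1}=1$ (with $\pi_{k-1,k,*}\,u_k^{p}=0$ for $p<n-1$) with the relation $u_k^n=-\sum_{s=1}^n c_s^{[k-1]}\,u_k^{n-s}$ of \thetag{\ref{u-n}} at each level. First, by the same argument as in Lemma~\ref{h-1}, the $d^{n+1}$-coefficient of $\int_{X_n} u_1^{i_1}\cdots u_n^{i_n}$ equals $(-1)^n$ times the sum of the coefficients of all pure weight-$n$ Chern monomials $c_1^{\lambda_1}\cdots c_n^{\lambda_n}$ (i.e., those with no $h$-factor) appearing in the final reduction on $X$. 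Every intermediate term that acquires a factor of some $c_j(X)$, equivalently of $h$ since $c_j(X)=(-h)^j d^j + \text{l.o.t.}$ by~\thetag{\ref{c-d}}, may therefore be discarded at once.

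Setting $\ell:=\max\{k:i_k\neq n\}$, the revlex hypothesis forces $i_\ell\leq n-1$ and $i_{\ell+1}=\cdots=i_n=n$; moreover $\ell\geq 2$, since $\ell=1$ combined with $\sum_k i_k=n^2$ would require $i_1=n$, contradicting $i_1=i_\ell<n$. My plan is to iterate the following step at each level $k=n,n-1,\ldots,\ell+1$: apply $\pi_{k-1,k,*}\,u_k^n=-c_1^{[k-1]}$ and expand $c_1^{[k-1]}=c_1(X)+(n-1)(u_1+\cdots+u_{k-1})$ via~\thetag{\ref{c1}}. The $c_1(X)$-term carries a factor of $h$ and so contributes nothing to the $d^{n+1}$-coefficient by Lemma~\ref{h-1}, while each $(n-1)u_j$-summand (for $j<k$) produces a new top monomial on $X_{k-1}$ with the $j$-th exponent increased by $1$, to be processed at the next level. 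At level $\ell$ itself, either the accumulated $u_\ell$-exponent in a given branch remains $<n-1$, whence $\pi_{\ell-1,\ell,*}$ kills that branch, or it reaches exactly $n-1$, in which case the branch reduces to a top form on $X_{\ell-1}$. For this last subcase I would invoke the subsidiary vanishing claim that any top monomial on $X_m$ with $m<n$ contributes $0$ to the $d^{n+1}$-coefficient, provable by an analogous Lemma~\ref{h-1}-style weight/dimension count reflecting the fact that on $X_m$ (with $m<n$) no multi-index can reproduce the ``central'' configuration $(n,n,\ldots,n)$ which alone gives a nonvanishing pure-Chern sum.

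The main obstacle I anticipate is the combinatorial explosion of the iteration: branching at each level $k$ produces $k-1$ shift-monomials, so iterating from $k=n$ down to $k=\ell+1$ yields on the order of $(n-1)!/\ell!$ branches, each carrying its own accumulated shift pattern. The technical crux is to verify systematically that every surviving branch falls into one of the two handled cases at level $\ell$---Segre-type vanishing when the accumulated $u_\ell$-exponent stays too low, or the subsidiary top-form vanishing on $X_{\ell-1}$ when it reaches exactly $n-1$---consistently with the sum constraint $\sum_k i_k=n^2$ and the revlex hypothesis. Controlling how iterated exponent shifts interact with the revlex ordering, together with establishing the subsidiary vanishing claim in its own right, appears to be where the bulk of the elimination combinatorics announced in Section~\ref{Section-5} must enter.
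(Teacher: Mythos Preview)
Your proposal contains a genuine gap at its core step. You claim that any branch acquiring a factor $c_j(X)$ may be ``discarded at once'' because $c_j(X)=(-h)^j d^j+\text{l.o.t.}$ carries a power of $h$, and you invoke Lemma~\ref{h-1}. But this is not what Lemma~\ref{h-1} says, and the inference is false. Writing $c_1(X)=-h(d-n-2)$, a branch of the form $c_1(X)\cdot u_1^{j_1}\cdots u_{k-1}^{j_{k-1}}$ equals $-(d-n-2)\cdot\big[h\,u_1^{j_1}\cdots u_{k-1}^{j_{k-1}}\big]$; even granting that the bracketed $h$-term has $d$-degree at most $n$, multiplication by the linear factor $(d-n-2)$ raises the $d$-degree back to $n+1$, so the $d^{n+1}$-coefficient is \emph{not} forced to vanish by this reasoning. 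Concretely, $c_1^n$ itself has $d^{n+1}$-coefficient $(-1)^n\neq 0$, showing that ``acquiring a $c_1(X)$-factor'' does not kill the top coefficient. In fact, in the reduction of a pure $u$-monomial there are no explicit $h$-factors at all---every term in the final expression on $X$ is a pure weight-$n$ Chern monomial---so your dichotomy between ``pure Chern'' and ``$h$-factor'' terms is empty.

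The vanishing of these $c_1$-branches \emph{is} true, but it is exactly the content of parts (ii) and (iv) of the paper's Lemma~\ref{i-ii-iii-iv}, which require their own inductive proof (the ``loop'' argument reducing (ii) at level $k$ to (ii) at level $k-1$ via (i)). Your ``subsidiary vanishing claim'' is part (i) of the same lemma, taken from~\cite{div2008b}. The paper's proof of the Proposition is three lines precisely because all four interlocking vanishing statements (i)--(iv) are packaged into that lemma; your outline rediscovers the need for (i), (ii), (iv) but supplies a faulty shortcut for (ii) and (iv). A secondary issue: your iteration ``apply $\pi_{k-1,k,*}u_k^n=-c_1^{[k-1]}$'' assumes the $u_k$-exponent stays equal to $n$ at each level, but accumulated shifts from earlier steps can push it to $n+1,n+2,\ldots$, requiring the full Jacobi--Trudy reduction of Lemma~\ref{K-i-l} rather than just the $c_1^{[k-1]}$ case.
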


\proof
Thus, assume $(i_1, \dots, i_n) >_{ \sf revlex} (n, \dots,
n)$. Firstly, if $i_n = n$, the claimed vanishing property is in all
concerned subcases yielded by {\bf (iii)} of the lemma just
below. Secondly, if $i_n = n -1$, an integration on the fiber of
$\pi_{ n-1, n} : X_n \to X_{ n-1}$ replaces $u_n^{ n-1}$ by the
constant $+1$, hence we are left with $u_1^{ i_1} \cdots u_{ n-1}^{
i_{ n-1}}$ and {\bf (i)} of the same lemma then yields the
conclusion. Thirdly and lastly, if $i_n \leqslant n-2$, then the form
$u_1^{ i_1} \cdots u_{ n-1}^{ i_{ n-1}}$ vanishes identically for
degree-form reasons. Thus, granted the lemma, the proposition is
proved.
\endproof

\begin{lemma}
\label{i-ii-iii-iv}
The coefficient of $d^{ n+1}$ in all the following four sorts of
$u$-monomials is equal to zero:

\begin{itemize}

\smallskip\item[{\bf (i)}]
$u_1^{ i_1 } \cdots u_k^{ i_k }$ for any $k \leqslant n-1$ and any
$i_1, \dots, i_k$ with $i_1 + \cdots + i_k = n + k ( n-1)${\rm ;}

\smallskip\item[{\bf (ii)}]
$(c_1)^{ n-k }\,u_1^{ i_1 } \cdots u_k^{ i_k }$ for any $k
\leqslant n-1$, and any $i_1, \dots, i_k$ with $i_k \leqslant n-1$ and
$i_1 + \cdots + i_k = kn${\rm ;}

\smallskip\item[{\bf (iii)}]
$u_1^{i_1} \cdots u_l^{ i_l} \, u_{ l+1}^n \cdots u_n^n$ for any $l
\leqslant n$, any $i_1, \dots, i_l$ with $i_l \leqslant n-1$ and $i_1
+ \cdots + i_l = ln${\rm ;}

\smallskip\item[{\bf (iv)}]
$c_1 u_1^{ i_1} \cdots u_l^{ i_l} u_{ l+1}^n \cdots u_{ n-1}^n$
for any $l \leqslant n-1$, any $i_l \leqslant n-1$, any $i_1, \dots,
i_l$ with $i_1+ \cdots + i_l = ln$.

\end{itemize}
\end{lemma}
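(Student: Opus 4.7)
Since none of the four monomials contains any explicit factor of $h$, and since the reduction rules \thetag{\ref{c-j-ell}}--\thetag{\ref{u-ell-n}} together with the fiber integrations along $\pi_{\ell-1,\ell}$ produce only polynomials in the $u_\ell$ and in the $c_j^{[\ell]}$ (hence eventually in the $c_j = c_j(T_X)$), the full push-down of each such monomial to the cohomology of $X$ yields a pure Chern-class polynomial $P(c_1,\ldots,c_n)$ homogeneous of weight $n$. Substituting \thetag{\ref{c-d}} gives $c_j = (-1)^j h^j d^j + \mathrm{lower}$, so every weight-$n$ Chern monomial $c_1^{\lambda_1}\cdots c_n^{\lambda_n}$ has leading part $(-1)^n h^n d^n$, which integrates via $\int_X h^n = d$ to $(-1)^n d^{n+1}$. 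Summing over all monomials of $P$,
\[
{\sf coeff}_{d^{n+1}}\!\left[\int_X P\right]
\,=\,
(-1)^n\, P(1,1,\ldots,1).
\]
Hence, in each of the four cases, it suffices to verify that the reduction $P$ satisfies $P(1,\ldots,1) = 0$.

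\textbf{Treatment of cases (iii) and (iv).} The long tail $u_{l+1}^n\cdots u_n^n$ admits direct unwinding. Apply \thetag{\ref{u-ell-n}} at $\ell = n$ and fiber-integrate over the $\mathbb{P}^{n-1}$-fiber of $\pi_{n-1,n}$: only the summand carrying $u_n^{n-1}$ survives (lower powers fiber-integrate to $0$), leaving $-c_1^{[n-1]}\, u_1^{i_1}\cdots u_l^{i_l} u_{l+1}^n\cdots u_{n-1}^n$ on $X_{n-1}$. Expanding $c_1^{[n-1]} = c_1 + (n-1)\sum_{s=1}^{n-1} u_s$ via \thetag{\ref{c1}}, one summand is of type (iv) with the tail shortened by one, while the $n-1$ others, after relabeling, fall into cases (i) or (iii) with strictly smaller $n-l$. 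An induction on $n-l$ terminates at the base configurations $l=n$, $i_n \leq n-2$ (vanishing for degree-form reasons on $X_{n-1}$) and $l = n$, $i_n = n-1$ (fiber-integrating directly to case (i) with $k = n-1$). Case (iv) is handled identically, the extra $c_1$-factor being carried along inertly and ultimately feeding into case (ii).

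\textbf{Treatment of cases (i) and (ii); main obstacle.} For case (i), iterate the pushforward
\[
\pi_{\ell-1,\ell *}(u_\ell^p)
\,=\,
\begin{cases}
s_{p-(n-1)}(V_{\ell-1}), & p\geq n-1,\\
0, & p\leq n-2,
\end{cases}
\]
expressing the result via Segre classes of the successive $V_\ell$'s, which in turn reduce to Chern classes of $T_X$ through \thetag{\ref{c-j-ell}}. The crucial computation is that under $c_j(T_X) = 1$, one has $c(T_X) = 1+t+\cdots+t^n$ and therefore $s(T_X) = (1-t)/(1-t^{n+1}) = \sum_{m\geq 0}\bigl(t^{m(n+1)} - t^{m(n+1)+1}\bigr)$, whose coefficient of $t^p$ vanishes for every $2\leq p \leq n$. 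The base case $k = 1$ of (i) is immediate, $\pi_{0,1*}(u_1^{2n-1}) = s_n(T_X)$ evaluating to $0$ at $c = 1$; larger $k \leq n-1$ follows by an induction tracking how the Segre content of each $V_\ell$ evaluates at $c = 1$. Case (ii) is reduced in the same spirit: the projection formula pulls $c_1^{n-k}$ out of the pushforward (evaluating to $1$ at $c = 1$), leaving a weight-$k$ pushforward of the $u$-part. Since the constraints $i_k \leq n-1$ and $\sum i_j = kn$ force $2\leq k \leq n-1$, the output is a combination of Segre classes in degrees lying in $[2, n-1]\subset [2, n]$, all of which vanish at $c = 1$. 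The principal difficulty is the combinatorial bookkeeping in (iii)--(iv): repeated unwindings via \thetag{\ref{c-j-ell}}--\thetag{\ref{u-ell-n}} generate an expanding tree of derived monomials, and organizing the induction so that every branch terminates in a case already settled requires care.
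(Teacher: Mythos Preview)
Your reformulation that ${\sf coeff}_{d^{n+1}}[\int_X P] = (-1)^n\,P(1,\dots,1)$ for any weight-$n$ Chern polynomial $P$ is correct and is a genuinely different viewpoint from the paper's. Your handling of (iii)/(iv) by peeling off the $u_\ell^n$ tail is also in the same spirit as the paper, although the paper organizes all four statements as a single interleaved downward induction on $k$ (resp.\ $l$): it takes (i) as known from~\cite{div2008b} and, at each step, uses \thetag{\ref{u-ell-n}} together with \thetag{\ref{c-1-ell}} to derive (ii) at level $k$ from (i)/(ii) at level $k+1$, then (iii) and (iv) at the same level, before descending again.

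The substantive gap is in your direct Segre-class treatment of (ii) (and in the unwritten induction for (i)). The iterated pushforward of $u_1^{i_1}\cdots u_k^{i_k}$ is \emph{not} simply a Segre class of $T_X$: at each step $\pi_{\ell-1,\ell\,*}(u_\ell^{i_\ell}) = s_{i_\ell-n+1}(V_{\ell-1})$ is a Segre class of the intermediate bundle $V_{\ell-1}$, which is a polynomial in $c_1,\dots,c_n$ \emph{and} in $u_1,\dots,u_{\ell-1}$. Already for $k=3$ and $i_3=n-1$, $i_2\in\{n,n+1\}$, the output on $X$ is a combination such as $-c_1\,s_2(T_X)-(n-1)\,s_3(T_X)$ or $s_1(T_X)s_2(T_X)+n\,c_1\,s_2(T_X)+\tfrac{n^2-n+2}{2}\,s_3(T_X)$, not a single Segre class in degree $[2,n-1]$. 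These particular expressions do happen to vanish at $c=1$, but your sentence ``the output is a combination of Segre classes in degrees lying in $[2,n-1]$'' is not accurate, and you have not stated, let alone proved, the inductive claim that would make the vanishing at $c=1$ propagate through the tower. That is exactly the work the paper does by its explicit level-by-level computations; if you want to carry your approach through, you need to formulate and prove an auxiliary lemma describing, at $c_j=1$, the structure of $s_p(V_\ell)$ in terms of $u_1,\dots,u_\ell$ sufficient to force all the relevant pushforwards to vanish.
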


\proof
Property {\bf (i)} is established in Section~3 of~\cite{ div2008b}.
So {\bf (i)} holds.

Applying~\thetag{\ref{c-j-ell}} written for $j = 1$, namely 
$c_1^{[ \ell]} = c_1^{[ \ell - 1]} + ( n-1)\, u_\ell$, we get:
\begin{equation}
\label{c-1-ell}
c_1^{[\ell]}
=
c_1
+
(n-1)\,u_1
+\cdots+
(n-1)\,u_\ell.
\end{equation}

To begin with, we start from {\bf (i)} for $k = n-1$, $i_{ n-1} = n$
and $i_1 + \cdots + i_{ n-2} = n + (n-1)(n-1) - i_{ n-1} = n^2 - 2n
+1$ arbitrary, namely:
\[
0
=
{\sf coeff}_{d^{n+1}}
\big[
u_1^{i_1}\cdots u_{n-2}^{i_{n-2}}u_{n-1}^n
\big].
\]
Next, thanks to \thetag{\ref{u-ell-n}}, we may replace in this
equality $u_{ n-1}^n$ by $- c_1^{[ n-2]} u_{ n-1}^{ n-1} -
c_2^{[n-2]}u_{n-1}^{n-2} - \cdots - c_n^{[ n-2]}$:
\[
\footnotesize
\aligned
0
&
=
{\sf coeff}_{d^{n+1}}
\Big[
u_1^{i_1}\cdots u_{n-2}^{i_{n-2}}
\big(
-c_1^{[n-2]}u_{ n-1}^{ n-1}
-
\underline{c_2^{[n-2]}u_{n-1}^{n-2}
-\cdots-
c_n^{[n-2]}}_\circ
\big)
\Big]
\\
&
=
{\sf coeff}_{d^{n+1}}
\big[
u_1^{i_1}\cdots u_{n-2}^{i_{n-2}}
\big(
-c_1^{[n-2]}\,u_{n-1}^{n-1}
\big)
\big]
\ \ \ \ \ \
\explain{degree-form reasons}
\ \ \ \ \
\explain{use \thetag{\ref{c-1-ell}}}
\\
&
=
{\sf coeff}_{d^{n+1}}
\big[
u_1^{i_1}\cdots u_{n-2}^{i_{n-2}}
\big(
-c_1-
\underline{(n-1)u_1-\cdots-(n-1)u_{n-2}}_\circ
\big)\,u_{n-1}^{n-1}
\big]
\\
&
=
{\sf coeff}_{d^{n+1}}
\big[
-c_1u_1^{i_1}\cdots u_{n-2}^{i_{n-2}}u_{n-1}^{n-1}
\big]
\ \ \ \ \ \
\explain{apply {\bf (i)} again},
\endaligned
\]
and we therefore get {\bf (ii)} for $k = n-1$ when $i_{ n-1} =
n-1$. But in all the other remaining cases when $i_{n-1} \leqslant
n-2$, then by the assumption that the sum of the 
indices $i_l$ is equal to $(n-1)n$:
\[
i_1
+\cdots+ 
i_{ n-2}
\geqslant
(n-1)n-(n-2)=n^2-2n+2 
= 
\dim X_{n-2},
\]
and consequently, the degree of the form $c_1 u_1^{ i_1} \cdots
u_{ n-2}^{ i_{ n-2}}$ is $\geqslant 1 + \dim X_{ n-2}$, whence this
form vanishes identically. Thus {\bf (ii)} is proved completely for
$k = n-1$.

Next, consider {\bf (iii)} for $l = n$. If $i_n \leqslant n-2$, then
by degree-form reasons $0 \equiv u_1^{ i_1} \cdots u_{ n-1}^{ i_{
n-1}}$, whence ${\sf coeff}_{ d^{ n+1}} \big[ u_1^{ i_1}
\cdots u_{ n-1}^{ i_{n-1}} u_n^{ i_n} \big] = 0$ gratuitously. So we
assume $i_n = n-1$. But then $i_1 + \cdots + i_{ n-1} = n^2 - n +1$,
hence {\bf (i)} applies to give:
\[
\aligned
0
&
=
{\sf coeff}_{d^{n+1}}
\big[
u_1^{i_1}\cdots u_{n-1}^{i_{n-1}}
\big]
\ \ \ \ \ \ \
\explain{reconstitute hidden integration of $u_n^{n-1}$}
\\
&
=
{\sf coeff}_{d^{n+1}}
\big[
u_1^{i_1}\cdots u_{n-1}^{i_{n-1}}u_n^{n-1}
\big],
\endaligned
\]
and therefore this proves {\bf (iii)} completely for $l = n$. But we
also get at the same time the property {\bf (iii)} for $l = n-1$. 
Indeed, with $i_1 + \cdots + i_{ n-1} = (n-1)n$ and with $i_{
n-1} \leqslant n-1$, we may reduce, using \thetag{ \ref{u-ell-n}}:
\[
\footnotesize
\aligned
u_1^{i_1}\cdots u_{n-1}^{i_{n-1}}u_n^n
&
=
u_1^{i_1}\cdots u_{n-1}^{i_{n-1}}
\big[
-c_1^{[n-1]}u_n^{n-1}
-
\underline{c_2^{[n-1]}u_n^{n-2}
-\cdots-c_n^{[n-1]}}_\circ
\big]
\\
&
=
u_1^{i_1}\cdots u_{n-1}^{i_{n-1}}
\big[
-c_1^{[n-1]}u_n^{n-1}
\big]
\ \ \ \ \ \ \
\explain{degree-form reasons}
\ \ \ \ \
\explain{use \thetag{\ref{c-1-ell}}}
\\
&
=
u_1^{i_1}\cdots u_{n-1}^{i_{n-1}}
\big[
-c_1
-(n-1)u_1
-\cdots
-(n-1)u_{n-1}
\big]
\endaligned
\]
Thanks to {\bf (i)}, after expansion, the pure $u$-monomials give no
contribution to $d^{ n+1}$, and consequently:
\[
{\sf coeff}_{d^{n+1}}
\big[
u_1^{i_1}\cdots u_{n-1}^{i_{n-1}}u_n^n
\big]
=
{\sf coeff}_{d^{n+1}}
\big[
-c_1u_1^{i_1}\cdots u_{n-1}^{i_{n-1}}
\big]
=
0,
\]
where the last equality holds true thanks to the property {\bf (ii)}
already proved for $k = n-1$. Thus {\bf (iii)} is completely proved
for $l = n$ and for $l = n-1$.

Lastly, we just observe that {\bf (iv)} for $l = n-1$ coincides with
{\bf (ii)} for $k = n-1$. In summary, we have completed a first loop
of proofs.

Consider now the second loop. We start from {\bf (ii)} for $k = n-1$
(already got) with $i_{ n-1} = n-1$ and with $i_{ n-2} = n$, so that
$i_1 + \cdots + i_{ n-3} = (n-1)n - i_{ n-2} - i_{ n -1} = n^2 - 3n +
1$, and then we compute:
\[
\small
\aligned
0
&
=
{\sf coeff}_{d^{n+1}}
\big[
c_1u_1^{i_1}\cdots
u_{n-3}^{i_{n-3}}u_{n-2}^n
\underline{u_{n-1}^{n-1}}_{\int}
\big]
\ \ \ \ \ 
\explain{fiber-integration}
\\
&
=
{\sf coeff}_{d^{n+1}}
\big[
c_1u_1^{i_1}\cdots u_{n-3}^{i_{n-3}}
\big(
-c_1^{[n-3]}u_{n-2}^{n-1}
-\underline{c_2^{[n-3]}u_{n-2}^{n-2}
-\cdots
-c_n^{[n-3]}}_\circ
\big)
\big]
\ \ \ \ \ \
\explain{use \thetag{\ref{u-ell-n}}}
\\
&
=
{\sf coeff}_{d^{n+1}}
\big[
c_1u_1^{i_1}\cdots u_{n-3}^{i_{n-3}}
\big(
-c_1^{[n-3]}
\big)u_{n-2}^{n-1}
\big]
\ \ \ \ \ \
\explain{degree-form reasons}
\ \ \ \ \
\explain{use \thetag{\ref{c-1-ell}}}
\\
&
=
{\sf coeff}_{d^{n+1}}
\big[
c_1u_1^{i_1}\cdots u_{n-3}^{i_{n-3}}
\big(
-c_1-
\underline{(n-1)u_1-\cdots-(n-1)u_{n-3}}_\circ
\big)u_{n-2}^{n-1}
\underline{u_{n-1}^{n-1}}_{\int}
\big]
\\
&
=
{\sf coeff}_{d^{n+1}}
\big[
-c_1c_1u_1^{i_1}\cdots u_{n-3}^{i_{n-3}}u_{n-2}^{n-1}
\underline{u_{n-1}^{n-1}}_{\int}
\big]
\ \ \ \ \ \
\explain{apply {\bf (ii)} for $k = n-1$ again}
\\
&
=
{\sf coeff}_{d^{n+1}}
\big[
-c_1c_1u_1^{i_1}\cdots u_{n-3}^{i_{n-3}}u_{n-2}^{n-1}
\big]
\ \ \ \ \ \
\explain{fiber-integration},
\endaligned
\]
where we have reintroduced $u_{ n-1}^{ n-1}$ (artificially) in the
fourth line, so as to apply {\bf (ii)} for $k = n-1$ (got). As a
result of the last obtained equation, we have gained {\bf (ii)} for $k
= n-2$ when $i_{ n-2} = n-1$, but since when $i_{ n-2 } \leqslant
n-2$, the form $c_1 c_1 u_1^{ i_1} \cdots u_{ n-3}^{ i_{ n-3}}$
vanishes identically for degree reasons, we finally have fully
established {\bf (ii)} for $k = n-2$.

Next, we look at {\bf (iii)} for $l = n - 2$. Then $i_1 + \cdots +
i_{ n-2} = (n -2)n$ with $i_{ n-2} \leqslant n - 1$. So we ask
whether the following coefficient vanishes:
\[
\small
\aligned
&
{\sf coeff}_{d^{n+1}}
\big[
u_1^{i_1}\cdots u_{n-2}^{i_{n-2}}u_{n-1}^nu_n^n
\big]
=
\\
&
=
{\sf coeff}_{d^{n+1}}
\big[
u_1^{i_1}\cdots u_{n-2}^{i_{n-2}}u_{n-1}^n
\big(
c_1-
\underline{(n-1)u_1-\cdots-(n-1)u_{n-1}}_\circ
\big)
\big]
\\
&
=
{\sf coeff}_{d^{n+1}}
\big[
-c_1u_1^{i_1}\cdots u_{n-2}^{i_{n-2}}u_{n-1}^n
\big]
\\
&
=
{\sf coeff}_{d^{n+1}}
\big[
-c_1u_1^{i_1}\cdots u_{n-2}^{i_{n-2}}
\big(
-c_1-
\underline{(n-1)u_1-\cdots-(n-1)u_{n-2}}
\big)
u_{n-1}^{n-1}
\big]
\\
&
=
{\sf coeff}_{d^{n+1}}
\big[
c_1c_1u_1^{i_1}\cdots u_{n-2}^{i_{n-2}}
\underline{u_{n-1}^{n-1}}_{\int}
\big]
\\
&
=0,
\endaligned
\]
and in fact, this coefficient vanishes actually, thanks to {\bf (ii)}
for $k = n-2$ seen a moment ago. This therefore proves {\bf (iii)} for
$l = n - 2$ completely.

Finally, consider {\bf (iv)} for $l = n- 2$. Then $i_1 + \cdots + i_{
n-2 } = ( n - 2 ) n$ and $i_{ n-2} \leqslant n - 1$. But coming back
to the third line of the equations just above, where $i_{n-2}
\leqslant n - 1$ too, we have in fact already implicitly proved that:
\[
0 
= 
{\sf coeff}_{d^{n+1}} 
\big[c_1u_1^{i_1}
\cdots
u_{n-2}^{i_{n-2}}u_{n-1}^n\big],
\]
and this is {\bf (iv)} for $l = n-2$. Thus, the second loop is
completed, and the general induction, similar, is now intuitively
clear.
\endproof

\begin{corollary}
\label{before-minorating}
The coefficient of $d^{ n+1}$ in any monomial $c_1 u_1^{ j_1}
\cdots u_{ n-1}^{ j_{ n-1}} u_n^{ j_n}$ with $1 + j_1 + \cdots + j_{ n-1}
+ j_n = n^2$ which is larger than $c_1 u_1^n \cdots u_{ n -
1}^n u_n^{ n-1}$ is zero:
\[
\small
\aligned
{\sf coeff}_{d^{n+1}}
&
\big[
c_1u_1^{j_1}\cdots u_{n-1}^{j_{n-1}}u_n^{j_n}
\big]
=
0,
\\
&
\ \ \ \ \ \ \ \ \ \ \ \ \ \ \ \ \ \
\text{\rm for any}
\ \ \ \ \
(j_1,\dots,j_{n-1},j_n)
>_{\sf revlex}
(n,\dots,n,n-1).
\endaligned
\]
Furthermore:
\[
\aligned
{\sf coeff}_{d^{n+1}}
\big[
u_1^n\cdots u_{n-1}^nu_n^n
\big]
&
=
{\sf coeff}_{d^{n+1}}
\big[(-1)^n(c_1)^n\big]
=
+1.
\\
{\sf coeff}_{d^{n+1}}
\big[
c_1u_1^n\cdots u_{n-1}^nu_n^{n-1}
\big]
&
=
{\sf coeff}_{d^{n+1}}
\big[(-1)^{n-1}(c_1)^n\big]
=
-1.
\endaligned
\]
\end{corollary}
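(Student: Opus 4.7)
I would prove the corollary by combining the three reduction techniques already in use in this section (vanishing for degree-form reasons, fiber-integration of $u_\ell^{n-1}$, and substitution via the structural relations~(\ref{c-j-ell}) and~(\ref{u-ell-n})), proceeding exactly in the spirit of the proof of Proposition~\ref{bigger-central}.

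For the vanishing statement, I would split the assumption $(j_1,\ldots,j_n) >_{\sf revlex} (n,\ldots,n,n-1)$ with $1 + j_1 + \cdots + j_n = n^2$ into three cases according to the value of $j_n$. When $j_n \leqslant n-2$, the submonomial $c_1 u_1^{j_1}\cdots u_{n-1}^{j_{n-1}}$ lives on $X_{n-1}$ with degree $n^2 - j_n \geqslant n^2 - n + 2 > \dim X_{n-1}$, and hence vanishes for degree-form reasons. When $j_n = n-1$, fiber-integration kills $u_n^{n-1}$ and reduces to a form $c_1 u_1^{j_1}\cdots u_{n-1}^{j_{n-1}}$ on $X_{n-1}$ with $\sum_{k<n} j_k = n(n-1)$ and $(j_1,\ldots,j_{n-1}) >_{\sf revlex}(n,\ldots,n)$; the revlex condition together with the sum constraint forces some $j_{k^*} > n$, and iterated reduction of the highest-power factor $u_{k^*}^{j_{k^*}}$ via~(\ref{u-ell-n})---where all higher-order $c_j^{[\ell]} u_\ell^{n-j}$ contributions ($j \geqslant 2$) vanish for degree-form reasons---places the remainder inside the scope of part (iv) (and recursively, parts (i) and (ii)) of Lemma~\ref{i-ii-iii-iv}. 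When $j_n \geqslant n$, I would first apply~(\ref{u-ell-n}) to write $u_n^n \equiv -c_1^{[n-1]} u_n^{n-1}$ modulo terms vanishing for degree-form reasons, expand $c_1^{[n-1]} = c_1 + (n-1)\sum_{\ell<n} u_\ell$ via~(\ref{c-1-ell}), and iterate; the $c_1$ contribution produces $c_1^2 u_1^{j_1}\cdots u_{n-1}^{j_{n-1}} u_n^{j_n - 1}$ which eventually falls into the previous case after finitely many steps, while the $u_\ell$ contributions merely raise an exponent by one and can be treated likewise.

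For the two explicit evaluations, I would start from $u_1^n \cdots u_n^n$ and apply~(\ref{u-ell-n}) at level $n$, together with~(\ref{c-1-ell}), to obtain
\[
u_1^n\cdots u_n^n
\equiv
-c_1\,u_1^n\cdots u_{n-1}^n\, u_n^{n-1}
\pmod{\text{terms with vanishing } d^{n+1}\text{-coefficient}},
\]
the cross contributions $(n-1)u_\ell$ being absorbed by part (i) of Lemma~\ref{i-ii-iii-iv} after fiber-integration of $u_n^{n-1}$. Iterating this reduction successively at levels $n, n-1, \ldots, 1$ (absorbing cross terms at each stage into the vanishing cases of Lemma~\ref{i-ii-iii-iv}) yields $u_1^n\cdots u_n^n \equiv (-1)^n (c_1)^n$ modulo terms irrelevant for the $d^{n+1}$-coefficient. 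The explicit formula~(\ref{c-d}) gives $c_1 = -(d-n-2)h$, hence $(c_1)^n = (-1)^n (d-n-2)^n h^n = (-1)^n (d-n-2)^n d$, whose $d^{n+1}$-coefficient equals $(-1)^n$, so that overall the coefficient reads $(-1)^n\cdot (-1)^n = +1$. For the second identity, I would fiber-integrate $u_n^{n-1}$ at the very first step and then proceed identically, picking up one fewer factor of $-c_1$, which accounts for the opposite sign.

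The main obstacle is the case $j_n = n-1$ of the vanishing statement: the revlex-larger hypothesis does not pin down a single "excessive" exponent, so one must carry out an inductive reduction along the sequence of overflowing indices, carefully checking that each intermediate expression either falls within the already-established vanishing cases of Lemma~\ref{i-ii-iii-iv} or can be re-reduced by a further application of~(\ref{u-ell-n}) and~(\ref{c-1-ell}). This bookkeeping, though conceptually straightforward given the groundwork of Lemma~\ref{i-ii-iii-iv}, is the only place where a substantive argument is needed beyond direct invocation of that lemma.
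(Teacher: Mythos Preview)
Your overall structure is right, and the explicit evaluations at the end are handled correctly. But there are two problems with the vanishing statement.

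First, your case $j_n \geqslant n$ is vacuous. Under the paper's revlex convention, $(j_1,\dots,j_n) >_{\sf revlex} (n,\dots,n,n-1)$ means $(n,\dots,n,n-1)$ is \emph{smaller}, and the very first criterion for smallness is $n-1 > j_n$. If that fails, one needs $j_n = n-1$ and then compares the next index. So the hypothesis already forces $j_n \leqslant n-1$; there is nothing to prove for $j_n \geqslant n$.

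Second, and more seriously, your treatment of the case $j_n = n-1$ is both overcomplicated and contains an incorrect step. After fiber-integrating $u_n^{n-1}$ you are left with $c_1 u_1^{j_1}\cdots u_{n-1}^{j_{n-1}}$ where $j_1+\cdots+j_{n-1} = (n-1)n$, and the revlex hypothesis says precisely that there is a largest index $l \leqslant n-1$ with $j_{l+1}=\cdots=j_{n-1}=n$ and $j_l \leqslant n-1$. This is \emph{exactly} the format of part~(iv) of Lemma~\ref{i-ii-iii-iv}, so that lemma applies directly with no further work---this is what the paper's one-line proof does. Your proposed ``iterated reduction of the highest-power factor $u_{k^*}^{j_{k^*}}$'' is unnecessary, and your claim that the higher-order contributions $c_j^{[\ell]} u_\ell^{n-j}$ ($j\geqslant 2$) vanish for degree-form reasons is wrong when $k^*$ is not the top remaining level: the subform $c_1\, u_1^{j_1}\cdots u_{k^*-1}^{j_{k^*-1}}\, c_j^{[k^*-1]}$ need not exceed $\dim X_{k^*-1}$ in degree (take for instance $n=4$, $(j_1,j_2,j_3)=(6,3,3)$, $k^*=1$: the term $c_1 c_2\, u_1^4 u_2^3 u_3^3$ has subform $c_1 c_2$ of degree $3 < 4 = \dim X$). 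So what you call ``the main obstacle'' is in fact no obstacle at all: drop the reduction argument and invoke~(iv) immediately.
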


\proof
The first claim is just a rephrasing of the property {\bf (iv)} of the
lemma, after one notices that $c_1 u_1^{j_1} \cdots u_{n-1}^{ j_{n-1}}
u_n^{ j_n}$ vanishes identically for degree reasons when $j_n
\leqslant n-2$, while the term $u_n^{ n-1} = u_n^{ j_n}$ disappears
after fiber integration when $j_n = n - 1$. The identities stated just
after now have obvious proofs.
\endproof

\subsection{ Minorating ${\sf coeff}_{ d^{ n+1}} 
\big[ \Pi \big]$}
\label{highlighting}
Let us decompose the intersection product $\Pi_\delta$
defined by~\thetag{\ref{pi-delta}} as $\Pi + \delta \Pi'$, where:
\[
\small
\aligned
\Pi
&
:=
\big(a_1u_1+\cdots+a_nu_n+2\vert\mathbf{a}\vert h\big)^{n^2}
-
n^2h
\big(a_1u_1+\cdots+a_nu_n+2\vert\mathbf{a}\vert h\big)^{n^2-1}\,
2\vert\mathbf{a}\vert,
\\
\Pi'
&
:=
n^2c_1\big(a_1u_1+\cdots+a_nu_n+2\vert\mathbf{a}\vert h\big)^{n^2-1}\,
\vert\mathbf{a}\vert.
\endaligned
\]
The (ineffective) Lemma~\ref{ineffective-reduction} insures that the
reduction of $\Pi$ in terms of $d = \deg X$ is a certain polynomial:
\[
{\sf P}_{\bf a}(d)
=
\sum_{k=0}^{n+1}\,
{\sf p}_{k,{\bf a}}\,d^k, 
\]
having certain coefficients ${\sf p}_{ k, {\bf a}} \in \mathbb{ Z}
\big[ a_1, \dots, a_n \big]$. Moreover, Lemma~\ref{h-1} showed
that positive powers of $h$ do not contribute to the
leading coefficient, whence:
\[
\aligned
{\sf p}_{n+1,{\bf a}}
=
{\sf coeff}_{d^{n+1}}
\big[\Pi\big]
&
=
{\sf coeff}_{d^{n+1}}
\big[
\big(
a_1u_1+\cdots+a_nu_n
\big)^{n^2}
\big]
\\
&
=
{\sf coeff}_{d^{n+1}}
\big[
\big(
a_1u_1+\cdots+a_nu_n+2\vert{\bf a}\vert h
\big)^{n^2}
\big].
\endaligned
\]
By  Proposition 2 in~\cite{ div2008b}, the bundle: 
\[
\mathcal O_{X_n}(\mathbf a)\otimes
\pi_{0,n}^*\mathcal O_{X}(2|\mathbf a|)
\]
is nef whenever $\mathbf a$ belongs to the cone defined by~\thetag{
\ref{rel-nef-cone}}, therefore its top self-intersection must be
non-negative. Thus, once this top self-intersection is evaluated in term of
the degree $d$ of the hypersurface, its dominating coefficient must be
non-negative, too. In other words we must have:
\[
{\sf p}_{n+1,\mathbf{a}}
\geqslant 
0.
\]
But from the corollary just above, we know that ${\sf p}_{ n+1,
{\bf a}} \in \mathbb{ Z} [ {\bf a} ]$ is not identically
zero, for it incorporates at least the nonzero (central) monomial: 
\[
{\sf coeff}_{ d^{ n+1}}
\big[
{\textstyle{\frac{n^2!}{n!\,\cdots\,n!}}}\,
a_1^n\cdots a_n^n\,
u_1^n\cdots u_n^n
\big]
=
{\textstyle{\frac{n^2!}{n!\,\cdots\,n!}}}\,
a_1^n\cdots a_n^n. 
\]
Then, in order to capture a weight ${\bf a}$ for which ${\sf p}_{ n+1,
{\bf a}} > 0$, we at first observe that the cube of $\mathbb{ N}^n$ having
edges of length $n^2$ which consists of all integers $(a_1, \dots,
a_n)$ satisfying the inequalities:
\[
\small
\aligned
1\leqslant a_n\leqslant 1+n^2,
\ \ \ \ \
3n^2\leqslant a_{n-1}\leqslant (3+1)n^2,
\ \ \ \ \
(3^2+3)n^2\leqslant a_{n-2}\leqslant (3^2+3+1)n^2
\\
\dots,
\ \ \ \ \
(3^{n-1}+\cdots+3)n^2
\leqslant
a_1
\leqslant
(3^{n-1}+\cdots+3+1)n^2
\endaligned
\]
is visibly contained in the cone in question:
\[
a_n\geqslant
1,
\ \ \ \ \
a_{n-1}\geqslant 2a_n,
\ \ \ \ \ 
a_{n-2}
\geqslant 
3a_{n-1},
\dots,
a_1\geqslant 3a_2. 
\] 
We now claim that there exists at least one $n$-tuple of integers
${\bf a}^\ast = (a_1^\ast, \dots, a_n^\ast)$ belonging to this cube
with the property that ${\sf p}_{ n+1, {\bf a}^\ast}$ is nonzero, and
hence:
\[
{\sf p}_{n+1,{\bf a}^\ast}
\geqslant
1
=:
{\sf G}_{n+1},
\]
so that we can take $1$ as the minorant introduced at the beginning.
Indeed, ${\sf p}_{ n+1, {\bf a}}$ is a homogeneous polynomial of
degree $n^2$ to which an elementary lemma applies.

\begin{lemma}
Let ${\sf q} = {\sf q} ( b_1, \dots, b_\nu) \in \mathbb{ Z} \big[ b_1,
\dots, b_\nu \big]$ be a polynomial of degree $c\geqslant 1$.  Then
${\sf q}$ can vanish at all points of a cube of integers having edges
of length equal to its degree $c$ only when it is identically zero.
\end{lemma}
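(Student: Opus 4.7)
\smallskip

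My plan is to prove the lemma by induction on the number of variables $\nu$, using as the base case the familiar univariate fact that a nonzero polynomial of degree $c$ in one variable has at most $c$ roots, hence cannot vanish at $c+1$ distinct points. A cube of integers with edges of length $c$ is a set of the form $\prod_{i=1}^{\nu}\{\alpha_i,\alpha_i+1,\dots,\alpha_i+c\}$, containing exactly $(c+1)^\nu$ lattice points; the base case $\nu=1$ is then immediate.

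For the inductive step, assume the statement holds for polynomials in $\nu-1$ variables of any degree $\leqslant c$, and suppose $\mathsf{q}(b_1,\dots,b_\nu)\in\mathbb{Z}[b_1,\dots,b_\nu]$ of degree $c$ vanishes at every point of the cube $\prod_{i=1}^{\nu}\{\alpha_i,\dots,\alpha_i+c\}$. I would expand
\[
\mathsf{q}(b_1,\dots,b_\nu)\;=\;\sum_{j=0}^{c}\mathsf{q}_j(b_2,\dots,b_\nu)\,b_1^{\,j},
\]
where each coefficient $\mathsf{q}_j\in\mathbb{Z}[b_2,\dots,b_\nu]$ has total degree at most $c-j\leqslant c$. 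For every fixed choice of $(b_2,\dots,b_\nu)\in\prod_{i=2}^{\nu}\{\alpha_i,\dots,\alpha_i+c\}$, the resulting univariate polynomial in $b_1$ has degree at most $c$ and vanishes at the $c+1$ distinct integers $\alpha_1,\alpha_1+1,\dots,\alpha_1+c$, so the base case forces all its coefficients $\mathsf{q}_j(b_2,\dots,b_\nu)$ to be zero.

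Thus each $\mathsf{q}_j$ itself vanishes on the $(\nu-1)$-dimensional cube $\prod_{i=2}^{\nu}\{\alpha_i,\dots,\alpha_i+c\}$, which still has edges of length $c$. Since $\deg\mathsf{q}_j\leqslant c$, the induction hypothesis (applied with the same $c$, allowing lower degrees trivially) yields $\mathsf{q}_j\equiv 0$ for every $j=0,1,\dots,c$, and therefore $\mathsf{q}\equiv 0$, completing the induction. There is no real obstacle here: the only mild point to be careful about is that the induction hypothesis must be phrased to allow polynomials of degree $\leqslant c$ (not exactly $c$) in fewer variables, so that it can be applied to each coefficient $\mathsf{q}_j$; this is harmless since the cube has edge length $c$, which is at least the degree of every $\mathsf{q}_j$.
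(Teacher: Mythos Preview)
Your proof is correct and follows essentially the same route as the paper: expand in powers of $b_1$, use that a univariate polynomial of degree at most $c$ vanishing at $c+1$ points is identically zero (the paper phrases this via the invertibility of the $(c+1)\times(c+1)$ Vandermonde matrix), and then induct on the number of variables. Your remark that the induction hypothesis should allow degree $\leqslant c$ rather than exactly $c$ is a valid clarification the paper leaves implicit.
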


\proof
Expand ${\sf q} = \sum_{ k_1 = 0}^c \, b_1^{ k_1} \, {\sf q}_{ k_1} (
b_2, \dots, b_\nu)$, recognize a $(c+1) \times ( c+1)$ Van der Monde
determinant, deduce that each ${\sf q}_{ k_1} ( b_2, \dots, b_\nu)$
vanishes at all points of a similar cube in a space of dimension $\nu
- 1$, and terminate by induction.
\endproof

\subsection{ Majorating the other coefficients ${\sf coeff}_{
d^k } \big[ \Pi \big]$} Now, for such an ${\bf a}^\ast$ which is not
very precisely located in the cube, we nevertheless have the effective
control, which is useful below:
\[
\max_{1\leqslant i\leqslant n}\,
a_i^\ast
=
a_1^\ast
=
{\textstyle{\frac{3^n-1}{2}}}\,n^2
\leqslant
{\textstyle{\frac{3^n}{2}}}\,n^2. 
\]
From now on, we shall simply denote ${\bf a}^\ast$ by ${\bf a}$. 
At present, for any integer $k$ with $0
\leqslant k \leqslant n$, let us denote by ${\sf D}_k ( n)$ any
available bound ({\em see} in advance Theorem~\ref{D-k-n}) in
terms of $n$ only for the maximal absolute value of the coefficient of
$d^k$ in all monomials $h^l u_1^{ i_1} \cdots u_n^{ i_n}$ with $l +
i_1 + \cdots + i_n = n^2$, namely:
\[
\max_{l+i_1+\cdots+i_n=n^2}
\big\vert
{\sf coeff}_{d^k}
\big[
h^lu_1^{i_1}\cdots u_n^{i_n}
\big]
\big\vert
\leqslant
{\sf D}_k(n).
\]
Then for any $k$ with $0 \leqslant k \leqslant n$, we now aim 
at estimating from above the coefficient of $d^k$ in our
intersection product $\Pi$, using two new lemmas and starting
from its expansion, all terms of which we shall
have to control:
\[
\small
\aligned
&
\big\vert
{\sf coeff}_{d^k}
\big[\Pi\big]
\big\vert
\leqslant
\\
&
\leqslant
\sum_{l+i_1+\cdots+i_n=n^2}
{\textstyle{\frac{n^2!}{l!\,i_1!\,\cdots\,i_n!}}}
\cdot
(2\vert{\bf a}\vert)^l
a_1^{i_1}\cdots a_n^{i_n}
\cdot
\big\vert
{\sf coeff}_{d^k}
\big[h^lu_1^{i_1}\cdots u_n^{i_n}\big]
\big\vert
+
\\
&
+
\sum_{l+j_1+\cdots+j_n=n^2-1}\,
n^2\,
{\textstyle{\frac{(n^2-1)!}{l!\,j_1!\,\cdots\,j_n!}}}
\cdot
2\vert{\bf a}\vert
(2\vert{\bf a}\vert)^l
a_1^{j_1}\cdots a_n^{j_n}
\cdot
\big\vert
{\sf coeff}_{d^k}
\big[hh^lu_1^{j_1}\cdots u_n^{j_n}\big]
\big\vert. 
\endaligned
\]

\begin{lemma}
\label{stirling-2}
Let $l$, $i_1, \dots, i_n\in \mathbb{ N}$ satisfying $l + i_1 + \cdots
+ i_n = n^2$ and let $l$, $j_1, \dots, j_n \in \mathbb{ N}$ satisfying
$l + j_1 + \cdots + j_n = n^2 - 1$. Then:
\[
{\textstyle{\frac{n^2!}{l!\,i_1!\,\cdots\,i_n!}}}
\leqslant
(n+1)^{n^2}
\ \ \ \ \ 
\text{and:}
\ \ \ \ \ 
n^2\,{\textstyle{\frac{(n^2-1)!}{l!\,j_1!\,\cdots\,j_n!}}}
\leqslant
(n+1)^{n^2+1}.
\]
Furthermore, the number of summands in $\sum_{ l + i_1 + \cdots + i_n
= n^2}$ and the number of summands in $\sum_{ l + j_1 + \cdots + j_n =
n^2 - 1}$, which are both plain binomial coefficients, enjoy the
following two elementary majorations:
\[
{\textstyle{\frac{(n^2+n)!}{n^2!\,n!}}}
\leqslant
4\,n^{2n-1}
\ \ \ \ \
\text{and:}
\ \ \ \ \
{\textstyle{\frac{(n^2-1+n)!}{(n^2-1)!\,n!}}}
\leqslant
2\,n^{2n-1}.
\]
\end{lemma}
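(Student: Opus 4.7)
The plan is to treat the four estimates in two pairs, separating the bounds on individual multinomial coefficients (the first two inequalities) from the bounds on the cardinality of the index set of such tuples (the last two).

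For the first two bounds, I would apply the multinomial theorem to the trivial identity
\[
(n+1)^{n^2}
=
(\underbrace{1+1+\cdots+1}_{n+1\ \text{\rm ones}})^{n^2}
=
\sum_{l+i_1+\cdots+i_n=n^2}\,
\frac{n^2!}{l!\,i_1!\,\cdots\,i_n!}.
\]
All summands being non-negative, each individual multinomial coefficient is bounded above by the full sum $(n+1)^{n^2}$, which is precisely the first estimate. The second estimate follows from the same reasoning applied to $(n+1)^{n^2-1}$, combined with the trivial inequality $n^2\leqslant (n+1)^2$ absorbed as a factor.

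For the third and fourth estimates, the classical stars-and-bars bijection identifies
\[
\sum_{l+i_1+\cdots+i_n=n^2}\,1
=
\binom{n^2+n}{n}
\qquad\text{\rm and}\qquad
\sum_{l+j_1+\cdots+j_n=n^2-1}\,1
=
\binom{n^2+n-1}{n}.
\]
I would bound both binomial coefficients by writing each as a product of $n$ consecutive integers divided by $n!$, then majorating each factor by the largest one. Concretely, from
\[
\binom{n^2+n}{n}
=
\frac{(n^2+1)(n^2+2)\cdots(n^2+n)}{n!}
\leqslant
\frac{(n^2+n)^n}{n!}
=
\frac{n^n(n+1)^n}{n!},
\]
the standard estimate $(1+1/n)^n\leqslant e$ together with $e\big/(n-1)!\leqslant e<4$, valid for all $n\geqslant 2$, yields the third inequality. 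The fourth is entirely analogous, starting from $\binom{n^2+n-1}{n}\leqslant(n^2+n-1)^n\big/n!$.

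The only subtle point will be getting the smaller constant $2$ in the fourth estimate right: the naive replacement of $(1+1/n)^n$ by $e$ in the argument just above produces the constant $e>2$, not $2$. To fix this, I would use the sharper inequality $(1+(n-1)/n^2)^n \leqslant e^{(n-1)/n}$, reducing the question to checking $e^{(n-1)/n}\big/(n-1)!\leqslant 2$. This is done by direct inspection at $n=2$, where the left side equals $\sqrt{e}\approx 1.65$, while for $n\geqslant 3$ the factor $1/(n-1)!\leqslant 1/2$ forces the left side to be at most $e/2<2$. Apart from this mild tuning of constants, everything is routine combinatorics.
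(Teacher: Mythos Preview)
Your argument is correct. For the first two inequalities you proceed exactly as the paper does, via the multinomial expansion of $(1+\cdots+1)^{n^2}$.

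For the third and fourth inequalities your route differs slightly from the paper's. The paper first bounds the auxiliary quantity $\binom{n^2+n-1}{n-1}$ by majorating each of the $n-1$ factors $n^2+k$ in the numerator crudely by $2n^2$, obtaining $2^{n-1}n^{2n-2}/(n-1)!$, and then invokes the purely arithmetic inequality $2^{n-1}\leqslant 2\,(n-1)!$; both target binomials are then recovered by multiplying by $(n^2+n)/n$ or $n^2/n$ respectively, which delivers the constants $4$ and $2$ directly without any case distinction. Your version instead majorates each factor by $n^2+n$ (resp.\ $n^2+n-1$) and appeals to $(1+1/n)^n\leqslant e$, which forces the extra tuning step $e^{(n-1)/n}/(n-1)!\leqslant 2$ checked at $n=2$ to recover the constant $2$. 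Both arguments are elementary and of the same flavour; the paper's is marginally cleaner in that it avoids this last adjustment.
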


\proof
Indeed, any multinomial coefficient ${\textstyle{ \frac{ n^2!}{
l!\,i_1! \,\cdots\, i_n! }}}$ is less than or equal to the sum of all
multinomial coefficients $(1 + 1 + \cdots + 1)^{ n^2} = (n+1)^{
n^2}$. At the same time, we deduce: $n^2 \, \frac{ (n^2 - 1)!}{ l!
j_1! \cdots j_n!} = n^2 ( n+1)^{ n^2 -1} \leqslant (n+1)^{ n^2 + 1}$.

For the second claim, we as a preliminary have:
\[
{\textstyle{\frac{(n^2+n-1)!}{n^2!\,(n-1)!}}}
=
{\textstyle{\frac{(n^2 + 1)\cdots(n^2+n-1)}{1\,\,\cdots\,\,(n-1)}}} 
\leqslant 
{\textstyle{\frac{(n^2+n^2)\cdots(n^2+n^2)}{(n-1)!}}} 
=
{\textstyle{\frac{2^{n-1}\,n^{2n-2}}{(n-1)!}}} 
\leqslant 
2\,n^{ 2n-2},
\] 
since $2^{ n-1} \leqslant 2 \, (n-1) !$ for any $n \geqslant 1$.
Consequently, we deduce:
\[
{\textstyle{\frac{(n^2+n)!}{n^2!\,n!}}}
=
{\textstyle{\frac{(n^2+n-1)!}{n^2!\,(n-1)!}}}\cdot
{\textstyle{\frac{(n^2+n)}{n}}}
\leqslant
2\,n^{2n-2}\cdot(n+
{\textstyle{\frac{1}{n}}})
\leqslant
4\,n^{2n-1},
\]
and similarly: ${\textstyle{ \frac{ (n^2-1+n)! }{( n^2-1)! \, n! }}}
\leqslant {\textstyle{ \frac{ (n^2+n-1) !}{ n^2 ! \, (n-1)! }}} \cdot
{\textstyle{ \frac{ n^2}{ n}}} \leqslant 2\, n^{ 2n-2} \cdot n = 2\,
n^{ 2n-1 }$.
\endproof

\begin{lemma}
For any $l, i_1, \dots, i_n\in \mathbb{ N}$ satisfying $l + i_1 +
\cdots + i_n = n^2$, one has:
\[
(2\vert{\bf a}\vert)^l
a_1^{i_1}\cdots a_n^{i_n}
\leqslant
n^{3n^2}\,3^{n^3}. 
\]
\end{lemma}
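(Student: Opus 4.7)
The plan is a direct crude majoration, using the control on the specific weight $\mathbf{a} = \mathbf{a}^\ast$ that was established just above. Recall that $\mathbf{a}^\ast$ was picked inside the cube defined by $1 \leqslant a_n \leqslant 1 + n^2$, $3n^2 \leqslant a_{n-1} \leqslant 4n^2$, \dots, $(3^{n-1} + \cdots + 3)n^2 \leqslant a_1 \leqslant (3^{n-1} + \cdots + 3 + 1)n^2$, so that in particular the maximum coordinate satisfies $\max_i a_i^\ast = a_1^\ast \leqslant \tfrac{3^n}{2}\,n^2 \leqslant 3^n n^2$.

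First I would sum the individual bounds $a_i \leqslant \tfrac{3^n}{2}\,n^2$ over $i = 1, \dots, n$ to obtain $|\mathbf{a}| \leqslant n \cdot \tfrac{3^n}{2}\,n^2$, whence $2|\mathbf{a}| \leqslant 3^n n^3$. Plugging these two uniform estimates into the left-hand side of the claim, I get
\[
(2|\mathbf{a}|)^l\,a_1^{i_1}\cdots a_n^{i_n}
\;\leqslant\;
(3^n n^3)^l \cdot (3^n n^2)^{i_1 + \cdots + i_n}.
\]

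The key (and essentially only) computational step is then to regroup powers of $3$ separately from powers of $n$ and exploit the constraint $l + i_1 + \cdots + i_n = n^2$. The exponent of $3$ collapses to $n\bigl(l + i_1 + \cdots + i_n\bigr) = n \cdot n^2 = n^3$, and the exponent of $n$ equals $3l + 2(i_1 + \cdots + i_n) = l + 2\bigl(l + i_1 + \cdots + i_n\bigr) = l + 2n^2 \leqslant 3n^2$ because $l \leqslant n^2$. Combining these two observations yields at once
\[
(2|\mathbf{a}|)^l\,a_1^{i_1}\cdots a_n^{i_n}
\;\leqslant\;
3^{n^3}\,n^{l + 2n^2}
\;\leqslant\;
n^{3n^2}\,3^{n^3},
\]
as desired. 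There is no real obstacle here: the entire argument is a routine crude estimate, the only subtlety being to remember the explicit coordinate bound $a_i^\ast \leqslant \tfrac{3^n}{2}\,n^2$ recorded just before the statement, and to absorb the $\tfrac{1}{2}$ harmlessly into the worst-case bound $3^n n^2$.
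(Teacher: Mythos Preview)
Your proof is correct and follows essentially the same approach as the paper: a direct crude majoration using the coordinate bound $a_i \leqslant a_1^\ast \leqslant \tfrac{3^n}{2}\,n^2$ together with the constraint $l + i_1 + \cdots + i_n = n^2$. The paper organizes the factors slightly differently (it bounds each $a_i$ by $|\mathbf{a}| \leqslant n a_1$ first, so that the $2^{n^2}$ from $(2|\mathbf{a}|)^l$ cancels exactly against the $2^{-n^2}$ in $(3^n/2)^{n^2}$), but the content is the same elementary estimate.
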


\proof
Indeed, we majorate each $a_i$ by $\vert {\bf a} \vert$ 
and $\vert {\bf a} \vert = a_1
+ \cdots + a_n$ by $n a_1$, and also $l$ by $n^2$, so that $(2\vert
{\bf a} \vert)^l a_1^{ i_1} \cdots a_n^{i_n} \leqslant 2^{ n^2} \big(
na_1\big)^{ n^2}$ and we apply $a_1 \leqslant \frac{ 3^n}{ 2} \, n^2$.
\endproof

Thanks to these two lemmas, we may perform
majorations:
\[
\small
\aligned
\big\vert
{\sf coeff}_{d^k}
\big[\Pi\big]
\big\vert
&
\leqslant
4\,n^{2n-1}\cdot (n+1)^{n^2}
\cdot 
n^{3n^2}\,3^{n^3}
\cdot{\sf D}_k(n)
+
\\
&\ \ \ \ \ 
+
2\,n^{2n-1}\cdot (n+1)^{n^2+1}
\cdot 
n^{3n^2}\,3^{n^3}
\cdot{\sf D}_k(n)
\\
&
\leqslant
6\,n^{2n-1}\cdot (n+1)^{n^2+1}
\cdot
n^{3n^2}\,3^{n^3}
\cdot{\sf D}_k(n)
\ \ \ \ \ \
{\scriptstyle{(k\,=\,0,\,\dots,\,n)}}.
\endaligned
\]

\begin{lemma}
For any exponent $k$ with $0 \leqslant k \leqslant n$, one has:
\[
\big\vert
{\sf coeff}_{d^k}
\big[\Pi\big]
\big\vert
\leqslant
6\,n^{2n-1}\cdot(n+1)^{n^2}
\cdot
n^{3n^2}\,3^{n^3}
\cdot{\sf D}_k(n).
\qed
\]
\end{lemma}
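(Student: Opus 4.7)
The proof is essentially a packaging step: all the real work has been done in the two preceding lemmas, and the task is to assemble the triangle-inequality bound in the expansion of $\Pi$ that precedes the statement. The plan is to start from the multinomial expansion
\[
\Pi
=
\sum_{l+i_1+\cdots+i_n=n^2}
{\textstyle{\tfrac{n^2!}{l!\,i_1!\cdots i_n!}}}
(2\vert\mathbf{a}\vert)^l a_1^{i_1}\cdots a_n^{i_n}\,
h^l u_1^{i_1}\cdots u_n^{i_n}
\]
\[
\quad-\,n^2
\sum_{l+j_1+\cdots+j_n=n^2-1}
{\textstyle{\tfrac{(n^2-1)!}{l!\,j_1!\cdots j_n!}}}
(2\vert\mathbf{a}\vert)^{l+1} a_1^{j_1}\cdots a_n^{j_n}\,
h^{l+1} u_1^{j_1}\cdots u_n^{j_n},
\]
apply the triangle inequality after extracting $\operatorname{coeff}_{d^k}$, and bound each of the three ingredients of each summand in turn.

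First I would dominate the multinomial coefficients by $(n+1)^{n^2}$ and $n^2(n+1)^{n^2-1}\leqslant(n+1)^{n^2+1}$ using the first assertion of Lemma~\ref{stirling-2}, and then bound the total number of multi-indices by $4\,n^{2n-1}$ and $2\,n^{2n-1}$ respectively using the second assertion of the same lemma. Next, for each fixed multi-index I would apply the preceding monomial lemma, which gives the uniform estimate $(2\vert\mathbf{a}\vert)^l a_1^{i_1}\cdots a_n^{i_n}\leqslant n^{3n^2}\,3^{n^3}$, valid on the chosen weight $\mathbf{a}=\mathbf{a}^\ast$ since $a_1^\ast\leqslant\frac{3^n}{2}n^2$. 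Finally, by the very definition of ${\sf D}_k(n)$, each $\big\vert\operatorname{coeff}_{d^k}[h^l u_1^{i_1}\cdots u_n^{i_n}]\big\vert$ is bounded above by ${\sf D}_k(n)$.

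Putting everything together yields the two-term bound
\[
\big\vert\operatorname{coeff}_{d^k}[\Pi]\big\vert
\leqslant
4 n^{2n-1}(n+1)^{n^2}\!\cdot n^{3n^2}3^{n^3}\!\cdot{\sf D}_k(n)
+
2 n^{2n-1}(n+1)^{n^2+1}\!\cdot n^{3n^2}3^{n^3}\!\cdot{\sf D}_k(n),
\]
and after factoring out the common factor the sum $4+2(n+1)$ must be controlled. Here is the only sub-obstacle: the sum of the two contributions is not literally $6\,n^{2n-1}(n+1)^{n^2}\cdots$, so to reach the announced form I would either absorb the leftover $(n+1)$ into $n^{3n^2}$ (using $(n+1)\leqslant 2n$ for $n\geqslant 1$ and readjusting the constant) or, more cleanly, simply retain the slightly weaker bound $(n+1)^{n^2+1}$ that comes out of the computation and note that the subsequent uses of this estimate tolerate such a harmless inflation. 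In any case, no new idea beyond the mechanical combination of the three preceding lemmas is required, and the announced estimate is reached.
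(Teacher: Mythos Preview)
Your proposal is correct and follows exactly the paper's approach: the paper performs the same triangle-inequality expansion, applies the same three lemmas in the same order, and arrives at the two-term bound $4\,n^{2n-1}(n+1)^{n^2}\cdots + 2\,n^{2n-1}(n+1)^{n^2+1}\cdots \leqslant 6\,n^{2n-1}(n+1)^{n^2+1}\cdots$. You are also right that the exponent should be $n^2+1$ rather than the $n^2$ printed in the lemma statement; this is a typo in the paper, and indeed the version with $(n+1)^{n^2+1}$ is what the paper actually uses in the subsequent ``Final effective estimations'' subsection.
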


To conclude these estimates, for any integer $k = 0, 1, \dots, n,
n+1$, let us denote by ${\sf D}_k' ( n)$ any available majorant
for all the monomials appearing in $\Pi'$: 
\[
\max_{1+l+j_1+\cdots+j_n=n^2}
\big\vert
{\sf coeff}_{d^k}\big[c_1h^lu_1^{j_1}\cdots
u_n^{j_n}\big]
\big\vert
\leqslant
{\sf D}_k'(n).
\]

\begin{lemma}
For any exponent $k$ with $0 \leqslant k\leqslant n+1$, one has:
\[
\big\vert
{\sf coeff}_{d^k}
\big[\Pi'\big]
\big\vert
\leqslant
n^{2n-1}\cdot(n+1)^{n^2+1}
\cdot 
n^{3n^2}\,3^{n^3}
\cdot
{\sf D}_k'(n).
\]
\end{lemma}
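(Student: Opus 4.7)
The plan is to mirror the estimation just carried out for $\Pi$, adapted to the one-summand structure of $\Pi'$. By the multinomial expansion,
\[
\Pi' = n^2\,\vert\mathbf{a}\vert\,c_1 \sum_{l+j_1+\cdots+j_n=n^2-1} \tfrac{(n^2-1)!}{l!\,j_1!\cdots j_n!}\,(2\vert\mathbf{a}\vert)^l\,a_1^{j_1}\cdots a_n^{j_n}\,h^l u_1^{j_1}\cdots u_n^{j_n},
\]
and the triangle inequality bounds $\vert{\sf coeff}_{d^k}[\Pi']\vert$ by a sum whose general term is a product of four nonnegative factors. Three of these are handled exactly as in the $\Pi$-case: the number of summands is the binomial coefficient $\binom{n^2-1+n}{n}\leqslant 2\,n^{2n-1}$ by Lemma~\ref{stirling-2}; the weighted multinomial factor $n^2\tfrac{(n^2-1)!}{l!\,j_1!\cdots j_n!}$ is dominated by $(n+1)^{n^2+1}$, again by Lemma~\ref{stirling-2}; and by very definition of ${\sf D}_k'(n)$ each coefficient $\vert{\sf coeff}_{d^k}[c_1 h^l u_1^{j_1}\cdots u_n^{j_n}]\vert$ is at most ${\sf D}_k'(n)$.

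The only delicate point, and the reason the asserted upper bound has no multiplicative constant in front, is that one must be a shade more careful when bounding the polynomial part $\vert\mathbf{a}\vert\,(2\vert\mathbf{a}\vert)^l\,a_1^{j_1}\cdots a_n^{j_n}$ than in the previous lemma: here the extra $\vert\mathbf{a}\vert$ prefactor of $\Pi'$ must be absorbed. Using $\vert\mathbf{a}\vert\leqslant n\,a_1$, then $a_i\leqslant a_1\leqslant \tfrac{3^n}{2}\,n^2$ for the weight $\mathbf{a}^\ast$ chosen in Subsection~\ref{highlighting}, together with the homogeneity relation $(l+1)+j_1+\cdots+j_n=n^2$, I estimate
\[
\vert\mathbf{a}\vert\,(2\vert\mathbf{a}\vert)^l\,a_1^{j_1}\cdots a_n^{j_n} \leqslant 2^{n^2-1}\,n^{l+1}\,a_1^{n^2} \leqslant 2^{n^2-1}\,n^{n^2}\Bigl(\tfrac{3^n}{2}\,n^2\Bigr)^{n^2} = \tfrac{1}{2}\,n^{3n^2}\,3^{n^3}.
\]
Multiplying the four bounds together and using $2 \cdot \tfrac12 = 1$ yields exactly $n^{2n-1}\cdot (n+1)^{n^2+1} \cdot n^{3n^2}\,3^{n^3}\cdot {\sf D}_k'(n)$, as claimed. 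There is no real obstacle here; the combinatorial book-keeping is already done in Lemma~\ref{stirling-2}, and the decisive step is simply to recognize that the extra factor $\vert\mathbf{a}\vert$ in $\Pi'$ contributes a $\tfrac12$ (through the explicit upper bound $a_1\leqslant \tfrac{3^n}{2}\,n^2$) which precisely absorbs the $2$ coming from the count of summands.
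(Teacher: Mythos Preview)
Your proof is correct and follows essentially the same approach as the paper's own proof: both expand $\Pi'$ multinomially, invoke Lemma~\ref{stirling-2} for the count of summands and the multinomial factor, use the definition of ${\sf D}_k'(n)$, and bound $\vert\mathbf{a}\vert\,(2\vert\mathbf{a}\vert)^l\,a_1^{j_1}\cdots a_n^{j_n}$ by $\tfrac{1}{2}\,n^{3n^2}\,3^{n^3}$ via $a_1\leqslant\tfrac{3^n}{2}\,n^2$, so that the $\tfrac{1}{2}$ absorbs the $2$ from the count of summands. The paper presents this as a bare chain of inequalities, while you spell out the cancellation explicitly, but the argument is the same.
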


\proof
Indeed, one performs the similar majorations:
\[
\small
\aligned
&
\big\vert
{\sf coeff}_{d^k}
\big[
\Pi'
\big]
\big\vert
\leqslant
\\
&
\leqslant
\sum_{l+j_1+\cdots+j_n=n^2-1}
n^2\,
{\textstyle{\frac{(n^2-1)!}{l!\,j_1!\,\cdots\,j_n!}}}
\cdot
\vert{\bf a}\vert
(2\vert{\bf a}\vert)^l
a_1^{j_1}\cdots a_n^{j_n}
\cdot
\big\vert
{\sf coeff}_{d^k}
\big[
c_1h^lu_1^{j_1}\cdots u_n^{j_n}
\big]
\big\vert
\\
&
\leqslant
2\,n^{2n-1}
\cdot 
(n+1)^{n^2+1}\cdot
{\textstyle{\frac{1}{2}}}\,
n^{3n^2}\,3^{n^3}
\cdot
{\sf D}_k'(n)
\\
&
\leqslant
n^{2n-1}\cdot(n+1)^{n^2+1}
\cdot
n^{3n^2}\,3^{n^3}
\cdot
{\sf D}_k'(n),
\endaligned
\]
hence the bound we obtain is exactly the same, up to the factor 6.
\endproof

\subsection{ Final effective estimations}
We can now explain how to achieve the proof of Theorem~\ref{main}. At
first, we shall realize in Section~\ref{Section-5} 
that both constant coefficients
${\sf coeff}_{ d^0} \big[ \Pi \big] = {\sf coeff}_{ d^0 } \big[ \Pi'
\big] = 0$ vanish, hence ${\sf D}_0 ( n ) = {\sf D}_0 ' ( n) = 0$
works. Most importantly, we shall establish in Section~\ref{Section-5}
that one may choose:
\[
\small
\aligned
{\sf D}_1(n)
=\cdots=
{\sf D}_n(n)
=
{\sf D}_1'(n)
=\cdots=
{\sf D}_n'(n)
=
{\sf D}_{n+1}'(n)
=
n^{4n^3}
2^{n^4}.
\endaligned
\]
Taking $n^{4n^3}2^{n^4}$ for granted, remind that with the above
choice of weight ${\bf a}^*$ (now
denoted ${\bf a}$), we ensure that:
\[
{\sf coeff}_{d^{n+1}}
\big[\Pi\big]
=
{\sf p}_{n+1,{\bf a}}
\geqslant
1
=:
{\sf G}_{n+1}.
\]
From the preceding two lemmas, we therefore deduce that: 
\[
\aligned
\big\vert
{\sf coeff}_{d^k}
\big[
\Pi
\big]
\big\vert
&
\leqslant
6\,n^{2n-1}
\cdot
(n+1)^{n^2+1}
\cdot
n^{3n^2}\,3^{n^3}
\cdot
n^{4n^3}
2^{n^4}
=:
6\,{\sf H}(n)
\ \ \ \ \ \ \ \ 
{\scriptstyle{(k\,=\,1\,\cdots\,n)}}
\\
\big\vert
{\sf coeff}_{d^k}
\big[
\Pi'
\big]
\big\vert
&
\leqslant
n^{2n-1}
\cdot
(n+1)^{n^2+1}
\cdot
n^{3n^2}\,3^{n^3}
\cdot
n^{4n^3}
2^{n^4}
=:{\sf H}(n)
\ \ \ \ \ \ \ \ 
{\scriptstyle{(k\,=\,1\,\cdots\,n\,+\,1)}}. 
\endaligned
\]
so that, coming back to the beginning of Section~\ref{Section-4}, 
we may choose
${\sf E}_0 = {\sf E}_0 ' = 0$ (since ${\sf D}_0 ( n ) = {\sf D}_0 '
(n) = 0$) and also explicitly in terms of $n$:
\[
\aligned
{\sf E}_1
=\cdots=
{\sf E}_n
&
=
6\,{\sf H}(n)
\\
{\sf E}_1'
=\cdots=
{\sf E}_n'
=
{\sf E}_{n+1}'
&
=
{\sf H}(n). 
\endaligned
\]
Coming back to the definition of $d_n^1, d_n^2$ given at the end of
Lemma~\ref{d-1-n} and just after, we may now majorate:
\[
\small
\aligned
d_n^1
&
\leqslant
1
+
\big(
n\,6\,{\sf H}(n)
+
{\textstyle{\frac{n+1}{2}}}
\big)
\big/
{\textstyle{\frac{1}{2}}}
=:
\widetilde{d}_n^1,
\\
d_n^2
&
\leqslant
1
+
n+2
+
2\,(n^2+2n)\,{\sf H}(n)
=:
\widetilde{d}_n^2.
\endaligned
\] 
Notice that $\widetilde{ d}_n^2 \geqslant \widetilde{ d}_n^1$ as soon
as $n \geqslant 3$. Finally, by comparing the growth of all terms in
${\sf H}(n)$ as $n \to \infty$, one sees that $2^{ n^4}$ dominates and
hence that the following inequality:
\[
\widetilde{d}_n^2
=
1+n+2
+
2\,
(n^2+2n)
\cdot
n^{2n-1}
\cdot
(n+1)^{n^2+1}
\cdot
n^{3n^2}\,3^{n^3}
\cdot
n^{4n^3}
2^{n^4}
\leqslant
2^{n^5},
\] 
holds for all large $n$. However, any symbolic computer shows that for
$n = 2, 3, 4$, one in fact has $\widetilde{ d}_2^2 > 2^{ 2^5}$,
$\widetilde{ d}_3^2 > 2^{ 3^5}$, $\widetilde{ d}_4^2 > 2^{ 4^5}$,
while $\widetilde{ d}_5^2 < 2^{ n^5}$ and $\widetilde{ d}_n^2 \ll 2^{
n^5}$ for $n = 6, 7, 8, 9$ so that $\widetilde{ d}_n^2 < 2^{ n^5}$
holds for any $n \geqslant 5$ by an elementary inspection of the
function $n \mapsto \widetilde{ d}_n^2$. Fortunately, the three left
cases $n = 2$, $n = 3$ and $n = 4$ of Theorem~\ref{main} are covered,
firstly for the classical surface case $n = 2$ by, say~\cite{ deg2000}
in which $\deg X \geqslant 21$ with $21 \ll 2^{ 2^5}$, and secondly
for $n = 3$ and $n = 4$ by our second Theorem~\ref{lowdim}, because
$2^{ 3^5} \gg 593$ and $2^{ 4^5} \gg 3203$.  So we conclude that if we
take for granted: 1) that one may take all the ${\sf D}_k (n)$ and all
the ${\sf D}_k' ( n)$ equal to $n^{ 4n^3} 2^{n^4}$, a technical and
crucial statement to which Section~\ref{Section-5} below is entirely
devoted; and 2) that Theorem~\ref{lowdim} is got, an effective
statement to which the two Sections~\ref{Section-6}
and~\ref{Section-7} below are devoted, then the proof of our main
Theorem~\ref{main} is to be considered as complete, and finally, the
neat uniform degree bound $\deg X \geqslant 2^{ n^5}$ works in all
dimensions $n \geqslant 2$.
\qed

\section{Estimations of the quantities 
${\sf D}_k (n)$ and ${\sf D}_k' (n)$}
\label{Section-5}

To complete our program, it now remains only to capture somewhat
effective upper bounds ${\sf D}_k ( n)$, $0
\leqslant k \leqslant n$ and ${\sf D}_k' ( n)$, $0 \leqslant k
\leqslant n+1$.

\begin{theorem}\label{D-k-n}
With $n \geqslant 2$, for any $l, i_1, \dots, i_n \in \mathbb{ N}$
with $l+ i_1 + \cdots + i_n = n^2$ and any $l, j_1, \dots, j_n \in
\mathbb{ N}$ with $1 + l+ j_1 + \cdots + j_n = n^2$, one has:
\[
0
=
{\sf coeff}_{d^0}
\big[h^lu_1^{i_1}\cdots u_n^{i_n}\big]
=
{\sf coeff}_{d^0}
\big[c_1h^lu_1^{j_1}\cdots u_n^{j_n}\big].
\]
Moreover and above all, for every $k = 1, \dots, n+1$,
the following uniform effective upper bound holds:
\[
\aligned
\big\vert{\sf coeff}_{d^k}
\big[h^lu_1^{i_1}\cdots u_n^{i_n}\big]
\big\vert
\leqslant
n^{4n^3}2^{n^4},
\\
\big\vert{\sf coeff}_{d^k}
\big[c_1h^lu_1^{j_1}\cdots u_n^{j_n}\big]
\big\vert
\leqslant
n^{4n^3}2^{n^4}.
\endaligned
\]
\end{theorem}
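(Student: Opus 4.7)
The plan is to make Lemma~\ref{ineffective-reduction} quantitative by tracking integer coefficients throughout the two-stage reduction: first descending Demailly's tower via~\thetag{\ref{c-j-ell}} and~\thetag{\ref{u-ell-n}}, then substituting~\thetag{\ref{c-d}} for the Chern classes of $X$ and using $h^n=d$.

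The vanishing $\mathsf{coeff}_{d^0}=0$ follows at one stroke. By Lemma~\ref{ineffective-reduction}, the input monomial reduces to an integer combination of terms $h^{l'}c_1^{\lambda_1}\cdots c_n^{\lambda_n}$ with $l'+\lambda_1+2\lambda_2+\cdots+n\lambda_n=n$. Substituting~\thetag{\ref{c-d}}, which extracts a factor $h^j$ from every $c_j$, produces $h^{l'+\sum j\lambda_j}=h^n$ multiplied by a polynomial in $d$ with integer coefficients. Since $h^n=d$, the whole reduction is divisible by $d$ and so has no constant term in $d$; the $c_1$-twisted case is identical, with $h^{n-1}$ shifting to $h^n$ after inclusion of the extra factor.

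For the uniform bound on $|\mathsf{coeff}_{d^k}|$ when $k\geqslant 1$, I would carry out the reduction in three nested layers with carefully controlled coefficient growth. \textbf{Layer~1:} iterate~\thetag{\ref{c-j-ell}} from $\ell=1$ up to $\ell=n$ to express every relative Chern class $c_j^{[\ell]}$ as an integer polynomial in $c_1,\dots,c_j$ and $u_1,\dots,u_\ell$; since $|\lambda_{j,j-k}|\leqslant 2^n$ and the iteration uses at most $n$ stages with $\leqslant n+1$ summands each, the coefficients of this polynomial are bounded by $(n+1)^n\cdot 2^{n^2}$. \textbf{Layer~2:} at every tower level $\ell=n,n-1,\dots,1$, reduce all powers $u_\ell^p$ with $p\leqslant n^2$ down to $0\leqslant j\leqslant n-1$ by iterating~\thetag{\ref{u-ell-n}}, collecting like terms after each individual step in the canonical basis $\{u_1^{j_1}\cdots u_n^{j_n}:j_\ell\leqslant n-1\}$ of $H^\bullet(X_n)$ viewed as a free $H^\bullet(X)$-module. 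Tracking the accumulation of Layer-$1$ factors across the $O(n^3)$ reduction steps spread over $n$ tower levels, each integer coefficient at the end of Layer~2 is bounded by a quantity of the form $n^{O(n^3)}\cdot 2^{O(n^4)}$. \textbf{Layer~3:} substitute~\thetag{\ref{c-d}} for the absolute Chern classes $c_j(T_X)$, whose integer coefficients are binomials bounded by $2^{n+2}$; since at most $n$ such factors multiply together and then $h^n=d$ is applied, this contributes at most a further multiplicative factor $2^{O(n^2)}$. Absorbing every polynomial-in-$n$ count of basis monomials into the exponents yields the announced bound $n^{4n^3}\,2^{n^4}$, uniformly over $k=1,\dots,n+1$ and over both the untwisted and the $c_1$-twisted case.

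The main obstacle is Layer~2: a naive iteration of~\thetag{\ref{u-ell-n}} could multiply the number of surviving monomials by $n$ at each of the $O(n^3)$ steps, producing a super-exponential factor $n^{O(n^3)}$ in the \emph{number} of monomials that would ultimately violate the target bound. The resolution is to systematically reduce from the top level $u_n$ down to $u_1$ and, after every application of~\thetag{\ref{u-ell-n}}, immediately project onto the canonical free-module basis above, so that the number of surviving monomials remains polynomial in $n$ while only the coefficient bound accumulates multiplicatively; the final bookkeeping of the iterated binomial factors $\lambda_{j,j-k}$ against the number of recursion steps is where the precise exponent matching $n^{4n^3}\,2^{n^4}$ must be verified.
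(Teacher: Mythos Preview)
Your argument for the vanishing of the $d^0$-coefficient is correct and coincides with the paper's.

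For the main bound, your three-layer scheme is a genuinely different route from the paper's, and the gap lies in Layer~2. You assert that by projecting onto the canonical $H^\bullet(X)$-module basis $\{u_1^{j_1}\cdots u_n^{j_n}:j_\ell\leqslant n-1\}$ after each reduction, ``the number of surviving monomials remains polynomial in $n$''. This basis already has $n^n$ elements, not polynomially many, and more importantly your Layer~1 (fully expanding each $c_j^{[\ell-1]}$ down to the ground Chern classes and to $u_1,\dots,u_{\ell-1}$) injects, at every application of~\thetag{\ref{u-ell-n}}, new positive powers of all lower $u_{\ell'}$ simultaneously. When you then descend to level $\ell-1$, the exponent of $u_{\ell-1}$ you must reduce has grown in a way your sketch does not track; iterating this over $n$ levels is exactly where an uncontrolled $n^{O(n^3)}$ blow-up in the number of terms, not merely in coefficient size, re-enters. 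Your final sentence essentially concedes that the constants have not been pinned down, and with this bookkeeping that matching does not go through as written.

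The paper avoids this precisely by \emph{not} expanding the relative Chern classes all the way down. Its organizing device is the Jacobi--Trudy determinant $\mathcal{C}_J^{\ell}$: reducing $u_\ell^{i_\ell}$ yields a \emph{single} $\mathcal{C}_{i_\ell-n+1}^{\ell-1}$ (Lemma~\ref{K-i-l}), and reducing $\mathcal{C}_J^\ell$ one level yields a short sum $\sum_{j}\mathcal{C}_{J-j}^{\ell-1}u_\ell^{j}$ with a clean numerical factor (Proposition~\ref{J-T}). Two further ingredients keep everything from exploding: an ``upper reduction operator'' $\mathcal{R}_\lambda^+$ that replaces signs by $+$ and every $\lambda_{j,j-k}$ by $2^n$, and the submultiplicativity $\mathcal{C}_{J_1}^{\ell}\cdot\mathcal{C}_{J_2}^{\ell}\leqslant_{\mathcal{R}^+}\mathcal{C}_{J_1+J_2}^{\ell}$ (Lemma~\ref{C-C-majorations}), which at each step collapses products back to a single determinant. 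This is the substitute for your basis-projection idea, and it is what makes the per-level constant exactly ${\sf N}\,n^2$ with ${\sf N}=2^{n^3}n^{4n^2}$, hence after $n-1$ descents and the final ground estimate the bound $n^{4n^3}2^{n^4}$.
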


In other words, in the above notations, one may choose ${\sf D}_0 ( n)
= {\sf D}_0 ' (n) = 0$ and ${\sf D}_k (n) = {\sf
D}_k ' (n) = n^{4n^3}2^{n^4}$ for $k = 1, \dots, n+1$.

\subsection{Jacobi-Trudy determinants}
One key observation towards these estimations is that the reduction
process from one level to the lower level in Demailly's tower involves
Jacobi-Trudy determinants in the Chern classes of the lower level
in question.

\begin{definition}
At any level $\ell$ with $0 \leqslant \ell \leqslant n- 1$ and for any
$J$ with $0 \leqslant J \leqslant n + \ell ( n-1) = \dim X_\ell$, we
define the corresponding {\sl Jacobi-Trudy determinant}:
\begin{equation*}
\mathcal{C}_J^\ell
:=
\left\vert
\begin{array}{ccccc}
c_1^{[\ell]} & c_2^{[\ell]} & c_3^{[\ell]} & \cdots & c_J^{[\ell]}
\\
1 & c_1^{[\ell]} & c_2^{[\ell]} & \cdots & c_{J-1}^{[\ell]}
\\
0 & 1 & c_1^{[\ell]} & \cdots & c_{J-1}^{[\ell]}
\\
\cdot & \cdot & \cdot & \cdots & \cdot
\\
0 & 0 & 0 & \cdots & c_1^{[\ell]}
\end{array}
\right\vert,
\end{equation*}
where, again by convention, we set 
any $c_k^{ [ \ell]} := 0$ as soon as $k
\geqslant n+1$; by convention also, $\mathcal{ C }_J^\ell := 0$ 
is set to zero when $J > \dim X_\ell$ and when $J < 0$; lastly, we
set $\mathcal{ C}_0^\ell := 1$.
\end{definition}

Expanding the determinant $\mathcal{ C}_J^\ell$
along its first line, and expanding again
the obtained block-determinants, one easily convinces oneself 
of the induction formulae:
\begin{equation}
\label{recurrence-jacobi-trudy}
\mathcal{C}_J^\ell
=
c_1^{[\ell]}\,\mathcal{C}_{J-1}^\ell
-
c_2^{[\ell]}\,\mathcal{C}_{J-2}^\ell
+
c_3^{[\ell]}\,\mathcal{C}_{J-3}^\ell
-\cdots,
\end{equation}
the last term in this expansion being
either $(-1)^{ n - 1} \, c_n^{ [\ell]}
\, \mathcal{ C}_{ J-n}^\ell$ when $J \geqslant n$ or else $(-1)^{ J-1} \,
c_J^{ [ \ell]} \, \mathcal{ C}_0^\ell$ when $J < n$.

In the proof of Theorem~\ref{D-k-n}, the study of the monomials $u_1^{
i_1} \cdots u_n^{ i_n}$ will appear {\em a posteriori} to be exactly
the same as the study of the monomials $h^l u_1^{ i_1} \cdots u_n^{
i_n}$ and $c_1 h^l u_1^{ j_1} \cdots u_n^{ j_n}$.

Generally speaking, fixing $\ell$ with $1 \leqslant \ell \leqslant n$ and
exponents $i_1, \dots, i_\ell \in \mathbb{ N}$ satisfying $i_1 +
\cdots + i_\ell = n + \ell  (n-1)  = \dim X_\ell$, let us therefore
study the reduction, in term of the degree $d$ of $X$, of the specific
monomial $u_1^{ i_1} \cdots u_{ \ell - 1}^{ i_{ \ell - 1}} u_\ell^{
i_\ell}$. We write it as $\Omega_K^{ \ell - 1} u_\ell^{ i_\ell}$,
where $\Omega_K^{ \ell - 1} := u_1^{ i_1} \cdots u_{ \ell - 1}^{ i_{
\ell- 1}}$ is a $(K, K)$-form living on $X_{ \ell - 1}$ with $K +
i_\ell = n + \ell ( n-1)$.

If $i_\ell \leqslant n - 2$, then $\Omega_K^{ \ell - 1}$ vanishes form
degree-form reasons. If $i_\ell = n - 1$, then a fiber-integration
gives $\Omega_K^{ \ell - 1} \underline{ u_\ell^{ n-1}}_{\int} =
\Omega_K^{ \ell - 1} \cdot 1 = \Omega_K^{ \ell-1}\mathcal{
C}_0^{\ell-1}$.

\begin{lemma}
\label{K-i-l}
For any $\ell$ with $1 \leqslant \ell \leqslant n$, given any
$(K, K)$-form $\Omega_K^{ \ell-1}$ at level $\ell - 1$
and any integer $i_\ell$ with
$i_\ell \geqslant n-1$ and $i_\ell + K = \dim X_\ell$, the reduction
of $\Omega_K^{ \ell - 1} \, u_\ell^{ i_\ell}$ down to level $\ell-1$
precisely reads:
\[
\small
\aligned
\Omega_K^{\ell-1}u_\ell^{i_\ell}
&
=
(-1)^{i_\ell-n+1}\,
\Omega_K^{\ell-1}\,
\left\vert
\begin{array}{cccc}
c_1^{[\ell-1]} & c_2^{[\ell-1]} & \cdots & c_{i_\ell-n+1}^{[\ell-1]}
\\
1 & c_1^{[\ell-1]} & \cdots & c_{i_\ell-n}^{[\ell-1]}
\\
\cdot & \cdot & \cdots & \cdot
\\
0 & 0 & \cdots & c_1^{[\ell-1]}
\end{array}
\right\vert
\\
&
=
(-1)^{i_\ell-n+1}\,\Omega_K^{\ell-1}\,
\mathcal{C}_{i_\ell-n+1}^{\ell-1}.
\endaligned
\]
\end{lemma}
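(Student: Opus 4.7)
The plan is to prove the formula by induction on the parameter $i_\ell$, starting from the value $i_\ell = n-1$ and increasing. Throughout, everything that is unaffected by the reduction, namely the cup-product with the fixed $(K,K)$-form $\Omega_K^{\ell-1}$ living on $X_{\ell-1}$, will simply be carried along as a passive factor.

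For the base case $i_\ell = n-1$, the exponent constraint $i_\ell + K = \dim X_\ell = n + \ell(n-1)$ forces $K = n + (\ell-1)(n-1) = \dim X_{\ell-1}$, so $\Omega_K^{\ell-1}$ is a top-degree form on $X_{\ell-1}$; performing a fiber-integration along the fibers $\simeq \mathbb{P}^{n-1}$ of $\pi_{\ell-1,\ell} \colon X_\ell \to X_{\ell-1}$ replaces $u_\ell^{n-1}$ by $1$, which matches $(-1)^0\,\Omega_K^{\ell-1}\,\mathcal{C}_0^{\ell-1}$ since $\mathcal{C}_0^{\ell-1} = 1$ by convention.

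For the inductive step, assume $i_\ell \geqslant n$ and suppose the formula is established for every strictly smaller value $\geqslant n-1$. Apply the relation~\thetag{\ref{u-ell-n}} in the form $u_\ell^n = -\sum_{k=1}^n c_k^{[\ell-1]}\,u_\ell^{n-k}$, multiply by $u_\ell^{i_\ell - n}$, and then by $\Omega_K^{\ell-1}$, to obtain
\[
\Omega_K^{\ell-1}\,u_\ell^{i_\ell}
=
-\sum_{k=1}^{n}\,
\bigl(c_k^{[\ell-1]}\,\Omega_K^{\ell-1}\bigr)\,u_\ell^{i_\ell-k}.
\]
For each index $k$, the factor $c_k^{[\ell-1]}\,\Omega_K^{\ell-1}$ is a $(K+k,K+k)$-form on $X_{\ell-1}$; a direct count shows $K+k - \dim X_{\ell-1} = k - (i_\ell - n + 1)$, so whenever $k > i_\ell - n + 1$ the companion subform vanishes identically for degree-form reasons, while for $k \leqslant i_\ell - n + 1$ one has $i_\ell - k \geqslant n - 1$ and the inductive hypothesis applies.

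Setting $J := i_\ell - n + 1$ and invoking induction on each surviving term yields
\[
\Omega_K^{\ell-1}\,u_\ell^{i_\ell}
=
-\sum_{k=1}^{J}\,
(-1)^{J-k}\,
c_k^{[\ell-1]}\,\Omega_K^{\ell-1}\,\mathcal{C}_{J-k}^{\ell-1},
\]
where one may equivalently let $k$ run up to $\min(n,J)$ since $c_k^{[\ell-1]} = 0$ for $k > n$ by convention. Pulling out $(-1)^J$ and comparing with the recurrence~\thetag{\ref{recurrence-jacobi-trudy}} for the Jacobi-Trudy determinant $\mathcal{C}_J^{\ell-1} = \sum_{k=1}^J (-1)^{k-1} c_k^{[\ell-1]}\,\mathcal{C}_{J-k}^{\ell-1}$ gives exactly $(-1)^J\,\Omega_K^{\ell-1}\,\mathcal{C}_{J}^{\ell-1}$, which is the claimed identity. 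The only real care needed is in tracking the alternation of signs and in verifying the two dichotomies (degree-form vanishing versus fiber-integration versus further reduction) that govern which terms survive and which do not; all remaining computations are routine.
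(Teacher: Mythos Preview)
Your proof is correct and follows essentially the same inductive approach as the paper: both arguments induct on $i_\ell$, apply the relation~\thetag{\ref{u-ell-n}} multiplied by $u_\ell^{i_\ell-n}$, and then recognize the Jacobi--Trudy recurrence~\thetag{\ref{recurrence-jacobi-trudy}}. The only cosmetic differences are that you start the base case at $i_\ell = n-1$ (via fiber-integration) whereas the paper handles that case in the paragraph preceding the lemma and begins its formal induction at $i_\ell = n$, and that you spell out the degree-form vanishing dichotomy in the inductive step more explicitly than the paper does.
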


\proof
Assume first that $i_\ell = n$ and
use~\thetag{ \ref{u-ell-n}} to get:
\[
\aligned
\Omega_K^{\ell-1}\,u_\ell^n
&
=
-
\Omega_K^{\ell-1}\,c_1^{[\ell-1]}\,
\underline{u_\ell^{n-1}}_{\int}
-
\underline{\Omega_K^{\ell-1}c_2^{[\ell-1]}}_\circ
u_\ell^{n-2}
-\cdots-
\underline{\Omega_K^{\ell-1}c_n^{[\ell-1]}}_\circ
\\
&
=
-\Omega_K^{\ell-1}\,\mathcal{C}_1^{\ell-1}.
\endaligned
\]
Reasoning by induction, assume now that the lemma holds for all
$i_\ell'$ with $n \leqslant i_\ell' \leqslant i_\ell$ for some $i_\ell
\geqslant n$. Take an arbitrary $(L, L)$-form $\Omega_L^{ \ell-1}$ on
$X_{ \ell-1}$ with $L + i_\ell + 1 = \dim X_\ell$, multiply~\thetag{
\ref{u-ell-n}} by $\Omega_L^{ \ell-1} \, u_\ell^{ i_\ell + 1 - n}$ to
get:
\[
\small
\aligned
\Omega_L^{\ell-1}\,u_\ell^{i_\ell+1}
&
=
-\Omega_L^{\ell-1}
\Big(
c_1^{[\ell-1]}\,u_\ell^{i_\ell}
+
c_2^{[\ell-1]}\,u_\ell^{i_\ell-1}
+
c_3^{[\ell-1]}\,u_\ell^{i_\ell-2}
+\cdots
\Big)
\\
&
=
(-1)^{1+i_\ell-n+1}\,\Omega_L^{\ell-1}
\Big(
c_1^{[\ell-1]}\,\mathcal{C}_{i_\ell-n+1}^{\ell-1}
-
c_2^{[\ell-1]}\,\mathcal{C}_{i_\ell-n}^{\ell-1}
+
c_3^{[\ell-1]}\,\mathcal{C}_{i_\ell-n-1}^{\ell-1}
-\cdots
\Big)
\\
&
=
(-1)^{i_\ell+1-n+1}\,\Omega_L^{\ell-1}\,
\mathcal{C}_{i_\ell+1-n+1}^{\ell-1},
\endaligned
\]
thanks to~\thetag{ \ref{recurrence-jacobi-trudy}}, 
which gives the claimed reduction for the exponent 
$i_\ell+1$. 
\endproof

Applying this lemma to the monomial $u_1^{ i_1} \cdots u_\ell^{
i_\ell} u_{ \ell+1}^{ i_{ \ell+1}}$, we thus reduce it to
\[
u_1^{i_1}\cdots u_\ell^{i_\ell}u_{\ell+1}^{i_{\ell+1}}
=
(-1)^{i_{\ell+1}-n+1}\,
u_1^{i_1}\cdots u_\ell^{i_\ell}\,
\mathcal{C}_{i_{\ell+1}-n+1}^\ell
\,.
\]
To obtain effective estimations, we will need to further reduce such a
Jacobi-Trudy determinant $\mathcal{ C}_{ i_{ \ell +1} -n+1 }^\ell$
from level $\ell$ down to level $\ell -1$. A whole program begins. In
the application we have in mind, one should think that $\Omega_K^\ell
= (-1)^{ i_{\ell+1} -n+1}\, u_1^{i_1} \cdots u_\ell^{i_\ell}$ and that
$J = i_{ \ell+1} - n+1$.

\begin{lemma}
\label{dvp-expansion}
At an arbitrary level $\ell$ with $1 \leqslant \ell \leqslant n - 1$,
consider the Jacobi-Trudy determinant $\mathcal{
C}_J^\ell$ of an arbitrary size $J \times J$ with $1 \leqslant J
\leqslant \dim X_\ell$ and furthermore, let $\Omega_K^{ \ell}$ be any
$(K, K)$-form on $X_{ \ell}$ whose degree $K$ satisfies $K + J = \dim
X_\ell = n + \ell ( n-1)$. Then the reduction of
$\Omega_K^{\ell}\mathcal{C}_J^\ell$ down to level $\ell - 1$
relies upon the following formulae:
\[
\Omega_K^{\ell}\mathcal{C}_J^\ell
=
\Omega_K^{\ell}
\big[
\mathcal{C}_J^{\ell-1}
+
\mathcal{C}_0^\ell{\sf A}_J^\ell
+
\mathcal{C}_1^\ell{\sf A}_{J-1}^\ell
+\cdots+
\mathcal{C}_{J-1}^\ell{\sf A}_1^\ell
\big],
\]
in which, for any $k$ with $1 \leqslant k \leqslant J$, one has set:
\[
{\sf A}_k^\ell
:=
{\sf X}_1^\ell\mathcal{C}_{k-1}^{\ell-1}
-
{\sf X}_2^\ell\mathcal{C}_{k-2}^{\ell-1}
+\cdots+
(-1)^{k-1}{\sf X}_k^\ell\mathcal{C}_0^{\ell-1},
\]
where the ${\sf X}$-terms here gather all the terms after $c_j^{
[\ell -1]}$ in a convenient rewriting of~\thetag{
\ref{c-j-ell}} under
the following form:
\[
c_j^{[\ell]}
=
c_j^{[\ell-1]}
+\underbrace{
\lambda_{j,1}\,c_{j-1}^{[\ell-1]}u_\ell
+
\lambda_{j,2}\,c_{j-2}^{[\ell-1]}u_\ell^2
+\cdots+
\lambda_{j,j}\, u_\ell^j}_{\overset{\text{\rm def}}=\,{\sf X}_j^\ell},
\]
with the convention that ${\sf X}_j^\ell = 0$ 
for any $j \geqslant n+1$.
\end{lemma}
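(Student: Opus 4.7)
My plan is to prove the identity in the cohomology algebra of $X_\ell$ (after which multiplication by $\Omega_K^\ell$ is trivial) via a brief generating-function computation. Introduce the formal series
\[
C^\ell(t) := \sum_{J \geqslant 0} \mathcal{C}_J^\ell\, t^J, \quad P^\ell(t) := \sum_{j=1}^{n} (-1)^{j-1}\, c_j^{[\ell]}\, t^j, \quad X^\ell(t) := \sum_{j=1}^{n} (-1)^{j-1}\, {\sf X}_j^\ell\, t^j,
\]
together with the analogous $C^{\ell-1}(t), P^{\ell-1}(t)$ at the lower level, all viewed in $H^{\bullet}(X_\ell)[[t]]$. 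The Jacobi-Trudy recurrence~\thetag{\ref{recurrence-jacobi-trudy}}, coupled with the initial value $\mathcal{C}_0^\ell = 1$, is precisely encoded as the compact formal identity
\[
\big(1 - P^\ell(t)\big)\, C^\ell(t) = 1,
\]
noting that the truncations $c_j^{[\ell]} = 0$ for $j \geqslant n+1$ and $\mathcal{C}_J^\ell = 0$ for $J > \dim X_\ell$ pose no obstacle, because they correspond automatically to the vanishing of classes of too high degree in $H^{\bullet}(X_\ell)$.

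Now the rewriting~\thetag{\ref{c-j-ell}}, repackaged as $c_j^{[\ell]} = c_j^{[\ell-1]} + {\sf X}_j^\ell$, yields the additive decomposition $P^\ell(t) = P^{\ell-1}(t) + X^\ell(t)$. Substituting this into the identity above and isolating the new term gives
\[
\big(1 - P^{\ell-1}(t)\big)\, C^\ell(t) = 1 + X^\ell(t)\, C^\ell(t),
\]
and multiplying both sides by $C^{\ell-1}(t) = \big(1 - P^{\ell-1}(t)\big)^{-1}$ produces the crucial identity
\[
C^\ell(t) = C^{\ell-1}(t) + C^{\ell-1}(t)\, X^\ell(t)\, C^\ell(t).
\]

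It then remains only to extract the coefficient of $t^J$ on both sides. For the convolution product, a Cauchy expansion yields
\[
[t^J]\big(C^{\ell-1}(t)\, X^\ell(t)\, C^\ell(t)\big) = \sum_{k=0}^{J} \mathcal{C}_{J-k}^\ell \cdot [t^k]\big(C^{\ell-1}(t)\, X^\ell(t)\big),
\]
and a direct identification gives $[t^k]\big(C^{\ell-1}(t)\, X^\ell(t)\big) = \sum_{j=1}^{k} (-1)^{j-1}\, {\sf X}_j^\ell\, \mathcal{C}_{k-j}^{\ell-1} = {\sf A}_k^\ell$. Since ${\sf A}_0^\ell = 0$ by virtue of an empty sum, the $k = 0$ contribution drops out and we recover the claimed reduction formula after multiplying through by $\Omega_K^\ell$. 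I do not anticipate any serious obstacle: the only potentially delicate point is ensuring that the double truncation (in $j$ and in $J$) is compatible with the formal-series manipulations, but working inside $H^{\bullet}(X_\ell)[[t]]$ makes this automatic. An entirely equivalent elementary proof proceeds by direct induction on $J$ using~\thetag{\ref{recurrence-jacobi-trudy}} and the splitting $c_j^{[\ell]} = c_j^{[\ell-1]} + {\sf X}_j^\ell$, but the generating-function packaging has the advantage of making the combinatorial origin of the auxiliary quantities ${\sf A}_k^\ell$ maximally transparent.
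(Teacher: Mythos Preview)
Your proof is correct and takes a genuinely different route from the paper's. The paper proceeds by a direct column-by-column expansion of the determinant $\mathcal{C}_J^\ell$: writing each entry $c_j^{[\ell]}$ as $c_j^{[\ell-1]}+{\sf X}_j^\ell$, it splits the first column by linearity, peels off a term ${\sf X}_1^\ell\cdot\mathcal{C}_{J-1}^\ell$, and iterates on the remaining determinant, so that after $J$ steps the quantities $\Delta_k={\sf A}_k^\ell\,\mathcal{C}_{J-k}^\ell$ emerge one at a time from explicit minor expansions, with the final remainder equal to $\mathcal{C}_J^{\ell-1}$. Your argument instead encodes the Jacobi-Trudy recurrence as the single formal identity $\big(1-P^\ell(t)\big)C^\ell(t)=1$, uses the additive splitting $P^\ell=P^{\ell-1}+X^\ell$, and reads off the result by one Cauchy convolution. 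Both approaches are valid; yours is shorter and makes the appearance of ${\sf A}_k^\ell=[t^k]\big(C^{\ell-1}(t)X^\ell(t)\big)$ structurally inevitable, while the paper's hands-on expansion has the pedagogical advantage of requiring no formal-series machinery and of showing concretely how each $\Delta_k$ arises as a block-triangular minor. One small point worth making explicit in your write-up: the identity $\big(1-P^{\ell-1}(t)\big)C^{\ell-1}(t)=1$ that you invert is a \emph{universal} polynomial identity in $\mathbb{Z}[c_1^{[\ell-1]},\dots,c_n^{[\ell-1]}][[t]]$ coming from the recurrence~\thetag{\ref{recurrence-jacobi-trudy}}, so it holds after pullback to $H^\bullet(X_\ell)$ without any appeal to degree-vanishing on $X_{\ell-1}$; you gesture at this, but since the paper's convention sets $\mathcal{C}_J^{\ell-1}=0$ for $J>\dim X_{\ell-1}$, it is worth stressing that your $C^{\ell-1}(t)$ does not use that truncation.
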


\proof
Naturally, we should expand the Jacobi-Trudy determinant in question
after inserting in it the relation (\ref{c-j-ell}). 
This is based on linear
algebra considerations and we shall drop $\Omega_K^{\ell}$ in the
computations.

More precisely, let us write down the determinant $\mathcal{
C}_J^\ell$ we have to expand:
\[
\small
\aligned
\mathcal{C}_J^\ell
=
\left\vert
\begin{array}{cccc}
c_1^{[\ell]} & c_2^{[\ell]} & \cdots & c_J^{[\ell]}
\\
1 & c_1^{[\ell]} & \cdots & c_{J-1}^{[\ell]}
\\
\vdots & \vdots & \ddots & \vdots
\\
0 & 0 & \cdots & c_1^{[\ell]}
\end{array}
\right\vert
=
\left\vert
\begin{array}{cccc}
{\sf X}_1^\ell + c_1^{[\ell-1]} 
& c_2^{[\ell]} & \cdots & c_J^{[\ell]}
\\
0+1\ \ \ \ & c_1^{[\ell]} & \cdots & c_{J-1}^{[\ell]}
\\
\vdots & \vdots & \ddots & \vdots
\\
0 & 0 & \cdots & c_1^{[\ell]}
\end{array}
\right\vert
\endaligned
\]
by emphasizing the induction on $\ell$ which represents its first
column naturally as the sum of two columns. As already devised, we
expand it by linearity, getting:
\[
\aligned
\mathcal{C}_J^\ell
=
\left\vert
\begin{array}{cccc}
{\sf X}_1^\ell & c_2^{[\ell]} & \cdots & c_J^{[\ell]}
\\
0 & c_1^{[\ell]} & \cdots & c_{J-1}^{[\ell]}
\\
\vdots & \vdots & \ddots & \vdots
\\
0 & 0 & \cdots & c_1^{[\ell]}
\end{array}
\right\vert
+
\left\vert
\begin{array}{cccc}
c_1^{[\ell-1]} & c_2^{[\ell]} & \cdots & c_J^{[\ell]}
\\
1 & c_1^{[\ell]} & \cdots & c_{J-1}^{[\ell]}
\\
\vdots & \vdots & \ddots & \vdots
\\
0 & 0 & \cdots & c_1^{[\ell]}
\end{array}
\right\vert,
\endaligned
\]
and just afterwards immediately, we expand the first determinant along
its first column, while at the same time, in the second column of the
second determinant, we again emphasize the induction on $\ell$:
\[
\aligned
\mathcal{C}_J^\ell
=
{\sf X}_1^\ell\cdot\mathcal{C}_{J-1}^\ell
+
\left\vert
\begin{array}{ccccc}
c_1^{[\ell-1]} & {\sf X}_2^\ell + c_2^{[\ell-1]} 
& c_3^{[\ell]} 
& \cdots & c_J^{[\ell]}
\\
1 & {\sf X}_1^\ell + c_1^{[\ell-1]} & c_2^{[\ell]}
& \cdots & c_{J-1}^{[\ell]}
\\
0 & 0 + 1 \ \ \ & c_1^{[\ell]} 
& \cdots & c_{J-2}^{[\ell]}
\\
\vdots & \vdots & \vdots & \ddots & \vdots
\\
0 & 0 & 0 & \cdots & c_1^{[\ell]}
\end{array}
\right\vert.
\endaligned
\]
Next, we similarly expand by linearity the obtained determinant,
realizing again that its second column is a sum of two columns:
\[
\footnotesize
\aligned
\mathcal{C}_J^\ell
=
{\sf X}_1^\ell\cdot\mathcal{C}_{J-1}^\ell
&
+
\left\vert
\begin{array}{ccccc}
c_1^{[\ell-1]} & {\sf X}_2^\ell
& c_3^{[\ell]} 
& \cdots & c_J^{[\ell]}
\\
1 & {\sf X}_1^{\ell} & c_2^{[\ell]}
& \cdots & c_{J-1}^{[\ell]}
\\
0 & 0 & c_1^{[\ell]} 
& \cdots & c_{J-2}^{[\ell]}
\\
\vdots & \vdots & \vdots & \ddots & \vdots
\\
0 & 0 & 0 & \cdots & c_1^{[\ell]}
\end{array}
\right\vert
+ 
\left\vert
\begin{array}{ccccc}
c_1^{[\ell-1]} & c_2^{[\ell-1]} 
& c_3^{[\ell]} 
& \cdots & c_J^{[\ell]}
\\
1 & c_1^{[\ell-1]} & c_2^{[\ell]}
& \cdots & c_{J-1}^{[\ell]}
\\
0 & 1 & c_1^{[\ell]} 
& \cdots & c_{J-2}^{[\ell]}
\\
\vdots & \vdots & \vdots & \ddots & \vdots
\\
0 & 0 & 0 & \cdots & c_1^{[\ell]}
\end{array}
\right\vert,
\endaligned
\]
and evidently again, we must expand the first obtained 
determinant along its second column, getting:
\[
\small
\aligned
\mathcal{C}_J^\ell
=
{\sf X}_1^\ell\cdot\mathcal{C}_{J-1}^\ell
&
-
{\sf X}_2^\ell
\cdot
\left\vert
\begin{array}{cccc}
1 & c_2^{[\ell]} & \cdots & c_{J-1}^{[\ell]}
\\
0 & c_1^{[\ell]} & \cdots & c_{J-2}^{[\ell]}
\\
\vdots & \vdots & \ddots & \vdots 
\\
0 & 0 & \cdots & c_1^{[\ell]}
\end{array}
\right\vert
+
{\sf X}_1^\ell\cdot
\left\vert
\begin{array}{cccc}
c_1^{[\ell-1]} & c_3^{[\ell]} & \cdots & c_J^{[\ell]}
\\
0 & c_1^{[\ell]} & \cdots & c_{J-2}^{[\ell]}
\\
\vdots & \vdots & \ddots & \vdots 
\\
0 & 0 & \cdots & c_1^{[\ell]}
\end{array}
\right\vert
\\
& \ \ \ \ \ \ \ \
+ 
\left\vert
\begin{array}{cccccc}
c_1^{[\ell-1]} & c_2^{[\ell-1]} 
& {\sf X}_3^\ell + c_3^{[\ell-1]} 
& c_4^{[\ell]} & \cdots & c_J^{[\ell]}
\\
1 & c_1^{[\ell-1]} & {\sf X}_2^\ell + c_2^{[\ell-1]} 
& c_3^{[\ell]} & \cdots & c_{J-1}^{[\ell]}
\\
0 & 1 & {\sf X}_1^\ell + c_1^{[\ell-1]}
& c_2^{[\ell]} & \cdots & c_{J-2}^{[\ell]}
\\
0 & 0 & 0 + 1 \ \ \ \ & c_1^{[\ell]} 
& \cdots & c_{J-3}^{[\ell]}
\\
\vdots & \vdots & \vdots & \vdots & \ddots & \vdots
\\
0 & 0 & 0 & 0 & \cdots & c_1^{[\ell]} 
\end{array}
\right\vert,
\endaligned
\]
and we are supposed to iterate once again the same two processes:
\[
\small
\aligned
\mathcal{C}_J^\ell
&
=
{\sf X}_1^\ell\cdot\mathcal{C}_{J-1}^\ell
-
{\sf X}_2^\ell\cdot 1
\cdot
\mathcal{C}_{J-2}^\ell
+
{\sf X}_1^\ell\cdot
\mathcal{C}_1^{\ell-1}\cdot
\mathcal{C}_{J-2}^\ell
\\
&\qquad 
+
{\sf X}_3^\ell\cdot
\left\vert
\begin{array}{cc}
1 & c_1^{[\ell-1]}
\\
0 & 1
\end{array}
\right\vert
\cdot
\left\vert
\begin{array}{ccc}
c_1^{[\ell]} & \cdots & c_{J-3}^{[\ell]}
\\
\vdots & \ddots & \vdots
\\
0 & \cdots & c_1^{[\ell]}
\end{array}
\right\vert
\\ &\qquad
-
{\sf X}_2^\ell
\cdot\left\vert
\begin{array}{cc}
c_1^{[\ell-1]} & c_2^{[\ell-1]}
\\
0 & 1
\end{array}
\right\vert
\cdot
\left\vert
\begin{array}{ccc}
c_1^{[\ell]} & \cdots & c_{J-3}^{[\ell]}
\\
\vdots & \ddots & \vdots
\\
0 & \cdots & c_1^{[\ell]}
\end{array}
\right\vert
\\ & \qquad
+
{\sf X}_1^\ell\cdot
\left\vert
\begin{array}{cc}
c_1^{[\ell-1]} & c_2^{[\ell-1]}
\\
1 & c_1^{[\ell-1]}
\end{array}
\right\vert
\cdot
\left\vert
\begin{array}{ccc}
c_1^{[\ell]} & \cdots & c_{J-3}^{[\ell]}
\\
\vdots & \ddots & \vdots
\\
0 & \cdots & c_1^{[\ell]}
\end{array}
\right\vert
\\ &\qquad
+
\left\vert
\begin{array}{ccccccc}
c_1^{[\ell-1]} & c_2^{[\ell-1]} & c_3^{[\ell-1]} 
& {\sf X}_4^\ell + c_4^{[\ell-1]} 
& c_5^{[\ell]} & \cdots & c_J^{[\ell]}
\\
1 & c_1^{[\ell-1]} & c_2^{[\ell-1]} 
& {\sf X}_3^\ell + c_3^{[\ell-1]} 
& c_4^{[\ell]} & \cdots & c_{J-1}^{[\ell]}
\\
0 & 1 & c_1^{[\ell-1]} & {\sf X}_2^\ell + c_2^{[\ell-1]}
& c_3^{[\ell]} & \cdots & c_{J-2}^{[\ell]}
\\
0 & 0 & 1 & {\sf X}_1^\ell + c_1^{[\ell-1]} 
& c_2^{[\ell]} 
& \cdots & c_{J-3}^{[\ell]}
\\
0 & 0 & 0 & 0 + 1 \ \ \ \
& c_1^{[\ell]} 
& \cdots & c_{J-4}^{[\ell]}
\\
\vdots & \vdots & \vdots & \vdots & \vdots & \ddots & \vdots
\\
0 & 0 & 0 & 0 & 0 & \cdots & c_1^{[\ell]} 
\end{array}
\right\vert.
\endaligned
\]
At this point where things start to become clearer, we make the
following general observation. Consider the determinant
that one obtains after a finite number of steps:
\[
\aligned
\left\vert
\begin{array}{cccccccc}
c_1^{[\ell-1]} & c_2^{[\ell-1]} & \cdots & c_{k-1}^{[\ell-1]}
& {\sf X}_k^\ell + c_k^{[\ell-1]} & c_{k+1}^{[\ell]} 
& \cdots & c_J^{[\ell]}
\\
1 & c_1^{[\ell-1]} & \cdots & c_{k-2}^{[\ell-1]} 
& {\sf X}_{k-1}^\ell + c_{k-1}^{[\ell-1]} 
& c_k^{[\ell]} & \cdots & c_{J-1}^{[\ell]}
\\
\vdots & \vdots & \ddots & \vdots & \vdots & \vdots 
& \ddots & \vdots
\\
0 & 0 & \cdots & c_1^{[\ell-1]}
& {\sf X}_2^\ell + c_2^{[\ell-1]} & c_3^{[\ell]} 
& \cdots & c_{J-k+2}^{[\ell]}
\\
0 & 0 & \cdots & 1 
& {\sf X}_1^\ell + c_1^{[\ell-1]} & c_2^{[\ell]} 
& \cdots & c_{J-k+1}^{[\ell]}
\\
0 & 0 & \cdots & 0 & 0 + 1 \ \ \ \ \
& c_1^{[\ell]} & \cdots & c_{J-k}^{[\ell]}
\\
\vdots & \vdots & \ddots & \vdots & \vdots & \vdots 
& \ddots & \vdots
\\
0 & 0 & \cdots & 0 & 0 & 0 & \cdots & c_1^{[\ell]}
\end{array}
\right\vert,
\endaligned
\]
where the central-looking column is the $k$-th one, for some $k$ with
$1 \leqslant k \leqslant J$. Write this determinant as a sum of two
determinants by linearity, and expand the first obtained determinant,
let us call it $\Delta_k$, along its $k$-th column in which are
present all the ${\sf X}_k^\ell$'s. We thus get that the first
determinant is equal to:
\[
\small
\aligned
\Delta_k
&
:=
(-1)^{k+1}\,{\sf X}_k^\ell\cdot
\left\vert
\begin{array}{ccc}
1 & \cdots & c_{k-2}^{[\ell-1]}
\\
\vdots & \ddots & \vdots
\\
0 & \cdots & 1
\end{array}
\right\vert
\cdot 
\mathcal{C}_{J-k}^\ell
\\ &\qquad
+
(-1)^{k+2}\,{\sf X}_{k-1}^\ell
\cdot
\left\vert
\begin{array}{cccc}
c_1^{[\ell-1]} & * & \cdots & *
\\
0 & 1 & \cdots & c_{k-3}^{[\ell-1]}
\\
\vdots & \vdots & \ddots & \vdots 
\\
0 & 0 & \cdots & 1
\end{array}
\right\vert
\cdot
\mathcal{C}_{J-k}^\ell
\\ &\qquad
+
(-1)^{k+3}\,{\sf X}_{k-2}^\ell\cdot
\left\vert
\begin{array}{ccccc}
c_1^{[\ell-1]} & c_2^{[\ell-1]} & * & \cdots & *
\\ 
1 & c_1^{[\ell-1]} & * & \cdots & *
\\
0 & 0 & 1 & \cdots & c_{k-4}^{[\ell-1]}
\\
\vdots & \vdots & \vdots & \ddots & \vdots
\\
0 & 0 & 0 & \cdots & 1
\end{array}
\right\vert
\cdot
\mathcal{C}_{J-k}^\ell
\\ &\qquad
+\cdots+
(-1)^{k+k}\,{\sf X}_1^\ell\cdot
\left\vert
\begin{array}{ccc}
c_1^{[\ell-1]} & \cdots & c_{k-1}^{[\ell-1]}
\\
\vdots & \ddots & \vdots
\\
0 & \cdots & c_1^{[\ell-1]}
\end{array}
\right\vert
\cdot
\mathcal{C}_{J-k}^\ell,
\endaligned
\]
while the second determinant is of the same kind as the one we started
with, except that the ${\sf X}$'s are now located in the $(k+1)$-th
column. Thus after mild simplifications, 
what we called the first determinant
equals:
\[
\small
\aligned
\Delta_k
&
=
(-1)^{k+1}\,{\sf X}_k^\ell\cdot 1 
\cdot\mathcal{C}_{J-k}^\ell
+
(-1)^{k+2}\,{\sf X}_{k-1}^\ell\cdot\mathcal{ C}_1^{\ell-1}
\cdot\mathcal{C}_{J-k}^\ell
+
\\
&\ \ \ \ \
+
(-1)^{k+3}\,{\sf X}_{k-2}^\ell\cdot
\mathcal{C}_2^{\ell-1}
\cdot\mathcal{C}_{J-k}^\ell
+\cdots+
{\sf X}_1^\ell\cdot
\mathcal{C}_{k-1}^{\ell-1}
\cdot\mathcal{C}_{J-k}^\ell
\\
&
=
{\sf A}_k^\ell\mathcal{C}_{J-k}^\ell.
\endaligned
\]
In conclusion, the initial Jacobi-Trudy determinant $\mathcal{
C}_J^\ell$ we started with now equals:
\[
\mathcal{C}_J^\ell
=
\Delta_1+\cdots+\Delta_k+\cdots+\Delta_J
+
\left\vert
\begin{array}{ccc}
c_1^{[\ell-1]} & \cdots & c_J^{[\ell-1]}
\\
\vdots & \ddots & \vdots 
\\
0 & \cdots & c_1^{[\ell-1]}
\end{array}
\right\vert,
\]
where the last written determinant, equal to $\mathcal{ C}_J^{\ell
-1}$ and living at the $(\ell-1)$-th level, is the remainder
determinant after all ${\sf X}$-terms are removed by
expansion. Summing the $\Delta_k = {\sf A}_k^\ell \, \mathcal{ C}_{
J-k}^\ell$, we obtain the formula announced in the lemma.
\endproof

As $J$ varies, the formulae given by this lemma:
\[
\mathcal{C}_J^\ell
=
\mathcal{C}_J^{\ell-1}
+
\mathcal{C}_0^\ell{\sf A}_J^\ell
+
\mathcal{C}_1^\ell{\sf A}_{J-1}^\ell
+\cdots+
\mathcal{C}_{J-1}^\ell{\sf A}_1^\ell,
\]
are still imperfect, for their right-hand sides still involve
Jacobi-Trudy determinants at the level $\ell$.
So necessarily, we must perform further reductions. 

\begin{lemma}
\label{C-A}
For any $J$ with $0 \leqslant J \leqslant \dim X_\ell$ and any $\ell$
with $1 \leqslant \ell \leqslant n$, one has:
\[
\mathcal{C}_J^\ell
=
\sum_{j=0}^J\,
\mathcal{C}_{J-j}^{\ell-1}
\bigg(
\sum_{\nu=1}^j\,
\sum_{k_1+\cdots+k_\nu=j\atop
k_1,\dots,k_\nu\geqslant 1}\,
{\sf A}_{k_1}^\ell\cdots{\sf A}_{k_\nu}^\ell
\bigg),
\]
with the convention that for $j = 0$, the empty sum in parentheses
equals $1$.
\end{lemma}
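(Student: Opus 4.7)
\textbf{Proof proposal for Lemma~\ref{C-A}.}

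The plan is to iterate the one-step reduction just established in Lemma~\ref{dvp-expansion}, which, after dropping the common factor $\Omega_K^\ell$, amounts to the recursion
\[
\mathcal{C}_J^\ell = \mathcal{C}_J^{\ell-1} + \sum_{k=1}^{J} \mathcal{C}_{J-k}^\ell \, {\sf A}_k^\ell.
\]
Each occurrence of a level-$\ell$ Jacobi-Trudy determinant $\mathcal{C}_{J-k}^\ell$ on the right-hand side must itself be descended to level $\ell-1$, which we do by re-applying the same formula: this peels off one further factor ${\sf A}^\ell$ and strictly decreases the index. The iteration must terminate, because $\mathcal{C}_0^\ell = 1 = \mathcal{C}_0^{\ell-1}$, and one expects the bookkeeping of the chains of ${\sf A}$-factors produced along the way to reproduce exactly the claimed formula.

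To make this precise, I would argue by strong induction on $J \geqslant 0$. The base case $J = 0$ is immediate from the convention $\mathcal{C}_0^\ell = 1$ combined with the empty-sum convention stated in the lemma. For the inductive step, assuming the formula holds for all $J' < J$, I apply the one-step recursion to $\mathcal{C}_J^\ell$, then substitute the induction hypothesis into each $\mathcal{C}_{J-k}^\ell$. The result is a sum indexed by pairs $\bigl(k,\,(k_1,\dots,k_\nu)\bigr)$ with $k \geqslant 1$ and $k_1,\dots,k_\nu \geqslant 1$ satisfying $k + k_1 + \cdots + k_\nu \leqslant J$, each contributing
\[
\mathcal{C}_{J - k - k_1 - \cdots - k_\nu}^{\ell-1} \, {\sf A}_{k_1}^\ell \cdots {\sf A}_{k_\nu}^\ell \, {\sf A}_k^\ell.
\]

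The final step is purely combinatorial: appending $k$ to the right of the chain $(k_1,\dots,k_\nu)$ yields a chain of length $\nu + 1$ summing to $j := k + k_1 + \cdots + k_\nu \in \{1,\dots,J\}$, and this assignment is clearly a bijection between the indexing set above and the set of all ordered compositions of every $j \in \{1,\dots,J\}$ into positive parts (the inverse being the stripping-off of the last entry). Grouping the terms according to the value of $j$ yields the claimed inner sum $\sum_{\mu=1}^{j}\sum_{k_1'+\cdots+k_\mu'=j} {\sf A}_{k_1'}^\ell \cdots {\sf A}_{k_\mu'}^\ell$, while the isolated $\mathcal{C}_J^{\ell-1}$ term from the first application of the recursion supplies the $j = 0$ contribution.

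The main obstacle is thus purely notational: keeping the range conventions straight (in particular that $\nu$ ranges from $1$ to $j$ when $j \geqslant 1$ and collapses to the empty product equal to $1$ when $j = 0$) and verifying that the re-indexing via terminal append is bijective. No additional geometric or cohomological input beyond Lemma~\ref{dvp-expansion} itself is required.
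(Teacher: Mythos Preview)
Your proof is correct and follows essentially the same route as the paper: both argue by strong induction on $J$, apply the one-step recursion of Lemma~\ref{dvp-expansion}, substitute the induction hypothesis into each $\mathcal{C}_{J-k}^\ell$, and then identify the coefficient of $\mathcal{C}_{J-j}^{\ell-1}$ as the sum over ordered compositions of $j$. The only cosmetic difference is that the paper writes this last identification as an explicit recursion $\Sigma_j^\ell({\sf A}) = \sum_{k=1}^{j}\Sigma_{j-k}^\ell({\sf A})\,{\sf A}_k^\ell$ proved by unwinding the definition, whereas you phrase it as the bijection ``append/strip the last part'' on compositions; these are the same observation.
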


\proof
First, for $J = 0$, recall that by convention $\mathcal{ C }_0^\ell =
\mathcal{ C }_0^{ \ell-1} = 1$. Next, for $J = 1$, we start from the
formula of the preceding lemma and we perform an evident computation:
\[
\mathcal{C}_1^\ell
=
\mathcal{C}_1^{\ell-1}
+
\mathcal{C}_0^\ell{\sf A}_1^\ell
=
\mathcal{C}_1^{\ell-1}\Sigma_0^\ell({\sf A})
+
\mathcal{C}_0^{\ell-1}\Sigma_1^\ell({\sf A}),
\]
if, generally speaking, we denote for convenient abbreviation:
\begin{equation}
\label{sigma-j-ell}
\Sigma_j^\ell({\sf A})
:=
\sum_{\nu=1}^j\,
\sum_{k_1+\cdots+k_\nu=j\atop
k_1,\dots,k_\nu\geqslant 1}\,
{\sf A}_{k_1}^\ell\cdots{\sf A}_{k_\nu}^\ell,
\end{equation}
with of course $\Sigma_0^\ell ({\sf A}) = 1$. These $\Sigma_j^\ell (
{\sf A})$ satisfy useful induction formulae:
\begin{equation}
\label{sigma-induction}
\small
\aligned
\Sigma_j^\ell({\sf A})
&
=
{\sf A}_j^\ell
+
\sum_{\nu=2}^j\,
\sum_{k_1+k_2+\cdots+k_\nu=j\atop
k_1,k_2,\dots,k_\nu\geqslant 1}\,
{\sf A}_{k_1}^\ell{\sf A}_{k_2}^\ell\cdots
{\sf A}_{k_\nu}^\ell
\\
&
=
{\sf A}_j^\ell
+
\sum_{\nu=2}^j\,
\bigg(
{\sf A}_1^\ell\!\!
\sum_{k_2+\cdots+k_\nu=j-1\atop
k_2,\dots,k_\nu\geqslant 1}\,
{\sf A}_{k_2}^\ell\cdots{\sf A}_{k_\nu}^\ell
+
{\sf A}_2^\ell\!\!
\sum_{k_2,\dots,k_\nu=j-2\atop
k_2,\dots,k_\nu\geqslant 1}\,
{\sf A}_{k_2}^\ell\cdots{\sf A}_{k_\nu}^\ell
+
\\
&
\ \ \ \ \ \ \ \ \ \ \ \ \ \ \ \ \ \ \ \ \ \ \ \ \ \ 
+\cdots+
{\sf A}_{j-1}^\ell\!\!
\sum_{k_2+\cdots+k_\nu=1\atop
k_1,\dots,k_\nu\geqslant 1}\,
{\sf A}_{k_2}^\ell\cdots{\sf A}_{k_\nu}^\ell
\bigg)
\\
&
=
{\sf A}_j^\ell
+
{\sf A}_1^\ell\,\sum_{\nu=2}^{j-1}\,
\sum_{k_2+\cdots+k_\nu=j-1\atop
k_2,\dots,k_\nu\geqslant 1}\!\!
{\sf A}_{k_2}^\ell\cdots{\sf A}_{k_\nu}^\ell
+
{\sf A}_2^\ell\,\sum_{\nu=2}^{j-2}\,
\sum_{k_2+\cdots+k_\nu=j-2\atop
k_2,\dots,k_\nu\geqslant 1}\!\!
{\sf A}_{k_2}^\ell\cdots{\sf A}_{k_\nu}^\ell
+
\\
&
\ \ \ \ \ \ \ \ \ \ \ \ \ \ \ \ \ \ \ \ \ \ \ \ \ \
+\cdots+
{\sf A}_{j-1}^\ell\,\sum_{\nu=2}^2\,
\sum_{k_2=1\atop
k_2\geqslant 1}\,{\sf A}_{k_2}^\ell
\\
&
=
{\sf A}_j^\ell\Sigma_0^\ell({\sf A})
+
{\sf A}_1^\ell\,\Sigma_{j-1}^\ell({\sf A})
+
{\sf A}_2^\ell\Sigma_{j-2}^\ell({\sf A})
+\cdots+
{\sf A}_{j-1}^\ell\Sigma_1^\ell({\sf A}). 
\endaligned
\end{equation}
Next, for $J = 2$, starting again from the 
known (imperfect) formula
and using what has just been seen:
\[
\aligned
\mathcal{C}_2^\ell
&
=
\mathcal{C}_2^{\ell-1}
+
\mathcal{C}_0^\ell{\sf A}_2^\ell
+
\mathcal{C}_1^\ell{\sf A}_1^\ell
\\
&
=
\mathcal{C}_2^{\ell-1}
+
\mathcal{C}_0^{\ell-1}{\sf A}_2^\ell
+
\big[
\mathcal{C}_1^{\ell-1}\Sigma_0^\ell({\sf A})
+
\mathcal{C}_0^{\ell-1}\Sigma_1^\ell({\sf A})
\big]{\sf A}_1^\ell
\\
&
=
\mathcal{C}_2^{\ell-1}\Sigma_0^\ell({\sf A})
+
\mathcal{C}_1^{\ell-1}
\big[
\Sigma_0^\ell({\sf A}){\sf A}_1^\ell
\big]
+
\mathcal{C}_0^{\ell-1}
\big[
\Sigma_1^\ell({\sf A}){\sf A}_1^\ell
+
{\sf A}_2^\ell
\big]
\\
&
=
\mathcal{C}_2^{\ell-1}\Sigma_0^\ell({\sf A})
+
\mathcal{C}_1^{\ell-1}\Sigma_1^\ell({\sf A})
+
\mathcal{C}_0^{\ell-1}\Sigma_2^\ell({\sf A}).
\endaligned
\]
Suppose now by induction that we have already proved that:
\[
\mathcal{C}_{J'}^\ell
=
\mathcal{C}_{J'}^{\ell-1}\Sigma_0^\ell({\sf A})
+
\mathcal{C}_{J'-1}^{\ell-1}\Sigma_1^\ell({\sf A})
+
\mathcal{C}_{J'-2}^{\ell-1}\Sigma_2^\ell({\sf A})
+\cdots+
\mathcal{C}_0^{\ell-1}\Sigma_J^\ell({\sf A}),
\]
for all $J'$ with $0 \leqslant J' 
\leqslant J$, for some $J \geqslant 2$. 
Then we apply the known general (imperfect) 
formula with $J$
replaced by $J+1$ in it, and afterwards, we use the induction
hypothesis, which gives:
\[
\small
\aligned
\mathcal{C}_{J+1}^\ell
&
=
\mathcal{C}_{J+1}^{\ell-1}
+
\mathcal{C}_0^\ell{\sf A}_{J+1}^\ell
+
\mathcal{C}_1^\ell{\sf A}_J^\ell
+\cdots+
\mathcal{C}_{J-1}^\ell{\sf A}_2^\ell
+
\mathcal{C}_J^\ell{\sf A}_1^\ell
\\
&
=
\mathcal{C}_{J+1}^{\ell-1}\Sigma_0^\ell({\sf A})
+
\\
&\ \ \ \ \
+
\big[\mathcal{C}_0^{\ell-1}\Sigma_0^\ell({\sf A})\big]
{\sf A}_{J+1}^\ell
+
\\
&\ \ \ \ \
+
\big[
\mathcal{C}_1^{\ell-1}\Sigma_0^\ell({\sf A})
+
\mathcal{C}_0^{\ell-1}\Sigma_1^\ell({\sf A})
\big]{\sf A}_J^\ell
+
\\
&\ \ \ \ \
+\cdots\cdots\cdots\cdots\cdots\cdots\cdots\cdots\cdots
\cdots\cdots\cdots\cdots
+
\\
&\ \ \ \ \
+
\big[
\mathcal{C}_{J-1}^{\ell-1}\Sigma_0^\ell({\sf A})
+
\mathcal{C}_{J-2}^{\ell-1}\Sigma_1^\ell({\sf A})
+
\mathcal{C}_{J-3}^{\ell-1}\Sigma_2^\ell({\sf A})
+\cdots+
\mathcal{C}_0^{\ell-1}\Sigma_{J-1}^\ell({\sf A})
\big]{\sf A}_2^\ell
+
\\
&\ \ \ \ \
+
\big[
\mathcal{C}_J^{\ell-1}\Sigma_0^\ell({\sf A})
+
\mathcal{C}_{J-1}^{\ell-1}\Sigma_1^\ell({\sf A})
+
\mathcal{C}_{J-2}^{\ell-1}\Sigma_2^\ell({\sf A})
+\cdots+
\mathcal{C}_1^{\ell-1}\Sigma_{J-1}^\ell({\sf A})
+
\mathcal{C}_0^{\ell-1}\Sigma_J^\ell({\sf A})
\big]{\sf A}_1^\ell.
\endaligned
\] 
A necessary and natural reorganization then gives:
\[
\small
\aligned
\mathcal{C}_{J+1}^\ell
&
=
\mathcal{C}_{J+1}^{\ell-1}
\big[
\Sigma_0({\sf A})
\big]
+
\\
&\ \ \ \ \
+
\mathcal{C}_J^{\ell-1}
\big[
\Sigma_0^\ell({\sf A}){\sf A}_1^\ell
\big]
+
\\
&\ \ \ \ \
+
\mathcal{C}_{J-1}^{\ell-1}
\big[
\Sigma_1^\ell({\sf A}){\sf A}_1^\ell
+
\Sigma_0^\ell({\sf A}){\sf A}_2^\ell
\big]
+
\\
&\ \ \ \ \
+
\mathcal{C}_{J-2}^{\ell-1}
\big[
\Sigma_2^\ell({\sf A}){\sf A}_1^\ell
+
\Sigma_1^\ell({\sf A}){\sf A}_2^\ell
+
\Sigma_0^\ell({\sf A}){\sf A}_3^\ell
\big]
+
\\
&\ \ \ \ \ 
+
\cdots\cdots\cdots\cdots\cdots\cdots\cdots\cdots\cdots
\cdots\cdots\cdots\cdots\cdots\cdots\cdots
+
\\
&\ \ \ \ \
+
\mathcal{C}_0^{\ell-1}
\big[
\Sigma_J^\ell({\sf A}){\sf A}_1^\ell
+
\Sigma_{J-1}^\ell({\sf A}){\sf A}_2^\ell
+
\Sigma_{J-2}^\ell({\sf A}){\sf A}_3^\ell
+\cdots+
\Sigma_0^\ell({\sf A}){\sf A}_{J+1}^\ell
\big]
\\
&
=
\mathcal{C}_{J+1}^{\ell-1}\Sigma_0^\ell({\sf A})
+
\mathcal{C}_J^{\ell-1}\Sigma_1^\ell({\sf A})
+
\mathcal{C}_{J-1}^{\ell-1}\Sigma_2^\ell({\sf A})
+
\mathcal{C}_{J-2}^{\ell-1}\Sigma_3^\ell({\sf A})
+\cdots+
\mathcal{C}_0^{\ell-1}\Sigma_{J+1}^\ell({\sf A}),
\endaligned
\]
where at the end, one applies the formulae~\thetag{
\ref{sigma-induction}} just seen. Notice 
{\em passim} that the number of terms in
$\Sigma_j^\ell ( {\sf A})$ is equal to $2^{ j-1}$ for all $j \geqslant 1$.
\endproof

\subsection{Upper reduction operator}
The reduction process, after several elimination computations
involving (\ref{c-j-ell}) and (\ref{u-ell-n}) and at the end
(\ref{c-d}), transforms a general monomial of the form $h^l u_1^{ i_1}
\cdots u_n^{ i_n}$ with $l + i_1 + \cdots + i_n=n^2$ into a polynomial
$\mathcal{ R} \big( h^l u_1^{ i_1} \cdots u_n^{ i_n} \big)$ of degree
$\leqslant n + 1$ in $d$, where the symbol ``$\mathcal{ R}$'' stands
for ``{\sl reduction}''.

From now on, complete explicit algebraic computations will not be
conducted anymore, and instead, to tame their complexity,
{\em inequalities} will be dealt with.

For our majoration purposes, we now introduce an important {\sl upper
reduction operator} $\mathcal{ R}^+$ which by definition, at each
computational step of the reduction process, while going down in the
Demailly's tower, always replaces any incoming sign ``$-$'' by a sign
``$+$''. Accordingly, 
for any two monomials $h^l u_1^{i_1} \cdots u_n^{ i_n}$ and $h^{
l'} u_1^{ i_1'} \cdots u_n^{ i_n'}$, we shall say that:
\[
\mathcal{R}^+
\big(h^lu_1^{i_1}\cdots u_n^{i_n}\big)
\leqslant
\mathcal{R}^+
\big(h^{l'}u_1^{i_1'}\cdots u_n^{i_n'}\big),
\]
and write more briefly:
\[
h^lu_1^{i_1}\cdots u_n^{i_n}
\leqslant_{ \mathcal{ R}^+}
h^{l'}u_1^{i_1'}\cdots u_n^{i_n'},
\]
if the corresponding two (upper) reduced polynomials $\sum_{ k = 0}^{
n+1} \, {\sf p}_k \cdot d^k$ and $\sum_{ k = 0}^{ n+1} \, {\sf p}_k'
\cdot d^k$ have all their coefficients satisfying:
\[
\big(
0
\leqslant\!\big)\,
{\sf p}_k 
\leqslant
{\sf p}_k'
\ \ \ \ \ 
\text{\rm for every}
\ \
k=0,1,\dots,n+1.
\]
Then obviously the absolute values of the coefficients of the
reduction are smaller than the (nonnegative)
coefficients of the upper reduction:
\[
\big\vert
{\sf coeff}_{d^k}
\big[h^lu_1^{i_1}\cdots u_n^{i_n}\big]
\big\vert
\leqslant
{\sf coeff}_{d^k}
\big[
\mathcal{R}^+
\big(h^lu_1^{i_1}\cdots u_n^{i_n}\big)
\big]
\,.
\]

To obtain the desired bound
$n^{4n^3}2^{n^4}$ we need to handle the Jacobi-Trudy
determinants seen above. The following lemma will be useful.

\begin{lemma}
For any $\lambda_1, \lambda_2, \dots, \lambda_n$ with
$n = \lambda_1 + 2 \lambda_2 + \cdots + n \lambda_n$, one
has:
\[
c_1^{\lambda_1}
\big(\mathcal{C}_2^0\big)^{\lambda_2}
\cdots
\big(\mathcal{C}_n^0\big)^{\lambda_n}
\leqslant_{ \mathcal{ R}^+}
\mathcal{C}_n^0
\,.
\]
\end{lemma}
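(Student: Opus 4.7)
The plan is to establish the claim at the level of polynomials in $\mathbb{Z}[c_1, \dots, c_n]$ by an elementary injection argument, and then to observe that the remaining substitution~\thetag{\ref{c-d}} of Chern classes followed by integration preserves the coefficient-wise inequality under $\mathcal{R}^+$, since each $c_j$ becomes, after erasing signs, a polynomial in $h, d$ with non-negative integer coefficients. Because $\mathcal{R}^+$ applied to a product is coefficient-wise dominated by the product of its $\mathcal{R}^+$-images (any cancellation in a product of signed polynomials only decreases absolute values), it will be enough to prove the coefficient-wise inequality
\[
c_1^{\lambda_1}\prod_{j=2}^{n}\bigl(\mathcal{R}^+(\mathcal{C}_j^0)\bigr)^{\lambda_j}\leqslant\mathcal{R}^+(\mathcal{C}_n^0)\quad\text{in }\mathbb{Z}[c_1,\dots,c_n].
\]

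To identify $\mathcal{R}^+(\mathcal{C}_J^0)$, I would apply $\mathcal{R}^+$ to the Jacobi--Trudy recurrence~\thetag{\ref{recurrence-jacobi-trudy}} at level $\ell=0$, which simply turns all its alternating signs into plus signs, giving $\mathcal{R}^+(\mathcal{C}_J^0) = c_1\mathcal{R}^+(\mathcal{C}_{J-1}^0) + c_2\mathcal{R}^+(\mathcal{C}_{J-2}^0) + \cdots + c_J$ with $\mathcal{R}^+(\mathcal{C}_0^0)=1$. A straightforward induction on $J$ then yields the closed formula
\[
\mathcal{R}^+(\mathcal{C}_J^0) = \sum_{(k_1,\dots,k_s)}c_{k_1}\cdots c_{k_s},
\]
summed over all ordered compositions $(k_1,\dots,k_s)$ of $J$ into positive parts. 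Accordingly, the left-hand side of the target inequality expands as a sum indexed by tuples $(\alpha^{(j,i)})_{j\geqslant 2,\, 1\leqslant i\leqslant\lambda_j}$ in which each $\alpha^{(j,i)}$ is a composition of $j$, with associated monomial $c_1^{\lambda_1}\prod_{j,i}c_{\alpha^{(j,i)}}$.

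The decisive combinatorial step is the introduction of the concatenation map
\[
\Phi\bigl((\alpha^{(j,i)})\bigr) := \bigl(\underbrace{1,\dots,1}_{\lambda_1},\,\alpha^{(2,1)},\dots,\alpha^{(2,\lambda_2)},\,\alpha^{(3,1)},\dots,\alpha^{(n,\lambda_n)}\bigr),
\]
which lands in the set of ordered compositions of $\lambda_1+2\lambda_2+\cdots+n\lambda_n=n$. It manifestly preserves the associated Chern-class monomial, and it is injective: given any target composition $\beta$, the tuple is recovered deterministically by first consuming the $\lambda_1$ initial entries of $\beta$ (which are forced to be $1$'s), then greedily isolating the next $\lambda_2$ initial segments each with running sum exactly $2$, then the next $\lambda_3$ segments with running sum exactly $3$, and so on. This exhibits the left-hand side as a subsum of $\mathcal{R}^+(\mathcal{C}_n^0) = \sum_\beta c_\beta$, thereby closing the argument. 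The main technical subtlety is precisely the injectivity of $\Phi$: a higher-level composition $\alpha^{(j,i)}$ may itself begin with one or several $1$'s, which could {\em a priori} be confused with the initial block of $\lambda_1$ unit parts coming from $c_1^{\lambda_1}$; the fixed order of concatenation together with the running partial-sum criterion removes this ambiguity unambiguously.
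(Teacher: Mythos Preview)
Your argument is correct and, at its core, coincides with the paper's. The paper observes that the Jacobi--Trudy matrix for $\mathcal{C}_n^0$ is Toeplitz, so any consecutive diagonal $j\times j$ block is exactly the matrix for $\mathcal{C}_j^0$; hence, after replacing all signs by $+$, the product $c_1^{\lambda_1}(\mathcal{C}_2^0)^{\lambda_2}\cdots(\mathcal{C}_n^0)^{\lambda_n}$ appears as the block-diagonal contribution to the permanent-like expansion of $\mathcal{C}_n^0$, which is trivially a subsum of the full expansion. Your proof is the same inclusion written in the language of compositions: the recurrence~\thetag{\ref{recurrence-jacobi-trudy}} with all $+$ signs identifies $\mathcal{R}^+(\mathcal{C}_J^0)$ with $\sum_{\beta\vDash J} c_\beta$, and your concatenation map $\Phi$ is precisely the embedding of block-diagonal permutations into all permutations. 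One small remark: your preliminary inequality ``$\mathcal{R}^+$ of a product is dominated by the product of $\mathcal{R}^+$'s'' is in fact an equality here, since $\mathcal{R}^+$ acts by expanding each determinant factor to its permanent before multiplying; your hedging is harmless but unnecessary. The combinatorial presentation you give is more explicit than the paper's one-line block argument, which may be useful pedagogically, but neither approach adds or loses anything mathematically.
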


\proof
An inspection of the determinant $\mathcal{ C}_n^0$ shows that one may
view all the pure monomials $c_1^{\lambda_1 }$, $\big( \mathcal{
C}_2^0 \big)^{ \lambda_2}$, \dots, $\big( \mathcal{ C}_k^0 \big)^{
\lambda_k}$ as diagonal subblocks of the corresponding sizes lying
inside $\mathcal{ C}_n^0$. Since the operator $\mathcal{ R}^+$ expands
the determinants and replaces all the minus signs by plus signs, it is
then clear that there are more terms in the right-hand side than there
are in the left-hand side, which completes the proof.
\endproof

The same arguments yield determinantal inequalities at
any level.

\begin{lemma}
\label{C-C-majorations}
For any two $J_1$, $J_2$ with $0 \leqslant J_1, J_2 \leqslant \dim
X_\ell$ satisfying in addition $J_1 + J_2 \leqslant \dim X_\ell$, and
for any $j_1$ with $0 \leqslant j_1 \leqslant n$ satisfying in
addition $j_1 + J_2 \leqslant \dim X_\ell$, one has the two
majorations:
\[
\footnotesize
\aligned
\mathcal{R}^+
\big(
\Omega_K^\ell\cdot
\mathcal{C}_{J_1}^\ell
\cdot 
\mathcal{C}_{J_2}^\ell
\big)
\leqslant
\mathcal{R}^+
\big(
\Omega_K^\ell\cdot
\mathcal{C}_{J_1+J_2}^\ell
\big)
\ \ \ \ \ 
\text{and}
\ \ \ \ \
\mathcal{R}^+
\big(
\Omega_K^\ell\cdot
c_{j_1}^{[\ell]}
\cdot
\mathcal{C}_{J_2}^\ell
\big)
\leqslant
\mathcal{R}^+
\big(
\Omega_K^\ell\cdot
\mathcal{C}_{j_1+J_2}^\ell
\big),
\endaligned
\]
where $\Omega_K^\ell$ is any $(K, K)$-form living on $X_\ell$
completing to $\dim X_\ell$ the degree, namely with $K + J_1 + J_2$
and with $K + j_1 + J_2$ both equal to $\dim X_\ell$.
\end{lemma}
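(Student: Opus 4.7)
My plan is to derive both inequalities from a single structural observation: in each case the smaller product on the left-hand side appears as one specific signed summand in a natural additive decomposition of the larger Jacobi-Trudy determinant on the right-hand side. Since, by its very definition, the operator $\mathcal{R}^+$ converts every intermediate minus sign encountered during the reduction process into a plus, any such additive decomposition—whatever signs its summands carry—becomes coefficient-wise monotone in $d$ once we apply $\mathcal{R}^+$. Consequently the dominated summand is automatically less than or equal, coefficient-wise in $d$, to the full $\mathcal{R}^+$-image of the enlarged determinant, and this is exactly what the two inequalities claim.

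For the first inequality, the key algebraic fact is that the Hessenberg matrix $M_{J_1+J_2}^\ell$ whose determinant is $\mathcal{C}_{J_1+J_2}^\ell$ admits a natural block form
\[
M_{J_1+J_2}^\ell
=
\left(
\begin{array}{cc}
M_{J_1}^\ell & B \\
N & M_{J_2}^\ell
\end{array}
\right),
\]
where the top-left $J_1\times J_1$ block is exactly the matrix defining $\mathcal{C}_{J_1}^\ell$, the bottom-right $J_2\times J_2$ block is exactly the matrix defining $\mathcal{C}_{J_2}^\ell$, the bottom-left block $N$ contains only a single nonzero entry (the $1$ at position $(J_1+1,J_1)$ inherited from the subdiagonal of the full matrix), and the top-right block $B$ is filled only with Chern classes $c_k^{[\ell]}$ of index $k\geqslant 2$. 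Laplace expansion of $\det M_{J_1+J_2}^\ell$ along its first $J_1$ rows singles out the column-choice $\{1,\dots,J_1\}$, whose contribution is precisely $\det(M_{J_1}^\ell)\cdot\det(M_{J_2}^\ell)=\mathcal{C}_{J_1}^\ell\cdot\mathcal{C}_{J_2}^\ell$, while the remaining column-choices produce additional signed monomials in the $c_k^{[\ell]}$. After multiplying by $\Omega_K^\ell$ and descending step by step through Demailly's tower via \thetag{\ref{c-j-ell}}, \thetag{\ref{u-ell-n}} and finally \thetag{\ref{c-d}}, every sign is absorbed into a plus by $\mathcal{R}^+$, and $\mathcal{R}^+(\Omega_K^\ell\cdot\mathcal{C}_{J_1+J_2}^\ell)$ emerges as a sum with nonnegative integer coefficients in $d$ that contains $\mathcal{R}^+(\Omega_K^\ell\cdot\mathcal{C}_{J_1}^\ell\cdot\mathcal{C}_{J_2}^\ell)$ as one of its summands.

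For the second inequality, rather than a block decomposition, I would invoke directly the Jacobi-Trudy recurrence \thetag{\ref{recurrence-jacobi-trudy}}:
\[
\mathcal{C}_{j_1+J_2}^\ell
=
\sum_{k=1}^{\min(n,\,j_1+J_2)}
(-1)^{k-1}\,c_k^{[\ell]}\,\mathcal{C}_{j_1+J_2-k}^\ell,
\]
whose term indexed by $k=j_1$ is exactly $\pm\,c_{j_1}^{[\ell]}\cdot\mathcal{C}_{J_2}^\ell$. Multiplying by $\Omega_K^\ell$, reducing down the tower, and applying $\mathcal{R}^+$, each of these summands contributes nonnegative coefficients in $d$, and the single summand $\mathcal{R}^+(\Omega_K^\ell\cdot c_{j_1}^{[\ell]}\cdot\mathcal{C}_{J_2}^\ell)$ is then coefficient-wise dominated by the full $\mathcal{R}^+$-image $\mathcal{R}^+(\Omega_K^\ell\cdot\mathcal{C}_{j_1+J_2}^\ell)$ of the sum.

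The only substantive technical point to verify, which I expect to be the main (but essentially routine) obstacle to writing down cleanly, is that $\mathcal{R}^+$ is monotone with respect to additive decompositions performed \emph{before} the reduction process starts. This however is immediate from the definition: the reduction relations \thetag{\ref{c-j-ell}}, \thetag{\ref{u-ell-n}} and \thetag{\ref{c-d}} are $\mathbb{Z}$-linear in their inputs and by construction $\mathcal{R}^+$ replaces every sign encountered along the way by a plus, so coefficient-wise positivity propagates level by level through Demailly's tower. Granted this check, both inequalities follow at once from the block-Laplace expansion (first) and from the Jacobi-Trudy recurrence (second), exactly as described above.
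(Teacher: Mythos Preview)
Your argument is correct and follows essentially the same route as the paper. The paper's proof (deferred to ``the same arguments'' as the preceding lemma) is precisely your diagonal-subblock observation: the all-plus Leibniz expansion of $\mathcal{C}_{J_1+J_2}^\ell$ contains, as a sub-sum indexed by block-diagonal permutations, the product of the all-plus expansions of $\mathcal{C}_{J_1}^\ell$ and $\mathcal{C}_{J_2}^\ell$, and since $\mathcal{R}^+$ propagates nonnegativity through the tower, the inequality follows. Your treatment of the second inequality via the recurrence~\thetag{\ref{recurrence-jacobi-trudy}} is a mild variation---it is nothing other than first-row Laplace expansion, and the cofactor of the entry $c_{j_1}^{[\ell]}$ is block upper-triangular with unit-triangular top block and $M_{J_2}^\ell$ in the bottom corner, so its all-plus expansion is exactly that of $\mathcal{C}_{J_2}^\ell$---hence equivalent to the subblock picture the paper invokes.
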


If $J_1 + J_2 < 0$ or if $J_1 + J_2 > \dim X_\ell$, and if $j_1 + J_2
< 0$ or if $j_1 + J_2 > \dim X_\ell$, the two sides vanish in both
inequalities, which hence hold without restriction.

\begin{lemma}
These coefficients $\lambda_{ j, j - k} =
\frac{(n-k)!}{(j-k)!\,(n-j)!} - \frac{(n-k)!}{(j-k-1)!(n-j+1)!}$
appearing in~\thetag{ \ref{c-j-ell}} 
satisfy the uniform
majoration:
\[
\big\vert\lambda_{j,j-k} 
\big\vert \leqslant 2^n
=:
\lambda
\]
expressed in terms of the dimension $n$ only.
\end{lemma}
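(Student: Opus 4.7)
The bound is essentially immediate from elementary properties of binomial coefficients, so the plan is very short. The key observation is that $\lambda_{j,j-k}$ is the difference of two non-negative integers, namely $\binom{n-k}{j-k}$ and $\binom{n-k}{j-k-1}$ (with the convention that a binomial coefficient vanishes if its lower index is negative or exceeds the upper one). Hence the absolute value is controlled either by the triangle inequality or, more sharply, by the larger of the two terms.

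The plan is to write
\[
\bigl|\lambda_{j,j-k}\bigr|
\;\leqslant\;
\max\!\Bigl(\tbinom{n-k}{j-k},\,\tbinom{n-k}{j-k-1}\Bigr),
\]
which holds because both summands are non-negative, and then to majorate the right-hand side by the full sum $\sum_{i=0}^{n-k}\binom{n-k}{i}=2^{n-k}$. Since $k\geqslant 0$, one concludes $|\lambda_{j,j-k}|\leqslant 2^{n-k}\leqslant 2^n$, which is precisely the claimed uniform majoration.

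There is essentially no obstacle: both the triangle inequality step and the binomial identity $\sum_i\binom{n-k}{i}=2^{n-k}$ are standard. The only minor point worth mentioning is that the conventions on $\binom{n-k}{\cdot}$ for out-of-range lower indices are consistent with the definition of $\lambda_{j,j-k}$ given just after equation~\thetag{\ref{c-j-ell}}, so the boundary cases $k=j$ (where $\binom{n-k}{j-k-1}=\binom{n-j}{-1}=0$) and $j=n$ (where $\binom{n-k}{j-k}$ may equal $1$ while the second term vanishes) pose no trouble: the bound $2^n$ is satisfied trivially in these degenerate instances as well. Thus the lemma follows in a single display.
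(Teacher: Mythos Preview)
Your proof is correct and follows essentially the same approach as the paper: both observe that $\lambda_{j,j-k}$ is a difference of two nonnegative binomial coefficients, bound the absolute value by the larger of the two, and then majorate that binomial coefficient by $2^n$. Your intermediate bound $2^{n-k}$ is marginally sharper, but the final conclusion and the reasoning are the same.
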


\proof
Indeed, the absolute value of the difference $\lambda_{ j, j - k} =
\lambda_{ j, j-k} ' - \lambda_{ j, j-k} ''$ of two nonnegative integers is
less than the largest one, and we majorate any appearing binomial
coefficient $\frac{ n'!}{ i' ! \, (n' - i')!}$ or $\frac{ n''!}{ i''! \,
(n'' - i'')!}$ with $n' \leqslant n$ and $n'' \leqslant n$ plainly by
$2^n$.
\endproof

In the subsequent majorations, while applying the upper majoration operator
$\mathcal{ R}^+$, we shall also replace any incoming $\lambda_{ j,
j-k}$ by this majorant $\lambda = 2^n$. As a result, we define a
generalized upper majoration operator ``$\mathcal{ R}_\lambda^+$''
which both replaces any minus sign by a plus sign and any $\lambda_{
j, j- k}$ by $\lambda = 2^n$.

Also, when executing inequalities, we shall sometimes not write the
left differential form $\Omega_K^{\ell}$ which completes to $\dim
X_\ell$ the total degree of the considered forms, for one knows well now
that forms to be reduced always have degree equal to the dimension of
the level on which they sit, unless
they vanish identically for degree-form reasons. 

\begin{lemma}
\label{A-R-C}
For all $k = 1, 2, \dots, n$, one has the $\mathcal{ R}_\lambda^+$
majorations:
\[
{\sf A}_k^\ell
\,\,\leqslant_{\mathcal{R}_\lambda^+}\,\,
k\lambda
\big(
\mathcal{C}_{k-1}^{\ell-1}u_\ell
+
\mathcal{C}_{k-2}^{\ell-1}u_\ell^2
+\cdots+
u_\ell^k
\big).
\]
\end{lemma}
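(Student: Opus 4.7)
The plan is to start from the definition of ${\sf A}_k^\ell$, to successively apply the upper reduction operator $\mathcal{R}_\lambda^+$ to the two sources of signs and of coefficients $\lambda_{j,m}$ present in it, and then to use the determinantal majoration of Lemma~\ref{C-C-majorations}.

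First, I would recall that by the definition given just after Lemma~\ref{dvp-expansion},
\[
{\sf A}_k^\ell
=
\sum_{j=1}^{k}(-1)^{j-1}\,{\sf X}_j^\ell\,\mathcal{C}_{k-j}^{\ell-1},
\qquad
{\sf X}_j^\ell
=
\sum_{m=1}^{j}\lambda_{j,m}\,c_{j-m}^{[\ell-1]}\,u_\ell^m
\]
(with $c_0^{[\ell-1]}=1$ yielding the pure power $u_\ell^j$ for $m=j$). Applying $\mathcal{R}_\lambda^+$, every minus sign becomes a plus sign and every $\lambda_{j,m}$ is replaced by $\lambda=2^n$, so these two displays jointly yield
\[
{\sf A}_k^\ell
\;\leqslant_{\mathcal{R}_\lambda^+}\;
\lambda\,\sum_{j=1}^{k}\sum_{m=1}^{j}\,c_{j-m}^{[\ell-1]}\,u_\ell^m\,\mathcal{C}_{k-j}^{\ell-1}.
\]

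Next, the second inequality of Lemma~\ref{C-C-majorations}, applied at level $\ell-1$ with $j_1=j-m$ and $J_2=k-j$ (so that $j_1+J_2=k-m$), gives
\[
c_{j-m}^{[\ell-1]}\cdot\mathcal{C}_{k-j}^{\ell-1}
\;\leqslant_{\mathcal{R}^+}\;
\mathcal{C}_{k-m}^{\ell-1}.
\]
Substituting and then switching the order of summation (setting $m$ outside, $j$ inside, with $j$ ranging from $m$ to $k$) yields
\[
{\sf A}_k^\ell
\;\leqslant_{\mathcal{R}_\lambda^+}\;
\lambda\,\sum_{m=1}^{k}\,u_\ell^m\,\mathcal{C}_{k-m}^{\ell-1}\,\sum_{j=m}^{k}1
\;=\;
\lambda\,\sum_{m=1}^{k}\,(k-m+1)\,\mathcal{C}_{k-m}^{\ell-1}\,u_\ell^m.
\]
Bounding the integer coefficient $(k-m+1)$ uniformly by $k$ and using $\mathcal{C}_0^{\ell-1}=1$ for the last term $m=k$, one obtains exactly the announced upper bound
\[
{\sf A}_k^\ell
\;\leqslant_{\mathcal{R}_\lambda^+}\;
k\lambda\,\big(\mathcal{C}_{k-1}^{\ell-1}u_\ell+\mathcal{C}_{k-2}^{\ell-1}u_\ell^2+\cdots+u_\ell^k\big).
\]

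The only step that is not immediate bookkeeping is the application of Lemma~\ref{C-C-majorations} to $c_{j-m}^{[\ell-1]}\cdot\mathcal{C}_{k-j}^{\ell-1}$, which I consider the main conceptual point: one must check that the degree hypothesis $j_1+J_2\leqslant \dim X_{\ell-1}$ required by that lemma is satisfied. Here the products appearing in ${\sf A}_k^\ell$ will always be multiplied on the left by some $(K,K)$-form $\Omega_K^\ell$ (coming from the global reduction process) whose degree $K$ complements to $\dim X_\ell$, so that $j_1+J_2=k-m\leqslant k\leqslant\dim X_\ell-K$ is automatic in any meaningful use of this bound; if not, both sides vanish and the inequality is trivial. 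All remaining operations are uniform substitutions under $\mathcal{R}_\lambda^+$ and elementary combinatorial rearrangements.
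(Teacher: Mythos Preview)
Your proof is correct and follows essentially the same approach as the paper's own proof: replace signs and the coefficients $\lambda_{j,m}$ by $\lambda$, apply the second inequality of Lemma~\ref{C-C-majorations} to each product $c_{j-m}^{[\ell-1]}\mathcal{C}_{k-j}^{\ell-1}$, regroup by powers of $u_\ell$, and bound the resulting integer coefficients $(k-m+1)$ uniformly by $k$. The only difference is cosmetic: you use compact summation notation, while the paper writes out the expansion explicitly and regroups by $u_\ell^m$ before invoking Lemma~\ref{C-C-majorations}, arriving at the same intermediate bound $\lambda\sum_{m=1}^k (k-m+1)\,\mathcal{C}_{k-m}^{\ell-1}u_\ell^m$.
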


\proof
Starting from the evident majoration of
the ${\sf X}_j^\ell$ that were defined at the
end of Lemma~\ref{dvp-expansion}: 
\[
{\sf X}_j^\ell
\,\,\leqslant_{\mathcal{R}_\lambda^+}\,\,
\lambda
\big(
c_{j-1}^{[\ell-1]}u_\ell
+
c_{j-2}^{[\ell-1]}u_\ell^2
+\cdots+
u_\ell^j
\big),
\]
we may perform majorations of an arbitrary ${\sf A}_k^\ell$
also defined there:
\[
\small
\aligned
{\sf A}_k^\ell
&
=
{\sf X}_1^\ell\mathcal{C}_{k-1}^{\ell-1}
-
{\sf X}_2^\ell\mathcal{C}_{k-2}^{\ell-1}
+
{\sf X}_3^\ell\mathcal{C}_{k-3}^{\ell-1}
-\cdots+
(-1)^{k-1}{\sf X}_k^\ell\mathcal{C}_0^{\ell-1}
\\
&
\leqslant_{\mathcal{R}_\lambda^+}\,\,
\big[\lambda u_\ell\big]\mathcal{C}_{k-1}^{\ell-1}
+
\big[\lambda\big(c_1^{[\ell-1]}u_\ell+u_\ell^2\big)\big]
\mathcal{C}_{k-2}^{\ell-1}
+
\big[\lambda\big(c_2^{[\ell-1]}u_\ell
+c_1^{[\ell-1]}u_\ell^2+u_\ell^3\big)\big]
\mathcal{C}_{k-3}^{\ell-1}
+
\\
&\ \ \ \ \ \ \ \ \ \ \ \
+\cdots+
\big[\lambda\big(c_{k-1}^{[\ell-1]}u_\ell
+\cdots+
c_1^{[\ell-1]}u_\ell^{k-1}+u_\ell^k\big)\big]
\mathcal{C}_0^{\ell-1}
\\
&
=
\lambda
\Big(
u_\ell\big[
\mathcal{C}_{k-1}^{\ell-1}
+
c_1^{[\ell-1]}\mathcal{C}_{k-2}^{\ell-1}
+
c_2^{[\ell-1]}\mathcal{C}_{k-3}^{\ell-1}
+\cdots+
c_{k-1}^{[\ell-1]}\mathcal{C}_0^{\ell-1}
\big]+
\\
&\ \ \ \ \
+
u_\ell^2\big[
\ \ \ \ \ \ \ \ \ \ \ \ \ \ \ \ \ \ \ \ \ \
\mathcal{C}_{k-2}^{\ell-1}
+
c_1^{[\ell-1]}\mathcal{C}_{k-3}^{\ell-1}
+\cdots+
c_{k-2}^{[\ell-1]}\mathcal{C}_0^{\ell-1}
\big]+
\\
&\ \ \ \ \ 
+
u_\ell^3\big[
\ \ \ \ \ \ \ \ \ \ \ \ \ \ \ \ \ \ \ \ \ \
\ \ \ \ \ \ \ \ \ \ \ \ \ \ \ \ \ \ \ \ \ \
\mathcal{C}_{k-3}^{\ell-1}
+\cdots+
c_{k-3}^{[\ell-1]}\mathcal{C}_0^{\ell-1}
\big]+
\\
&\ \ \ \ \
+\!
\cdots\cdots\cdots\cdots\cdots\cdots\cdots\cdots\cdots
\cdots\cdots\cdots\cdots\cdots\cdots\cdots\cdots\cdot\!
+
\\
&\ \ \ \ \
+
u_\ell^k\big[
\ \ \ \ \ \ \ \ \ \ \ \ \ \ \ \ \ \ \ \ \ \
\ \ \ \ \ \ \ \ \ \ \ \ \ \ \ \ \ \ \ \ \ \
\ \ \ \ \ \ \ \ \ \ \ \ \ \ \ \ \ \ \ \ \ \
\ \ \ \ \ \ \ \ \ \
\mathcal{C}_0^{\ell-1}
\big]
\Big).
\endaligned
\]
Now, we use the majoration of an arbitrary product of a Jacobi-Trudy
determinant by a Chern class that was provided in advance by
Lemma~\ref{C-C-majorations} to obtain:
\[
\aligned
{\sf A}_k^\ell
&
\,\,\leqslant_{\mathcal{R}_\lambda^+}\,\,
\lambda
\Big(
u_\ell\big[k\cdot\mathcal{C}_{k-1}^{\ell-1}\big]
+
u_\ell^2\big[(k-1)\cdot\mathcal{C}_{k-2}^{\ell-1}\big]
+\cdots+
u_\ell^k\big[\mathcal{C}_0^{\ell-1}\big]
\Big)
\\
&
\,\,\leqslant_{\mathcal{R}_\lambda^+}\,\,
k\lambda
\big(
\mathcal{C}_{k-1}^{\ell-1}u_\ell
+
\mathcal{C}_{k-2}^{\ell-1}u_\ell^2
+\cdots+
u_\ell^k
\big),
\endaligned
\]
as was to be proved.
\endproof

We now have to majorate conveniently the ${\sf A}$-polynomials
$\Sigma_j^\ell ({\sf A})$ defined by~\thetag{ \ref{sigma-j-ell}}
in terms of Jacobi-Trudy determinants living at the inferior level
$\ell - 1$, and in terms of $u_\ell$, too. For this purpose, let us
define what will play the role of a convenient majorant:
\[
\Theta_k^\ell
:=
\mathcal{C}_{k-1}^{\ell-1}u_\ell
+
\mathcal{C}_{k-2}^{\ell-1}u_\ell^2
+\cdots+
\mathcal{C}_1^{\ell-1}u_\ell^{k-1}
+
u_\ell^k,
\]
and let us keep in mind that the lemma just proved provided the
majorations ${\sf A}_k^\ell \,\, \leqslant_{ \mathcal{ R}_\lambda^+}
\, k\lambda \, \Theta_k^\ell$. To majorate products of ${\sf
A}_k^\ell$'s, we majorate products of $\Theta_k^\ell$'s.

\begin{lemma}
\label{Theta-k}
For any $k_1, k_2, \dots, k_\nu$ with $k_1, k_2, \dots, k_\nu
\geqslant 1$ whose sum $k_1 + k_2 + \cdots + k_\nu = j$ equals $j$,
one has the majoration:
\[
\Theta_{k_1}^\ell\Theta_{k_2}^\ell\cdots
\Theta_{k_\nu}^\ell
\,\,\leqslant_{\mathcal{R}_\lambda^+}\,\,
k_1k_2\cdots k_\nu\,
\Theta_{k_1+k_2+\cdots+k_\nu}^\ell.
\]
\end{lemma}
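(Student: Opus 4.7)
The plan is to induct on the number $\nu$ of factors, with the base case $\nu = 2$ being the substantive computation and the inductive step being an essentially free consequence.

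For the base case $\nu = 2$, I would fully expand
\[
\Theta_{k_1}^\ell\,\Theta_{k_2}^\ell
=
\sum_{a=1}^{k_1}\sum_{b=1}^{k_2}
\mathcal{C}_{k_1-a}^{\ell-1}\,\mathcal{C}_{k_2-b}^{\ell-1}\,u_\ell^{a+b},
\]
and then invoke the first product inequality of Lemma~\ref{C-C-majorations} at level $\ell-1$ to absorb each $\mathcal{C}_{k_1-a}^{\ell-1}\mathcal{C}_{k_2-b}^{\ell-1}$ into a single $\mathcal{C}_{k_1+k_2-a-b}^{\ell-1}$ (the subscripts respect the required degree-form constraint since $a+b\leqslant k_1+k_2$). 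Regrouping by $s := a+b$, the coefficient of $\mathcal{C}_{k_1+k_2-s}^{\ell-1}u_\ell^s$ in the resulting $\mathcal{R}^+$-majorant is the number $N(s)$ of pairs $(a,b)$ with $a+b=s$, $1\leqslant a\leqslant k_1$, $1\leqslant b\leqslant k_2$, which is bounded by $\min(k_1,k_2)$. Recognizing the remaining sum as a truncation (missing only the $s=1$ term) of $\Theta_{k_1+k_2}^\ell$ yields
\[
\Theta_{k_1}^\ell\,\Theta_{k_2}^\ell
\,\,\leqslant_{\mathcal{R}^+}\,\,
\min(k_1,k_2)\,\Theta_{k_1+k_2}^\ell
\,\,\leqslant_{\mathcal{R}^+}\,\,
k_1k_2\,\Theta_{k_1+k_2}^\ell,
\]
which is the base case.

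For the inductive step, assume the claim holds for $\nu - 1$ factors and set $K := k_1+\cdots+k_{\nu-1}$. Then
\[
\Theta_{k_1}^\ell\cdots\Theta_{k_{\nu-1}}^\ell\,\Theta_{k_\nu}^\ell
\,\,\leqslant_{\mathcal{R}_\lambda^+}\,\,
k_1\cdots k_{\nu-1}\,\Theta_K^\ell\,\Theta_{k_\nu}^\ell.
\]
Applying the base case to the pair $(K,k_\nu)$ gives the sharper bound $\Theta_K^\ell\,\Theta_{k_\nu}^\ell\leqslant_{\mathcal{R}^+} \min(K,k_\nu)\,\Theta_{K+k_\nu}^\ell\leqslant k_\nu\,\Theta_j^\ell$, where the crucial observation is precisely $\min(K,k_\nu)\leqslant k_\nu$, which is exactly what prevents the bound from blowing up into something like $\prod k_i \cdot (k_1+\cdots+k_{\nu-1})$. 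Chaining the two inequalities delivers
\[
\Theta_{k_1}^\ell\cdots\Theta_{k_\nu}^\ell
\,\,\leqslant_{\mathcal{R}_\lambda^+}\,\,
k_1\cdots k_{\nu-1}\,k_\nu\,\Theta_j^\ell,
\]
completing the induction.

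The only non-mechanical step is the base case, and in it the main point is the counting bound $N(s)\leqslant \min(k_1,k_2)$; everything else is just bookkeeping with $\mathcal{R}^+$, together with the determinantal product inequality already established in Lemma~\ref{C-C-majorations}. I do not foresee any genuine obstacle, since the statement is sharp enough that the naive induction works provided one commits to the stronger intermediate estimate $\min(k_1,k_2)$ rather than $k_1k_2$ in the base case.
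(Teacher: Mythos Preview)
Your proof is correct. The paper takes a more direct route: it expands the full product $\Theta_{k_1}^\ell\cdots\Theta_{k_\nu}^\ell$ all at once, observes that the expansion has exactly $k_1k_2\cdots k_\nu$ monomials of the form $\mathcal{C}_{k_1'}^{\ell-1}\cdots\mathcal{C}_{k_\nu'}^{\ell-1}u_\ell^{k''}$ with $k_1'+\cdots+k_\nu'+k''=j$, applies Lemma~\ref{C-C-majorations} to collapse each such monomial into a single term $\mathcal{C}_{j-k''}^{\ell-1}u_\ell^{k''}$ of $\Theta_j^\ell$, and then simply bounds the sum of $k_1\cdots k_\nu$ terms, each $\leqslant_{\mathcal{R}_\lambda^+}\Theta_j^\ell$, by $k_1\cdots k_\nu\,\Theta_j^\ell$. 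No induction, no intermediate sharp count.

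Your inductive argument is longer but yields the sharper two-factor estimate $\Theta_{k_1}^\ell\Theta_{k_2}^\ell\leqslant_{\mathcal{R}^+}\min(k_1,k_2)\,\Theta_{k_1+k_2}^\ell$, and you correctly identify that $\min(K,k_\nu)\leqslant k_\nu$ is what makes the induction close without overshooting. The paper's approach is quicker for the stated bound; yours would be the right starting point if one wanted to tighten the constant.
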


\proof
In greater length, the considered product writes:
\[
\big(\mathcal{C}_{k_1-1}^{\ell-1}u_\ell
+\cdots+
u_\ell^{k_1}\big)
\big(\mathcal{C}_{k_2-1}^{\ell-1}u_\ell
+\cdots+
u_\ell^{k_2}\big)
\cdots
\big(\mathcal{C}_{k_\nu-1}^{\ell-1}u_\ell
+\cdots+
u_\ell^{k_\nu}\big),
\]
and the total number of terms, after expansion, is hence clearly
$\leqslant k_1 k_2 \cdots k_\nu$. Using the already known inequality
$\mathcal{ C}_{ J_1}^{ \ell-1} \cdot \mathcal{ C}_{ J_2}^{ \ell - 1}
\,\, \leqslant_{\mathcal{ R}_\lambda^+} \, \, \mathcal{ C}_{ J_1 +
J_2}^{ \ell - 1}$, we may majorate as follows any monomial appearing
after expansion:
\[
\mathcal{C}_{k_1'}^{\ell-1}\mathcal{C}_{k_2'}^{\ell-1}
\cdots
\mathcal{C}_{k_\nu'}^{\ell-1}\,
u_\ell^{k''}
\,\,\leqslant_{\mathcal{R}_\lambda^+}\,\,
\mathcal{C}_{k_1'+\cdots+k_\nu'}^{\ell-1}\,
u_\ell^{k''},
\]
where $k_1' + k_2 ' + \cdots + k_\nu' + k'' = k_1 + k_2 + \cdots +
k_\nu = j$ of course, which completes the proof.
\endproof

At last, we can state and prove the main useful
majoration proposition which will
enable us to achieve the proof of
Theorem~\ref{D-k-n}, {\em cf.} the program
launched just before Lemma~5.2.

\begin{proposition}\label{J-T}
At any level $\ell$ with $1 \leqslant \ell \leqslant n-1$, consider
the Jacobi-Trudy determinant $\mathcal{ C}_J^\ell$ of an arbitrary
size $J \times J$ with $1 \leqslant J \leqslant \dim X_\ell$ and
furthermore, let $\Omega_K^{ \ell}$ be any $(K, K)$-form on $X_{ \ell
}$ the degree $K$ of which satisfies $K + J = \dim X_\ell = n + \ell (
n-1)$. Then the upper reduction $\mathcal{ R}_\lambda^+ ( \bullet)$ of
$\Omega_K^{ \ell} \mathcal{ C}_J^\ell$ in which any incoming
$\lambda_{ j, j- k}$ is replaced by $\lambda = 2^n \geqslant \big\vert
\lambda_{ j, j - k} \big\vert$ enjoys the following majoration in the
right-hand side of which, notably, all the appearing Jacobi-Trudy
determinants live at level $\ell - 1${\rm :}
\[
\
\small
\aligned
\Omega_K^{\ell}\mathcal{C}_J^\ell
\,\,\leqslant_{\mathcal{R}_\lambda^+}\,\,
J\cdot 2^J\cdot J^{2J}\cdot 2^{nJ}\cdot
\Omega_K^{\ell}\,
\Big[
\mathcal{C}_J^{\ell-1}
+
\mathcal{C}_{J-1}^{\ell-1}u_\ell
+\cdots+
\mathcal{C}_1^{\ell-1}u_\ell^{J-1}
+
u_\ell^J
\Big]\,
\endaligned
\,.
\]
\end{proposition}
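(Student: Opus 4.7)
The plan is to descend from level $\ell$ to level $\ell-1$ using the exact expansion of $\mathcal{C}_J^\ell$ provided by Lemma~\ref{C-A}, then majorate each ingredient successively via the upper reduction operator $\mathcal{R}_\lambda^+$. Concretely, Lemma~\ref{C-A} already gives the clean identity
\[
\mathcal{C}_J^\ell
=
\sum_{j=0}^{J}\,\mathcal{C}_{J-j}^{\ell-1}\,\Sigma_j^\ell({\sf A}),
\]
so the task reduces to controlling each $\Sigma_j^\ell({\sf A})$ purely in terms of Jacobi-Trudy determinants at level $\ell-1$ and of powers of $u_\ell$, with explicit numerical constants.

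First I will plug Lemma~\ref{A-R-C} into the definition \thetag{\ref{sigma-j-ell}} of $\Sigma_j^\ell({\sf A})$: each factor ${\sf A}_{k_i}^\ell \leqslant_{\mathcal{R}_\lambda^+} k_i\lambda\,\Theta_{k_i}^\ell$, and then Lemma~\ref{Theta-k} lets me collapse the product $\Theta_{k_1}^\ell\cdots\Theta_{k_\nu}^\ell$ to $k_1\cdots k_\nu\,\Theta_j^\ell$ (where $j=k_1+\cdots+k_\nu$). This yields
\[
{\sf A}_{k_1}^\ell\cdots{\sf A}_{k_\nu}^\ell
\,\,\leqslant_{\mathcal{R}_\lambda^+}\,\,
(k_1\cdots k_\nu)^2\,\lambda^\nu\,\Theta_j^\ell.
\]
I will then crudely bound $(k_1\cdots k_\nu)^2\leqslant j^{2\nu}\leqslant j^{2j}$, $\lambda^\nu\leqslant 2^{n\nu}\leqslant 2^{nj}$, and count compositions: the number of tuples $(k_1,\dots,k_\nu)$ of positive integers with prescribed sum $j$ and given length $\nu$ is $\binom{j-1}{\nu-1}$, whose total over $1\leqslant\nu\leqslant j$ is $2^{j-1}$. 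Consequently
\[
\Sigma_j^\ell({\sf A})
\,\,\leqslant_{\mathcal{R}_\lambda^+}\,\,
2^{j-1}\,j^{2j}\,2^{nj}\,\Theta_j^\ell,
\]
with the convention $\Sigma_0^\ell({\sf A})=1$.

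Next, I will multiply back by $\mathcal{C}_{J-j}^{\ell-1}$ and distribute $\Theta_j^\ell=\mathcal{C}_{j-1}^{\ell-1}u_\ell+\cdots+u_\ell^j$ term by term. Each factor $\mathcal{C}_{J-j}^{\ell-1}\cdot\mathcal{C}_{j-k}^{\ell-1}$ is controlled by the first inequality of Lemma~\ref{C-C-majorations} at level $\ell-1$ as $\mathcal{C}_{J-k}^{\ell-1}$, so
\[
\mathcal{C}_{J-j}^{\ell-1}\,\Theta_j^\ell
\,\,\leqslant_{\mathcal{R}_\lambda^+}\,\,
\sum_{k=1}^{j}\,\mathcal{C}_{J-k}^{\ell-1}\,u_\ell^k
\,\,\leqslant_{\mathcal{R}_\lambda^+}\,\,
\Xi_J,
\]
where $\Xi_J:=\mathcal{C}_J^{\ell-1}+\mathcal{C}_{J-1}^{\ell-1}u_\ell+\cdots+u_\ell^J$ is precisely the bracketed factor in the claimed inequality; the $j=0$ contribution $\mathcal{C}_J^{\ell-1}$ is likewise $\leqslant_{\mathcal{R}_\lambda^+}\Xi_J$.

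Finally, assembling these pieces and multiplying throughout by $\Omega_K^\ell$ yields
\[
\Omega_K^\ell\,\mathcal{C}_J^\ell
\,\,\leqslant_{\mathcal{R}_\lambda^+}\,\,
\Omega_K^\ell\,\Xi_J\cdot
\Big(1+\sum_{j=1}^{J}\,2^{j-1}\,j^{2j}\,2^{nj}\Big),
\]
and the numerical sum in parentheses is majorated by $J\cdot 2^J\cdot J^{2J}\cdot 2^{nJ}$: the summand at $j=J$ equals $\tfrac12\cdot 2^J J^{2J}2^{nJ}$, the ratio of consecutive summands exceeds $2^{n+1}\geqslant 4$ so the whole sum is within a constant of its last term, and one checks by hand that this loss is absorbed by the front factor $J$ (with the small cases $J=1,2$ verified directly). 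The step demanding the most attention is step~three, namely packaging the combinatorial explosion in $\Sigma_j^\ell({\sf A})$ (numerous compositions, the quadratic blow-up coming from Lemmas~\ref{A-R-C} and~\ref{Theta-k}, and the $\lambda^\nu$ factors) into the single clean estimate $2^{j-1}j^{2j}2^{nj}$; everything afterwards is a systematic telescoping via Lemma~\ref{C-C-majorations}.
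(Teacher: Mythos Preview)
Your proof is correct and follows essentially the same approach as the paper's: start from the expansion of Lemma~\ref{C-A}, bound each product ${\sf A}_{k_1}^\ell\cdots{\sf A}_{k_\nu}^\ell$ via Lemmas~\ref{A-R-C} and~\ref{Theta-k} by $(k_1\cdots k_\nu)^2\lambda^\nu\Theta_j^\ell\leqslant j^{2j}\lambda^j\Theta_j^\ell$, count the $2^{j-1}$ compositions, and then collapse the resulting products of Jacobi-Trudy determinants at level $\ell-1$ using Lemma~\ref{C-C-majorations}. The only cosmetic differences are your use of $2^{j-1}$ instead of the paper's cruder $2^j$, your explicit naming of $\Xi_J$, and your geometric-series argument for the final numerical constant where the paper simply takes the common majorant $J\cdot 2^JJ^{2J}\lambda^J$ of all summands.
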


\proof
Recall that
\[
\mathcal{C}_J^\ell
=
\sum_{j=1}^J\,\mathcal{C}_{J-j}^\ell\,
\Sigma_j^\ell({\sf A})
=
\sum_{j=0}^J\,
\mathcal{C}_{J-j}^{\ell-1}
\bigg(
\sum_{\nu=1}^j\,
\sum_{k_1+\cdots+k_\nu=j\atop
k_1,\dots,k_\nu\geqslant 1}\,
{\sf A}_{k_1}^\ell\cdots{\sf A}_{k_\nu}^\ell
\bigg).
\]
Using the last two lemmas, we deduce that for any $k_1, \dots, k_\nu
\geqslant 1$ with $k_1 + \cdots + k_\nu$ the sum of which $k_1 +
\cdots + k_\nu$ equals $j$, we have the majoration:
\[
\aligned
{\sf A}_{k_1}^\ell\cdots
{\sf A}_{k_\nu}^\ell
&
\,\,\leqslant_{\mathcal{R}_\lambda^+}\,\,
k_1\cdots k_\nu\,
\lambda^\nu\,\Theta_{k_1}^\ell\cdots\Theta_{k_\nu}^\ell
\ \ \ \ \ \ \ \ \ \ \ \ \,
\explain{Lemma~\ref{A-R-C}}
\\
&
\,\,\leqslant_{\mathcal{R}_\lambda^+}\,\,
\big(k_1\cdots k_\nu\big)^2\,\lambda^\nu\,
\Theta_{k_1+\cdots+k_\nu}^\ell
\ \ \ \ \ \ \ \ \ \
\explain{Lemma~\ref{Theta-k}}
\\
&
\,\,\leqslant_{\mathcal{R}_\lambda^+}\,\,
j^{2j}\,\lambda^j\,\Theta_j^\ell. 
\endaligned
\]
Since there are $2^{ j-1} \leqslant 2^j$ terms in the sum $\sum_{ \nu
= 1}^j \, \sum_{ k_1 + \cdots + k_\nu = j \atop k_1, \dots, k_\nu
\geqslant 1}$, we receive the useful majoration:
\[
\aligned
\Sigma_j^\ell({\sf A})
&
=
\sum_{\nu=1}^j\,\sum_{k_1+\cdots+k_\nu=j\atop
k_1,\dots,k_\nu\geqslant 1}\,
{\sf A}_{k_1}^\ell\cdots{\sf A}_{k_\nu}^\ell
\\
&
\leqslant_{\mathcal{R}_\lambda^+}\,\,
2^j\,j^{2j}\,\lambda^j\,\Theta_j^\ell.
\endaligned
\]
In conclusion, 
starting from Lemma~\ref{C-A} and using
Lemma~\ref{C-C-majorations}, 
we may lastly perform the following (not optimal) majoration:
\[
\aligned
\mathcal{C}_J^\ell
&
=
\mathcal{C}_J^{\ell-1}
+
\mathcal{C}_{J-1}^{\ell-1}\Sigma_1^\ell({\sf A})
+
\mathcal{C}_{J-2}^{\ell-1}\Sigma_2^\ell({\sf A})
+\cdots+
\mathcal{C}_{J-j}^{\ell-1}\Sigma_j^\ell({\sf A})
+\cdots+
\mathcal{C}_0^{\ell-1}\Sigma_J^\ell({\sf A})
\\
&
\leqslant_{\mathcal{R}_\lambda^+}\,\,
\mathcal{C}_J^{\ell-1}
+
\mathcal{C}_{J-1}^{\ell-1}\,
2^11^{2}\lambda^1\big[u_\ell\big]
+
\mathcal{C}_{J-2}^{\ell-1}2^22^4\lambda^2
\big[\mathcal{C}_1^{\ell-1}u_\ell+u_\ell^2\big]
\\
&\qquad 
+\cdots+
\mathcal{C}_{J-j}^{\ell-1}\,
2^jj^{2j}\lambda^j
\big[
\mathcal{C}_{j-1}^{\ell-1}u_\ell+\cdots+u_\ell^j
\big]
\\
&\qquad +\cdots
+
\mathcal{C}_0^{\ell-1}\,2^JJ^{2J}\lambda^J
\big[
\mathcal{C}_{J-1}^{\ell-1}u_\ell
+\cdots+
u_\ell^J
\big]
\\
&
\leqslant_{\mathcal{R}_\lambda^+}\,\,
2^11^2\lambda^1\big[
\mathcal{C}_J^{\ell-1}
+
\mathcal{C}_{J-1}^{\ell-1}u_\ell
\big]
+
2^22^4\lambda^2
\big[\mathcal{C}_{J-1}^{\ell-1}u_\ell
+
\mathcal{C}_{J-2}^{\ell-1}u_\ell^2\big]
\\
&\qquad +\cdots
+
2^jj^{2j}\lambda^j
\big[
\mathcal{C}_{J-1}^{\ell-1}u_\ell+\cdots+
\mathcal{C}_{J-j}^{\ell-1}u_\ell^j
\big]
\\
&\qquad +\cdots
+
2^JJ^{2J}\lambda^J
\big[
\mathcal{C}_{J-1}^{\ell-1}u_\ell+\cdots+u_\ell^J
\big]
\\
&
\leqslant_{\mathcal{R}_\lambda^+}\,\,
J\cdot 2^J\cdot J^{2J}\cdot \lambda^J\,
\Big[
\mathcal{C}_J^{\ell-1}
+
\mathcal{C}_{J-1}^{\ell-1}u_\ell
+
\mathcal{C}_{J-2}^{\ell-1}u_\ell^2
+\cdots+
\mathcal{C}_1^{\ell-1}u_\ell^{J-1}
+
u_\ell^J
\Big],
\endaligned
\]
where the introduction of supplementary terms in the brackets aims at
producing a uniform right-hand side. 
\endproof

\subsection{Proof of Theorem~\ref{D-k-n}} The vanishing of the
$d^0$-coefficient comes from the fact that after reduction to the
ground level $\ell = 0$, one gets a sum of homogeneous monomials of
the form $h^l c_1^{ \lambda_1} c_2^{ \lambda_2} \cdots c_n^{
\lambda_n}$ with $l + \lambda_1 + 2 \lambda_2 + \cdots + n \lambda_n =
n$, and then after expressing each $c_k$ in terms of $d$
through~\thetag{ \ref{c-d}}, one always has the power $h^n = d$ of $h$
in factor.

Notice that the integer $J$ of the Proposition~\ref{J-T} will always be
less than or equal to $\dim X_{ n-1} = n^2 - n+1$. To simplify the
computations and to receive at the end as simple majorants as
possible, we shall apply the following elementary majoration, 
using $J \leqslant n^2 - n+1$:
\[
\aligned
J\cdot 2^J\cdot J^{2J}\cdot 2^{nJ}
&
=
2^{(n+1)J}\cdot J^{2J+1}
\\
&
\leqslant
2^{n^3+1}\,(n^2-n+1)^{2n^2-2n+3}
\\
&
\leqslant
2^{n^3}\big(n^2\big)^{2n^2},
\endaligned
\]
because $2\, ( n^2 - n+1)^{ 2n^2 - 2n +3} \leqslant 2 \, (n^2)^{ 2n^2
- 2n + 3} \leqslant (n^2)^{ 2n^2}$ for any $n \geqslant 2$ (an
assumption of Theorem~\ref{D-k-n}). Let us temporarily denote this
bound by:
\[
{\sf N}
:= 
2^{n^3}\,n^{4n^2}. 
\]
As expected, we can now perform a uniform upper majoration of an
arbitrary monomial $u_1^{ i_1} \cdots u_n^{ i_n}$ with $i_1 + \cdots +
i_n = n^2$ down to level $\ell = 0$ as follows:
\[
\small
\aligned
u_1^{i_1}\cdots 
&
u_{n-1}^{i_{n-1}}u_n^{i_n}
=
u_1^{i_1}\cdots u_{n-1}^{i_{n-1}}\mathcal{C}_{i_n-n+1}^{n-1}
\\
&
\leqslant_{\mathcal{R}_\lambda^+}\,\,
{\sf N}\cdot
u_1^{i_1}\cdots u_{n-2}^{i_{n-2}}
u_{n-1}^{i_{n-1}}
\big[
\mathcal{C}_{i_n-n+1}^{n-2}
+
\mathcal{C}_{i_n-n}^{n-2}u_{n-1}
\\ 
& \qquad\qquad
+\cdots+
\mathcal{C}_1^{n-2}u_{n-1}^{i_n-n}
+
u_{n-1}^{i_n-n+1}
\big]
\ \ \ \ \ \ \ \ \ \ \ 
\explain{Proposition~\ref{J-T}}
\\
&
\leqslant_{\mathcal{R}_\lambda^+}\,\,
{\sf N}\cdot
u_1^{i_1}\cdots u_{n-2}^{i_{n-2}}
\big[
\underline{{\mathcal{C}_{i_n-n+1}^{n-2}u_{n-1}^{i_{n-1}}} 
+\cdots}_\circ
\\ 
& 
\qquad\qquad
+\mathcal{C}_{i_{n-1}+i_n-2n+2}^{n-2}\,
\underline{u_{n-1}^{n-1}}_{\int}
+\cdots+
u_{n-1}^{i_{n-1}+i_n-n+1}
\big]
\\
&
\leqslant_{\mathcal{R}_\lambda^+}\,\,
{\sf N}\cdot
u_1^{i_1}\cdots u_{n-2}^{i_{n-2}}
\big[
\mathcal{C}_{i_{n-1}+i_n-2n+2}^{n-2}\,
+
\mathcal{C}_{i_{n-1}+i_n-2n+1}^{n-2}\,u_{n-1}^n
\\ 
& \qquad\qquad+\cdots+
u_{n-1}^{i_{n-1}+i_n-n+1}
\big]
\\
&
\leqslant_{\mathcal{R}_\lambda^+}\,\,
{\sf N}\cdot
u_1^{i_1}\cdots u_{n-2}^{i_{n-2}}
\big[
\mathcal{C}_{i_{n-1}+i_n-2n+2}^{n-2}\,
+
\mathcal{C}_{i_{n-1}+i_n-2n+1}^{n-2}\,
\mathcal{C}_1^{n-2}
\\ 
&\qquad\qquad
+\cdots+
\mathcal{C}_{i_{n-1}+i_n-2n+2}^{n-2}
\big]
\ \ \ \ \ \ \ \ \ \ \ 
\explain{Lemma~\ref{K-i-l}}
\\
&
\leqslant_{\mathcal{R}_\lambda^+}\,\,
{\sf N}\,n^2\cdot
u_1^{i_1}\cdots u_{n-2}^{i_{n-2}}\,
\mathcal{C}_{i_{n-1}+i_n-2n+2}^{n-2}
\ \ \ \ \ \ \ \ \ \ \ 
\explain{Lemma~\ref{C-C-majorations}}
\\
&
\leqslant_{\mathcal{R}_\lambda^+}\,\,
\big({\sf N}\,n^2\big)^2\cdot
u_1^{i_1}\cdots u_{n-3}^{i_{n-3}}\,
\mathcal{C}_{i_{n-2}+i_{n-1}+i_n-3n+3}^{n-3}
\ \ \ \ \ \ \ \ \ \ \ 
\explain{induction}
\\
&
\leqslant_{\mathcal{R}_\lambda^+}\,\,
\big({\sf N}\,n^2\big)^3\cdot
u_1^{i_1}\cdots u_{n-4}^{i_{n-4}}\,
\mathcal{C}_{i_{n-3}+i_{n-2}+i_{n-1}+i_n-4n+4}^{n-4}
\ \ \ \ \ \ \ \ \ \ \ 
\explain{induction}.
\endaligned
\]
In the third line, we exhibit the general case where $i_{ n-1}$ can be
$< n-1$, we underline the terms vanishing for degree-form reasons and
we point out the fiber-integration of $u_{ n-1}^{ n-1}$; when $i_{
n-1} \geqslant n-1$, the underlined terms are absent. In the sixth
line, we majorate plainly by $n^2$ the number of terms inside the
brackets. (Recall that here by convention again, $\mathcal{ C}_J^\ell
= 0$ if either $J < 0$ or $J > \dim X_\ell$, so that some of the
written $\mathcal{ C}_J^\ell$ might well vanish, depending
on $i_1, \dots, i_n$.) 
A now clear induction down to level $\ell = 1$ therefore yields:
\[
\small
\aligned
u_1^{i_1}\cdots u_{n-1}^{i_{n-1}}u_n^{i_n}
&
\leqslant_{\mathcal{R}_\lambda^+}\,\,
\big({\sf N}\,n^2\big)^{n-2}\cdot
u_1^{i_1}\,
\mathcal{C}_{i_2+\cdots+i_n-(n-1)n+n-1}^1
\\
&
\leqslant_{\mathcal{R}_\lambda^+}\,\,
\big({\sf N}\,n^2\big)^{n-2}\cdot{\sf N}\cdot
\big[
\underline{\mathcal{C}_{2n-1-i_1}^0
+\cdots}_\circ+
\\
&
\ \ \ \ \ \ \ \ \ \ \ \
+
\mathcal{C}_n^0
\underline{u_1^{n-1}}_{\int}
+\cdots+
u_1^{2n-1}
\big]
\\
&
\leqslant_{\mathcal{R}_\lambda^+}\,\,
\big({\sf N}\,n^2\big)^{n-1}\,
\mathcal{C}_n^0.
\endaligned
\]
It only remains to majorate $\mathcal{ C}_n^0$. This last reduction
using only \thetag{ \ref{c-d}} without any $\lambda_{ j, j-k}$, let
us denote by $\mathcal{ R}_d^+$ the upper reduction operator
restricted to level $\ell = 0$.

\begin{lemma}
The $n \times n$ Jacobi-Trudy determinant $\mathcal{ C}_0^n$ enjoys
the majoration:
\[
\mathcal{C}_n^0
\,\,\leqslant_{\mathcal{R}_d^+}\,\,
2^{n^2+2n}\,n!\,n^n\,
\big[d^{n+1}+d^n+\cdots+d\big].
\]
\end{lemma}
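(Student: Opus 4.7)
I will bound $\mathcal{R}_d^+(\mathcal{C}_n^0)$ by combining a Leibniz expansion of the determinant with a uniform coefficient bound coming from the binomial identity $\sum_{i=0}^{n+2}\binom{n+2}{i}=2^{n+2}$.

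\smallskip

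\textbf{Step 1: Leibniz expansion.} The matrix entry in row $i$, column $j$ of $\mathcal{C}_n^0$ is $c_{j-i+1}$, with the conventions $c_0=1$ and $c_k=0$ for $k<0$. The Leibniz formula then yields
\[
\mathcal{C}_n^0
=
\sum_{\sigma\in S_n}\text{sgn}(\sigma)\prod_{i=1}^n c_{\sigma(i)-i+1},
\]
and only those $\sigma$ with $\sigma(i)\geqslant i-1$ for every $i$ contribute. Each nonzero term is a signed product $\prod_{s}c_{j_s}$ with $j_s\geqslant 1$ and total weight $\sum_s j_s = n$. After applying the upper reduction $\mathcal{R}_d^+$ (which replaces minus signs by plus signs), I therefore obtain
\[
\mathcal{R}_d^+(\mathcal{C}_n^0)
\,\leqslant\,
\sum_{\sigma\text{ admissible}}\,
\mathcal{R}_d^+\!\Bigl(\prod_{s}c_{j_s(\sigma)}\Bigr),
\]
the outer sum having at most $n!$ terms.

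\smallskip

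\textbf{Step 2: Per-monomial reduction.} Fix one such monomial $c_{j_1}\cdots c_{j_r}$ with $\sum j_s = n$. By formula~\thetag{\ref{c-d}}, each $c_{j_s}$ equals $(-1)^{j_s}h^{j_s}P_{j_s}(d)$ where
\[
P_j(d):=\sum_{i=0}^j(-1)^i\binom{n+2}{i}\,d^{j-i},
\]
so $\mathcal{R}_d^+P_j(d)=\sum_{i=0}^j\binom{n+2}{i}d^{j-i}$ has nonnegative coefficients summing to at most $2^{n+2}$. Since the $h$-degree of the product is exactly $\sum j_s=n$, integrating $h^n\equiv d$ yields
\[
\mathcal{R}_d^+\bigl(c_{j_1}\cdots c_{j_r}\bigr)
\,\leqslant\,
d\cdot\prod_{s=1}^{r}\mathcal{R}_d^+P_{j_s}(d),
\]
a polynomial in $d$ of degree at most $n+1$ and of valuation at least $1$, again with nonnegative coefficients.

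\smallskip

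\textbf{Step 3: Coefficient bound.} For a polynomial with nonnegative coefficients, each coefficient is bounded by its value at $d=1$. Hence every coefficient of $d^k$ in the above product is at most
\[
\prod_{s=1}^{r}\mathcal{R}_d^+P_{j_s}(1)
\,\leqslant\,
(2^{n+2})^{r}
\,\leqslant\,
2^{n(n+2)}
=
2^{n^2+2n},
\]
where $r\leqslant n$ because each $j_s\geqslant 1$ and $\sum j_s=n$.

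\smallskip

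\textbf{Step 4: Summation.} Adding up the contributions of the at most $n!$ admissible permutations gives
\[
\mathcal{R}_d^+\mathcal{C}_n^0
\,\leqslant\,
n!\cdot 2^{n^2+2n}\,\bigl[d+d^2+\cdots+d^{n+1}\bigr],
\]
which is already stronger than the announced bound $2^{n^2+2n}\,n!\,n^n\,[d+\cdots+d^{n+1}]$, the extra factor $n^n$ being absorbed. The vanishing of the $d^0$-coefficient (first part of Theorem~\ref{D-k-n}) is manifest since every monomial carries the factor $h^n=d$.

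\smallskip

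The only subtle point to verify is that $\mathcal{R}_d^+$ is subadditive and submultiplicative on integer-coefficient polynomials in $d$, so that the coefficient-by-coefficient majorations compose through the Leibniz expansion and the substitution~\thetag{\ref{c-d}}; this is straightforward since $\mathcal{R}_d^+$ simply replaces each coefficient by its absolute value at every intermediate step. No deeper difficulty arises: the main work has been offloaded onto the classical bound $\sum_i\binom{n+2}{i}=2^{n+2}$ and the elementary count of nonzero Leibniz terms.
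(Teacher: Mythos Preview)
Your proof is correct and follows essentially the same architecture as the paper's: bound the Leibniz expansion of $\mathcal{C}_n^0$ by $n!$ monomials $c_{j_1}\cdots c_{j_r}$ with $\sum j_s=n$, substitute each $c_j$ via~\thetag{\ref{c-d}}, and control the resulting product using the bound $\sum_i\binom{n+2}{i}\leqslant 2^{n+2}$. The only difference is in Step~3: where the paper majorates coefficient-wise via the auxiliary inequality $[d^{j_1}+\cdots+1][d^{j_2}+\cdots+1]\leqslant_{\mathcal{R}_d^+} j_1j_2[d^{j_1+j_2}+\cdots+1]$ (which is what produces the extra factor $n^n$), you instead bound every coefficient of a nonnegative-coefficient polynomial by its value at $d=1$, so that the bounds multiply cleanly without any combinatorial loss. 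This is a neater bookkeeping device and, as you note, yields the sharper estimate $n!\,2^{n^2+2n}[d+\cdots+d^{n+1}]$, from which the stated lemma follows \emph{a fortiori}.
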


\proof
The number of monomials in the universal $n \times n$ determinant
$\vert a_i^j \vert$ is $\leqslant n!$ (and is $< n!$ when some $a_i^j$
are zero). Hence:
\[
\mathcal{C}_n^0
\,\,\leqslant_{\mathcal{R}_d^+}\,\,
n!\,
\max_{\lambda_1+2\lambda_2+\cdots+n\lambda_n=n}\,
c_1^{\lambda_1}c_2^{\lambda_2}\cdots c_n^{\lambda_n}.
\]
The general binomial coefficient $\binom{n+2}{k}$ which appears
in~\thetag{ \ref{c-d}} is less than or equal to $2^{ n+2}$, so that:
\[
c_j
\,\,\leqslant_{\mathcal{R}_d^+}\,\,
2^{n+2}\,h^j\,
\big[
d^j+\cdots+d+1
\big].
\]
We majorate as follows the products of these basic polynomials in $d$:
\[
\big[d^{j_1}+\cdots+d+1\big]
\big[d^{j_2}+\cdots+d+1\big]
\,\,\leqslant_{\mathcal{R}_d^+}\,\,
j_1j_2
\big[d^{j_1+j_2}+\cdots+d+1\big],
\]
and we therefore deduce a majorant for the general homogeneous degree $n$
monomial in the ground Chern classes:
\[
\aligned
c_1^{\lambda_1}c_2^{\lambda_2}
\cdots c_n^{\lambda_n}
&
\,\,\leqslant_{\mathcal{R}_d^+}\,\,
\big(2^{n+2}\big)^{\lambda_1+\lambda_2+\cdots+\lambda_n}\,
1^{\lambda_1}2^{\lambda_2}\cdots
n^{\lambda_n}\,h^{\lambda_1+2\lambda_2+\cdots+n\lambda_n}
\\
&\qquad\qquad
\cdot\big[d^{\lambda_1+2\lambda_2+\cdots+n\lambda_n}+\cdots+d+1\big]
\\
&
\,\,\leqslant_{\mathcal{R}_d^+}\,\,
\big(2^{n+2}\big)^n\,
n^{\lambda_1+\lambda_2+\cdots+\lambda_n}\,h^n\,
\big[d^n+\cdots+d+1\big]
\\
&
\,\,\leqslant_{\mathcal{R}_d^+}\,\,
2^{n^2+2n}\,n^n\,d\,
\big[d^n+\cdots+d+1\big]
\endaligned
\]
which completes the proof.
\endproof

Applying this lemma to the last obtained inequality: 
\[
u_1^{i_1}\cdots u_n^{i_n}
\,\,\leqslant_{\mathcal{R}_\lambda^+}\,\,
(Nn^2)^{n-1}\,2^{n^2+2n}\,n!\,n^n\cdot
\big[d^{n+1}+d^n+\cdots+1],
\]
we then obtain the announced bound $n^{4n^3}2^{n^4}$ as follows:
\[
\aligned
\big\vert
{\sf coeff}_{d^k} 
\big[
u_1^{i_1}\cdots u_n^{i_n}\big]
\big\vert
&
\leqslant
\big(2^{n^3}\,n^{4n^2}\,n^2\big)^{n-1}\,
2^{n^2+2n}\,n!\,n^n
\\
&
\leqslant
2^{n^4-n^3+n^2+2n}\,
n^{4n^3-4n^2+2n-2}\,
n^n\,n^n
\\
&
\leqslant
n^{4n^3}2^{n^4}.
\endaligned
\] 
By an inspection of the final inequalities which enabled
us to descend from the top of Demailly's tower to its ground level,
one easily convinces oneself that the monomials $h^l u_1^{ i_1} \cdots
u_n^{ i_n}$ and $c_1 h^l u_1^{ j_1} \cdots u_n^{ j_n}$ satisfy
exactly the same upper bound reduction:
\[
\aligned
& h^lu_1^{i_1}\cdots u_n^{i_n}
\,\,\leqslant_{\mathcal{R}_\lambda^+}\,\,
\big(N\,n^2\big)^{n-1}\,\mathcal{C}_n^0
\quad\text{\rm and}
\\
&
c_1h^lu_1^{j_1}\cdots u_n^{j_n}
\,\,\leqslant_{\mathcal{R}_\lambda^+}\,\,
\big(N\,n^2\big)^{n-1}\,\mathcal{C}_n^0,
\endaligned
\]
since the forms $h^l$ and $c_1 h^l$ do intervene only at the very end
of the process. This completes the proof of Theorem~\ref{D-k-n}. At
the same time, the proof of Theorem~\ref{main} can
be considered as complete, as soon as
we take for granted Theorem~\ref{lowdim}, as
was already explained at the end of Section~\ref{Section-4}. 
\qed

\section{Effective bounds in dimensions 2, 3 and 4 
\\
through the invariant theory approach}
\label{Section-6}

The goal of this section is to obtain the effective bound $\deg X^4
\geqslant 3203$ of Theorem~\ref{lowdim} in dimension $n = 4$ which
insures strong algebraic degeneracy of entire curves inside a generic
projective four-fold $X^4 \subset \mathbb{ P}^5$. As was said in the
Introduction, our reasonings will be based on a complete knowledge
(\cite{ mer2008b}) of the full algebra $\bigoplus_{m \geqslant 0}
E_{4, m} T_{X^4, x_0}^*$ of germs of invariant $4$-jet differentials
at a point $x_0 \in X^4$, which, unfortunately, is still unavailable
at present times for jets of order $k \geqslant n$ in the higher
dimensions $n = 5, 6, 7, \dots$ (remind that by Theorem~1.1 in~\cite{
rou2006b} and by Theorem~1  in~\cite{ div2008a}, $H^0 \big( X^n, \,
E_{ k, m} T_{X^n}^* \big) = 0$ whenever $k \leqslant n - 1$). The so
obtained bound $\deg X^4 \geqslant 3203$ happens to be sharper than
the one $\deg X^4 \geqslant 6527$ that one would obtain using the
intersection product~\thetag{ \ref{morse-intersection-delta}}. For
completeness and in parallel, we also recall what happens in the lower
dimensions $2$ (\cite{ dem1997, deg2000}) and $3$ (\cite{ rou2006a,
rou2006b}).

\subsection{Algebras of bi-invariant $k$-jet differentials} 
Let $(x_1, \dots, x_n)$ be local coordinates centered at some point
$x_0 \in X$ and let $f = (f_1, \dots, f_n) \colon (\mathbb{ C}, 0) \to
(X, x_0)$ be a germ of holomorphic curve. For each fixed $l$-th jet
level ($1 \leqslant l \leqslant k$) over $x_0$, the constant matrices
$v = (v_i^j)_{ 1\leqslant i \leqslant n}^{ 1 \leqslant j \leqslant n}$
in ${\rm GL}_n( \mathbb{ C})$ act in a natural way on the $n$ jet
coordinates $(f_1^{ (l)}, \dots, f_n^{ (l)})$ simply by:
\[
v\cdot
f^{(l)} 
:= 
\Big(
{\textstyle{\sum_j}}v_i^jf_j^{(l)}
\Big)_{1\leqslant i\leqslant n}. 
\]
In order to know what is the precise
decomposition of ${\rm Gr}^\bullet ( E_{ k, m} T_X^* )$ as a direct
sum of Schur bundles $\Gamma^{ (\ell_1, \dots, \ell_n)} T_X^*$, the
classical representation theory of ${\rm GL}_n ( \mathbb{ C})$ tells
us that one should look at jet polynomials $Q (f', f'', \dots, f^{
(k)})$ that are not only invariant under reparametrization in the
sense of Definition~\ref{invariant-jets}, but also invariant under the
action of the full unipotent subroup ${\rm U}_n ( \mathbb{ C}) \subset
{\rm GL}_n ( \mathbb{ C})$ consisting of matrices with $1$ on the
diagonal, $0$ above the diagonal, and arbitrary complex number below
the diagonal; background information may be found in~\cite{ dem1997,
rou2006a, rou2006b, mer2008b}.  Accordingly, one may define the
algebra of {\sl bi-invariant} $k$-jet polynomials in dimension $n$:
\[
{\sf UE}_k^n
:=
\Big(
\bigoplus_{m\geqslant 0}\,
E_{k,m}T_{X^n,x_0}^*
\Big)^{{\rm U}_n(\mathbb{C})}.
\]
This algebra does not depend on the base point $x_0 \in X$. 
We shall employ the abbreviations $\Delta_{ i_1, i_2}^{ (\alpha) ,
(\beta)} := f_{ i_1}^{ (\alpha)} f_{ i_2}^{ (\beta)} - f_{ i_2}^{
(\alpha)} f_{ i_1}^{ (\beta)}$ for $2\times 2$ determinants, and
similarly $\Delta_{ i_1, i_2, i_3}^{ (\alpha), (\beta), (\gamma)}$ for
the analogous $3 \times 3$ determinants. The upper indices of all the
appearing 16 bi-invariants $f_1'$, $\Lambda^3$, $\Lambda^5$,
$\Lambda^7$, $D^6$, $D^8$, $N^{ 10}$, $W^{ 10}$, $M^8$, $E^{10}$, $L^{
12}$, $Q^{ 14}$, $R^{ 15}$, $U^{ 17}$, $V^{ 19}$ and $X^{ 21}$ below just
denote their weighted degree $m$.

\begin{theorem}
The following three algebraic descriptions hold. 

\begin{itemize}

\smallskip\item
\cite{dem1997} In dimension $2$, one has: ${\sf UE}^2_2 = \mathbb{ C}
\big[ f_1',\, \Lambda^3\big]$, where $\Lambda^3 := \Delta_{1,2}^{',
''} = f_1' f_2'' - f_2'f_1''$ is the two-dimensional Wronskian.

\smallskip\item
\cite{rou2006a} In dimension $3$, one has:
\[
{\sf UE}_3^3
=
\mathbb{C}\big[
f_1',\,\Lambda^3,\,\Lambda^5,\,D^6
\big],
\]
where $\Lambda^5 := \Delta_{ 1, 2}^{ ', '''} f_1' - 3\, \Delta_{ 1,
2}^{ ', ''} f_1''$ and where $D^6 := \Delta_{ 1, 2, 3}^{ ', '',
'''}$ is the three-dimensional Wronskian.

\item
\cite{mer2008b} In dimension $4$, one has:
\[
\aligned
{\sf UE}_4^4
=
&
\mathbb{C}
\big[
f_1',\,\Lambda^3,\,\Lambda^5,\,\Lambda^7,\,D^6,\,D^8,\,N^{10},\,
W^{10},\,M^8,\,E^{10},\,L^{12},\,
\\
&
\ \ \ \ \
Q^{14},\,R^{15},\,U^{17},\,V^{19},\,X^{21}
\big]
\Big/
\text{a certain ideal of $41$ relations},
\endaligned
\]
where:
\[
\aligned
\Lambda^7
&
=
\Delta_{1,2}^{',\,''''}\,f_1'f_1'
+
\Delta_{1,2}^{'',\,'''}\,f_1'f_1'
-
10\,\Delta_{1,2}^{',\,'''}\,f_1'f_1''
+
15\,\Delta_{1,2}^{',\,''}\,f_1''f_1'',
\\
D^8
&
=
\Delta_{1,2,3}^{',\,''',\,''''}\,f_1'
-
3\,\Delta_{1,2,3}^{',\,'',\,''''}\,f_1'',
\\
N^{10}
&
=
\Delta_{1,2,3}^{',\,''',\,''''}\,f_1'f_1'
-
3\,\Delta_{1,2,3}^{',\,'',\,''''}\,f_1'f_1''
+
4\,\Delta_{1,2,3}^{',\,'',\,'''}\,f_1'f_1'''
+
3\,\Delta_{1,2,3}^{',\,'',\,'''}\,f_1''f_1'',
\endaligned
\]
where $W^{10} = \Delta_{ 1, 2, 3, 4}^{',\, '',\, ''',\, ''''}$ is the
four-dimensional Wronskian and where the eight remaining bi-invariants
defined by:
\[
\aligned
&
M^8
:=
{\textstyle{\frac{-5\,\Lambda^5\Lambda^5+3\,\Lambda^3\Lambda^7}{
f_1'f_1'}}}
\ \ \ \ \ \ \ \
E^{10}
:=
{\textstyle{\frac{-6\,\Lambda^5\,D^6+3\,\Lambda^3\,D^8}{f_1'}}},
\ \ \ \ \ \ \ \
L^{12}
:=
{\textstyle{\frac{-\Lambda^7D^6+5\,\Lambda^3N^{10}}{f_1'}}},
\\
&
Q^{14}
:=
{\textstyle{\frac{\Lambda^7D^8-10\,\Lambda^5N^{10}}{f_1'}}},
\ \ \ \ \ \ \ \
R^{15}
:=
{\textstyle{\frac{D^8D^8-12\,D^6N^{10}}{f_1'}}},
\ \ \ \ \ \ \ \
U^{17}
:=
{\textstyle{\frac{4\,D^8E^{10}+3\,\Lambda^3R^{15}}{f_1'}}},
\\
&
V^{19}
:=
{\textstyle{\frac{8\,N^{10}E^{10}+\Lambda^5R^{15}}{f_1'}}},
\ \ \ \ \ \ \ \
X^{21}
:=
{\textstyle{\frac{4\,D^8Q^{14}-5\,\Lambda^7R^{15}}{f_1'}}}
\endaligned
\]
happen all to be {\em true polynomials} in $\mathbb{ C} \big[ f_1',
\dots, f_4^{ ''''} \big]$, and where an explicit Gr\"obner basis, with
respect to the pure lexicographic term-order $f_1 ' < \Lambda^3 <
\cdots < X^{ 21}$, for the ideal of relations that they share, is
provided in \S11 of~\cite{ mer2008b}.

\end{itemize}
\end{theorem}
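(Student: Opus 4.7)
The plan is to treat the three dimensions separately by carrying out classical invariant theory for the simultaneous action of the reparametrization group $\mathbb{G}_k'$ and the full unipotent subgroup ${\rm U}_n(\mathbb{C})\subset{\rm GL}_n(\mathbb{C})$ on the polynomial ring generated by the jet coordinates $f_i^{(l)}$, $1\leqslant i\leqslant n$, $1\leqslant l\leqslant k$. First, I would make both actions explicit: the reparametrization action is given by the multivariate Fa\`a di Bruno formulae recalled in~\cite{mer2008a}, while a lower-triangular unipotent matrix adds to each $f_j^{(l)}$ a prescribed linear combination of the $f_i^{(l)}$ with $i<j$, uniformly in the jet order~$l$. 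The sought algebra ${\sf UE}_k^n$ is then by definition the bigraded subring of invariants under the product action, and the task becomes to present it by generators and relations.

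Next, I would exhibit the generators in order of increasing weighted degree. The basic building blocks are the multi-column Wronskian-type determinants $\Delta_{i_1,\dots,i_r}^{(\alpha_1),\dots,(\alpha_r)}$, which are ${\rm SL}_n$-covariant in the column indices and, combined through the classical Wronskian identity, yield $\mathbb{G}_k'$-invariants. Normalizing by appropriate powers of $f_1'$ (itself bi-invariant since $f_1$ sits at the top of a flag preserved by the unipotent group) produces the Wronskian-like bi-invariants $\Lambda^3,\Lambda^5,\Lambda^7,D^6,D^8,N^{10},W^{10}$, while the eight remaining generators $M^8,E^{10},L^{12},Q^{14},R^{15},U^{17},V^{19},X^{21}$ are first constructed as \emph{a priori} rational fractions in the former, and then shown to be genuine polynomials by a direct Fa\`a di Bruno expansion that exhibits the divisibility by $f_1'$ inside the numerator. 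For $n=2$ freeness of $\mathbb{C}[f_1',\Lambda^3]$ is plain; for $n=3$ a Hilbert-series match on small weights suffices.

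To prove that each generating set is complete and to identify all relations, I would compare, weighted degree by weighted degree, the Hilbert series of ${\sf UE}_k^n$\,\,---\,\,accessible through the Schur-bundle composition series of $\operatorname{Gr}^{\bullet}(E_{k,m}T_X^*)$ recalled in Section~\ref{Section-2}\,\,---\,\,with the Hilbert series of the candidate presentation. In the dimension~4 case, the 41 syzygies among the 16 generators have to be extracted through a pure lexicographic Gr\"obner basis computation with the prescribed term order $f_1'<\Lambda^3<\dots<X^{21}$, checking both that every S-polynomial reduces to zero and that no further generator is forced.

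The main obstacle is precisely this dimension~4 analysis: the algebra ceases to be free, \emph{secondary} invariants proliferate, and the tangled interplay of 41 lexicographic relations with 16 generators of widely varying weighted degrees up to~21 turns the verification into a delicate combinatorial problem. This is exactly the reason why no analogous presentation is presently known in dimension $n\geqslant 5$, and why Sections~\ref{Section-4} and~\ref{Section-5} above had to rely instead on the Demailly-tower approach that bypasses any explicit description of ${\sf UE}_n^n$.
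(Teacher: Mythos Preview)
The paper does not prove this theorem at all: each of the three items is attributed to an external reference (\cite{dem1997}, \cite{rou2006a}, \cite{mer2008b}) and the statement is quoted without argument, the paper moving directly to its consequences (the Schur bundle decompositions of Theorem~\ref{schur-decomposition}). So there is no ``paper's own proof'' to compare against; your proposal is really a sketch of how the cited works proceed, and in broad outline it matches them --- in particular the dimension~4 case in~\cite{mer2008b} is indeed handled by an algorithmic generation of bi-invariants followed by a Gr\"obner basis computation for the ideal of syzygies.

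One point in your plan deserves caution. You propose to certify completeness of the generating set by matching Hilbert series, and you say the Hilbert series of ${\sf UE}_k^n$ is ``accessible through the Schur-bundle composition series of $\operatorname{Gr}^{\bullet}(E_{k,m}T_X^*)$ recalled in Section~\ref{Section-2}''. But in this paper the explicit Schur decomposition (Theorem~\ref{schur-decomposition}) is \emph{deduced from} the bi-invariant description you are trying to prove, not established independently; using it to verify the Hilbert series would be circular. What Section~\ref{Section-2} actually provides is only the coarser filtration with graded pieces $\big(\bigoplus S^{\ell_1}V^*\otimes\cdots\otimes S^{\ell_k}V^*\big)^{\mathbb{G}_k'}$, and extracting the ${\rm U}_n$-highest-weight multiplicities from that still requires knowing the $\mathbb{G}_k'$-invariants of each tensor product --- which is essentially the problem at hand. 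The cited references avoid this circularity by computing the bigraded Hilbert series directly via character/Molien-type formulas for the joint $\mathbb{G}_k'\times{\rm U}_n$ action, independently of any Schur decomposition; your write-up should make that independence explicit.
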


\noindent
For instance, the first three relations among the 41 
written just before the theorem of \S11 in~\cite{ mer2008b} are: 
\[
\label{41-syzygies}
\aligned
0
&
\overset{1}{\equiv}
-5\,\Lambda^5\Lambda^5
+
3\,\Lambda^3\Lambda^7
-f_1'f_1'M^8,
\\
0
&
\overset{2}{\equiv}
-2\,\Lambda^5D^6+
\Lambda^3D^8
-{\textstyle{\frac{1}{3}}}f_1'\,E^{10},
\\
0
&
\overset{3}{\equiv}
-\Lambda^7D^6
+
5\,\Lambda^3N^{10}
-f_1'L^{12}.
\endaligned
\]
Although the complexity of the algebra of bi-invariants increases
dramatically as soon as $n \geqslant 4$, one finds in~\cite{ mer2008b}
a complete algorithm which generates all bi-invariants together with
all the relations that they share, this in arbitrary dimension $n
\geqslant 1$ and for arbitrary jet order $k \geqslant 1$.

\subsection{Schur bundle decompositions}
In dimension 3, there are no relations between the four basic
bi-invariants $f_1'$, $\Lambda^3$, $\Lambda^5$ and $D^6$ and we
hence clearly have: 
\[
\aligned
\big(E_{k,m}T_{X^n,x_0}^*\big)^{{\rm U}_3(\mathbb{C})}
=
{\rm Span}_{\mathbb{C}}
\Big\{
&
(f_1')^a(\Lambda^3)^b(\Lambda^5)^c(D^6)^d
\colon
\\
&\ \
a,b,c,d\in\mathbb{N},\,\,
a+3b+5c+6d=m
\Big\}.
\endaligned
\]
Then to any such general monomial $(f_1')^a (\Lambda^3)^b
(\Lambda^5)^c (D^6)^d$ having weighted degree 
$m = a + 3b + 5 c + 6d$, the
representation theory of ${\rm GL}_n ( \mathbb{ C})$ tells us that
there corresponds the Schur bundle:
\[
\Gamma^{(a+b+2c+d,\,\,b+c+d,\,\,d)}T_X^*,
\]
just because the diagonal $3 \times 3$ matrices $t = {\rm diag} (t_1,
t_2, t_3)$ act as: $t \cdot f_i^{ (\lambda)} := t_i \, f_i^{
(\lambda)}$, whence:
\[
t\cdot f_1'
=
t_1f_1',
\ \ \ \ \ \ \
t\cdot\Lambda^3
=
t_1t_2\,\Lambda^3,
\ \ \ \ \ \ \
t\cdot\Lambda^5
=
t_1t_1t_2\,\Lambda^5,
\ \ \ \ \ \ \
t\cdot D^6
=
t_1t_2t_3\,D^6,
\]
so that indeed the three exponents of the $t_i$ in:
\[
t\cdot(f_1')^a(\Lambda^3)^b(\Lambda^5)^c(D^6)^d
=
t_1^{a+b+2c+d}\,\,t_2^{b+c+d}\,\,t_3^d\,\,
(f_1')^a(\Lambda^3)^b(\Lambda^5)^c(D^6)^d
\]
indicate the three corresponding integers in $\Gamma^{ (\lambda_1,
\lambda_2, \lambda_3)} T_X^*$. The same elementary process enables
one, in dimensions 2 and 4, to immediately deduce from the preceding
statement the following important decomposition theorem for the graded
bundle ${\rm Gr}^\bullet E_{ k,m} T_{X^n}^*$ associated to $E_{ k, m}
T_{ X^n}^*$, which is valuable without assuming that $X$ is
projective.

\begin{theorem}
\label{schur-decomposition}
Let $X$ be a compact complex manifold and let $m \in \mathbb{ N}$.
\begin{itemize}
\item
\cite{dem1997} If $\dim X=2$ then
\[
\operatorname{Gr}^{\bullet}
E_{2,m}T_X^*
=
\bigoplus_{a+3b=m}
\Gamma^{(a+b,\,\,b)}T_X^*.
\]
\item
\cite{rou2006a} If $\dim X=3$ then
\[
\operatorname{Gr}^{\bullet }E_{3,m}T_X^{\ast}
=
\bigoplus_{a+3b+5c+6d=m}\Gamma^{
(a+b+2c+d,\,\,b+c+d,\,\,d)}T_X^{\ast }.
\]
\item
\cite{mer2008b} If $\dim X=4$ then
\[
\aligned
&
\operatorname{Gr}^\bullet
E_{4,m}T_X^*
=
\bigoplus_{(a,b,c,d,e,f,g,h,i,j,k,l,m',n)\in\mathbb{N}^{14}\backslash
(\square_1\cup\cdots\cup\square_{41}) 
\atop
o+3a+5b+7c+6d+8e+10f+8g+10h+12i+14j+15k+17l+19m'+21n+10p\,=\,m}\,
\\
&
{\scriptsize
\Gamma
\left(
\aligned
o+a+2b+3c+d+2e+3f+2g+2h+3i+4j+3k+3l+4m'+5n+p&
\\
a+b+c+d+e+f+2g+2h+2i+2j+2k+3l+3m'+3n+p&
\\
d+e+f+h+i+j+2k+2l+2m'+2n+p&
\\
p&
\endaligned
\right)}\,T_X^*,
\endaligned
\]
where the 41 subsets $\square_i$, $i = 1, 2, \dots, 41$, of
$\mathbb{N}^{ 14} \ni (a, b, \dots, l, m', n)$ are explicitly defined
in \S12 of~\cite{mer2008b}.
\end{itemize}
\end{theorem}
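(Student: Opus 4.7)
\medskip

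\noindent\textbf{Proof proposal.} The plan is to reduce the three decompositions to a single representation-theoretic principle: for any polynomial $\mathrm{GL}_n(\mathbb{C})$-representation $W$, the subspace $W^{\mathrm{U}_n(\mathbb{C})}$ of highest weight vectors, graded by the action of the diagonal torus, determines $W$ as a sum of Schur modules, each weight $(\lambda_1,\dots,\lambda_n)$ of a highest-weight vector contributing one copy of $\Gamma^{(\lambda_1,\dots,\lambda_n)}$. Applied fiberwise over $X$ and combined with the filtration formula
\[
\operatorname{Gr}^\bullet(E_{k,m}T_X^*) = \Big(\bigoplus_{|\ell|_k = m} S^{\ell_1}T_X^* \otimes \cdots \otimes S^{\ell_k}T_X^*\Big)^{\mathbb{G}_k'}
\]
already recorded in the paper, this principle reduces the problem to: (i) exhibiting a linear basis of the bi-invariant algebra $\mathsf{UE}_k^n$ in each case, and (ii) computing the torus weight of each basis element.

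First I would treat the ambient computation of weights uniformly. Since the diagonal torus $t=\mathrm{diag}(t_1,\dots,t_n)$ acts by $t\cdot f_i^{(\lambda)}=t_i f_i^{(\lambda)}$, a bi-invariant built from determinants $\Delta^{(\alpha_1),\dots,(\alpha_s)}_{i_1,\dots,i_s}$ with row indices $\{1,\dots,s\}$ has weight $(1,1,\dots,1,0,\dots,0)$ with $s$ ones, and a monomial $f_1'$ carries the weight $(1,0,\dots,0)$. Multiplying monomials adds weights, which is exactly the procedure the excerpt illustrates in dimension $3$. Thus for $n=2$ the algebra $\mathbb{C}[f_1',\Lambda^3]$ is free, and one reads off immediately that $(f_1')^a(\Lambda^3)^b$ has weight $(a+b,b)$ and degree $a+3b$, yielding the claimed decomposition. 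For $n=3$ the argument is identical: the algebra is free in the four generators $f_1',\Lambda^3,\Lambda^5,D^6$ of respective weights $(1,0,0),(1,1,0),(2,1,0),(1,1,1)$ and weighted degrees $1,3,5,6$, and summing weights across a monomial gives $(a{+}b{+}2c{+}d,\,b{+}c{+}d,\,d)$.

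The genuinely hard case is $n=4$, because $\mathsf{UE}_4^4$ is no longer a polynomial algebra: the $16$ generators satisfy a non-trivial ideal $\mathfrak{I}$ of $41$ syzygies. My plan here is to invoke the explicit Gr\"obner basis computed in \S11 of \cite{mer2008b} with respect to the pure lexicographic order $f_1'<\Lambda^3<\cdots<X^{21}$. The standard consequence is that a $\mathbb{C}$-linear basis of $\mathsf{UE}_4^4$ is given by those monomials in the $16$ generators whose exponent vector avoids the multi-cones generated by the $41$ leading monomials of the Gr\"obner basis; these multi-cones are precisely the subsets $\square_1,\dots,\square_{41}\subset\mathbb{N}^{14}$ listed in \S12 of \cite{mer2008b}. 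It then remains to tabulate the torus weights of the $16$ generators $f_1',\Lambda^3,\Lambda^5,\Lambda^7,D^6,D^8,N^{10},W^{10},M^8,E^{10},L^{12},Q^{14},R^{15},U^{17},V^{19},X^{21}$ from their determinantal expressions (e.g.\ $\Lambda^7$ has weight $(1,1,0,0)$, $D^8$ has weight $(1,1,1,0)$, $W^{10}$ has weight $(1,1,1,1)$, and the remaining eight \lq\lq secondary\rq\rq{} bi-invariants inherit their weights from the quotient formulas defining them), and to add these weights in each admissible monomial: the four sums are exactly the four entries of the Schur tuple displayed in the theorem, while the weighted-degree equation reproduces the linear relation on $o,a,b,\dots,n,p$ constraining the sum to total weight $m$.

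The principal obstacle is thus not conceptual but combinatorial: one must verify that the exclusions $\square_1,\dots,\square_{41}$ really are in bijection with the leading monomials of a Gr\"obner basis of $\mathfrak{I}$, i.e.\ that the ideal of $41$ relations is \emph{saturated} in the Gr\"obner-theoretic sense, with no further syzygies hiding at higher weighted degree. This is precisely the content of the algorithmic verification carried out in \cite{mer2008b}, which I would cite as a black box rather than redo. Once this input is accepted, the Schur-bundle decomposition follows by the weight-summation bookkeeping described above, and the three cases of the theorem are established.
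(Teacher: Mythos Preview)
Your approach is correct and matches the paper's own: the text immediately preceding the theorem explains exactly this highest-weight bookkeeping (torus action on the bi-invariant generators, summing weights over monomials, and in dimension~4 invoking the Gr\"obner basis of~\cite{mer2008b} to extract a monomial basis indexed by $\mathbb{N}^{14}\setminus(\square_1\cup\cdots\cup\square_{41})$). One small correction to your parenthetical examples: the torus weights of $\Lambda^7$ and $D^8$ are $(3,1,0,0)$ and $(2,1,1,0)$, not $(1,1,0,0)$ and $(1,1,1,0)$, as you can read off directly from the coefficients of $c$ and $e$ in the four displayed entries of the Schur tuple; the extra $f_1'$ and $f_1''$ factors in their definitions contribute to the first weight component.
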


\subsection{Euler-Poincar\'e characteristic of Schur bundles}
With $X = X^n \subset \mathbb{ P}^{ n+1}$ projective as before and
with $c_j = c_j (T_X)$ for $j =1, \dots, n$ being the Chern classes of
$T_X$ as in~\thetag{ \ref{c-d}}, a general asymptotic formula for the
Euler-Poincar\'e characteristic of a Schur bundle is given in \S13
of~\cite{ mer2008b} ({\em see} also Theorem~4 in~\cite{ bru1997}), and
for $n = 4$, this formula expands as:
\[
\chi\big(
X,\,
\Gamma^{(\ell_1,\ell_2,\ell_3,\ell_4)}\,T_X^*
\big)
=
\frac{c_1^4-3\,c_1^2c_2+c_2^2
+2\,c_1c_3-c_4}{0!\,\,1!\,\,2!\,\,7!}\
\left\vert
\begin{array}{cccc}
1\, & 1\, & 1\, & 1\,
\\
\ell_1\, & \ell_2\, & \ell_3\, & \ell_4\,
\\
\ell_1^2\, & \ell_2^2\, & \ell_3^2\, & \ell_4^2\,
\\
\ell_1^7\, & \ell_2^7\, & \ell_3^7\, & \ell_4^7\,
\end{array}
\right\vert
+
\]
\begin{equation}
\label{chi-Gamma}
+
\frac{c_1^2c_2-c_2^2-c_1c_3+c_4}
{0!\,\,1!\,\,3!\,\,6!}\
\left\vert
\begin{array}{cccc}
1\, & 1\, & 1\, & 1\,
\\
\ell_1\, & \ell_2\, & \ell_3\, & \ell_4\,
\\
\ell_1^3\, & \ell_2^3\, & \ell_3^3\, & \ell_4^3\,
\\
\ell_1^6\, & \ell_2^6\, & \ell_3^6\, & \ell_4^6\,
\end{array}
\right\vert
+
\frac{-c_1c_3+c_2^2}
{0!\,\,1!\,\,4!\,\,5!}\
\left\vert
\begin{array}{cccc}
1\, & 1\, & 1\, & 1\,
\\
\ell_1\, & \ell_2\, & \ell_3\, & \ell_4\,
\\
\ell_1^4\, & \ell_2^4\, & \ell_3^4\, & \ell_4^4\,
\\
\ell_1^5\, & \ell_2^5\, & \ell_3^5\, & \ell_4^5\,
\end{array}
\right\vert
+
\end{equation}
\[
+
\frac{c_1c_3-c_4}{0!\,\,2!\,\,3!\,\,5!}\
\left\vert
\begin{array}{cccc}
1\, & 1\, & 1\, & 1\,
\\
\ell_1^2\, & \ell_2^2\, & \ell_3^2\, & \ell_4^2\,
\\
\ell_1^3\, & \ell_2^3\, & \ell_3^3\, & \ell_4^3\,
\\
\ell_1^5\, & \ell_2^5\, & \ell_3^5\, & \ell_4^5\,
\end{array}
\right\vert
+
\frac{c_4}{1!\,\,2!\,\,3!\,\,4!}\
\left\vert
\begin{array}{cccc}
\ell_1\, & \ell_2\, & \ell_3\, & \ell_4\,
\\
\ell_1^2\, & \ell_2^2\, & \ell_3^2\, & \ell_4^2\,
\\
\ell_1^3\, & \ell_2^3\, & \ell_3^3\, & \ell_4^3\,
\\
\ell_1^4\, & \ell_2^4\, & \ell_3^4\, & \ell_4^4\,
\end{array}
\right\vert
+
{\rm O}\big(\vert\ell\vert^9\big).
\]
Of course, similar expanded\,\,---\,\,though 
shorter\,\,---\,\,formulae exist also in
dimensions 2 and 3, {\em cf.} again \S13 of~\cite{ mer2008b}. 

\subsection{Riemann-Roch computations} 
Recalling that the $n$-th power $h^n = d$ of the hyperplane class $h =
c_1 \big( \mathcal{ O}_{ \mathbb{ P}^{ n+1}} (1) \big)$ equals $\deg
X$, the formulae~\thetag{ \ref{c-d}} entail that any monomial $c_1^{
\lambda_1} c_2^{ \lambda_2} \cdots c_n^{ \lambda_n}$ whose weighted
homogeneous degree $\lambda_1 + 2 \lambda_2 + \cdots + n \lambda_n$
equals $n$ is a polynomial in $\mathbb{ Z} [ d]$, as are $c_1^4$,
$c_1^2 c_2$, $c_2 c_2$, $c_1 c_3$ and $c_4$ above when $n = 4$.  Basic
additivity, {\em e.g.} in dimension 3:
\[
\chi\big(X,\,E_{3,m}T_X^*\big)
=
\chi\big(X,\,{\rm Gr}^\bullet E_{3,m}T_X^*\big)
=
\sum_{a+3b+5c+6d=m}\,
\chi\big(X,\,\Gamma^{(a+b+2c+d,\,\,b+c+d,\,\,d)}T_X^*\big)
\]
enables one to deduce, by plain numerical summation and with some
electronic assistance, the following three Euler-Poincar\'e
characteristics, depending upon $m$ and $d$ only. We notice that the
summation of the three attached remainders, {\em e.g.} of ${\rm O}
\big( \vert \ell \vert^9 \big)$ in dimension 4, only contributes up to
a lower power of $m$, {\em e.g.}  up to an ${\rm O} \big( m^{
15}\big)$ in dimension 4.

\begin{theorem}
Let $X \subset \mathbb{ P}^{ n+1}$ be a smooth hypersurface 
of degree $d$.
\begin{itemize}
\item
\cite{dem1997} For $n=2${\rm :}
\[
\chi\big(X,E_{2,m}T_X^*\big)
=
\frac{m^4}{648}\,d\,
\big(4d^2-68d+154\big)
+
{\rm O}(m^3).
\]
\item
\cite{rou2006a} For $n=3${\rm :}
\[
\aligned
\chi\big(X,\,E_{3,m}T_X^{\ast}\big)
=
\frac{m^{9}}{81648\times 10^{6}}\,d\,
\big(389d^{3}-20739d^{2}+
185559d
&
-358873\big)
\\
&
+
{\rm O}(m^{8}).
\endaligned
\]
\item
\cite{mer2008b} For $n=4${\rm :}
\[
\footnotesize
\aligned
\chi\big(X,\,E_{4,m}T_X^*\big)
=
&
\frac{m^{16}}{1313317832303894333210335641600000000000000}\,
\cdot\,d\,\cdot
\\
&\ \ \ \ \
\cdot
\big(
50048511135797034256235\,d^4
-
\\
&\ \ \ \ \ \ \ \ \ \
-
6170606622505955255988786\,d^3
-
\\
&\ \ \ \ \ \ \ \ \ \
-
928886901354141153880624704\,d
+
\\
&\ \ \ \ \ \ \ \ \ \
+
141170475250247662147363941\,d^2
+
\\
&\ \ \ \ \ \ \ \ \ \
+
1624908955061039283976041114
\big)
\\
&\ \ \ \ \ \
+
{\rm O}\big(m^{15}\big).
\endaligned
\]
\end{itemize}
\end{theorem}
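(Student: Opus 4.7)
The strategy is to evaluate $\chi\bigl(X,\,E_{k,m}T_X^*\bigr)$ by routing through the graded bundle: since Euler characteristic is additive on graded pieces, and since both the invariant bundles $E_{2,m}T^*_X$, $E_{3,m}T^*_X$, $E_{4,m}T^*_X$ admit explicit decompositions of their associated graded bundles into direct sums of Schur bundles (Theorem~\ref{schur-decomposition}, imported respectively from~\cite{dem1997, rou2006a, mer2008b}), we may write
\[
\chi\bigl(X,\,E_{n,m}T_X^*\bigr)
=
\chi\bigl(X,\,\operatorname{Gr}^\bullet E_{n,m}T_X^*\bigr)
=
\sum_{\text{tuples of weight}\,m}
\chi\bigl(X,\,\Gamma^{(\ell_1,\ldots,\ell_n)}T_X^*\bigr).
\]
Into each summand on the right we plug the asymptotic Jacobi-Trudy/Schur formula for $\chi\bigl(X,\,\Gamma^{(\ell_1,\ldots,\ell_n)}T_X^*\bigr)$, of which~\thetag{\ref{chi-Gamma}} is the $n=4$ instance and whose $n=2,3$ analogs are recorded in \S13 of~\cite{mer2008b}: each such formula is a sum of determinants in the $\ell_i$'s with coefficients that are weighted-degree-$n$ monomials in the Chern classes $c_1,\ldots,c_n$, plus a controlled remainder of order $\mathrm{O}(|\ell|^{\dim X_n - 1})$.

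Next I would express all the occurring Chern-class monomials $c_1^{\lambda_1}c_2^{\lambda_2}\cdots c_n^{\lambda_n}$ with $\lambda_1+2\lambda_2+\cdots+n\lambda_n=n$ as integer polynomials in $d=\deg X$ via~\thetag{\ref{c-d}} and the identity $h^n=d$; this reduces each individual $\chi\bigl(X,\,\Gamma^{(\ell_1,\ldots,\ell_n)}T_X^*\bigr)$ to a polynomial expression in $\bigl(d;\,\ell_1,\ldots,\ell_n\bigr)$. The remaining step is to perform, as $m\to\infty$, the finite summation over the lattice of weight vectors prescribed by Theorem~\ref{schur-decomposition}: in dimension $2$ one sums over $a+3b=m$, in dimension $3$ over $a+3b+5c+6d=m$, and in dimension $4$ over the $14$-dimensional lattice $\{o+3a+5b+7c+6d+8e+\cdots+21n+10p=m\}$ with the $41$ excluded subsets $\square_1\cup\cdots\cup\square_{41}$ subtracted. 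Because each summand is a polynomial in the $\ell_i$ of total degree $\dim_{\mathbb C}X_n$, the bulk sum produces a polynomial in $m$ whose top-degree coefficient is a polynomial in $d$; after reorganization one extracts the leading $m^{\dim X_n}$-term and all lower orders collapse into the announced $\mathrm{O}$-remainder.

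The genuine difficulty is essentially combinatorial and lies entirely in the four-dimensional case. One must (i) handle the $41$ exceptional \emph{square-domains} $\square_i$ that must be removed from the naive cone of weight tuples in $\mathbb N^{14}$, each of which contributes a lower-order correction that has to be computed and collected without error, (ii) tame the lattice sums $\sum_{o+3a+\cdots=m}$ of Schur-Jacobi-Trudy polynomials, which requires evaluating several systems of iterated generating-function identities, and (iii) keep track of the 14 many interlaced Chern-class coefficients that multiply the various Jacobi-Trudy determinants in the full expansion of $\chi\bigl(X,\,\Gamma^{(\ell_1,\ell_2,\ell_3,\ell_4)}T_X^*\bigr)$. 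This step is purely mechanical but far beyond pencil-and-paper: computer algebra is indispensable, exactly as already exploited in~\cite{rou2006a, mer2008b}, and yields after reduction the compact polynomial in $d$ displayed in the theorem. In dimensions $2$ and $3$ the analogous computation is substantially simpler and follows the same three-step template, reproducing the classical formulas of~\cite{dem1997} and~\cite{rou2006a} respectively.
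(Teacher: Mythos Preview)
Your proposal is correct and follows essentially the same route as the paper: additivity of $\chi$ on the graded pieces, the Schur decompositions of Theorem~\ref{schur-decomposition}, the asymptotic Schur characteristic formula~\thetag{\ref{chi-Gamma}}, substitution of~\thetag{\ref{c-d}}, and a computer-assisted lattice summation with the remainder absorbed into the lower-order term. Two small inaccuracies worth fixing in your write-up: the individual Schur characteristic~\thetag{\ref{chi-Gamma}} has total degree $\binom{n+1}{2}$ in the $\ell_i$ (e.g.\ degree $10$ when $n=4$), not $\dim_{\mathbb C}X_n$; and in dimension~$4$ the summation runs over $16$ non-negative integer variables $(o,a,b,\dots,n,p)$ subject to one linear constraint, the $41$ excluded sets $\square_i$ being conditions only on the $14$ variables $(a,\dots,n)$.
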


\subsection{The strategy of controlling the even cohomology dimensions} 
Remember from Theorem~\ref{A} that the first step towards the
algebraic degeneracy of entire curves $f\colon \mathbb{ C} \rightarrow
X$ consists in proving the existence of nonzero global sections in
$H^0 \big( X,\, E_{k,m} T_X^*\otimes A^{-1} \big)$, for some ample
line bundle $A\to X$, {\em e.g.} $A = \mathcal{ O}_X ( 1)$, and when
$A$ does not depend on $m$, the asymptotic cohomologies, as $m \to
\infty$, of the two bundles $E_{ k, m} T_X^*$ and $E_{ k, m} T_X^*
\otimes A^{ -1}$ coincide. So a quite natural strategy, followed by
the third-named author in~\cite{ rou2006b}, consists to rewrite
the characteristic, say
in dimension four: $\chi = h^0 - h^1 + h^2 - h^4$ under the form:
\[
\aligned
h^0
&
= 
\chi+h^1-h^2+h^3-h^4
\\
&
\geqslant
\chi\ \ \ \ \ \ \ \ \
-h^2\ \ \ \ \ \ \ \ \
-h^4,
\endaligned
\]
and to control asymptotically the dimensions $h_{ k, m}^{ 2i}$ of all
the even cohomology groups $H^{ 2i}( X, \, E_{k,m} T_X^* \otimes A^{
-1})$ by some vanishing theorem or by some appropriate inequalities
which would then show that these $h_{ k, m}^{ 2i}$ grow less rapidly
than the characteristic $\chi_{ k, m}$ as $m$ tends to $\infty$.  In
dimensions 2 and 4, the controls of the top even cohomology dimensions
$h^2$ and $h^4$ are obtained thanks to a vanishing theorem due to
Demailly which generalized a theorem of Bogomolov.

\begin{theorem}[\cite{dem1997}]
Let $X$ be a projective algebraic manifold of dimension $n \geqslant 2$ and
let $L$ be a holomorphic line bundle over $X$. Assume that $K_{X}$ is big
and nef and let $\mu = (\mu_1, \dots, \mu_n) \in \mathbb{ Z}^n$ be a weight
with $\mu_1 \geqslant \dots \geqslant \mu_n$. If either $L$ is
pseudo-effective and $\vert \mu \vert = \mu_1 + \cdots + \mu_n > 0$, or $L$
is big and $\vert \mu \vert \geqslant 0$, then{\rm :}
\[
H^0
\big(
X,\,\Gamma^{(\mu_1,\dots,\mu_n)}T_X\otimes L^*
\big)
=
0.
\]
\end{theorem}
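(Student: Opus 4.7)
The strategy is to argue by contradiction: suppose $\sigma \in H^0(X, \Gamma^{(\mu_1,\dots,\mu_n)}T_X \otimes L^*)$ is nonzero and derive a sign obstruction forbidden by the combined positivity hypotheses. The cleanest conceptual route is through a slope-stability argument: the nonzero $\sigma$ yields an injection of coherent sheaves $L \hookrightarrow \Gamma^\mu T_X$, and Miyaoka's generic semi-positivity of $\Omega_X$ (valid once $K_X$ is pseudo-effective, a fortiori once it is big and nef) controls the slopes of all subsheaves of $\Gamma^\mu T_X$ in a way that turns out to be incompatible with the positivity of $L$.

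Concretely, I would first normalize the weight to a polynomial one via the identity $\Gamma^{(\mu_1,\dots,\mu_n)}V \cong \Gamma^{(\mu_1-\mu_n,\dots,0)}V \otimes (\det V)^{\mu_n}$, realizing $\Gamma^\mu T_X \otimes K_X^{\mu_n}$ as a direct summand of $T_X^{\otimes |\nu|}$ with $\nu := (\mu_1-\mu_n,\dots,0)$ and $|\nu| = |\mu| - n\mu_n \geq 0$. Tensoring the inclusion $L \hookrightarrow \Gamma^\mu T_X$ by $K_X^{\mu_n}$ then produces $L \otimes K_X^{\mu_n} \hookrightarrow T_X^{\otimes |\nu|}$. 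Next I would apply Miyaoka's theorem: when $K_X$ is nef, the Harder--Narasimhan pieces of $\Omega_X^{\otimes k}$ all have non-negative $H$-slope for every ample class $H$, and dually every subsheaf $\mathcal F \subseteq T_X^{\otimes k}$ satisfies
\[
\mu_H(\mathcal F) \;\leq\; -\tfrac{k}{n}\,K_X \cdot H^{n-1}.
\]
Applied with $\mathcal F = L \otimes K_X^{\mu_n}$ and $k = |\nu|$, the additive $\mu_n\,K_X\cdot H^{n-1}$ contributions cancel on both sides and leave the intrinsic slope inequality $c_1(L)\cdot H^{n-1} \leq -\tfrac{|\mu|}{n}\,K_X\cdot H^{n-1}$. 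Since $K_X$ is big and nef, $K_X\cdot H^{n-1}>0$ for every ample $H$; if $|\mu|>0$ and $L$ is pseudo-effective, one confronts $0 \leq c_1(L)\cdot H^{n-1} \leq -\tfrac{|\mu|}{n}K_X\cdot H^{n-1} < 0$, and if instead $|\mu|\geq 0$ with $L$ big, bigness upgrades the left-hand side to $c_1(L)\cdot H^{n-1} > 0$ while the right-hand side is $\leq 0$, contradictory in both configurations.

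The main obstacle lies in establishing the sharp quantitative slope bound $\mu_H^{\max}(T_X^{\otimes k}) \leq k\,\mu_H(T_X)$: Miyaoka's theorem alone delivers only $\mu_H^{\max}(T_X^{\otimes k}) \leq 0$, and upgrading this to the sharp form requires $H$-semistability of $T_X$ for the chosen polarization, which is not automatic from nefness of $K_X$. The technical heart of the proof therefore consists in choosing the polarization judiciously, for instance perturbing $H$ toward a suitable combination $K_X + \varepsilon A$ with $A$ ample and $\varepsilon > 0$ small (admissible because $K_X$ is big and nef), and invoking the characteristic-zero fact that semistability is preserved under tensor products so that the Harder--Narasimhan slopes of $T_X^{\otimes k}$ inherit linearly from those of $T_X$. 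Once this sharp bound is secured, the remaining arithmetic reduces to the elementary sign bookkeeping carried out in the previous paragraph.
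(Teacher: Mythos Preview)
The paper does not prove this statement; it is quoted from \cite{dem1997} and used as a black box, so there is no proof in the paper to compare against. I can therefore only assess your argument on its own merits.

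Your approach via Miyaoka's generic semipositivity is natural, but the gap you flag in the final paragraph is real and your proposed fix does not close it. What Miyaoka (together with the char-$0$ additivity $\mu_H^{\max}(E\otimes F)=\mu_H^{\max}(E)+\mu_H^{\max}(F)$) actually yields from $L\otimes K_X^{\mu_n}\hookrightarrow T_X^{\otimes|\nu|}$ is
\[
c_1(L)\cdot H^{n-1}+\mu_n\,K_X\cdot H^{n-1}\;\le\;|\nu|\,\mu_H^{\max}(T_X)\;\le\;0,
\]
hence only $c_1(L)\cdot H^{n-1}\le -\mu_n\,K_X\cdot H^{n-1}$. This suffices when $\mu_n>0$, or when $\mu_n=0$ and $L$ is big, but gives no contradiction once $\mu_n<0$ (or $\mu_n=0$ with $L$ merely pseudo-effective), which the theorem explicitly allows. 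To reach your displayed inequality $c_1(L)\cdot H^{n-1}\le -\tfrac{|\mu|}{n}K_X\cdot H^{n-1}$ you would need $\mu_H^{\max}(T_X)\le \mu_H(T_X)$, i.e.\ $H$-semistability of $T_X$. Perturbing $H$ toward $K_X+\varepsilon A$ does nothing to force this: semistability of the tangent bundle is a genuine extra condition, not a consequence of $K_X$ being big and nef (already products of curves of different genera, or blow-ups, show that $T_X$ can fail to be semistable for every polarization). Demailly's original argument in \cite{dem1997} proceeds differently, and you should consult it rather than try to push the slope inequality further.
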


Recall that if some $\mu_i$ is negative, we may use the identity:
\[
\Gamma^{(\mu_1,\dots,\mu_n)}T_X^*
=
\Gamma^{(\mu_1+l,\dots,\mu_n+l)}T_X^* \otimes
K_X^{-l}.
\]
For instance in dimension 4, we observe that the above 
vanishing theorem implies that
\[
h^4
\big(
X,\,E_{4,m}T_{ X}^*\otimes
A^{-1}
\big)
=
0,
\] 
for all $m$ sufficiently large; indeed, Serre duality and a
division by a tensor power of $K_X$ gives:
\[
\aligned
&
h^4
\big(
X,\,\Gamma^{(\lambda_1,\lambda_2,\lambda_3,\lambda_4)}T_X^*
\otimes A^{-1}
\big)
=
h^0
\big(
X,\,\Gamma^{(\lambda_1,\lambda_2,\lambda_3,\lambda_4)}T_X
\otimes
A
\otimes
K_X
\big)
\\
&
=
h^0
\Big(
X,\,\Gamma^{(\lambda_1-\nu,\lambda_2-\nu,
\lambda_3-\nu,\lambda_4-\nu)}T_X
\otimes
{K_X}^{1-\nu}
\otimes
\mathcal{O}(A)
\Big).
\endaligned
\]
But ${K_X}^{\nu-1} \otimes A^{-1}$ is big for $\nu$ large enough and then
the above theorem applies to provide the vanishing of $h^4$ as soon as:
\[
\vert\lambda\vert 
- 
4\nu 
\geqslant 0,
\]
which is satisfied for $m$ large enough since one easily convinces
oneself that $\vert \lambda \vert \geqslant \frac{4m}{10}$ in the
dimension 4 case of Theorem~\ref{schur-decomposition}.

\smallskip

However, it has been discovered by the third-named author \cite{rou2006b}
that already in dimension three, $H^2 \big( X, \, E_{ 3, m} T_X^* \big)
\neq 0$ does not vanish in general. Fortunately, a suitable majoration
holds.

\begin{theorem}[\cite{rou2006b}]
Let $X$ be a smooth hypersurface of degree $d$ in $\mathbb{ P}^4$. Then
for $\vert \lambda \vert$ large enough:
\begin{multline*}
h^2\big(X,\Gamma^{(\lambda_{1},\lambda_{2},\lambda_{3})}T_X^*\big)
\\
\leqslant d(d+13)\frac{3(\lambda_{1}+\lambda_{2}+\lambda_{3})^3}{2}
(\lambda_{1}-\lambda_{2})(\lambda_{1}-\lambda_{3})
(\lambda_{2}-\lambda_{3})
+
{\rm O}(\vert\lambda\vert^5).
\end{multline*}
\end{theorem}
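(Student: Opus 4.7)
\smallskip\noindent
\textbf{Proof plan.} The strategy is to reduce the estimate for $h^2$ of the Schur bundle $\Gamma^{\lambda} T_X^*$ to an estimate for $h^2$ of a single line bundle on a flag bundle, and then to apply the algebraic weak holomorphic Morse inequality for the intermediate cohomology. First, introduce the complete flag bundle $\pi \colon \mathcal{F} \to X$ of $T_X^*$, of relative dimension $3$, so that $\dim \mathcal{F} = 6$. For any dominant weight $\lambda = (\lambda_1, \lambda_2, \lambda_3)$ there is a canonical line bundle $\mathcal{L}_{\lambda}$ on $\mathcal{F}$ with $\pi_* \mathcal{L}_{\lambda} \simeq \Gamma^{\lambda} T_X^*$ and $R^q \pi_* \mathcal{L}_{\lambda} = 0$ for $q \geqslant 1$; the Leray spectral sequence then identifies $H^2\big(X,\,\Gamma^{\lambda} T_X^*\big)$ with $H^2\big(\mathcal{F},\,\mathcal{L}_{\lambda}\big)$.

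\smallskip\noindent
Next, write $\mathcal{L}_{\lambda} = F_{\lambda} \otimes G_{\lambda}^{-1}$ as a difference of two nef line bundles on $\mathcal{F}$ both depending linearly on $\lambda$, in the spirit of the nef decomposition employed in the proof of Theorem~\ref{existenceKX}. Such a splitting is obtained by adding to $\mathcal{L}_{\lambda}$ a sufficiently positive twist of the form $\pi^* \mathcal{O}_X(a |\lambda|)$; the constant $a$ and the precise expressions for $F_{\lambda}$ and $G_{\lambda}$ are governed by the Chern class relations on $\mathcal{F}$ and by the conormal exact sequence
\[
0 \to \mathcal{O}_X(-d) \to \Omega_{\mathbb{P}^4}\big\vert_X \to \Omega_X \to 0,
\]
which is ultimately the source of the dependence in $d$. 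The weak algebraic holomorphic Morse inequality then supplies
\[
h^2\big(\mathcal{F},\, F_{\lambda} \otimes G_{\lambda}^{-1}\big) \leqslant \frac{1}{4!\, 2!}\, F_{\lambda}^{\,4} \cdot G_{\lambda}^{\,2} + o\big(|\lambda|^6\big),
\]
valid asymptotically as $|\lambda| \to \infty$, which by Step~1 translates immediately into an asymptotic bound on $h^2\big(X,\,\Gamma^{\lambda}T_X^*\big)$.

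\smallskip\noindent
The third step consists in reducing the intersection product $F_{\lambda}^{\,4} \cdot G_{\lambda}^{\,2}$ living on $\mathcal{F}$ down to the base $X$ by fiber-integration along $\pi$ using Jacobi--Trudy-type identities in the tautological classes (analogous to those developed in Section~\ref{Section-5}), and then in substituting the Chern classes $c_j(T_X)$ in terms of $d$ and of $h = c_1\big(\mathcal{O}_X(1)\big)$ by means of~\thetag{\ref{c-d}}, using finally $h^3 = d$. The leading term should factor as the expected Vandermonde $(\lambda_1-\lambda_2)(\lambda_1-\lambda_3)(\lambda_2-\lambda_3)$ coming from the Weyl character formula, multiplied by a homogeneous polynomial of degree $3$ in $\lambda$ and by a polynomial in $d$.

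\smallskip\noindent
The main obstacle will be pinning down the precise numerical constants: the factor $\frac{3}{2}(\lambda_1+\lambda_2+\lambda_3)^3$ in front of the Vandermonde and, above all, the quadratic $d(d+13)$ in $d$. Since the Chern classes of $T_X$ are themselves cubic in $d$, the bound of degree only $2$ in $d$ forces a striking cancellation of the \emph{a priori} leading $d^3$-contributions; tracking this cancellation will require a delicate bookkeeping of binomial coefficients in the expansion of $F_{\lambda}^{\,4} \cdot G_{\lambda}^{\,2}$, in the same combinatorial spirit as the elimination performed in Sections~\ref{Section-4} and~\ref{Section-5} of the present paper.
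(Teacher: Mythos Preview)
Your first step coincides with the paper's: pass to the complete flag bundle $Y=Fl(T_X^*)$ and, via Bott's theorem, replace $\Gamma^\lambda T_X^*$ by the line bundle $\mathcal{L}^\lambda$; then write $\mathcal{L}^\lambda=F\otimes G^{-1}$ with $G=\pi^*\mathcal{O}_X(a|\lambda|)$ and $F$ positive. From that point on, however, your route and the paper's (which summarizes Rousseau's original argument in the proof of the four--dimensional analogue, Theorem~\ref{h2-4}) diverge. You apply the weak algebraic Morse inequality as a black box to get $h^2\leqslant\frac{1}{4!\,2!}\,F_\lambda^4\cdot G_\lambda^2$ and then plan to push this intersection number down by fiber--integration on the flag bundle. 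The paper instead \emph{unwinds} the proof of that inequality: choosing smooth $D_1,D_2\in|G|$ pulled back from $X$, Kodaira vanishing on $Y$ (from $F>0$ and $F\otimes K_Y^{-1}>0$ for $|\lambda|$ large) plus two restriction exact sequences yield $h^2(Y,F\otimes G^{-1})\leqslant h^0(D_3,F\otimes G^2)=\chi(D_3,F\otimes G^2)$ with $D_3=D_1\cap D_2$. This characteristic is then expressed on the base as an alternating sum of $\chi\big(X,\Gamma^\lambda T_X^*\otimes\mathcal{O}_X(t)\big)$ and evaluated by Br\"uckmann's closed formula, so no Schubert calculus on the flag fibre is ever needed.

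Two cautions about your variant. First, the weak Morse inequality is an asymptotic statement for powers $kL$ of a \emph{fixed} line bundle; since $\mathcal{L}_\lambda$ genuinely varies with $\lambda$, you must either restrict to rational rays $\lambda=k\mu$ and then argue uniformly over the dominant chamber, or---more cleanly---imitate the restriction argument itself, which gives a pointwise inequality valid for each large $|\lambda|$. Second, your worry about a mysterious $d^3$--cancellation is misplaced: since $G$ is pulled back from the $3$--fold $X$, one has $G^k=0$ on $Y$ for $k\geqslant 4$, so only low powers of $d$ can occur, and the factor $d(d+13)$ drops out of Br\"uckmann's formula without any delicate bookkeeping. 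Note finally that $\frac{1}{48}F^4G^2$ and the paper's $\chi(D_3,F\otimes G^2)$ have different leading terms, so your route will not reproduce the constant $d(d+13)\cdot\frac{3}{2}|\lambda|^3$ verbatim; this is harmless for the downstream application but means you would be proving a variant of the stated inequality rather than the inequality itself.
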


In dimension $4$ the same proof provides the new estimate:

\begin{theorem}
\label{h2-4}
Let $X$ be a smooth hypersurface of degree $d$ in 
$\mathbb{ P}^5$. Then for $\vert \lambda \vert$ large enough, we have:
\[
\small
\aligned
h^2\big(X,\,
&
\Gamma^{(\lambda_{1},\lambda_{2},\lambda_{3},\lambda_{4})}T_X^*\big)
\\
&
\leqslant
\frac{1}{80}\,d\,
(\lambda_1-\lambda_2)(\lambda_1-\lambda_3)(\lambda_1-\lambda_4)
(\lambda_2-\lambda_3)(\lambda_2-\lambda_4)(\lambda_3-\lambda_4)
\\
&
\cdot
\big(\lambda_1+\lambda_2+\lambda_3+\lambda_4\big)^2
\big[5\lambda_2 \lambda_1d^2+132\lambda_2 \lambda_1d
+132\lambda_1 \lambda_3d+5\lambda_2 \lambda_3d^2
\\
&
+132\lambda_ 2 \lambda_4 d+5\lambda_2 d^2\lambda_4
+132\lambda_1 \lambda_4 d+5\lambda_3 \lambda_4 d^2
+5\lambda_1\lambda_3d^2
\\
&
+132\lambda_3 \lambda_4d+132\lambda_2 \lambda_3 d
+1308\lambda_2\lambda_1+648\lambda_2^2+648\lambda_3^2
\\
&
+72\lambda_3^2d+648\lambda_1^2+72\lambda_1^2d
+1308\lambda_1\lambda_4+5\lambda_1d^2\lambda_4
+1308\lambda_2\lambda_4
\\
&+1308\lambda_2\lambda_3+648\lambda_4^2
+72\lambda_2^2d+1308\lambda_1\lambda_3
+72\lambda_4^2d+1308\lambda_3\lambda_4\big]
\\
&
+
{\rm O}\big(\vert\lambda\vert^9\big).
\endaligned
\]
\end{theorem}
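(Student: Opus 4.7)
\medskip

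\textbf{Proof plan for Theorem~\ref{h2-4}.}
The strategy is to transplant verbatim, up to one extra dimension, the proof of the dimension-three estimate from~\cite{rou2006b}: transfer the computation from $X$ to the total space of a flag bundle on which the Schur bundle becomes a line bundle, decompose that line bundle as a difference of two nef line bundles, and then apply Trapani's algebraic version of the weak holomorphic Morse inequalities for the intermediate cohomology group of index $q=2$. The final estimate in $d$, $\lambda_1, \lambda_2, \lambda_3, \lambda_4$ will then come out by a purely mechanical reduction using \thetag{\ref{c-d}}.

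More precisely, let $\pi \colon Y := \mathrm{Fl}(T_X^*) \to X$ denote the complete flag bundle of $T_X^*$; it has relative dimension $\binom{4}{2}=6$, so $\dim_{\mathbb{C}} Y = 10$. Let $0 \subset F_1 \subset F_2 \subset F_3 \subset F_4 = \pi^* T_X^*$ be the tautological filtration, with line-bundle quotients $Q_i := F_i / F_{i-1}$. For the dominant weight $\lambda$, set
\[
L_\lambda := Q_1^{\lambda_1} \otimes Q_2^{\lambda_2} \otimes Q_3^{\lambda_3} \otimes Q_4^{\lambda_4}.
\]
The relative Borel--Weil--Bott theorem yields $\pi_* L_\lambda = \Gamma^{\lambda} T_X^*$ together with $R^j \pi_* L_\lambda = 0$ for $j \geqslant 1$, hence the Leray spectral sequence collapses and gives the key identification $h^q(X, \Gamma^\lambda T_X^*) = h^q(Y, L_\lambda)$ for every $q$, in particular for $q = 2$.

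Next, by the standard splitting principle on $Y$ we can arrange to write $L_\lambda = \mathcal{F}_\lambda \otimes \mathcal{G}_\lambda^{-1}$ as a difference of two \emph{nef} line bundles of the shape $L_\mu \otimes \pi^* \mathcal{O}_X(k)$, the ampleness of $\mathcal{O}_X(1)$ for $d$ large being used to absorb negative contributions, exactly as was done for the three-dimensional case in~\cite{rou2006b}. Applying the algebraic weak Morse inequality in the form
\[
h^2(Y, L_\lambda) \;\leqslant\; \binom{10}{2}\, \mathcal{F}_\lambda^{\,8} \cdot \mathcal{G}_\lambda^{\,2} + \text{lower order in } |\lambda|,
\]
we reduce the problem to an intersection-theoretic computation on $Y$, which, using the defining relations of the Chern classes of the tautological quotients and the expressions~\thetag{\ref{c-d}} for $c_j(T_X)$ in terms of $d$ and $h$ (with $h^4 = d$), produces a polynomial in $(d, \lambda_1, \lambda_2, \lambda_3, \lambda_4)$ with leading part homogeneous of degree $10$ in $\lambda$, antisymmetric in the $\lambda_i$. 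Factoring out the Vandermonde $\prod_{i<j}(\lambda_i-\lambda_j)$, which divides any such antisymmetric polynomial, produces the factorized bound displayed in the statement, with the remaining bracket being a symmetric polynomial of bidegree $(2,2)$ in $(d,\lambda)$.

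The main obstacle is purely of computational, rather than conceptual, nature: the intersection numbers $\mathcal{F}_\lambda^{\,8}\cdot \mathcal{G}_\lambda^{\,2}$ live in the degree-$10$ cohomology of $Y$ and must be expanded through the Schubert/Jacobi--Trudy calculus on the flag bundle, producing several hundred monomial contributions before the Vandermonde cancellation becomes visible. Symbolic assistance will be indispensable to collect, after reduction modulo the relations satisfied by the $c_1(Q_i)$ and the substitutions~\thetag{\ref{c-d}}, the explicit coefficients $5, 132, 648, 72, 1308$ appearing inside the bracket; once the leading-order part in $|\lambda|$ of this intersection number has been matched with the claimed bound, the remainder $\mathrm{O}(|\lambda|^9)$ is exactly the error absorbed in the Morse inequality, which completes the proof.
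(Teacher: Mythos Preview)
Your setup coincides with the paper's: pass to the full flag bundle $Y=\mathrm{Fl}(T_X^*)$ of relative dimension~$6$, identify $h^2(X,\Gamma^\lambda T_X^*)=h^2(Y,\mathcal{L}^\lambda)$ via Borel--Weil--Bott, and write $\mathcal{L}^\lambda=F\otimes G^{-1}$ with $F=\mathcal{L}^\lambda\otimes\pi^*\mathcal{O}_X(3|\lambda|)$ and $G=\pi^*\mathcal{O}_X(3|\lambda|)$. The divergence is in the key step. You invoke the weak algebraic Morse inequality $h^2\leqslant\binom{10}{2}\,F^{8}\!\cdot G^{2}/10!$ directly on $Y$. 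The paper instead uses Kodaira vanishing on $Y$ (granted by $F\otimes K_Y^{-1}>0$ for $|\lambda|$ large) together with two successive restrictions to general divisors $D_1,D_2\in|G|$: the long exact sequences yield $h^2(Y,\mathcal{L}^\lambda)=h^1(D_1,F)\leqslant h^0(D_3,F\otimes G^2)=\chi(D_3,F\otimes G^2)$ on $D_3=D_1\cap D_2$. Because $D_i=\pi^{-1}(E_i)$ are pull-backs, this pushes down to
\[
\chi\big(X,\Gamma^\lambda T_X^*\otimes\mathcal{O}_X(9|\lambda|)\big)
-2\,\chi\big(X,\Gamma^\lambda T_X^*\otimes\mathcal{O}_X(6|\lambda|)\big)
+\chi\big(X,\Gamma^\lambda T_X^*\otimes\mathcal{O}_X(3|\lambda|)\big),
\]
and each term is then evaluated by Br\"uckmann's explicit formula~\cite{bru1997} for twisted Schur bundles; the Vandermonde factor and the specific coefficients $5,\,132,\,72,\,648,\,1308$ come out of \emph{that} computation.

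Your route is legitimate and does produce \emph{a} valid majorant of degree~$10$ in $\lambda$, divisible by the Vandermonde (the push-forward $\pi_*$ from the flag bundle antisymmetrizes in the Chern roots, hence in the $\lambda_i$, so this part of your argument is fine). But the leading part of the Morse bound is $\tfrac{1}{2\cdot 8!}\,F^{8}\!\cdot G^{2}$, whereas the leading part of the paper's bound $\chi(D_3,F\otimes G^2)$ is $\tfrac{1}{8!}\,(F+2G)^{8}\!\cdot G^{2}$; these are different intersection numbers on $Y$, so the explicit bracket you would obtain after reduction would \emph{not} be the one displayed in the statement. A smaller gap: the Morse inequality is asymptotic in powers $L^{\otimes k}$ of a fixed line bundle, hence strictly speaking bounds $h^2$ only along rays $\lambda=k\mu$; extending this uniformly over all large dominant $\lambda$ requires an additional argument you do not supply, whereas the restriction-to-divisors method delivers the inequality for each individual $\lambda$ once $|\lambda|$ is large enough.
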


\proof
We follow~\cite{rou2006b} pp.~335-36, summarizing the main arguments
for the convenience of the reader.  The proof is essentially, again,
an application of holomorphic Morse inequalities and the reader will
notice strong similarities with the arguments presented in section
\ref{Section-2}.

Let $Y := Fl(T_{X}^{\ast })$ be the flag manifold of $T_{X}^{\ast }$
and let $\pi \colon Fl(T_{X}^{\ast })\rightarrow X$ the natural
projection. Let $\lambda = (\lambda_1,\lambda_2, \lambda_3,
\lambda_4)$ be a weight and $\mathcal{ L}^{\lambda }$ the line bundle
on $Y$ associated to $\Gamma^{\lambda}T_{X}^*$ such that
$\Gamma^\lambda T_{X}^* = \pi_* (\mathcal{L}^\lambda )$. By a theorem
of Bott, these bundles have the same cohomology ({\em
cf.}~\cite{rou2006b} p.~327) and therefore we are reduced to control
the cohomology of a line bundle. To this aim, we write:
\begin{equation*}
\mathcal{L}^{\lambda }
=
\big(
\mathcal{L}^{\lambda }\otimes \pi ^{\ast }\mathcal{O}_{X}
(3\left| \lambda \right|)
\big)\otimes 
\big(
\pi ^{\ast }\mathcal{O}_{X}(3\left|
\lambda \right| )
\big)^{-1}
=
F\otimes G^{-1},
\end{equation*}
with $F: = \mathcal{L}^{\lambda }\otimes \pi ^{\ast
}\mathcal{O}_{X}(3\left| \lambda \right| )$ and $G := \pi ^{\ast
}\mathcal{O}_{X}(3\left| \lambda \right| ).$ We observe that
$\mathcal{L}^{\lambda }\otimes \pi ^{\ast }\mathcal{O}_{X}(3\left|
\lambda \right| )$ is positive because $T_{X}^{\ast }\otimes
\mathcal{O}_{X}(2)$ is semi-positive (\cite{rou2006b}).

Recall also (\cite{rou2006b}) that we can write 
$K_Y=(\mathcal{L}^{\sigma})^{-1}
\otimes \pi^*K_X^5$ where $\sigma=(7,5,3,1)$, so:
\[
F \otimes K_Y^{-1} 
=
\mathcal{L}^{\lambda+\sigma} 
\otimes\pi ^{\ast }\mathcal{O}_{X}(3\left| \lambda \right| ) 
\otimes
\pi^*K_X^{-5}.
\] 
Then we still have the positivity $F \otimes K_Y^{-1} >0$ for
$|\lambda|$ large enough, because similarly as above, the line bundle:
\[
\mathcal{L}^{\lambda+\sigma} 
\otimes\pi ^{\ast }\mathcal{O}_{X}
\big(2\left| \lambda + \sigma \right|\big)
\]
is semi-positive as soon as $|\lambda| > 5(d-6)+32$.

Now, we take a smooth irreducible divisor $D_1$ in the linear series
$|G|$ of the form $\pi^* (E_1)$ for some divisor in $X$. On $Y$, we
have the exact sequence:
\[
0
\longrightarrow 
\mathcal{O}_{Y}\big(F\otimes G^{-1}\big)
\longrightarrow 
\mathcal{O}_{Y}(F)
\longrightarrow 
\mathcal{O}_{D_1}(F)
\longrightarrow 
0,
\]
and therefore in the associated long exact cohomology sequence:
\[
\aligned
0
=
H^{i}\big(Y,\mathcal{O}_{Y}(F)\big)
\longrightarrow 
H^{i}\big(D_1,\mathcal{O}_{D_1}(F)\big)
&
\longrightarrow 
H^{i+1}\big(Y,\mathcal{O}_{Y}(F\otimes G^{-1})\big)
\\
&
\longrightarrow
H^{i+1}\big(Y,\mathcal{O}_{Y}(F)\big)
=
0,
\endaligned
\]
both the first and last terms vanish for any $i>0$ by an application of
the Kodaira vanishing theorem. We at once deduce:
\[
h^{i}
\big(
D_1,\,
\mathcal{O}_{D_1}(F)
\big)
=
h^{i+1}\big(Y,\mathcal{O}_{Y}(F\otimes G^{-1})\big).
\]
Next, we take a second divisor $D_2 \in |G|$ intersecting properly
$D_1$ and of the form $\pi^* ( E_2)$ too.  Using the adjunction
formula and applying a similar restriction to $D_3 := D_1\cap D_2$
(word by word, the arguments are exactly the same as
in~\cite{rou2006b}, pp.~335--336, so we do not repeat the complete
proof), we obtain:
\[
h^2
\big(
Y,\mathcal{O}_{Y}(F\otimes G^{-1})
\big)
=
h^1
\big(
D_1,\mathcal{O}_{D_1}(F)\big)
\leqslant 
h^0
\big(D_3,\, 
\mathcal{O}_{D_3}(F\otimes G^2)\big)
=
\chi\big(D_3,\,\mathcal{O}_{D_3}(F\otimes G^2)\big).
\]
Letting $E_3 := E_1 \cap E_2$, one then shows (\cite{rou2006b}, p.~336)
that the latter Euler-Poincaré characteristic equals the following
linear combination of characteristics {\em on the
base $X$}:
\[
\aligned
&
\chi\big(D_3,\,\mathcal{O}_{D_3}(F\otimes G^2)\big)
=
\chi\big(E_3,\,
\Gamma^{(\lambda_1,\lambda_2,\lambda_3,\lambda_4)}T_X^*\vert_{E_3}
\otimes
\mathcal{O}_{E_3}(9\vert\lambda\vert)
\big)
\\
&
=
\chi\big(X,\,\Gamma^\lambda T_X^*
\otimes
\mathcal{O}_X(9\vert\lambda\vert)\big)
-2\,
\chi\big(X,\,\Gamma^\lambda T_X^*
\otimes
\mathcal{O}_X(6\vert\lambda\vert)\big)
+
\chi\big(X,\,\Gamma^\lambda T_X^*
\otimes
\mathcal{O}_X(3\vert\lambda\vert)\big).
\endaligned
\]
So the $h^2$ we want to majorate is less than
or equal to this last line. 
But then by applying a general complete 
combinatorial formula due to Br\"uckmann (Theorem~4
in~\cite{ bru1997}) for the characteristic $\chi \big( X, \,
\Gamma^\lambda T_X^* \otimes \mathcal{ O}_X ( t ) \big)$ of any
twisted Schur bundle over $X$, we may terminate the proof either by
hand or with the help of a computer.
\endproof

From such controls of higher cohomology groups, we deduce the
existence of global algebraic differential equations canalizing all
entire holomorphic maps: to obtain minorations of $h^0 \geqslant \chi
- h^2$, it suffices indeed as already explained to perform summations,
according to the representations of Theorem~\ref{schur-decomposition},
of the asymptotic Euler characteristics~\thetag{ \ref{chi-Gamma}},
subtracting at the same time the majorant of $h^2$ just obtained.  At
first, we recall here what is known in dimensions $2$ and $3$.  The
twisting $(\bullet) \otimes A^{ -1}$ by the negative of a fixed ample
line bundle $A \to X$ is erased in the asymptotics.

\begin{theorem}
Let $X\subset\mathbb{P}^{n+1}$ be a smooth hypersurface of degree $d$
and let $A$ be any ample line bundle over $X$. 
\begin{itemize}
\item
\cite{dem1997} For $n=2${\rm :}
\[
h^0\big(X,\,E_{2,m}T_X^*
\otimes
A^{-1}\big)
\geqslant
\frac{m^4}{648}\,d\,\big(4d^2-68d+154\big)
+
{\rm O}(m^3);
\] 
\item
\cite{rou2006b} For $n=3${\rm :}
\[
\aligned
h^0\big(
X,\,E_{3,m}T_X^{\ast }
\otimes
A^{-1}
\big)
&
\geqslant
\frac{m^9}{408240000000}
\cdot d\cdot
\big(
1945\,d^3-103695\,d^2
\\
&
\ \ \ \ \ \ \ \ \ \ \ \ \ \ 
-
7075491\,d-105837083\big)
+
{\rm O}(m^8).
\endaligned
\]
\end{itemize}
In particular, if $d \geqslant 15$ (resp. $d \geqslant 97$) then $E_{
2, m} T_X^* \otimes A^{ -1}$ (resp. $E_{3,m} T_X^* \otimes A^{ -1}$)
admits non trivial sections for $m$ large, and every entire curve $f
\colon \mathbb{ C} \rightarrow X$ must satisfy the corresponding
algebraic differential equations.
\end{theorem}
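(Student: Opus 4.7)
My plan is to apply the paradigm laid out in Subsection~6.5: from $\chi = h^0 - h^1 + h^2 - \cdots$ together with $h^i \geqslant 0$, one gets the chain of inequalities $h^0 \geqslant \chi - \sum_{i\geqslant 1} h^{2i}$. I would then control each even cohomology $h^{2i}(X, E_{k,m}T_X^*\otimes A^{-1})$ by passing to the associated graded $\operatorname{Gr}^\bullet E_{k,m}T_X^*$ of Theorem~\ref{schur-decomposition} via the standard long exact sequences, applying the Demailly--Bogomolov vanishing theorem recalled in Subsection~6.5 to each Schur component in the top dimension, and the third-named author's direct bound to the intermediate $h^2$ in the threefold case. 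Since $A$ is independent of $m$, the twist by $A^{-1}$ only shifts $\chi$ and every $h^i$ by polynomials in $m$ of strictly lower order than the announced leading term, hence is harmless asymptotically.

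\textbf{Case $n=2$.} Only the top dimension $h^2$ needs attention. Via Serre duality, $h^2(X,\Gamma^{(a+b,b)}T_X^*\otimes A^{-1}) = h^0(X,\Gamma^{(a+b,b)}T_X\otimes A\otimes K_X)$; after dividing out a suitable power $K_X^{\nu}$ (which is big for $d\geqslant 5$ since $K_X=\mathcal{O}_X(d-4)$) to rewrite $A\otimes K_X^{1-\nu}$ as the dual of a big line bundle, Demailly's theorem gives the vanishing as soon as $|\lambda|=a+2b \geqslant 2\nu$, which holds for all $m\gg 0$. Hence $h^0 \geqslant \chi$ up to lower order, and summing the Euler-characteristic formula of Theorem~6.3 (itself obtained by summing the Br\"uckmann Riemann--Roch formula for each Schur component over $a+3b=m$) yields the announced polynomial $\frac{m^4}{648}d(4d^2-68d+154)$. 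The quadratic $4d^2-68d+154$ becomes positive precisely at $d=15$, furnishing the effective threshold.

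\textbf{Case $n=3$.} Now $h^0 \geqslant \chi - h^2 - h^3$. The top $h^3$ vanishes asymptotically by the same Serre--Demailly argument applied to each component $\Gamma^{(a+b+2c+d,\,b+c+d,\,d)}T_X^*$ of Theorem~\ref{schur-decomposition}. For $h^2$ I use the third-named author's componentwise bound recalled just before Theorem~\ref{h2-4}:
\[
h^2\bigl(X,\Gamma^{(\lambda_1,\lambda_2,\lambda_3)}T_X^*\bigr) \;\leqslant\; d(d+13)\,\tfrac{3|\lambda|^3}{2}\,(\lambda_1-\lambda_2)(\lambda_1-\lambda_3)(\lambda_2-\lambda_3)+O(|\lambda|^5).
\]
Summing both the $\chi$-formula of Theorem~6.3 and this $h^2$-majorant over the lattice $a+3b+5c+6d=m$ of Theorem~\ref{schur-decomposition}, and subtracting the latter from the former, yields a polynomial lower bound for $h^0$ of degree $9$ in $m$: the leading $d^4$-term is unchanged from $\chi$, while the $h^2$-contribution, of degree exactly two in $d$ via the factor $d(d+13)$, shifts only the coefficients of $d^3, d^2, d, 1$, producing the stated polynomial $1945d^3-103695d^2-7075491d-105837083$ once one clears the common denominator.

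\textbf{Main obstacle and conclusion.} The principal difficulty is purely computational: executing the lattice sum $\sum_{a+3b+5c+6d=m}$ of the explicit $h^2$-bound faithfully enough to recover the precise constants $-7075491$ and $-105837083$. This is a finite combinatorial task done with computer assistance, in parallel with the $\chi$-summation already performed in~\cite{rou2006a}. Once both lower bounds are established, the qualitative conclusion is immediate: at $d=15$ (resp.\ $d=97$) the relevant polynomial becomes positive, so $h^0(X,E_{k,m}T_X^*\otimes A^{-1})>0$ for $m$ large, and Theorem~\ref{A} applied to any nonzero such section forces every nonconstant entire $f\colon\mathbb{C}\to X$ to satisfy the corresponding algebraic differential equation.
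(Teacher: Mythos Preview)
Your proposal is correct and follows essentially the same strategy as the paper outlines in Subsection~6.5: pass to the graded Schur decomposition of Theorem~\ref{schur-decomposition}, use additivity of $\chi$ and the filtration inequality $h^{2i}(E_{k,m})\leqslant\sum_\lambda h^{2i}(\Gamma^\lambda)$, kill the top even cohomology via Serre duality plus the Demailly--Bogomolov vanishing theorem, and in dimension~3 bound $h^2$ componentwise by the third-named author's estimate before summing. One small slip: in dimension~3 you write $h^0\geqslant\chi-h^2-h^3$, but since $\chi=h^0-h^1+h^2-h^3$ the odd term $h^3$ enters $h^0=\chi+h^1-h^2+h^3$ with a \emph{plus} sign, so the correct (and sharper) inequality is simply $h^0\geqslant\chi-h^2$; your subsequent argument that $h^3$ vanishes is true but unnecessary for the bound.
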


In dimension $4$, we may therefore present the following new result.

\begin{theorem}
\label{h0-4}
Let $X$ be a smooth hypersurface of degree $d$ in $\mathbb{ P}^5$ and
let $A$ be any ample line bundle over $X$. Then:
\[
\small
\aligned
h^0(X,\,
&
E_{4,m}T_X^*\otimes A^{-1})
\\
&
\geqslant
\frac{m^{16}}{1313317832303894333210335641600000000000000}\cdot d
\\
&\qquad\cdot
\big[-867659678949860838548185438614
\\
&\qquad
-93488069360760785094059379216\,d
\\
&\qquad
-1369327265177339103292331439\,d^2
\\
&\qquad
-6170606622505955255988786\,d^3
\\
&\qquad
+50048511135797034256235\,d^4
\big]
\\
&
\qquad\qquad
+
{\rm O}\big(m^{15}\big).
\endaligned
\]
In particular, if $d \geqslant 259$ then $E_{ 4,m }T_{ X }^*\otimes
A^{-1}$ admits non trivial sections for $m$ large, and every
entire curve $f \colon \mathbb{ C} \rightarrow X$ must satisfy the
corresponding algebraic differential equations.
\end{theorem}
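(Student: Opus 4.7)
The plan is to realize in dimension $4$ the strategy already implemented in dimension $3$ by the third-named author in~\cite{rou2006b}: express $h^{0}$ in terms of the Euler--Poincar\'e characteristic minus a controlled contribution coming from the even intermediate cohomology groups, and then sum asymptotically over the Schur summands appearing in the decomposition of $E_{4,m}T_{X}^{\ast}$. Since twisting by a fixed ample line bundle $A^{-1}$ does not affect the leading asymptotic term as $m\to\infty$, I will work directly with $E_{4,m}T_{X}^{\ast}$ itself; once the result is established for this bundle, adding $A^{-1}$ changes only lower-order contributions absorbed in the $\mathrm{O}(m^{15})$ remainder.

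First, I would use the basic identity $h^{0}=\chi+h^{1}-h^{2}+h^{3}-h^{4}\geqslant\chi-h^{2}-h^{4}$ for a rank-$4$ base. The top cohomology $h^{4}\bigl(X,\Gamma^{(\lambda_1,\lambda_2,\lambda_3,\lambda_4)}T_{X}^{\ast}\bigr)$ vanishes for $|\lambda|$ large thanks to Demailly's vanishing theorem: as explained after that theorem, Serre duality rewrites $h^{4}$ as an $h^{0}$ of a twisted Schur bundle $\Gamma^{(\lambda-\nu,\ldots,\lambda-\nu)}T_{X}\otimes K_{X}^{1-\nu}\otimes A$, and the vanishing applies once $|\lambda|-4\nu\geqslant 0$; but in the Schur decomposition of Theorem~\ref{schur-decomposition} one easily checks $|\lambda|\geqslant\tfrac{4m}{21}$, so for $m$ large enough every summand contributes $h^{4}=0$. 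Thus only $\chi$ and $h^{2}$ have to be summed.

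Second, I would sum the asymptotic formula~\thetag{\ref{chi-Gamma}} over the explicit index set given in the dimension~$4$ part of Theorem~\ref{schur-decomposition}, subtracting the bound of Theorem~\ref{h2-4} term by term. Concretely, each $14$-tuple $(a,b,\ldots,n)\notin\square_{1}\cup\cdots\cup\square_{41}$ with the weighted sum equal to $m$ determines a Schur weight $\lambda(a,\ldots,n)$; substituting this $\lambda$ into~\thetag{\ref{chi-Gamma}} gives the contribution to $\chi\bigl(X,E_{4,m}T_{X}^{\ast}\bigr)$, and substituting it into the bound of Theorem~\ref{h2-4} gives a majorant for the contribution to $h^{2}$. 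Each of the Chern monomials $c_{1}^{4},c_{1}^{2}c_{2},c_{2}^{2},c_{1}c_{3},c_{4}$ is reduced to a polynomial in $d$ via~\thetag{\ref{c-d}}, so after the (large but mechanical) summation both contributions become explicit polynomials in $d$ with leading $m$-power equal to $m^{16}$, and the resulting expression for $\chi-h^{2}$ is exactly the polynomial displayed in the statement, up to an error ${\rm O}(m^{15})$ (which also absorbs the $A^{-1}$ twist).

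The main obstacle is the sheer combinatorial volume of this last step: the sum ranges over a subset of $\mathbb{N}^{14}$ of size growing like $m^{13}$ and is described by removing $41$ forbidden subpolytopes $\square_{i}$, and the bound on $h^{2}$ from Theorem~\ref{h2-4} involves a degree-$10$ polynomial in $(\lambda_{1},\ldots,\lambda_{4})$ whose coefficients are themselves polynomials in $d$. In practice, the summation is carried out by symbolic computer algebra using the multivariate polynomial identities of~\cite{mer2008b} and standard Faulhaber-type identities to reduce each polynomial summation to a rational function of $m$, keeping only the top-order term in $m$. Once the final polynomial $P(d)$ of degree $4$ in $d$ displayed in the statement is obtained, the bound $d\geqslant 259$ comes from an elementary numerical check: $P(d)$ has a unique real root in the range $d\geqslant n+3=7$, which lies strictly between $258$ and $259$, so $P(d)>0$ for every integer $d\geqslant 259$, giving $h^{0}\bigl(X,E_{4,m}T_{X}^{\ast}\otimes A^{-1}\bigr)>0$ for all $m$ sufficiently large. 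An application of Theorem~\ref{A} then forces every entire holomorphic curve $f\colon\mathbb{C}\to X$ to satisfy the corresponding algebraic differential equations.
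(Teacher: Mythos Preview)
Your proposal is correct and follows essentially the same route as the paper: the paper also uses $h^{0}\geqslant\chi-h^{2}-h^{4}$, disposes of $h^{4}$ via Demailly's Bogomolov-type vanishing, bounds $h^{2}$ by Theorem~\ref{h2-4}, and then sums both contributions over the Schur decomposition of Theorem~\ref{schur-decomposition} (with computer assistance) to produce the displayed polynomial in $d$. The only minor discrepancy is your estimate $|\lambda|\geqslant\tfrac{4m}{21}$, whereas the paper gives the sharper (and easily checked) $|\lambda|\geqslant\tfrac{4m}{10}$ coming from the Wronskian summand; either bound suffices for the vanishing of $h^{4}$.
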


\subsection{Algebraic degeneracy}  
Similarly as in Theorem~\ref{existenceKX} but say in dimension 4 to
fix ideas (for the dimension 3, {\em see}~\cite{rou2007},
pp.~381--383), one tensors the invariant jet bundle $E_{ k,m} T_X^*$
by $A^{ -1} := K_X^{ - \delta m}$, one uses the standard formula:
\[
\Gamma^{(\lambda_1,\lambda_2,\lambda_3,\lambda_4)}T_X^*
\otimes
K_X^{-\delta m}
= 
\Gamma^{(\lambda_1-\delta m,\lambda_2-\delta m,
\lambda_3-\delta m,\lambda_4-\delta m)}T_X^*
\]
in order to reapply the Schur bundle decomposition of
Theorem~\ref{schur-decomposition}, one redoes all the computations of
Theorem~\ref{h2-4} and of Theorem~\ref{h0-4}, and one gets in this way
a new minorant:
\[
h^0\big(X,\,E_{4,m}T_X^*\otimes K_X^{-\delta m}\big)
\geqslant
\alpha(d,\delta)\cdot m^{16}
+
{\rm O}\big(m^{15}\big),
\]
for a certain complicated polynomial $\alpha ( d, \delta) \in \mathbb{
Q} [ d, \delta]$ which we find now superfluous to write down
explicitly, and which of course regives for $\delta = 0$ the minorant
of Theorem~\ref{h0-4}. Remind now that according to
Theorem~\ref{globgen}, in dimension $4$, the maximal pole order of a
meromorphic frame on the space of vertical $4$-jets of the universal
hypersurface parametrizing all degree $d$ hypersurfaces of $\mathbb{
P}^5$ is equal to $4^2 + 2\cdot 4 = 24$. Then following line by line
the arguments of the proof of Theorem~\ref{noneffective-main}, in
order to be able to apply sufficiently many meromorphic derivations
$L_{ W_1} \cdots L_{ W_p}$ with $p \leqslant m$ to a given nonzero jet
differential so as to deduce\,\,---\,\,reasoning again by
contradiction as in Section~\ref{Section-3}\,\,---\,\,algebraic
degeneracy of entire curves, one has to insure: that $d > \frac{ 24}{
\delta} + 6$, as is required by~\thetag{ \ref{C2}} for the general
dimension $n$; and simultaneously also: that $\alpha ( d, \delta) >0$
for all $d \geqslant d_4$ larger than a certain effective $d_4 \in
\mathbb{ N}$.  But quite similarly as in the dimension 3 case, these
two constraints happen to be {\em compatible}, and thanks to effective
computations executed independently on two digital computers by the
second and by the third named author using different codes, one
verifies in dimension $4$ that $d_4 = 3203$ works (with $\delta =
\frac{ 3197}{ 24}$), and this is how, after so many rational
calculations, one gains the new effective lower bound $\deg X
\geqslant 3203$ of Theorem \ref{lowdim}.
\qed

\section{Effective algebraic degeneracy in dimensions 5 and 6}
\label{Section-7} 

Finally, for dimensions $5$ and $6$, we simply carry out the same
strategy as in the general case, but with a choice of weight different
from $a^\ast$ introduced in Subsection~\ref{highlighting}. Our choice
specific for these two dimensions are $\mathbf{ a} = (54, 18, 6, 2,
1)$ and $\mathbf{ a} = (162, 54, 18, 6, 2, 1)$, that is to say: the
minimal choice in order to have relative nefness of the weighted
(anti)tautological line bundle $\mathcal O_{ X_n} ( \mathbf{ a})$, $n
= 5, 6$ ({\em cf.}~\cite{dem1997, div2008a}); also, we choose $\delta
= \frac{ 5^2+2 \cdot 5}{ d-5-2 }$ and $\delta = \frac{ 6^2 + 2 \cdot
6}{ d - 6 - 2}$. The bound is then obtained thanks to computer
calculations with \textsc{gp/pari}, ({\em cf.}~\cite{div2008a} for the
code). The same method, in dimension 4 (resp.~3), would have produced
$\deg X \geqslant 6527$ (resp.~$\geqslant 1019$), less sharp than
$\deg X \geqslant 3203$ (resp.~$\geqslant 593$).

In dimension $n = 5$, here are the corresponding two polynomials ${\sf
P}_{ \mathbf{ a}} ( d)$ and ${\sf P}_{ \mathbf{ a}} ' ( d)$ the length
of which confirms the incompressible complexity of the reduction
process:
\begin{equation}
\label{P-54-18-6-2-1}
\footnotesize
\aligned
{\sf P}_{54,18,6,2,1}(d)
&
=
82970555252684668951323755447424\,d^6
-
\\
&\ \ \ \ \
-
69092357692382960198316008279615424\,d^5
-
\\
&\ \ \ \ \ \ \ \
-
37591957313184629697218108831955927744\,d^4
-
\\
&\ \ \ \ \ \ \ \ \ \ \
-
2161144497516080476955607837671278699584\,d^3
-
\\
&\ \ \ \ \ \ \ \ \ \ \ \ \
-
20767931723173741117548555837243163806144\,d^2
-
\\
&\ \ \ \ \ \ \ \ \ \ \ \ \ \ \
-
23736461779038166246115958304551871056384\,d.
\endaligned
\end{equation}
and:
\begin{equation}
\label{P-54-18-6-2-1-prime}
\footnotesize
\aligned
{\sf P}_{54,18,6,2,1}'(d)
&
=
-81064936492382180549906181650347200\,d^6
-
\\
&\ \ \ \ \ \
-25619265529443874657362851013713227200
\,d^5
-
\\
&\ \ \ \ \ \ \ \ \
-1138360224016877254137407566642735778400
\,d^4
-
\\
&\ \ \ \ \ \ \ \ \ \ \ \
-2649407942988198539201176162753240634400
\,d^3
+
\\
&\ \ \ \ \ \ \ \ \ \ \ \ \ \ \ \
+
70399558265933283202949942118101580280800
\,d^2
+
\\
&\ \ \ \ \ \ \ \ \ \ \ \ \ \ \ \ \ \ \ \
+
90355953106499854530169310985578945008800
\,d.
\endaligned
\end{equation}
We believe that the sequence of weights $\mathbf{ a} = (2\cdot
3^{n-2},\dots, 6, 2, 1)$ instead of
$a^\ast$ should work in any dimension, and that
it should provide better effective estimates in all dimensions, though
we suspect the bound should remain exponential. To conclude, we
collect our three effective estimates in a comparative table

\begin{center}
\begin{tabular}{*{6}{c}}
\hline 
\rule[-3pt]{0pt}{13.5pt}
{\bf dim} 
$\mathbf{X}$ & {\bf Theorem \ref{lowdim}} & {\bf Theorem \ref{main}} 
\\ 
\hline
\rule[-3pt]{0pt}{15.5pt}
3 & 593 & $2^{3^5}$
\\
\rule[-3pt]{0pt}{10.5pt}
4 & 3203 & $2^{4^5}$ 
\\
\rule[-3pt]{0pt}{10.5pt}
5 & 35355 & $2^{5^5}$ 
\\
\rule[-3pt]{0pt}{10.5pt}
6 & 172925 & $2^{6^5}$
\\ 
\hline
\end{tabular}
\end{center} 


\vfill\end{document}